\theoremstyle{plain} %default
\newtheorem{thm}{Theorem}[section]
\newtheorem{prop}[thm]{Proposition}
\newtheorem{lemma}[thm]{Lemma}
\newtheorem{cor}[thm]{Corollary}
\newtheorem{example}[thm]{Example}
\newtheorem{rmk}{Remark}
\theoremstyle{definition}
\newtheorem{defn}[thm]{Definition}
\theoremstyle{remark}
\numberwithin{equation}{section}
\newcommand{\mysectionname}{}
\newcommand{\newsection}[1]{\section{#1}\renewcommand{\mysectionname}{\uppercase{#1}}}
\newcommand{\MT}{\mathcal{M}_{\mathrm{T}}}  %new command
\newcommand{\Mstar}{\mathcal{M}_*}
\newcommand{\MstarS}{\mathcal{M}_{\mathrm{T}}^*}
\newcommand{\Mreal}{\mathcal{M}_{\mathbb{R}}}
\newcommand{\Mrealplus}{\mathcal{M}_{\mathbb{R}^+}}
\newcommand{\Mrealpplus}{\mathcal{M}_{\mathbb{R}^+}^*}
\newcommand{\g}[1]{\mathit{G}_{#1}}
\newcommand{\f}[1]{\mathit{F}_{#1}}
\newcommand{\e}[1]{\mathit{E}_{#1}}
\newcommand{\p}[1]{\varphi_{#1}}
\newcommand{\pp}[1]{\psi_{#1}}
\newcommand{\et}[1]{\eta_{#1} }
\newcommand{\multt}[1]{{#1}^{\boxtimes t }}
\newcommand{\multtT}[1]{{\mu}^{\boxtimes #1 }}
\newcommand{\sig}[1]{\Sigma_{#1}}
\newcommand{\bmultg}{\!\begin{array}{c} {\scriptstyle\times} \\[-12pt]\cup\end{array}\!}
\newcommand{\bmultk}{\!\!{\scriptstyle\begin{array}{c} {\scriptscriptstyle\times} \\[-12pt]\cup\end{array}}\!\!}
\newcommand{\subord}{\ \frame{$\ \vdash$}\; }  %notation for subordination
\begin{document}
\title{Free Brownian motion and free convolution semigroups: multiplicative case}
\author{Ping Zhong}
\address{Department of Mathematics, Rawles Hall, 831 East Third Street, Indiana University, Bloomington, Indiana 47405, U.S.A. }
\email{pzhong@indiana.edu}
%\date{\today}
\begin{abstract}
We consider a pair of probability measures $\mu,\nu$ on the unit circle
such that $\Sigma_{\lambda}(\eta_{\nu}(z))=z/\eta_{\mu}(z)$.
We prove that the same type of equation holds for any $t\geq 0$ when we replace
$\nu$ by $\nu\boxtimes\lambda_t$ and
$\mu$ by $\mathbb{M}_t(\mu)$, where
$\lambda_t$ is the free multiplicative analogue of the normal distribution on the unit circle of $\mathbb{C}$ and
$\mathbb{M}_t$ is the map defined by Arizmendi and Hasebe.
These equations are a multiplicative analogue of
 equations studied by Belinschi and Nica.
In order to achieve this result, we study
infinite divisibility of the measures associated with subordination functions
in multiplicative free Brownian motion and multiplicative
free convolution semigroups. We use the modified $\mathcal{S}$-transform introduced by Raj Rao and Speicher to deal with the case that $\nu$ has mean zero. The same type of the result holds for
convolutions on the positive real line. In the end,
we give a new proof for some Biane's results on the densities of
the free multiplicative analogue of the normal distributions. 
\end{abstract}

\maketitle
\newsection{Introduction}
Let $\Mreal$ be the set of probability measures on $\mathbb{R}$.
For every $t\geq 0$, Belinschi and Nica \cite{BN2008Ind} defined
a family of maps $\mathbb{B}_t : \Mreal \rightarrow\Mreal$ by setting
\begin{equation}\nonumber
   \mathbb{B}_t(\mu)=\left(\mu^{\boxplus (t+1)} \right)^{\uplus \frac{1}{t+1}}, \,\,\mu\in\Mreal.
\end{equation}
These maps have several remarkable properties.
For any $t\geq 0$, $\mathbb{B}_t$ is an endomorphism
of $(\Mrealplus,\boxtimes)$,
where $\Mrealplus $ is the set of probability measures on $[0,+\infty)$
and $\boxtimes$ is free multiplicative convolution. $\{\mathbb{B}_t\}_{t\geq 0}$ is a semigroup and $\mathbb{B}_1$ is the Boolean to free Bercovici-Pata bijective map.

The maps $\mathbb{B}_t$ have strong connections with $\boxplus$-infinite divisibility.
They are also connected to free Brownian motion and additive free
convolution semigroups. For $\mu\in\Mreal$, we denote by $\g{\mu}$
the Cauchy transform of $\mu$ and by $\f{\mu}$
the reciprocal Cauchy transform of $\mu$.
Given a pair of probability measures $\mu,\nu \in\Mreal$
such that
\begin{equation}\nonumber
  \g{\nu}(z)=z-\f{\mu}(z),\,\,\,z\in\mathbb{C}^+,
\end{equation}
we have
\begin{equation}\label{eq:int1}
   \g{\nu\boxplus\gamma_t}(z)=z-\f{\mathbb{B}_t(\mu)}(z), \,\,\, t>0, \,\,\,z\in\mathbb{C}^+,
\end{equation}
where $\gamma_t$ is the semi-circular distribution with variance $t$.
This result was generalized to the
multi-variable case in \cite{BN2008,BN2009,Nica2009}.
An equivalent form of (\ref{eq:int1}) was used to 
the superconvergence theorem in \cite{Wang2010}. In a series of papers 
\cite{Anshelevich,AnshG2011,AnshetwostateBM,Anshelevich2012},
Anshelevich generalized the above correspondence of $\mu \leftrightarrow\nu$
and $\mathbb{B}_t(\mu)\leftrightarrow\nu\boxplus\gamma_t$
to the context of two-state probability spaces.
Motivated by these generalizations and applications, we study in this article
the analogue of these equations
for multiplicative free convolution.

Throughout this article, we denote by $\mathrm{T}$ the unit circle of $\mathbb{C}$, by $\MT$ the set of probability measures on $\mathrm{T}$,
and by $\Mstar$ the set of probability measures on $\mathbb{C}$ with nonzero
mean.
We also set
\begin{equation}\nonumber
  \mathcal{M}_{\mathrm{T}}^*=\{\mu\in\MT\cap \Mstar:
  \eta_{\mu}(z)\neq 0, \forall z\in\mathbb{D}\backslash \{0\} \}.
\end{equation}
It was shown in \cite{BB2005} that one can define multiplicative free
convolution power $\multt{\mu}$ for $\mu\in\MstarS$ and $t>1$.

In \cite{AHasebe2012},
a family of maps $\mathbb{M}_t$, which is the analogue of the semigroup $\mathbb{B}_t$,
was defined for the probability measures in $\MstarS$.
The definition of $\mathbb{M}_t$ in \cite{AHasebe2012} is more general, and
we only need a simpler form defined as follows. Given $\mu\in\MstarS$ which has positive mean,
then for $t\geq 0$, the map $\mathbb{M}_t$ is defined by
\begin{equation}\nonumber
   \mathbb{M}_t(\mu)=\left(\mu^{\boxtimes (t+1)} \right)^{\bmultk \frac{1}{t+1}},
\end{equation}
where the convolution power $\mu^{\boxtimes (t+1)}$ and the measure $\mathbb{M}_t(\mu)$ are chosen in a way such that they have positive means.

We then state one of our main theorems.
\begin{thm}\label{thm:1.1}
  Given a pair of probability measures $\mu\in \MstarS$ and $\nu\in\MT$
  such that
     \begin{equation}\label{equalM}
       \Sigma_{\lambda}(\eta_{\nu}(z))=\frac{z}{\eta_{\mu}(z)}, z\in \mathbb{D},
     \end{equation}
    we have
     \begin{equation}
       \Sigma_{\lambda}(\eta_{\nu\boxtimes\lambda_t}(z))=
       \frac{z}{\eta_{\mathbb{M}_t(\mu)}(z)}, z\in \mathbb{D},
     \end{equation}
     where $\lambda_t$ is the analogue of the normal distribution on $\mathit{T}$ with $\Sigma_{\lambda_t}(z)=\exp(\frac{t}{2}\frac{1+z}{1-z})$ and $\lambda=\lambda_1$.
\end{thm}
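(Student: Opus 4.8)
\noindent\emph{Proof proposal.} The plan is to show that hypothesis (\ref{equalM}) forces two a priori unrelated subordination functions to coincide, after which the conclusion is immediate. For $t\ge 0$ let $\omega_{t}\colon\mathbb D\to\mathbb D$ denote the subordination function of $\nu\boxtimes\lambda_{t}$ with respect to $\nu$, so $\eta_{\nu\boxtimes\lambda_{t}}=\eta_{\nu}\circ\omega_{t}$, and let $\sigma_{t}\colon\mathbb D\to\mathbb D$ denote the subordination function of $\mu^{\boxtimes(t+1)}=\mu\boxtimes\mu^{\boxtimes t}$ with respect to its first factor, so $\eta_{\mu^{\boxtimes(t+1)}}=\eta_{\mu}\circ\sigma_{t}$. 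First I would recast (\ref{equalM}): in the present normalization one has $\Sigma_{\mu}(\eta_{\mu}(z))=z/\eta_{\mu}(z)$, so (\ref{equalM}) is exactly the functional identity $\Sigma_{\lambda}\circ\eta_{\nu}=\Sigma_{\mu}\circ\eta_{\mu}$ on $\mathbb D$ (with a different normalization of $\Sigma$ one picks up the elementary factor relating the two transforms; this affects nothing below). To this I would add the trivial but decisive identity $\Sigma_{\lambda_{t}}=\Sigma_{\lambda}^{\,t}$, read off from $\Sigma_{\lambda_{t}}(z)=\exp(\tfrac{t}{2}\tfrac{1+z}{1-z})$, which also records the semigroup law $\lambda_{s}\boxtimes\lambda_{t}=\lambda_{s+t}$.

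The heart of the argument is that $\omega_{t}$ and $\sigma_{t}$ satisfy the \emph{same} fixed-point equation. Inserting the companion subordination function into the identity that relates the two subordination functions of a free multiplicative convolution to $z$ times its $\eta$-transform, and using $\eta_{\lambda_{t}}^{\langle-1\rangle}(w)=w\,\Sigma_{\lambda_{t}}(w)$, respectively $\eta_{\mu^{\boxtimes t}}^{\langle-1\rangle}(w)=w\,\Sigma_{\mu}(w)^{t}$, one is led to the closed equations
\[
\omega_{t}(z)=\frac{z}{\Sigma_{\lambda_{t}}\!\big(\eta_{\nu}(\omega_{t}(z))\big)},
\qquad
\sigma_{t}(z)=\frac{z}{\Sigma_{\mu}\!\big(\eta_{\mu}(\sigma_{t}(z))\big)^{t}},
\]
the first being the multiplicative counterpart of the complex Burgers equation for free unitary Brownian motion. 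But by the recast hypothesis together with $\Sigma_{\lambda_{t}}=\Sigma_{\lambda}^{\,t}$ one has $\Sigma_{\lambda_{t}}(\eta_{\nu}(w))=\Sigma_{\lambda}(\eta_{\nu}(w))^{t}=\Sigma_{\mu}(\eta_{\mu}(w))^{t}$ for every $w\in\mathbb D$, so the two equations coincide; since an equation of the shape $y=z/\Psi(y)$ with $\Psi$ analytic and $\Psi(0)\ne 0$ has a unique analytic solution vanishing at the origin, I conclude $\omega_{t}=\sigma_{t}$ on $\mathbb D$.

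It remains to identify the right-hand side. From $\mathbb M_{t}(\mu)=(\mu^{\boxtimes(t+1)})^{\bmultk\frac{1}{t+1}}$, the behaviour of the $\eta$-transform under the multiplicative Boolean power, and $\Sigma_{\mu^{\boxtimes(t+1)}}=\Sigma_{\mu}^{\,t+1}$, a short computation gives $z/\eta_{\mathbb M_{t}(\mu)}(z)=\Sigma_{\mu}\big(\eta_{\mu^{\boxtimes(t+1)}}(z)\big)=\Sigma_{\mu}\big(\eta_{\mu}(\sigma_{t}(z))\big)$; on the other hand $\Sigma_{\lambda}\big(\eta_{\nu\boxtimes\lambda_{t}}(z)\big)=\Sigma_{\lambda}\big(\eta_{\nu}(\omega_{t}(z))\big)=\Sigma_{\mu}\big(\eta_{\mu}(\omega_{t}(z))\big)$ by the recast hypothesis. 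Since $\omega_{t}=\sigma_{t}$, the two sides are equal, which is the claim.

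The manipulations above are light once the structural facts hold globally on $\mathbb D$, and securing that is where the real work lies. One must establish the multiplicative subordination theorem, with its product identity, on all of $\mathbb D$ for the convolutions at hand, and show that $\omega_{t}$ and $\sigma_{t}$ continue analytically (and univalently where needed) to $\mathbb D$; this is precisely why the paper first studies the $\boxtimes$-infinite divisibility of the measures associated with these subordination functions. I expect the genuine obstacle to be the case $m_{1}(\nu)=0$: then $\eta_{\nu}$ is not locally invertible at the origin and the ordinary $\Sigma$-transform of $\nu$, hence of $\nu\boxtimes\lambda_{t}$, is undefined, so the core step must be reworked with the modified $\mathcal S$-transform of Raj Rao and Speicher, with a verification that the Burgers-type fixed-point equation and the identity $\omega_{t}=\sigma_{t}$ survive in that normalization. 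As a cross-check one may instead establish the statement infinitesimally at $t=0$, matching the generator of multiplicative free Brownian motion against $\tfrac{d}{dt}\big|_{t=0}\mathbb M_{t}$, and propagate it using the semigroup laws $\lambda_{s}\boxtimes\lambda_{t}=\lambda_{s+t}$ and $\mathbb M_{s}\circ\mathbb M_{t}=\mathbb M_{s+t}$.
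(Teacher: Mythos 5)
Your outline is the same as the paper's: show that the subordination function of $\eta_{\nu\boxtimes\lambda_t}$ with respect to $\eta_\nu$ coincides with that of $\eta_{\mu^{\boxtimes(t+1)}}$ with respect to $\eta_\mu$, then unwind the Boolean power in the definition of $\mathbb{M}_t$. The generic computation (for $\nu$ with nonzero mean) is correct, and your fixed-point equations are just a repackaging of the identities $\Sigma_{\rho_t}=\Sigma_{\lambda_t}\circ\eta_\nu$ and $\Sigma_{\sigma_t}=\left[z/\eta_\mu(z)\right]^{t}$ that the paper extracts from Lemma \ref{subLemma} and Subsection 3.3.

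The genuine gap is exactly the point you defer. When $m_1(\nu)=0$ the subordination function of $\eta_{\nu\boxtimes\lambda_t}$ with respect to $\eta_\nu$ is \emph{not} unique (Example \ref{example:3.5}: if $\eta_\nu(z)=z^k$ then any rotation of $\omega$ by a $k$-th root of unity is again a subordination function), so your appeal to ``an equation of the shape $y=z/\Psi(y)$ has a unique analytic solution vanishing at the origin'' fails precisely in the case the theorem is designed to cover; there is no single $\omega_t$ to equate with $\sigma_t$ until one is singled out. The paper's resolution is Theorem \ref{BBnew} and Corollary \ref{cor:3.13}: the \emph{principal} subordination function is pinned down by the extra product condition $\omega_1(z)\omega_2(z)=z\eta_{\mu\boxtimes\nu}(z)$, and for that choice one proves, via the modified $\mathcal{S}$-transform, that its associated measure $\rho_t$ still satisfies $\Sigma_{\rho_t}(z)=\Sigma_{\lambda_t}(\eta_\nu(z))$. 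That is the actual content of the theorem in the zero-mean case, and your proposal names it as future work rather than supplying it. Two smaller omissions: you need $m_1(\mu)>0$ (which follows from (\ref{equalM}) by Proposition \ref{prop:3.19}, giving $m_1(\mu)=e^{-1/2}$) in order for $\mathbb{M}_t(\mu)$ to be defined at all, and the powers $\mu^{\boxtimes(t+1)}$ and $(\cdot)^{\bmultk 1/(t+1)}$ are only determined up to rotation, so the branch choices in your final computation must be fixed consistently with that normalization.
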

In order to prove Theorem \ref{thm:1.1}, we consider two semigroups
$\nu\boxtimes\lambda_t$ and
$\mu^{\boxtimes (t+1)}$  for all $t\geq0$.
It is well-known that
$\et{\nu\boxtimes\lambda_t}$ and $\et{\mu^{\boxtimes (t+1)}}$ are subordinated to $\et{\nu}$ and $\et{\mu}$ respectively.
We prove that the subordination functions are $\eta$-transforms of some
$\boxtimes$-infinitely divisible probability measures on $\mathrm{T}$.
It turns out that the equation $\Sigma_{\lambda}(\eta_{\nu}(z))=z/\eta_{\mu}(z)$ means that the subordination function of $\et{\nu\boxtimes\lambda_t}$ with respect to $\et{\nu}$ and the subordination function
of $\et{\mu^{\boxtimes (t+1)}}$ with respect to $\et{\mu}$  are the same. The proof of Theorem \ref{thm:1.1}
will be given in Subsection 3.5.

Given $\mu\in\MT$, we prove that if $\multt{\mu}$ can be defined
and $\et{\multt{\mu}}$ is subordinated to $\et{\mu}$ for
all $t>1$, then $\mu\in\MstarS$; in addition, we prove that
for nontrivial measures $\mu\in\MT$ and $\nu\in\mathcal{ID}(\boxtimes, \mathrm{T})$, the density functions of the measures $\mu\boxtimes\nu_t$ and $\mu^{\boxtimes t}$
 converge to $1/2\pi$ uniformly as $t\rightarrow \infty$.

To deal with the case that $\nu\in\MT\backslash\Mstar$, we use the modified $\mathcal{S}$-transform  \cite{Ariz2012, RS2007}
to study subordination functions. In this case, the subordination function
of $\et{\nu\boxtimes\lambda_t}$ with respect to $\et{\nu}$ is generally not unique.
However, we can prove that there exists a unique subordination function satisfying certain properties (see Theorem \ref{BBnew}). Let $\rho_t$ be the measure associated with this subordination function
of $\et{\nu\boxtimes\lambda_t}$ with respect to $\et{\nu}$, we have 
that $\Sigma_{\rho_t}(z)=\Sigma_{\lambda_t}(\et{\nu}(z))$.

Similar results to Theorem \ref{thm:1.1} for
multiplicative convolution on $\Mrealplus$ are also valid. The proof for this case is much simpler because of the uniqueness of multiplicative convolution powers and the uniqueness of subordination functions.

Finally, we give a new proof for some results concerning the density functions of the free multiplicative 
analogue of the normal distributions studied by Biane in \cite{BianeJFA}, and we obtain
some new results. For example,
for $\lambda_t$ ($t> 0$) the free multiplicative analogue of the normal distributions on $\mathrm{T}$,
we prove that $\lambda_t$ is unimodal.

This article is organized as follows. After this introductory section, we describe some backgrounds in the additive case
in Section 2. In Section 3, we consider multiplicative free and multiplicative Boolean convolution on $\MT$, and
prove our main theorems.
Section 4 is devoted to studying multiplicative free and multiplicative Boolean convolution on $\Mrealplus$. The regularity properties of the free multiplicative analogue of the normal distributions
are discussed in Section 5.
%%%%%%%%%%%%%%%%%%%%%%%%%%%%%%%%%%
\newsection{Background: additive case}
\subsection{Additive free convolution and additive Boolean convolution}
For a measure $\mu\in\Mreal$,
we define the Cauchy transform $\g{\mu} : \mathbb{C}^+ \rightarrow
\mathbb{C}^-$ by
\begin{equation}\nonumber
%\hspace{6pt}
  \g{\mu}(z)=\int_{-\infty}^{+\infty}\frac{1}{z-t}\,d\mu(t),\,\,\,z\in\mathbb{C}^+.
\end{equation}
We set $\f{\mu}(z)=1/\g{\mu}(z), \, z\in\mathbb{C}^+$, so that $\f{\mu}:
\mathbb{C}^+\rightarrow\mathbb{C}^+$ is analytic.

The following result in \cite{BV1993} characterizes those functions which are
reciprocal Cauchy transforms of probability measures.
\begin{prop}\label{fmu}
Let $\mathit{F}:\mathbb{C}^+\rightarrow\mathbb{C}^+$ be an analytic function.
Then the following assertions are equivalent.
\begin{enumerate}
  \item There exists a probability measure $\mu$ on $\mathbb{R}$ such that
  $\mathit{F}(z)=\f{\mu}(z)$ in $\mathbb{C}^+$.
  \item There exists $a\in \mathbb{R}$, and a finite positive measure $\rho $ on $\mathbb{R}$
  such that
   \begin{equation}\nonumber
     \mathit{F}(z)=a+z+\int_{-\infty}^{+\infty}\frac{1+tz}{t-z}\,d\rho(t)
   \end{equation}
  for all $z\in \mathbb{C}^+$.
  \item We have that $\lim_{y\rightarrow +\infty}\f{}(iy)/iy=1$.
\end{enumerate}
\end{prop}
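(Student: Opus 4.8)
The plan is to prove the chain of implications $(1)\Rightarrow(3)\Rightarrow(2)\Rightarrow(1)$, the only external input being the classical Nevanlinna--Pick integral representation of analytic self-maps of $\mathbb{C}^+$ (in the form with a finite representing measure). For $(1)\Rightarrow(3)$: writing $F=\f{\mu}=1/\g{\mu}$, one has $iy\,\g{\mu}(iy)=\int\frac{iy}{iy-t}\,d\mu(t)=\int\frac{1}{1+it/y}\,d\mu(t)$, whose integrand is bounded by $1$ in modulus and converges to $1$ pointwise as $y\to+\infty$; dominated convergence gives $iy\,\g{\mu}(iy)\to1$, hence $\f{\mu}(iy)/iy=1/\big(iy\,\g{\mu}(iy)\big)\to1$.

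For $(3)\Rightarrow(2)$: since $F$ is an analytic self-map of $\mathbb{C}^+$, the Nevanlinna representation provides $a\in\mathbb{R}$, $b\ge0$ and a finite positive measure $\rho$ on $\mathbb{R}$ with $F(z)=a+bz+\int\frac{1+tz}{t-z}\,d\rho(t)$. A short estimate shows $\big|\tfrac{1}{iy}\cdot\tfrac{1+tiy}{t-iy}\big|\le2$ for all $t\in\mathbb{R}$ and all $y\ge1$, and this quantity tends to $0$ pointwise in $t$; since $\rho$ is finite, dominated convergence gives $F(iy)/iy\to b$ as $y\to+\infty$. By hypothesis $(3)$ this forces $b=1$, which is precisely the form asserted in $(2)$.

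For $(2)\Rightarrow(1)$: from $\operatorname{Im}\tfrac{1+tz}{t-z}=\tfrac{(1+t^2)\operatorname{Im}z}{|t-z|^2}>0$ on $\mathbb{C}^+$ one gets that $F$ is an analytic self-map of $\mathbb{C}^+$, so $G:=1/F$ maps $\mathbb{C}^+$ into $\mathbb{C}^-$ with $\operatorname{Im}(-G)>0$ on $\mathbb{C}^+$, and the computation above gives $iy\,G(iy)=1/\big(F(iy)/iy\big)\to1$. It therefore suffices to show that an analytic map $G:\mathbb{C}^+\to\mathbb{C}^-$ with $iy\,G(iy)\to1$ is the Cauchy transform of a probability measure on $\mathbb{R}$. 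Applying the Nevanlinna representation to $-G$, write $-G(z)=\alpha+\beta z+\int\frac{1+tz}{t-z}\,d\sigma(t)$; the single scalar limit $iy\,G(iy)\to1$ unpacks into $\beta=0$ (from $G(iy)/iy\to0$), $\int(1+t^2)\,d\sigma(t)=1$ (from the real part, by monotone convergence), and $\alpha=\int t\,d\sigma(t)$ (from the imaginary part). The identity $t+\frac{1+tz}{t-z}=\frac{1+t^2}{t-z}$ then yields $-G(z)=\int\frac{1+t^2}{t-z}\,d\sigma(t)=-\g{\mu}(z)$ with $d\mu(t):=(1+t^2)\,d\sigma(t)$ a probability measure of total mass $1$, so $F=1/G=\f{\mu}$.

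The one delicate point is this last implication: extracting from the single asymptotic relation $iy\,G(iy)\to1$ the three structural facts about the Nevanlinna data of $-G$. This requires carefully justifying the exchange of limit and integral — dominated convergence against the $L^1(\sigma)$-majorant $\tfrac12(1+t^2)$ for the imaginary-part integrands, and monotone convergence for the real part — while bearing in mind that no moment of $\mu$ is available a priori. Everything else reduces to routine dominated-convergence estimates together with one invocation of the Nevanlinna representation theorem.
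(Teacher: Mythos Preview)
Your proof is correct. Note, however, that the paper does not actually prove this proposition: it is stated as a known characterization and attributed to \cite{BV1993} without argument. So there is no ``paper's own proof'' to compare against; you have supplied a complete self-contained proof where the paper simply cites the literature.

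Your chain $(1)\Rightarrow(3)\Rightarrow(2)\Rightarrow(1)$ via the Nevanlinna representation is the standard route, and the details check out. The only place worth tightening in exposition is the order of the limiting arguments in $(2)\Rightarrow(1)$: you must first run the monotone-convergence step on the real part of $iy\,G(iy)$ to obtain $\int(1+t^2)\,d\sigma=1$, and \emph{only then} is $\tfrac12(1+t^2)$ available as an $L^1(\sigma)$ majorant for the dominated-convergence step on the imaginary part that yields $\alpha=\int t\,d\sigma$. You clearly understand this (your final paragraph flags exactly this subtlety), but in the body of the argument the two steps are listed without making the logical dependence explicit; a reader could miss that the imaginary-part step is not self-contained.
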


It was proved in \cite{BV1993} that $\f{\mu}$ is invertible in some domain. More precisely,
for two positive numbers $M$ and $N$, we set
\begin{equation}\nonumber
    \Gamma_{M,N}=\{z\in\mathbb{C}^+ : |x|<M y,\, y>N \}.
\end{equation}
Then for any $M>0$, there exists $N>0$ such that the left inverse $\f{\mu}^{-1}$ of $\f{\mu}$
is defined in $\Gamma_{M,N}$, and then we can define the Voiculescu transform of $\mu$
by
\begin{equation}\nonumber
  \p{\mu}(z)=\f{\mu}^{-1}(z)-z,
\end{equation}
for $z\in \Gamma_{M,N}$. For any two measures $\mu, \nu \in \Mreal$, we have that
\begin{equation}\label{sumremark}
  \p{\mu\boxplus\nu}(z)=\p{\mu}(z)+\p{\nu}(z)
\end{equation}
holds in any truncated cone $\Gamma_{M,N}$
where $\p{\mu},\p{\nu}$ and $\p{\mu\boxplus\nu}$ are defined. This remarkable result
was discovered by Voiculescu \cite{DVV1986} for compactly supported measures and then
extended to general cases in \cite{BV1993,Maassen}.

Given $\nu\in\Mreal$, we say that $\nu$ is $\boxplus$-infinitely divisible if for every positive
integer $n$, there exists a probability measure $\nu_{1/n}\in \Mreal$ such that
\begin{equation}\nonumber
 \nu=\underbrace{\nu_{1/n}\boxplus\nu_{1/n}\boxplus\cdots\boxplus\nu_{1/n}}_{n
 \,\,\text{times}}.
\end{equation}
It is known \cite{BV1993, Maassen, DVV1986} that a probability measure $\nu$ on $\mathbb{R}$ is $\boxplus$-infinitely divisible if
 and only if its Voiculescu transform $\p{\nu}$ has an analytic extension defined on
 $\mathbb{C}^+$ with values in $\mathbb{C}^-\cup\mathbb{R}$.
 We denote by $\mathcal{ID}(\boxplus, \mathbb{R})$ the set of all
$\boxplus$-infinitely divisible probability measures on the real line.
If $\nu\in\mathcal{ID}(\boxplus, \mathbb{R})$, then for every $t>0$, there exists a probability
measure $\nu_t$ such that $\varphi_{\nu_t}(z)=t\varphi_{\nu}(z)$ for $z$ in the common domain of $\varphi_{\nu}$
and $\varphi_{\nu_t}$.

We would like to mention the following fact.
\begin{prop}\label{leftInvreal}
 If $\nu$ is $\boxplus$-infinitely divisible, let $\mathit{H}(z)=z+\varphi_{\nu}(z)$, then
 \begin{equation}\label{leftInvFor}
   \mathit{H}(\f{\nu}(z))=z
 \end{equation}
 holds for $z \in \mathbb{C}^+$. The set $U:=\{z\in\mathbb{C}^+: \Im{\mathit{H}(z)}>0 \}$ is
 a simply connected domain with boundary which is a simple curve
 and $\mathit{H}$ maps $\mathbb{C}^+$ conformally onto $U$. Moreover, the boundary $\partial U$ is
 the graph of a function and the function $\mathit{H}$ is continuous up to the real axis.
\end{prop}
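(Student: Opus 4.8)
The plan is to combine the free L\'evy--Khintchine representation of $\p{\nu}$ with two applications of the identity theorem, and to read off the geometry of $U$ from an explicit formula for $\Im\mathit{H}$. Since $\nu\in\mathcal{ID}(\boxplus,\mathbb{R})$, its Voiculescu transform admits a representation
\begin{equation}\nonumber
  \p{\nu}(z)=\gamma+\int_{\mathbb{R}}\frac{1+tz}{z-t}\,d\sigma(t),\qquad z\in\mathbb{C}^+,
\end{equation}
for some $\gamma\in\mathbb{R}$ and a finite positive measure $\sigma$; in particular $\mathit{H}(z)=z+\p{\nu}(z)$ is analytic on all of $\mathbb{C}^+$. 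Taking imaginary parts, for $z=x+iy$ one gets
\begin{equation}\nonumber
  \Im\mathit{H}(z)=y\,\bigl(1-g(x,y)\bigr),\qquad g(x,y):=\int_{\mathbb{R}}\frac{1+t^2}{(x-t)^2+y^2}\,d\sigma(t).
\end{equation}
For each fixed $x$ the function $g(x,\cdot)$ is strictly decreasing on $(0,\infty)$ and tends to $0$ as $y\to+\infty$, so there is a unique $v(x)\in[0,+\infty)$ with $\{y>0:\Im\mathit{H}(x+iy)>0\}=(v(x),+\infty)$. Hence $U=\{x+iy:\ y>v(x)\}$ is the open region lying above the graph of the everywhere finite function $v$; using the joint continuity of $g$ on $\mathbb{C}^+$ together with its strict monotonicity in $y$ one checks that $v$ is both upper and lower semicontinuous, hence continuous, from which it follows that $U$ is a connected, simply connected domain and that $\partial U$ is the graph of $v$, a simple curve.

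Next I would establish the identity $\mathit{H}(\f{\nu}(z))=z$ and the conformality. By the definition of $\p{\nu}$ through the left inverse of $\f{\nu}$, the identity $\mathit{H}(z)=\f{\nu}^{-1}(z)$, i.e.\ $\mathit{H}(\f{\nu}(z))=z$, already holds on a nonempty open subset of $\mathbb{C}^+$ (the image of a truncated cone $\Gamma_{M,N}$ under the left inverse of $\f{\nu}$). Since $\f{\nu}$ maps $\mathbb{C}^+$ into $\mathbb{C}^+$ and $\mathit{H}$ is now analytic on all of $\mathbb{C}^+$, the composition $\mathit{H}\circ\f{\nu}$ is analytic on the connected set $\mathbb{C}^+$ and coincides with the identity on that open subset, so by the identity theorem $\mathit{H}\circ\f{\nu}=\mathrm{id}_{\mathbb{C}^+}$. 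In particular $\f{\nu}$ is injective, hence a conformal map of $\mathbb{C}^+$ onto the open set $\Omega:=\f{\nu}(\mathbb{C}^+)$, with $\f{\nu}^{-1}=\mathit{H}|_{\Omega}$; and since $\mathit{H}(w)=\f{\nu}^{-1}(w)\in\mathbb{C}^+$ for every $w\in\Omega$, we have $\Omega\subseteq U$. Running the identity theorem in the other direction, $\f{\nu}\circ\mathit{H}$ is analytic on the connected set $U$ and equals the identity on the nonempty open subset $\Omega$, hence $\f{\nu}(\mathit{H}(w))=w$ for all $w\in U$. This forces $\mathit{H}|_U$ to be injective, while $\mathit{H}(U)\subseteq\mathbb{C}^+$ by the definition of $U$ and $\mathit{H}(U)\supseteq\mathit{H}(\Omega)=(\mathit{H}\circ\f{\nu})(\mathbb{C}^+)=\mathbb{C}^+$. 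Therefore $\mathit{H}|_U:U\to\mathbb{C}^+$ is a conformal bijection whose inverse is $\f{\nu}|_{\mathbb{C}^+}$; equivalently $\f{\nu}$ maps $\mathbb{C}^+$ conformally onto $U$, so in particular $\Omega=U$.

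It remains to address the continuity of $\mathit{H}$ up to the real axis. On $\partial U=\{x+iv(x):x\in\mathbb{R}\}$ the points with $v(x)>0$ are interior points of $\mathbb{C}^+$, where $\mathit{H}$ is analytic; the remaining boundary points, those with $v(x)=0$, satisfy $\int_{\mathbb{R}}\frac{1+t^2}{(x-t)^2}\,d\sigma(t)\le 1<\infty$, and a dominated-convergence estimate then shows that $\mathit{H}$ extends continuously, with a finite real value, to such $x$. More generally the integral representation shows that $\mathit{H}$ extends to a continuous map $\mathbb{C}^+\cup\mathbb{R}\to\mathbb{C}\cup\{\infty\}$, the value $\infty$ occurring only at points where $\sigma$ is too concentrated (for instance at atoms of $\sigma$). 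I expect this last step --- the boundary behaviour of $\mathit{H}$ and the continuity of $v$ as $z$ approaches $\mathbb{R}$, where $\sigma$ may carry atoms --- to be the main technical obstacle; away from the real axis the whole argument is a clean combination of the free L\'evy--Khintchine representation with the identity theorem.
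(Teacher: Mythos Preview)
Your argument is correct. The paper does not give a self-contained proof of this proposition at all; it simply cites \cite{BV1993,DVV1986} for the identity $\mathit{H}(\f{\nu}(z))=z$, Proposition~4.7 of \cite{BB2005} for the simply-connected and graph-of-a-function structure of $U$, and Lemma~3.3 of \cite{CG2011} for the boundary continuity. What you have written is essentially an explicit reconstruction of the contents of those references: the computation $\Im\mathit{H}(x+iy)=y\bigl(1-g(x,y)\bigr)$ with $g(x,\cdot)$ strictly decreasing is exactly the mechanism behind Proposition~4.7 of \cite{BB2005}, and your two identity-theorem applications recover the standard proof that $\f{\nu}$ and $\mathit{H}|_U$ are mutually inverse conformal maps. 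So the route is the same, only spelled out rather than cited.

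One small remark: the statement in the paper reads ``$\mathit{H}$ maps $\mathbb{C}^+$ conformally onto $U$'', which is evidently a slip --- as your argument makes clear, it is $\f{\nu}$ that maps $\mathbb{C}^+$ conformally onto $U$, equivalently $\mathit{H}|_U$ maps $U$ conformally onto $\mathbb{C}^+$. You have proved the correct version. Your treatment of the boundary behaviour is also adequate; the continuity of $\mathit{H}$ into $\mathbb{C}\cup\{\infty\}$ at real points (with $\infty$ allowed at atoms of $\sigma$) is precisely what the cited Lemma~3.3 of \cite{CG2011} provides, and your dominated-convergence sketch captures the relevant estimate.
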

\begin{proof}
 The first part of the assertion appears in \cite{BV1993,DVV1986},
 and the second part of the assertion follows from the fact that $\mathit{H}$ satisfies
 the conditions of Proposition 4.7 in \cite{BB2005}. The last part of the assertion is due to Lemma 3.3 in \cite{CG2011}
 and Proposition 4.7 in \cite{BB2005}.
\end{proof}

Additive Boolean convolution was introduced in \cite{SpeicherW1997}. For $\mu \in \Mreal$,
we set
$\e{\mu}(z)=z-\f{\mu}(z)$.
For $\mu,\nu\in\Mreal$, the additive Boolean convolution $\mu\uplus\nu$ is characterized by the identity
\begin{equation}\nonumber
  \e{\rho}(z)=\e{\mu}(z)+\e{\nu}(z), \, \text{for}\, z\in\mathbb{C}^+.
\end{equation}
We can also consider the infinite divisibility with respect to additive Boolean convolution.
It turns out that every $\mu\in\Mreal$ is $\uplus$-infinitely divisible (see \cite{SpeicherW1997}).
We denote by $\mathcal{ID}(\uplus, \mathbb{R})$ the set of all
$\uplus$-infinitely divisible probability measures on the real line.
\subsection{Infinite divisibility and subordination functions}\label{andand}
Given $\mu, \nu \in\Mreal$, it is known that $\f{\mu\boxplus\nu}$ is subordinated to $\f{\mu}$ and $\f{\nu}$,
and by Proposition \ref{fmu}, we can also regard these subordination functions as the reciprocal Cauchy transforms
of probability measures on $\mathbb{R}$.
\begin{defn}
 For $\mu, \nu\in\Mreal$, the subordination distribution \cite{Anshelevich2012, LR2007, Nica2009}
 $\mu\small{\subord}\nu$ (resp. $\nu\small{\subord}\mu$) is defined to be the unique probability measure in $\Mreal$
 such that $\f{\mu\boxplus\nu}(z)=\f{\nu}(\f{\mu\small{\subord}\nu}(z))$ (resp. $\f{\mu\boxplus\nu}(z)=\f{\mu}(\f{\nu\small{\subord}\mu}(z))$).
\end{defn}

Many subordination distributions in semigroups related to free convolution are
infinitely divisible (see \cite{Anshelevich2012, Nica2009}).
\begin{prop}\label{subordINF}
 Let $\mu,\nu\in\Mreal$.
 \begin{enumerate}[$(1)$]
   \item We have that $\p{\mu\small{\subord}\nu}(z)=(\p{\mu}\circ\f{\nu})(z)$.
   \item If $\mu\in\mathcal{ID}(\boxplus,\mathbb{R})$, then $\mu\small{\subord}\nu\in\mathcal{ID}(\boxplus, \mathbb{R})$. In particular,
       $\gamma_t\small{\subord}\nu\in \mathcal{ID}(\boxplus, \mathbb{R})$ and $\p{\gamma_t\small{\subord}\nu}(z)=t\g{\nu}(z)$,
       where $\gamma_t$ is the semi-circular distribution with variance $t$.
    \item If $\nu=\mu\boxplus\nu'$ for $\nu'\in\Mreal$, then $\mu\small{\subord}\nu\in\mathcal{ID}(\boxplus, \mathbb{R})$. In particular, $\mu\small{\subord}\mu\in \mathcal{ID}(\boxplus, \mathbb{R})$, and $\p{\mu\small{\subord}\mu}(z)=z-\f{\mu}(z)$.
 \end{enumerate}
\end{prop}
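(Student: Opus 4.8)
The plan is to fix a truncated cone $\Gamma_{M,N}$ small enough that all of the Voiculescu transforms, reciprocal Cauchy transforms and their left inverses occurring below are simultaneously defined there, to carry out each computation on $\Gamma_{M,N}$, and to extend the resulting identities afterwards by analytic continuation. For $(1)$, write $\omega:=\f{\mu\small{\subord}\nu}$, so that $\f{\mu\boxplus\nu}=\f{\nu}\circ\omega$ by definition; passing to left inverses gives $\omega^{-1}=\f{\mu\boxplus\nu}^{-1}\circ\f{\nu}$ on a suitable truncated cone. Since $\omega^{-1}(z)=z+\p{\mu\small{\subord}\nu}(z)$ and $\f{\mu\boxplus\nu}^{-1}(u)=u+\p{\mu}(u)+\p{\nu}(u)$ by \eqref{sumremark}, evaluating the latter at $u=\f{\nu}(z)$ and using $\p{\nu}(\f{\nu}(z))=\f{\nu}^{-1}(\f{\nu}(z))-\f{\nu}(z)=z-\f{\nu}(z)$ collapses the right-hand side to $z+\p{\mu}(\f{\nu}(z))$; comparing gives $\p{\mu\small{\subord}\nu}(z)=\p{\mu}(\f{\nu}(z))$, which is $(1)$.

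For $(2)$, if $\mu\in\mathcal{ID}(\boxplus,\mathbb{R})$ then $\p{\mu}$ extends analytically to all of $\mathbb{C}^+$ with values in $\mathbb{C}^-\cup\mathbb{R}$. Since $\f{\nu}$ maps $\mathbb{C}^+$ into $\mathbb{C}^+$, the composition $\p{\mu}\circ\f{\nu}$ furnishes an analytic extension of $\p{\mu\small{\subord}\nu}$ (by $(1)$) to $\mathbb{C}^+$ with values in $\mathbb{C}^-\cup\mathbb{R}$, so the characterization of $\boxplus$-infinite divisibility recalled above gives $\mu\small{\subord}\nu\in\mathcal{ID}(\boxplus,\mathbb{R})$. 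Taking $\mu=\gamma_t$, for which $\p{\gamma_t}(z)=t/z$, we obtain $\p{\gamma_t\small{\subord}\nu}(z)=t/\f{\nu}(z)=t\g{\nu}(z)$.

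For $(3)$, suppose $\nu=\mu\boxplus\nu'$. The key step is to peel off one copy of $\mu$: the subordination relation for $\nu'\boxplus\mu$ reads $\f{\nu}=\f{\mu}\circ\omega_1$ with $\omega_1:=\f{\nu'\small{\subord}\mu}$, which is itself the reciprocal Cauchy transform of a probability measure and hence analytic on all of $\mathbb{C}^+$. Combining this with $(1)$ and the identity $\p{\mu}(\f{\mu}(w))=w-\f{\mu}(w)$ gives
\[
  \p{\mu\small{\subord}\nu}(z)=\p{\mu}(\f{\nu}(z))=\p{\mu}\big(\f{\mu}(\omega_1(z))\big)=\omega_1(z)-\f{\mu}(\omega_1(z))=\omega_1(z)-\f{\nu}(z),
\]
and the right-hand side is analytic on all of $\mathbb{C}^+$. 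Since the representation in Proposition \ref{fmu} forces $\Im \f{\mu}(w)\geq\Im w$ for every $w\in\mathbb{C}^+$, we get $\Im\big(\omega_1(z)-\f{\mu}(\omega_1(z))\big)\leq 0$, so this extension takes values in $\mathbb{C}^-\cup\mathbb{R}$ and $\mu\small{\subord}\nu\in\mathcal{ID}(\boxplus,\mathbb{R})$. Choosing $\nu'=\delta_0$ forces $\omega_1=\mathrm{id}$ and recovers $\mu\small{\subord}\mu\in\mathcal{ID}(\boxplus,\mathbb{R})$ together with $\p{\mu\small{\subord}\mu}(z)=z-\f{\mu}(z)$.

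The arithmetic is short throughout; the one step that needs an idea is the peeling identity $\f{\nu}=\f{\mu}\circ\f{\nu'\small{\subord}\mu}$ in $(3)$, which converts the a priori only locally defined $\p{\mu\small{\subord}\nu}$ into a difference of two globally defined reciprocal Cauchy transforms whose imaginary parts are comparable; everything else reduces to keeping track of the truncated cones on which the various left inverses are valid.
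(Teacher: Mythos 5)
Your argument is correct, and it is in fact more informative than what the paper prints: the paper's ``proof'' of this proposition simply cites Lemmas~1 and~2 of Anshelevich's paper (and Corollary~2.3 of the arithmetic paper), recording only the two identities $\p{\gamma_t}(z)=t/z$ and $(\p{\mu}\circ\f{\mu})(z)=z-\f{\mu}(z)$ that are needed to specialize them. Your part $(1)$ is the standard computation behind the cited Lemma~1 (invert the subordination relation, use additivity of $\p{}$ and the cancellation $\p{\nu}(\f{\nu}(z))=z-\f{\nu}(z)$), and your part $(2)$ is the obvious consequence of the characterization of $\mathcal{ID}(\boxplus,\mathbb{R})$ by global analytic extension of $\p{}$ into $\mathbb{C}^-\cup\mathbb{R}$. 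The genuinely useful content you supply is in part $(3)$: the peeling identity $\f{\nu}=\f{\mu}\circ\f{\nu'\subord\mu}$ turns $\p{\mu\subord\nu}$ into $\omega_1-\f{\nu}$ with $\omega_1=\f{\nu'\subord\mu}$, and the Nevanlinna representation in Proposition~\ref{fmu} gives $\Im\f{\mu}(w)\geq\Im w$, hence the required sign of the imaginary part; this is a clean, self-contained substitute for the cited Lemma~2, and it specializes correctly to $\p{\mu\subord\mu}(z)=z-\f{\mu}(z)$ when $\nu'=\delta_0$. The domain bookkeeping you flag (that $\f{\nu}$ and $\omega_1$ map truncated cones into truncated cones, so all compositions of left inverses are legitimate before analytic continuation) is exactly the point that needs to be checked, and it holds because these functions are asymptotic to the identity along $iy$. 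No gaps.
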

\begin{proof}
Part (1) is Lemma 1 in \cite{Anshelevich2012}. Note that $\p{\gamma_t}(z)=t/z$ and
$(\p{\mu}\circ\f{\mu})(z)=z-\f{\mu}(z)$, Part (2) and (3) follow from Part 1 and Lemma 2 in \cite{Anshelevich2012}, see also Corollary 2.3 in \cite{arithmetic}.
\end{proof}

The following result was inspired by a question in \cite{Anshelevich2012}.
I am grateful to Michael Anshelevich for sending me a updated version of the paper \cite{Anshelevich2012}.
 \begin{lemma}\label{lemma:2.6}
  Given $\tau, \rho\in\Mreal$, if $\tau\small{\subord} \rho \in \mathcal{ID}(\boxplus, \mathbb{R})$, then $\rho\boxplus\tau^{\boxplus t}$ is
  defined for all $t\geq 0$ in the sense that $\p{\rho}+t\p{\tau}$ is the Voiculescu transform of a positive
measure. Moreover, we have that $\f{\rho\boxplus(\tau^{\boxplus t})}=\f{\rho}\left(\f{(\tau\small{\subord}\rho)^{\boxplus t}}(z) \right)$.
 \end{lemma}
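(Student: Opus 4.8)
The plan is to exploit Proposition~\ref{subordINF}(1) to translate the infinite-divisibility hypothesis on $\tau\small{\subord}\rho$ directly into the Voiculescu-transform statement, and then to verify the subordination identity by composing inverses of reciprocal Cauchy transforms. First I would recall that $\p{\tau\small{\subord}\rho}(z)=(\p{\tau}\circ\f{\rho})(z)$ on a truncated cone. By the characterization of $\mathcal{ID}(\boxplus,\mathbb{R})$ (stated in Section~2), the hypothesis $\tau\small{\subord}\rho\in\mathcal{ID}(\boxplus,\mathbb{R})$ says that $\p{\tau}\circ\f{\rho}$ extends analytically to all of $\mathbb{C}^+$ with values in $\mathbb{C}^-\cup\mathbb{R}$. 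Since $\f{\rho}:\mathbb{C}^+\to\mathbb{C}^+$ has left inverse $\f{\rho}^{-1}$ on some $\Gamma_{M,N}$, composing with $\f{\rho}^{-1}$ I would first observe $\p{\tau}$ itself extends to $\mathbb{C}^-\cup\mathbb{R}$-valued analytic function on the range of $\f{\rho}$; but the cleaner route is: for each $t\geq 0$, $t\,\p{\tau\small{\subord}\rho}=\p{(\tau\small{\subord}\rho)^{\boxplus t}}$ makes sense because $\tau\small{\subord}\rho$ is $\boxplus$-infinitely divisible, and then I would show $\p{\rho}+t\,\p{\tau}$ is the Voiculescu transform of a probability measure by writing it as $\p{\rho}+t\,(\p{\tau}\circ\f{\rho})\circ\f{\rho}^{-1}=\p{\rho}+\p{(\tau\small{\subord}\rho)^{\boxplus t}}\circ\f{\rho}^{-1}$, i.e. $\p{\rho\boxplus(\tau\small{\subord}\rho)^{\boxplus t}}\circ\f{\rho}^{-1}$, and checking this has the right boundary behavior.

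For the displayed subordination formula, I would use the subordination machinery of Section~2.2 together with the free-additive-convolution rule \refto{sumremark}. Set $\sigma:=\rho\boxplus\tau^{\boxplus t}$, assuming it is defined. By definition of the subordination distribution, $\f{\sigma}(z)=\f{\rho}\big(\f{(\tau^{\boxplus t})\small{\subord}\rho}(z)\big)$, so it suffices to identify the inner subordination function. Using Proposition~\ref{subordINF}(1) applied to the pair $\tau^{\boxplus t},\rho$, we get $\p{(\tau^{\boxplus t})\small{\subord}\rho}=\p{\tau^{\boxplus t}}\circ\f{\rho}=t\,(\p{\tau}\circ\f{\rho})=t\,\p{\tau\small{\subord}\rho}=\p{(\tau\small{\subord}\rho)^{\boxplus t}}$, where the last equality uses infinite divisibility of $\tau\small{\subord}\rho$. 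Since a probability measure is determined by its Voiculescu transform on a truncated cone, $(\tau^{\boxplus t})\small{\subord}\rho=(\tau\small{\subord}\rho)^{\boxplus t}$, and substituting into the subordination formula gives $\f{\rho\boxplus(\tau^{\boxplus t})}(z)=\f{\rho}\big(\f{(\tau\small{\subord}\rho)^{\boxplus t}}(z)\big)$, as claimed.

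The main obstacle I anticipate is the first half: carefully justifying that $\p{\rho}+t\,\p{\tau}$ is genuinely the Voiculescu transform of a \emph{probability} measure (not merely a formal analytic germ), since $\p{\tau}$ by itself need not extend nicely to $\mathbb{C}^+$ even though $\p{\tau}\circ\f{\rho}$ does. The resolution is to avoid ever extending $\p{\tau}$ alone: work throughout with the composite $\p{\tau}\circ\f{\rho}=\p{\tau\small{\subord}\rho}$, build the candidate measure $\rho\boxplus(\tau\small{\subord}\rho)^{\boxplus t}$ first (which exists because $\tau\small{\subord}\rho\in\mathcal{ID}(\boxplus,\mathbb{R})$), and then show its reciprocal Cauchy transform, precomposed appropriately, exhibits $\p{\rho}+t\,\p{\tau}$ as a legitimate Voiculescu transform on a truncated cone where all quantities are defined. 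One also needs that $\f{\rho}$ maps the relevant truncated cone into a region where $\f{\rho}^{-1}$ and the extended $\p{\tau\small{\subord}\rho}$ interact consistently; this is a routine but slightly delicate domain-tracking argument of the type already carried out in \cite{BV1993,BB2005}.
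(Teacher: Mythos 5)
Your plan for the second half (the identity $\f{\rho\boxplus(\tau^{\boxplus t})}=\f{\rho}\bigl(\f{(\tau\subord\rho)^{\boxplus t}}(z)\bigr)$) is essentially the right computation, but as written it is circular for exactly the values of $t$ that matter: you ``set $\sigma:=\rho\boxplus\tau^{\boxplus t}$, assuming it is defined'' and apply Proposition \ref{subordINF}(1) to the pair $\tau^{\boxplus t},\rho$, which presupposes that $\tau^{\boxplus t}$ is itself a probability measure. For $0\le t<1$ and $\tau\notin\mathcal{ID}(\boxplus,\mathbb{R})$ (the remark following the lemma stresses that this case does occur), neither $\tau^{\boxplus t}$ nor $\rho\boxplus\tau^{\boxplus t}$ is available in advance; the entire content of the lemma is to manufacture the measure whose Voiculescu transform is $\p{\rho}+t\p{\tau}$.

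The first half, where you try to manufacture it, contains a genuine error. You propose the candidate $\rho\boxplus(\tau\subord\rho)^{\boxplus t}$ and write $\p{\rho}+\p{(\tau\subord\rho)^{\boxplus t}}\circ\f{\rho}^{-1}=\p{\rho\boxplus(\tau\subord\rho)^{\boxplus t}}\circ\f{\rho}^{-1}$; this is false, since the right-hand side equals $\p{\rho}\circ\f{\rho}^{-1}+\p{(\tau\subord\rho)^{\boxplus t}}\circ\f{\rho}^{-1}$ and $\p{\rho}\circ\f{\rho}^{-1}\neq\p{\rho}$. The correct candidate is not a free convolution but a composition: set $\sigma_t=(\tau\subord\rho)^{\boxplus t}$ (which exists for all $t\ge 0$ precisely because $\tau\subord\rho$ is $\boxplus$-infinitely divisible) and define $\mu_t$ by $\f{\mu_t}=\f{\rho}\circ\f{\sigma_t}$. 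This is automatically the reciprocal Cauchy transform of a probability measure by Proposition \ref{fmu}(3) (a composition of $F$-transforms is an $F$-transform), which disposes of the ``main obstacle'' you flag without any boundary-behaviour check. Then the short computation $\p{\mu_t}(\f{\rho}(z))=\f{\sigma_t}^{-1}(z)-\f{\rho}(z)=t\,\p{\tau}(\f{\rho}(z))+\p{\rho}(\f{\rho}(z))$, followed by analytic continuation, gives $\p{\mu_t}=\p{\rho}+t\p{\tau}$ and the subordination identity simultaneously. This is the paper's argument; your identity $\p{(\tau^{\boxplus t})\subord\rho}=\p{(\tau\subord\rho)^{\boxplus t}}$ then comes out as a corollary rather than serving as an ingredient.
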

 \begin{proof}
    Let $\sigma=\tau\small{\subord} \rho$, and $\sigma_t=\sigma^{\boxplus t}$. By Proposition \ref{fmu}, there exists a unique probability measure $\mu_t\in\Mreal$,
such that
    \begin{equation}\nonumber
      \f{\mu_t}=\f{\rho}\left(\f{\sigma_t}(z) \right).
    \end{equation}
   We claim that $\p{\mu_t}(z)=\p{\rho}(z)+t\p{\tau}(z)$. Indeed, by Proposition \ref{subordINF}, we have that
     \begin{equation}\nonumber
       \f{\sigma_t}^{-1}(z)-z=t\cdot \p{\sigma}(z)=t\cdot \p{\tau}\left( \f{\rho}(z)\right),
     \end{equation}
   and we thus obtain that
    \begin{align*}
      \p{\mu_t}\left(\f{\rho}(z) \right)&=\f{\mu_t}^{-1}\left(\f{\rho}(z) \right) - \f{\rho}(z)\\
                   &=\f{\sigma_t}^{-1}(z)-\f{\rho}(z)\\
                   &=\f{\sigma_t}^{-1}(z)-z+z-\f{\rho}(z)\\
                   &=t\cdot \p{\tau}\left(\f{\rho}(z) \right)+\f{\rho}^{-1}\left(\f{\rho}(z) \right)
                       -\f{\rho}(z).
    \end{align*}
    By analytic continuation, we conclude that
      \begin{equation}\nonumber
        \begin{split}
                 \p{\mu_t}(z)&=t\cdot \p{\tau}(z)+\f{\rho}^{-1}(z)-z\\
            &=\p{\rho}(z)+t\cdot \p{\tau}(z),
        \end{split}
      \end{equation}
which completes the proof.
      \end{proof}

\begin{rmk}
\emph{
There are examples $\rho, \tau\in\Mreal$ such that $\tau\small{\subord}\rho\in\mathcal{ID}(\boxplus, \mathbb{R})$
but $\tau \notin \mathcal{ID}(\boxplus, \mathbb{R})$ and $\tau$ is not a summand of $\rho$, see \cite{Anshelevich2012}.
}
\end{rmk}

Combining Proposition \ref{subordINF} and Lemma \ref{lemma:2.6}, we can reconstruct 
Nica-Speicher free convolution semigroups \cite{BB2004,NicaS1996} as follows.
\begin{thm}\label{thm:2.7}
 Given $\mu\in\Mreal$, then $\mu^{\boxplus t}\in\Mreal$ is
 defined such that $\p{\mu^{\boxplus t}}(z)=t\p{\mu}(z)$ for all $t>1$. Moreover,
 there exists an analytic map $\omega_t :\mathbb{C}^+\rightarrow \mathbb{C}^+$
such that $\f{\mu^{\boxplus t}}(z)=\f{\mu}(\omega_t(z))$, for $z\in\mathbb{C}^+$, $\omega_t=\f{(\mu\small{\subord}\mu)^{\boxplus (t-1)}}(z)$ and $\p{(\mu\small{\subord}\mu)^{\boxplus (t-1)}}=(t-1)(z-\f{\mu}(z))$ for all $t>1$.
\end{thm}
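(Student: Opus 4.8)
The plan is to read Theorem~\ref{thm:2.7} off from Proposition~\ref{subordINF}(3) and Lemma~\ref{lemma:2.6} by specializing the latter to $\tau=\rho=\mu$; the three assertions of the theorem are then exactly what those two results produce, so no new ingredient is needed and the only work is to assemble the pieces and keep track of domains.

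First I would put $\sigma:=\mu\small{\subord}\mu$. Proposition~\ref{subordINF}(3), applied with $\nu'=\delta_0$ (so that $\mu=\mu\boxplus\nu'$), gives $\sigma\in\mathcal{ID}(\boxplus,\mathbb{R})$ together with $\p{\sigma}(z)=z-\f{\mu}(z)$. In particular the hypothesis of Lemma~\ref{lemma:2.6} holds with $\tau=\rho=\mu$, and that lemma yields, for every $s\geq 0$, that $\p{\mu}+s\,\p{\mu}=(1+s)\,\p{\mu}$ is the Voiculescu transform of a probability measure on $\mathbb{R}$. Writing $t=1+s$ and naming this measure $\mu^{\boxplus t}$, we obtain a measure with $\p{\mu^{\boxplus t}}(z)=t\,\p{\mu}(z)$ for every $t>1$ (the value $t=1$ being trivial).

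Next, Lemma~\ref{lemma:2.6} also supplies the subordination identity $\f{\mu^{\boxplus t}}(z)=\f{\mu}\bigl(\f{\sigma^{\boxplus(t-1)}}(z)\bigr)$, so that $\omega_t:=\f{\sigma^{\boxplus(t-1)}}$ is the required subordination function. Since $\sigma\in\mathcal{ID}(\boxplus,\mathbb{R})$ and $t-1>0$, the measure $\sigma^{\boxplus(t-1)}$ is a genuine probability measure, so $\omega_t$ is analytic from $\mathbb{C}^+$ into $\mathbb{C}^+$; and $\p{\sigma^{\boxplus(t-1)}}(z)=(t-1)\,\p{\sigma}(z)=(t-1)\bigl(z-\f{\mu}(z)\bigr)$, which is the last assertion.

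The one point that still deserves a word is that $\mu^{\boxplus t}$, defined here as the measure whose Voiculescu transform is $t\,\p{\mu}$, really coincides with the iterated free convolution when $t$ is a positive integer and satisfies $\mu^{\boxplus t_1}\boxplus\mu^{\boxplus t_2}=\mu^{\boxplus(t_1+t_2)}$ whenever both sides make sense; both follow at once from the additivity \eqref{sumremark} of Voiculescu transforms and from the fact, contained in Proposition~\ref{fmu}, that a probability measure on $\mathbb{R}$ is determined by its Voiculescu transform on any truncated cone $\Gamma_{M,N}$. I do not anticipate a real obstacle; the only care needed is the usual domain bookkeeping, since $\p{\mu}$ and $\p{\mu^{\boxplus t}}$ live a priori only on truncated cones, so the identities above should first be established on the appropriate common domain and then propagated by analytic continuation, just as in the proof of Lemma~\ref{lemma:2.6}. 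All of the substance is already packaged in Proposition~\ref{subordINF} and Lemma~\ref{lemma:2.6}.
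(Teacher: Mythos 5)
Your proposal is correct and follows exactly the route the paper intends: the paper states Theorem~\ref{thm:2.7} as an immediate consequence of combining Proposition~\ref{subordINF} (part (3), giving $\mu\small{\subord}\mu\in\mathcal{ID}(\boxplus,\mathbb{R})$ with $\p{\mu\small{\subord}\mu}(z)=z-\f{\mu}(z)$) with Lemma~\ref{lemma:2.6} applied to $\tau=\rho=\mu$, which is precisely your assembly. The closing remarks on consistency with integer convolution powers and on domain bookkeeping are sound but not needed beyond what those two results already provide.
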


Let $\mathit{H}_t(z)=z+(t-1)(z-\f{\mu}(z))$, then by Proposition \ref{leftInvreal} and Theorem \ref{thm:2.7}, we know that $\mathit{H}_t$ is the left inverse
of $\omega_t$ such that
$\mathit{H}_t(\omega_t(z))=z$ for $z\in\mathbb{C}^+$. Therefore, for $t>1$, $\omega_t(z)$ can be written as
\begin{equation}\label{omegaT}
  \omega_t(z)=z+\left(1-\frac{1}{t}\right)(\f{\mu^{\boxplus t}}(z)-z),\,\,\,z\in\mathbb{C}^+.
\end{equation}
We deduce from (\ref{omegaT}) and the definition of $\omega_t$ in Theorem \ref{thm:2.7} that, for $t>0$,
 \begin{equation}\nonumber
   z-\f{(\mu\small{\subord}\mu)^{\boxplus t}}(z)=\left(1-\frac{1}{t+1}\right)(z-\f{\mu^{\boxplus (t+1)}}(z)),
 \end{equation}
which implies that
 \begin{equation}\label{subordB}
 (\mu\small{\subord}\mu)^{\boxplus t}(z)=(\mu^{\boxplus (t+1)})^{\uplus(t/(t+1))}.
 \end{equation}
\subsection{Two formulas related to free Brownian motion}
Given $\mu\in\Mreal$, we construct subordination functions
$\omega_t$
as in Theorem \ref{thm:2.7}. Let $\sigma_t=(\mu\small{\subord}\mu)^{\boxplus t}\in\Mreal$, then $\omega_{t+1}=\f{\sigma_t}(z)$ for $t>0$. Given $\nu\in\Mreal$, let $\rho_t=\gamma_t\subord\nu$ and let
$\mathit{F}_t=\f{\rho_t}(z)$ for all $t>0$.
From Proposition \ref{subordINF} and Theorem \ref{thm:2.7}, we know that
$\rho_t$ and $\sigma_t$ are $\boxplus$-infinitely divisible and their Voiculescu transforms
are given by $\p{\rho_t}(z)=t\g{\nu}$ and $\p{\sigma_t}(z)=t(z-\f{\mu}(z))$.
By comparing
Voiculescu transform of $\rho_t$ with Voiculescu transform
of $\sigma_t$, we deduce that
$\mathit{F}_t=\omega_{t+1}$ for some $t>0$ if and only if
$\g{\nu}(z)=z-\f{\mu}(z)$.

For any $t>0$, Belinschi and Nica \cite{BN2008Ind} construct the transformation $\mathbb{B}_t:\Mreal\rightarrow \Mreal$ such that
\begin{equation}\nonumber
  \mathbb{B}_t({\mu})=(\mu^{\boxplus(1+t)})^{1/1+t}, \,\text{for}\, \mu\in\Mreal.
\end{equation}
They also show that $\mathbb{B}_t$ is a semigroup and $\mathbb{B}_2=\mathbb{B}$,
where the map $\mathbb{B}:\mathcal{ID}(\uplus, \mathbb{R}) \rightarrow \mathcal{ID}(\boxplus, \mathbb{R})$ is the bijective map from the $\uplus$-infinitely divisible distributions
to the $\boxplus$-infinitely divisible distributions, discovered in the seminal paper \cite{BPata1999}. The following theorem is from \cite{BN2008Ind}.
\begin{thm}\label{thm:2.8}
 Let $\mu$ and $\nu$ be a pair of probability measures on the real line such that
 \begin{equation}\label{relationPlus}
   \g{\nu}(z)=z-\f{\mu}(z),\,\,\,z\in\mathbb{C}^+.
 \end{equation}
 Then we have
 \begin{equation}\nonumber
   \g{\nu\boxplus\gamma_t}(z)=z-\f{\mathbb{B}_t(\mu)}(z), \,\,\, t>0, \,\,\,z\in\mathbb{C}^+.
 \end{equation}
\end{thm}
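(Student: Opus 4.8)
The plan is to run the argument sketched at the end of Subsection~2.3 and then push it through to the level of Cauchy transforms. Under the hypothesis \eqref{relationPlus} I would introduce the two subordination semigroups $\rho_t=\gamma_t\small{\subord}\nu$ and $\sigma_t=(\mu\small{\subord}\mu)^{\boxplus t}$, both of which lie in $\mathcal{ID}(\boxplus,\mathbb{R})$ by Proposition~\ref{subordINF} and are defined for every $t>0$ by Lemma~\ref{lemma:2.6} and Theorem~\ref{thm:2.7}. Their Voiculescu transforms are $\p{\rho_t}(z)=t\,\g{\nu}(z)$ (Proposition~\ref{subordINF}(2)) and $\p{\sigma_t}(z)=t\,(z-\f{\mu}(z))$ (Proposition~\ref{subordINF}(3) together with Theorem~\ref{thm:2.7}). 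Since \eqref{relationPlus} says precisely that $\g{\nu}(z)=z-\f{\mu}(z)$, these transforms agree on the common truncated cone, and hence $\rho_t=\sigma_t$ as probability measures, because two $\boxplus$-infinitely divisible measures with the same Voiculescu transform on a truncated cone coincide. In particular $\f{\rho_t}=\f{\sigma_t}=\omega_{t+1}$, the subordination function of Theorem~\ref{thm:2.7}.

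Next I would convert this equality of subordination functions into the stated identity. On the one hand, the subordination relation for $\gamma_t\boxplus\nu$ gives $\g{\nu\boxplus\gamma_t}(z)=\g{\nu}(\f{\rho_t}(z))$; inverting $\f{\rho_t}$ via $\f{\rho_t}^{-1}(w)=w+\p{\rho_t}(w)=w+t\,\g{\nu}(w)$ and setting $w=\f{\rho_t}(z)$ yields $z-\f{\rho_t}(z)=t\,\g{\nu}(\f{\rho_t}(z))=t\,\g{\nu\boxplus\gamma_t}(z)$, first on a truncated cone and then on all of $\mathbb{C}^+$ by the identity theorem. On the other hand, \eqref{subordB} writes $\sigma_t=(\mu^{\boxplus(t+1)})^{\uplus(t/(t+1))}$ while $\mathbb{B}_t(\mu)=(\mu^{\boxplus(t+1)})^{\uplus(1/(t+1))}$, so the multiplicativity $z-\f{\rho^{\uplus s}}(z)=s\,(z-\f{\rho}(z))$ of the additive Boolean transform under $\uplus$-powers gives $z-\f{\sigma_t}(z)=t\,(z-\f{\mathbb{B}_t(\mu)}(z))$, i.e. $\f{\mathbb{B}_t(\mu)}(z)=z-\tfrac1t\bigl(z-\f{\rho_t}(z)\bigr)$. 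Combining the two displayed facts and using $\f{\sigma_t}=\f{\rho_t}$ gives $\f{\mathbb{B}_t(\mu)}(z)=z-\g{\nu\boxplus\gamma_t}(z)$, which is the claim.

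The computation is short once the ingredients are assembled, so the only care needed is bookkeeping: keeping track of the truncated cones on which Voiculescu transforms are a priori defined and justifying the extension to all of $\mathbb{C}^+$ via analytic continuation, and verifying that $\sigma_t=(\mu\small{\subord}\mu)^{\boxplus t}$ and the Boolean powers $(\mu^{\boxplus(t+1)})^{\uplus s}$ are genuinely well defined for every $t>0$ (not just $t>1$) -- which is exactly what Lemma~\ref{lemma:2.6}, Theorem~\ref{thm:2.7} and \eqref{subordB} supply. I do not expect a substantive obstacle beyond this; the entire conceptual content sits in the identification $\rho_t=\sigma_t$, i.e. in the observation that the subordination function $\mathit{F}_t$ of $\gamma_t\boxplus\nu$ over $\nu$ equals the subordination function $\omega_{t+1}$ of $\mu^{\boxplus(t+1)}$ over $\mu$ exactly when $\g{\nu}(z)=z-\f{\mu}(z)$.
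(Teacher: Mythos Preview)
The paper does not actually prove Theorem~\ref{thm:2.8}; it is quoted as background from \cite{BN2008Ind}. That said, your argument is correct and is precisely the additive prototype of the paper's own proof of the multiplicative analogue, Theorem~\ref{thm:1.1}: identify the subordination measure $\rho_t=\gamma_t\small{\subord}\nu$ with $\sigma_t=(\mu\small{\subord}\mu)^{\boxplus t}$ via their Voiculescu transforms (this is exactly the observation the paper makes in the paragraph preceding Theorem~\ref{thm:2.8}), and then substitute the common subordination function into the original relation. Your use of Proposition~\ref{leftInvreal} to push the identity $z-\f{\rho_t}(z)=t\,\g{\nu\boxplus\gamma_t}(z)$ from a truncated cone to all of $\mathbb{C}^+$, and of \eqref{subordB} together with the Boolean $\uplus$-power formula to rewrite $z-\f{\sigma_t}(z)$ in terms of $\f{\mathbb{B}_t(\mu)}$, are both sound; no substantive step is missing.
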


\begin{rmk}
\emph{
 Given $\mu, \nu\in\Mreal$ satisfying
 (\ref{relationPlus}), then Maassen \cite{Maassen} shows that $\mu$ has mean zero and variance one. Conversely, if $\mu\in\Mreal$ has mean zero and variance one, then there exists a unique $\nu\in\Mreal$ satisfying (\ref{relationPlus}).
}
\end{rmk}

Given $\tau\in\mathcal{ID}(\boxplus, \mathbb{R}) $ and $\mu, \nu\in\Mreal$, we compare
free L\'{e}vy process $\nu\boxplus\tau^{\boxplus t}$ and free convolution semigroup $\mu^{\boxplus (t+1)}$.
If $\p{\tau}\left(\f{\nu}(z) \right)=z-\f{\mu}(z)$, then $\tau\subord\nu=\mu\subord\mu$, which implies that subordination function of $\f{\nu\boxplus(\tau^{\boxplus t})}$ to $\f{\nu}$ is the same as the subordination function of $\f{\mu^{\boxplus (t+1)}}$ to $\f{\mu}$.
The following theorem generalizes Theorem \ref{thm:2.8}. The argument is similar to the proof
of Theorem 1.6 in \cite{BN2008Ind} (see also the proof of Lemma 3 in \cite{Anshelevich2012}), therefore we omit the proof.
\begin{thm}\label{levyBN}
 Given $\tau\in\mathcal{ID}(\boxplus, \mathbb{R})$, and let $\mu$ and $\nu$ be a pair of probability measures
 on the real line such that
      \begin{equation}\nonumber
         \p{\tau}\left(\f{\nu}(z) \right)=z-\f{\mu}(z),\, z\in \mathbb{C}^+.
      \end{equation}
 Then we have
      \begin{equation}\nonumber
         \p{\tau}\left(\f{\nu\boxplus (\tau^{\boxplus t})}(z) \right)
              =z-\f{\mathbb{B}_t(\mu)}(z), \,t>0, \,z\in \mathbb{C}^+.
      \end{equation}
\end{thm}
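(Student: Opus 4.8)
The plan is to run the same strategy that the preceding discussion sets up: reduce the identity to the statement that the relevant subordination functions coincide, and then convert that coincidence into the desired equality by a short computation.

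First I would record that the hypothesis $\p{\tau}\!\left(\f{\nu}(z)\right)=z-\f{\mu}(z)$ is exactly the statement $\tau\small{\subord}\nu=\mu\small{\subord}\mu$. Indeed, by Proposition \ref{subordINF}$(1)$ we have $\p{\tau\small{\subord}\nu}(z)=(\p{\tau}\circ\f{\nu})(z)$, while by Proposition \ref{subordINF}$(3)$ we have $\p{\mu\small{\subord}\mu}(z)=z-\f{\mu}(z)$; the hypothesis equates these two Voiculescu transforms on a truncated cone, and since both sides are Voiculescu transforms of probability measures — with $\tau\small{\subord}\nu\in\mathcal{ID}(\boxplus,\mathbb{R})$ by Proposition \ref{subordINF}$(2)$ — the two measures agree. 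Call this common measure $\sigma$; note $\sigma\in\mathcal{ID}(\boxplus,\mathbb{R})$.

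Next I would invoke the two semigroup pictures. Applying Lemma \ref{lemma:2.6} with $\rho=\nu$ (its hypothesis $\tau\small{\subord}\nu\in\mathcal{ID}(\boxplus,\mathbb{R})$ now holds), the measure $\nu\boxplus\tau^{\boxplus t}$ is defined for all $t\geq 0$ and $\f{\nu\boxplus(\tau^{\boxplus t})}(z)=\f{\nu}\!\left(\f{\sigma^{\boxplus t}}(z)\right)$. On the other side, by Theorem \ref{thm:2.7} the measure $\mu^{\boxplus(t+1)}$ is defined for $t>0$ with $\f{\mu^{\boxplus(t+1)}}(z)=\f{\mu}(\omega_{t+1}(z))$, where $\omega_{t+1}(z)=\f{(\mu\small{\subord}\mu)^{\boxplus t}}(z)=\f{\sigma^{\boxplus t}}(z)$. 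Thus $\f{\nu\boxplus(\tau^{\boxplus t})}$ and $\f{\mu^{\boxplus(t+1)}}$ are subordinated to $\f{\nu}$, respectively $\f{\mu}$, through one and the same function $\omega(z):=\f{\sigma^{\boxplus t}}(z)\in\mathbb{C}^+$. Since $\p{\tau}$ extends analytically to all of $\mathbb{C}^+$ ($\tau$ being $\boxplus$-infinitely divisible), the hypothesis may be evaluated at $w=\omega(z)$, giving
\[
\p{\tau}\!\left(\f{\nu\boxplus(\tau^{\boxplus t})}(z)\right)=\p{\tau}\!\left(\f{\nu}(\omega(z))\right)=\omega(z)-\f{\mu}(\omega(z))=\omega(z)-\f{\mu^{\boxplus(t+1)}}(z).
\]
Now I would finish by simplification: by formula (\ref{omegaT}) with $t$ replaced by $t+1$, $\omega(z)=\omega_{t+1}(z)=z+\frac{t}{t+1}\bigl(\f{\mu^{\boxplus(t+1)}}(z)-z\bigr)$, so $\omega(z)-\f{\mu^{\boxplus(t+1)}}(z)=\frac{1}{t+1}\bigl(z-\f{\mu^{\boxplus(t+1)}}(z)\bigr)$; and since $\mathbb{B}_t(\mu)=\bigl(\mu^{\boxplus(t+1)}\bigr)^{\uplus 1/(t+1)}$ we have $z-\f{\mathbb{B}_t(\mu)}(z)=\e{\mathbb{B}_t(\mu)}(z)=\frac{1}{t+1}\e{\mu^{\boxplus(t+1)}}(z)=\frac{1}{t+1}\bigl(z-\f{\mu^{\boxplus(t+1)}}(z)\bigr)$. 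Chaining these equalities yields $\p{\tau}\!\left(\f{\nu\boxplus(\tau^{\boxplus t})}(z)\right)=z-\f{\mathbb{B}_t(\mu)}(z)$ for $z\in\mathbb{C}^+$, $t>0$.

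The main obstacle is not any single computation but the bookkeeping of well-definedness and uniqueness: one must be sure that the equality of Voiculescu transforms on a truncated cone really forces $\tau\small{\subord}\nu=\mu\small{\subord}\mu$ as \emph{measures} (this is exactly where infinite divisibility of $\tau\small{\subord}\nu$, via Proposition \ref{subordINF}$(2)$, and the uniqueness built into the $\boxplus$-power construction are used), and that $\nu\boxplus\tau^{\boxplus t}$ and $\mu^{\boxplus(t+1)}$ are bona fide probability measures so that the subordination statements are literal (Lemma \ref{lemma:2.6} and Theorem \ref{thm:2.7}). Once these are in place, everything reduces to the three-line identity above — which is why the statement notes that the argument parallels the proof of Theorem~1.6 in \cite{BN2008Ind} and the proof of Lemma~3 in \cite{Anshelevich2012}.
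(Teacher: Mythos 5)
Your proof is correct and follows exactly the route the paper itself sketches in the paragraph preceding the theorem (identify $\tau\small{\subord}\nu=\mu\small{\subord}\mu$ from the hypothesis via Proposition \ref{subordINF}, conclude that $\f{\nu\boxplus\tau^{\boxplus t}}$ and $\f{\mu^{\boxplus(t+1)}}$ share the common subordination function $\f{\sigma^{\boxplus t}}$ via Lemma \ref{lemma:2.6} and Theorem \ref{thm:2.7}, then finish with (\ref{omegaT}) and the Boolean rescaling defining $\mathbb{B}_t$); the paper omits the details, deferring to Theorem 1.6 of \cite{BN2008Ind}, but your write-up supplies precisely those details. All the individual steps check out, including the use of infinite divisibility of $\tau$ to evaluate the hypothesis at $\omega_{t+1}(z)\in\mathbb{C}^+$.
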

\begin{rmk}
\emph{
Let $\tau=\gamma_{a, b}$ be the semi-circular distribution with mean $a$ and variance $b$,
and let $\mu, \nu$ be a pair of probability measures on the real line such that
      \begin{equation}%\label{aANDb}
        \p{\tau}\left(\f{\nu}(z) \right)=z-\f{\mu}(z).
      \end{equation}
We first compute
       \begin{equation}\label{eq:2a}
         \begin{split}
          \f{\mu}(z) & =z- \p{\tau}\left(\f{\nu}(z) \right)\\
                     & = z- \left(a+\frac{b}{z}\right)\circ \f{\nu}(z)\\
                      & = z- a -b\g{\nu}(z).
         \end{split}
       \end{equation} }
\emph{By Theorem \ref{levyBN}, then we have that
       \begin{equation}\label{eq:2b}
          \begin{split}
          \f{\mathbb{B}_t(\mu)}(z) & = z-\p{\tau}\left(\f{\nu\boxplus\tau^{\boxplus t}}(z) \right) \\
                & = z - \left(\left(a+\frac{b}{z}\right)\circ \f{\nu\boxplus \gamma_{a, b}^{\boxplus t}} \right)(z) \\
                 & = z-a-b\g{\nu\boxplus \gamma_{a, b}^{\boxplus t}}(z).
       \end{split}
       \end{equation} }
\emph{By (\ref{eq:2b}) and the definition of Boolean convolution, we obtain that
       \begin{equation}\label{eq:2c}
          \f{(\mathbb{B}_t(\mu))^{\uplus t}}(z)=z-ta-tb\g{\nu\boxplus\gamma_{a, b}^{\boxplus t}}(z).
       \end{equation}
      Equations (\ref{eq:2a}), (\ref{eq:2b}) and (\ref{eq:2c}) were studied in \cite{Anshelevich2012}.
      We would like to point out that, as it was shown in \cite{Anshelevich2012} (see Proposition 1 and Example 1), $(\mathbb{B}_t(\mu))^{\uplus t}\in\mathcal{ID}(\boxplus, \mathbb{R})$, and
      $(\mathbb{B}_t(\mu))^{\uplus t}=(\tau\subord\nu)^{\boxplus t}=(\mu\subord\mu)^{\boxplus t}$.
      In fact, for all $\mu\in\Mreal$, we can deduce from (\ref{subordB}) and the identity $(\mathbb{B}_t(\mu))^{\uplus t}=\left(\mu^{\boxplus (1+t)}\right)^{\uplus t/(1+t)}$ that $(\mathbb{B}_t(\mu))^{\uplus t}$ is the measure associated with the subordination function of $\mu^{\boxplus (1+t)}$ with respect to $\mu$, that is
$(\mathbb{B}_t(\mu))^{\uplus t}=(\mu\subord\mu)^{\boxplus t}$. }
\end{rmk}
\section{Multiplicative free convolution and Multiplicative Boolean convolution on $\MT$}
Given any two probability measures $\mu, \nu$ on $\mathrm{T}$, the unit circle of $\mathbb{C}$,
we can define
their multiplicative free convolution.
We first recall the calculation of the multiplicative free convolution
of two measures on $\mathrm{T}$ with nonzero means.
Given $\mu\in\MT$, we define
\begin{equation}\nonumber
  \pp{\mu}(z)=\int_{\mathrm{T}}\frac{tz}{1-tz}d\,\mu(t)
\end{equation}
and set $\et{\mu}(z)=\pp{\mu}(z)/(1+\pp{\mu}(z))$. The following proposition \cite{BB2005} characterizes the
$\eta$-transforms of probability measures on $\mathrm{T}$.
%%%%%%%%%%%%%5
\begin{prop}\label{etaD}
  Let $\eta : \mathbb{D}\rightarrow \mathbb{C}$ be an analytic function. Then the following
assertions are equivalent.
 \begin{enumerate}[$(1)$]
   \item There exists a probability measure $\mu\in\MT$ such that $\eta=\et{\mu}$.
   \item $\eta(0)=0$, and $|\eta(z)|<1$ holds for all $z\in \mathbb{D}$.
 \end{enumerate}
\end{prop}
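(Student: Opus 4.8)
The plan is to reduce both implications to the classical Riesz--Herglotz representation of analytic maps of the disk into a half-plane. The device is the auxiliary function
\[
  h_{\mu}(z)=\int_{\mathrm{T}}\frac{1+tz}{1-tz}\,d\mu(t),\qquad z\in\mathbb{D}.
\]
A one-line computation gives $h_{\mu}=1+2\pp{\mu}$, hence $1+\pp{\mu}=\tfrac12(h_{\mu}+1)$ and therefore
\[
  \et{\mu}=\frac{\pp{\mu}}{1+\pp{\mu}}=\frac{h_{\mu}-1}{h_{\mu}+1}=C(h_{\mu}),
\]
where $C(w)=(w-1)/(w+1)$ is the Cayley-type M\"obius map. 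Recall that $C$ is a biholomorphism of the open right half-plane $\Pi^{+}=\{\,\Re w>0\,\}$ onto $\mathbb{D}$ with $C(1)=0$, and that its inverse $C^{-1}(u)=(1+u)/(1-u)$ is a biholomorphism of $\mathbb{D}$ onto $\Pi^{+}$ with $C^{-1}(0)=1$. So the whole statement is just the transport, through $C$, of the Herglotz description of $h_{\mu}$.

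For $(1)\Rightarrow(2)$: since $\Re\dfrac{1+tz}{1-tz}=\dfrac{1-|z|^{2}}{|1-tz|^{2}}>0$ for $z\in\mathbb{D}$ and $|t|=1$, integration shows $h_{\mu}$ maps $\mathbb{D}$ into $\Pi^{+}$, and $h_{\mu}(0)=1$; composing with $C$ gives that $\et{\mu}=C\circ h_{\mu}$ maps $\mathbb{D}$ into $\mathbb{D}$ with $\et{\mu}(0)=C(1)=0$, which is exactly $(2)$. For $(2)\Rightarrow(1)$: given $\eta\colon\mathbb{D}\to\mathbb{C}$ analytic with $\eta(0)=0$ and $|\eta|<1$ on $\mathbb{D}$, the function $h:=C^{-1}\circ\eta=(1+\eta)/(1-\eta)$ is analytic on $\mathbb{D}$ (the denominator never vanishes because $|\eta|<1$), maps $\mathbb{D}$ into $\Pi^{+}$, and satisfies $h(0)=1$. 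By the Riesz--Herglotz theorem every such $h$ has the form $h(z)=\int_{\mathrm{T}}\frac{1+tz}{1-tz}\,d\nu(t)$ for a finite positive Borel measure $\nu$ on $\mathrm{T}$; evaluating at $z=0$ forces $\nu(\mathrm{T})=h(0)=1$, so $\nu\in\MT$. Then $h=h_{\nu}$, and running the algebra backwards yields $\et{\nu}=C(h_{\nu})=C(C^{-1}(\eta))=\eta$, which is $(1)$.

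I expect no serious obstacle: once the Riesz--Herglotz representation is granted (it may simply be cited, e.g.\ from \cite{BB2005} or \cite{BV1993}), the rest is bookkeeping. The only points deserving a careful line are the verification that $C$ and $C^{-1}$ are the stated mutually inverse biholomorphisms between $\Pi^{+}$ and $\mathbb{D}$ with the indicated values at $0$ and $1$, and the check that one never divides by zero in the formulas for $\et{\mu}$ and for $h$ --- the former because $1+\pp{\mu}(z)=\int_{\mathrm{T}}(1-tz)^{-1}\,d\mu(t)$ has positive real part on $\mathbb{D}$ and hence is nonzero, the latter because $|\eta|<1$. (By the Schwarz lemma, an analytic $\eta$ on $\mathbb{D}$ with $\eta(0)=0$ and $|\eta|\le 1$ automatically satisfies $|\eta(z)|\le|z|<1$, so the strict inequality in $(2)$ is no extra restriction beyond $\eta(0)=0$ and $|\eta|\le 1$.)
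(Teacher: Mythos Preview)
Your argument is correct. Note, however, that the paper does not actually prove this proposition: it is quoted from \cite{BB2005} without proof, so there is no ``paper's own proof'' to compare against. What you have written is precisely the standard justification --- reduce to the Riesz--Herglotz representation via the Cayley map $C(w)=(w-1)/(w+1)$ --- and it is the argument that underlies the cited result.

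One small refinement: the general Riesz--Herglotz formula for an analytic map $h\colon\mathbb{D}\to\{\Re w\ge 0\}$ reads
\[
   h(z)=i\alpha+\int_{\mathrm{T}}\frac{1+tz}{1-tz}\,d\nu(t)
\]
with $\alpha\in\mathbb{R}$; you dropped the $i\alpha$ term. This is harmless here because you immediately evaluate at $z=0$ and use $h(0)=1\in\mathbb{R}$, which forces $\alpha=0$ and $\nu(\mathrm{T})=1$ simultaneously. It would be cleaner to state the representation in its full form and then deduce $\alpha=0$ from the reality of $h(0)$.
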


If $\mu\in \MT\cap \Mstar$, then $\et{\mu}'(0)=\int_{\mathrm{T}}t d\,\mu(t) \neq 0$.
Therefore, the inverse $\eta_{\mu}^{-1}$ is defined in a neighborhood of zero.
We set $\Sigma_{\mu}(z)=\eta_{\mu}^{-1}(z)/z$. Given $\mu , \nu \in \MT\cap\Mstar$,
their multiplicative free convolution, which is denoted by $\mu\boxtimes \nu$,
is the unique probability measure in $\MT\cap\Mstar$ such that
\begin{equation}\label{sigmaM}
  \Sigma_{\mu\boxtimes\nu}(z)=\Sigma_{\mu}(z)\Sigma_{\nu}(z)
\end{equation}
holds for $z$ in a neighborhood of zero.

It is known \cite{Biane1998, BB2007new} that there exist two analytic
functions $\omega_1, \omega_2 : \mathbb{D}\rightarrow \mathbb{D}$ such that
\begin{enumerate}[$(1)$]
  \item $\omega_1(0)=\omega_2(0)=0$,
  \item $\eta_{\mu\boxtimes\nu}(z)=\eta_{\mu}(\omega_1(z))=\eta_{\nu}(\omega_2(z))$.
\end{enumerate}

A probability measure $\mu\in\MT$ is said
to be $\boxtimes$-infinitely divisible if for any positive integer $n$,
there exists $\mu_n\in\MT$ such that
$\mu=(\mu_n)^{\boxtimes n}=\mu_n \boxtimes\cdots\boxtimes\mu_n$.
It is shown in \cite{BV1992} that
if $\mu\in\MT\backslash\Mstar$ is $\boxtimes$-infinitely divisible,
then $\mu$ is the Haar measure on $\mathrm{T}$;
and $\mu\in\MT\cap\Mstar$ is $\boxtimes$-infinitely divisible
if and only if there exists
a function
\begin{equation}\label{levyMultiT}
  u(z)=\alpha i +\int_{\mathrm{T}}\frac{1+tz}{1-tz}d\,\sigma(t),
\end{equation}
such that $\Sigma_{\mu}(z)=\exp(u(z)) $,
where $\alpha\in\mathbb{R}$ and $\sigma$ is a finite positive measure on $\mathrm{T}$.
Equation (\ref{levyMultiT}) is the analogue of the L\'{e}vy-Hin\v{c}in
formula for multiplicative free convolution on $\mathrm{T}$.
The analogue of the normal distribution
in this context is given by $\Sigma_{\lambda_t}(z)=\exp\left(\frac{t}{2}\frac{1+z}{1-z}\right)$.
Denote by $\mathcal{ID}(\boxtimes, \mathrm{T})$ the set of all $\boxtimes$-infinitely divisible
measures on $\mathrm{T}$.

\begin{lemma}\label{infMstar}
 If $\mu\in\MT\cap\Mstar$ is $\boxtimes$-infinitely divisible.
 Then
 \begin{enumerate}[$(1)$]
   \item The function $H(z)=z\Sigma_{\mu}(z)$ is the left inverse of $\et{\mu}(z)$, that is
        $H(\et{\mu}(z))=z$ for all $z\in\mathbb{D}$.
   \item The function $\et{\mu}$ extends to be a continuous function on $\overline{\mathbb{D}}$, and $\et{\mu}$ is one-to-one on $\overline{\mathbb{D}}$.
   \item The set
          $\{z\in \mathbb{D}: |z \Sigma_{\mu}(z)|<1\}$ is a simply connected domain which coincides with $\{\eta_{\mu}(z): z\in \mathbb{D} \}$, and its boundary is $\eta_{\mu}(\mathit{T})$ which is a simple closed curve.
 \end{enumerate}
\end{lemma}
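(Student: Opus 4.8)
The plan is to mirror the additive picture of Proposition \ref{leftInvreal} through the standard exponential change of variables $z\mapsto e^{i\theta}$ (or rather via the Cayley-type transform sending $\mathbb{D}$ to $\mathbb{C}^+$), so that $\et{\mu}$ plays the role of $\f{\nu}$ and $z\Sigma_{\mu}(z)$ plays the role of $H(z)=z+\p{\nu}(z)$. First I would prove (1): since $\mu\in\MT\cap\Mstar$, the map $\et{\mu}$ is analytic on $\mathbb{D}$ with $\et{\mu}(0)=0$ and $\et{\mu}'(0)\neq 0$, so $\eta_\mu^{-1}$ exists on a neighbourhood of $0$ and by definition $\Sigma_\mu(z)=\eta_\mu^{-1}(z)/z$ there, i.e. $z\Sigma_\mu(z)=\eta_\mu^{-1}(z)$, whence $H(\et{\mu}(z))=z$ for $z$ near $0$. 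The content of (1) is that this identity propagates to all of $\mathbb{D}$: using the L\'evy--Hin\v{c}in representation $\Sigma_\mu(z)=\exp(u(z))$ with $u$ as in \refto{levyMultiT}, one checks that $H(z)=z\exp(u(z))$ is analytic on $\mathbb{D}$ and (this is the key input) extends continuously to $\overline{\mathbb{D}}$ with $|H|\le 1$ on a suitable subdomain; then $H\circ\et{\mu}$ and the identity agree near $0$ and both are analytic on $\mathbb{D}$, so they agree on $\mathbb{D}$ by analytic continuation, provided one first knows $\et{\mu}(\mathbb{D})$ stays inside the domain where $H$ is the genuine inverse. I would get around the circularity by running (1) and (3) together: set $U=\{z\in\mathbb{D}:|z\Sigma_\mu(z)|<1\}$, show $0\in U$ and that $H$ restricted to $U$ is a conformal bijection onto $\mathbb{D}$ (using that $H$ satisfies the hypotheses of Proposition 4.7 in \cite{BB2005}, exactly as invoked in the proof of Proposition \ref{leftInvreal}), and let $\et{\mu}$ be the inverse branch; this simultaneously gives (1) and the equality $\{\et{\mu}(z):z\in\mathbb{D}\}=U$ in (3).

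For (3), once $H:U\to\mathbb{D}$ is a conformal bijection it follows that $U$ is simply connected (being conformally equivalent to $\mathbb{D}$) and that $\et{\mu}(\mathbb{D})=U$. To identify $\partial U$ with $\et{\mu}(\mathrm{T})$ and to see it is a simple closed curve, I would again appeal to Proposition 4.7 in \cite{BB2005} together with a circle-analogue of Lemma 3.3 in \cite{CG2011} — i.e. the regularity statement that $H=z\exp(u(z))$ is continuous up to $\mathrm{T}$ — which is precisely the ingredient used for the last sentence of Proposition \ref{leftInvreal}. Continuity of $H$ up to the boundary plus the conformality then forces $\partial U=H^{-1}(\mathrm{T})$ pulled around, and the boundary correspondence theorem for conformal maps gives that $\et{\mu}$ extends continuously and injectively to $\overline{\mathbb{D}}$, with $\et{\mu}(\mathrm{T})=\partial U$ a Jordan curve. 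That extension statement is exactly (2). So (2) is really a corollary of (3) via Carath\'eodory's boundary theorem, and I would present it in that order: (1) and (3) first, then (2).

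The main obstacle I anticipate is establishing the boundary regularity of $H(z)=z\exp(u(z))$ on $\mathrm{T}$ and verifying that $H$ meets the hypotheses of Proposition 4.7 of \cite{BB2005} in the multiplicative/disc setting — in the additive case this is handled by Lemma 3.3 of \cite{CG2011}, and one needs the correct transcription. Concretely, one must control $u(z)=\alpha i+\int_{\mathrm{T}}\frac{1+tz}{1-tz}\,d\sigma(t)$ near $\mathrm{T}$: it is a Herglotz-type integral, so $\Re u\ge 0$ and the boundary behaviour of $\Re u$ and $\Im u$ is governed by the Poisson and conjugate-Poisson integrals of $\sigma$; the exponential $\exp(u)$ could in principle blow up where $\sigma$ has atoms, but on the set $\{|z\exp(u(z))|<1\}=\{\Re u(z)<-\log|z|\}$ one has a quantitative bound, and this is what makes $U$ have a rectifiable, graph-like boundary. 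Once that real-analysis input is in place, everything else is the standard conformal-mapping package (open mapping, Carath\'eodory, boundary correspondence). I expect the write-up to therefore consist of: (a) reduce to checking Proposition 4.7 of \cite{BB2005} for $H$; (b) cite/prove the disc analogue of \cite[Lemma 3.3]{CG2011} for boundary continuity; (c) read off (1), (3), then (2) as above.
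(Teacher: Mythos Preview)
Your approach is essentially the paper's --- invoke the relevant structural result from \cite{BB2005} --- but you have manufactured a difficulty in (1) that is not there, and this leads you to tangle (1) and (3) unnecessarily. Since $\mu$ is $\boxtimes$-infinitely divisible, $\Sigma_\mu(z)=\exp(u(z))$ with $u$ analytic on all of $\mathbb{D}$, so $H(z)=z\exp(u(z))$ is analytic on $\mathbb{D}$; and $\eta_\mu$ maps $\mathbb{D}$ into $\mathbb{D}$ by Proposition~\ref{etaD}. Hence $H\circ\eta_\mu$ is analytic on the connected set $\mathbb{D}$ and agrees with the identity near $0$, so it equals the identity on $\mathbb{D}$ by the identity theorem. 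There is no need for $H$ to be invertible, and no circularity: (1) is a one-line analytic continuation, exactly as the paper writes it.

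For (2) and (3), the paper simply cites Proposition~4.5 of \cite{BB2005}, which is the disc version of the structural result (your Proposition~4.7 is the half-plane version used in Proposition~\ref{leftInvreal}). That proposition already packages the conformality of $H$ on $U=\{|H(z)|<1\}$, the simple connectivity of $U$, the identification $U=\eta_\mu(\mathbb{D})$, the continuous extension of $\eta_\mu$ to $\overline{\mathbb{D}}$, and the Jordan-curve description of $\partial U=\eta_\mu(\mathrm{T})$. So your programme --- verify the hypotheses of the relevant \cite{BB2005} proposition, then read off (2) and (3) via Carath\'eodory boundary correspondence --- is correct, but the boundary-regularity analysis you flag as the ``main obstacle'' (a disc analogue of \cite[Lemma~3.3]{CG2011}) is already absorbed into Proposition~4.5 of \cite{BB2005} and does not need to be redone. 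In short: right idea, but cite Proposition~4.5 rather than 4.7, and drop the detour through a joint proof of (1) and (3).
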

\begin{proof}
Observing that $H(\eta_{\mu}(z))=z$
is valid in a neighborhood of zero, we obtain assertion (1) by analytic continuation.

Note that $H:\mathbb{D}\rightarrow \mathbb{C}$ satisfies
the conditions in Proposition 4.5 in \cite{BB2005} and thus assertions (2) and (3) hold.
\end{proof}
\subsection{Multiplicative free Brownian motion}
For $\mu\in \MT$ and $t>0$, we study the multiplicative free convolution
$\mu\boxtimes \lambda_t$.
We first concentrate on the case when $\mu$ has
nonzero mean. The case when $\mu$ has mean zero will be studied in Subsection 3.2.

We start with the following result which is the multiplicative version of Lemma 1 in \cite{Biane1997}.
\begin{lemma} \label{subLemma}
 Given $\mu,\nu \in \MT\cap\Mstar$, we have that
 \begin{equation}\nonumber
   \eta_{\mu}(z)=\eta_{\mu\boxtimes\nu}(z\cdot \Sigma_{\nu}(\eta_{\mu}(z)))
 \end{equation}
holds for $z$ in a neighborhood of zero.
\end{lemma}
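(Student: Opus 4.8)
The plan is to mimic the additive-case argument (Lemma 1 in \cite{Biane1997}), translating the reciprocal Cauchy transform $\f{}$ into the $\eta$-transform and the Voiculescu transform into $\log\Sigma$. The starting point is the multiplicativity relation \refto{sigmaM}, namely $\Sigma_{\mu\boxtimes\nu}(z)=\Sigma_{\mu}(z)\Sigma_{\nu}(z)$ for $z$ near $0$, together with the definition $\Sigma_{\rho}(z)=\eta_{\rho}^{-1}(z)/z$, equivalently $\eta_{\rho}^{-1}(z)=z\Sigma_{\rho}(z)$, valid in a neighborhood of $0$ for any $\rho\in\MT\cap\Mstar$. Since $\mu,\nu\in\MT\cap\Mstar$, all of $\eta_{\mu}$, $\eta_{\nu}$, $\eta_{\mu\boxtimes\nu}$ have nonzero derivative at $0$, so each is a biholomorphism from a neighborhood of $0$ onto a neighborhood of $0$, and all the inverses and compositions below make sense on a sufficiently small neighborhood of $0$.

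First I would write the multiplicativity relation evaluated at $w:=\eta_{\mu}(z)$, where $z$ ranges over a small neighborhood of $0$ so that $w$ does too. We have
\begin{equation}\nonumber
  \eta_{\mu\boxtimes\nu}^{-1}(w)=w\,\Sigma_{\mu\boxtimes\nu}(w)=w\,\Sigma_{\mu}(w)\,\Sigma_{\nu}(w).
\end{equation}
Now substitute $w=\eta_{\mu}(z)$ and use $\eta_{\mu}(z)\,\Sigma_{\mu}(\eta_{\mu}(z))=\eta_{\mu}^{-1}(\eta_{\mu}(z))=z$, which holds for $z$ near $0$ because $z\Sigma_{\mu}(z)$ is a local left inverse of $\eta_{\mu}$. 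This gives
\begin{equation}\nonumber
  \eta_{\mu\boxtimes\nu}^{-1}(\eta_{\mu}(z))=z\,\Sigma_{\nu}(\eta_{\mu}(z)).
\end{equation}
Applying $\eta_{\mu\boxtimes\nu}$ to both sides — legitimate since the right-hand side lies in a neighborhood of $0$ on which $\eta_{\mu\boxtimes\nu}$ is invertible, after shrinking the neighborhood of $z$ if necessary — yields exactly
\begin{equation}\nonumber
  \eta_{\mu}(z)=\eta_{\mu\boxtimes\nu}\bigl(z\,\Sigma_{\nu}(\eta_{\mu}(z))\bigr),
\end{equation}
which is the claimed identity.

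The only genuine subtlety, and the step I would be most careful about, is bookkeeping of the domains: one must choose a single neighborhood $U$ of $0$ on which $\eta_{\mu}$, $\eta_{\nu}$, $\eta_{\mu\boxtimes\nu}$ are all univalent with univalent inverses, $\Sigma_{\mu},\Sigma_{\nu},\Sigma_{\mu\boxtimes\nu}$ are all defined and analytic, $\eta_{\mu}(U)$ is contained in the domain of $\Sigma_{\mu}\Sigma_{\nu}=\Sigma_{\mu\boxtimes\nu}$, and $z\mapsto z\Sigma_{\nu}(\eta_{\mu}(z))$ maps $U$ (possibly after one more shrinking) into the domain of $\eta_{\mu\boxtimes\nu}$. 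This is routine because every function involved is analytic and nonvanishing-derivative at $0$, so finitely many shrinkings suffice; I would state this once at the start and then perform the formal manipulation above. Since the final identity is an equality of analytic functions, it is not necessary to track a "best possible" neighborhood — "holds for $z$ in a neighborhood of zero" is exactly the conclusion wanted, and analytic continuation can extend it afterwards if needed elsewhere in the paper.
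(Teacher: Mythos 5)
Your proof is correct and follows essentially the same route as the paper's: rewrite the multiplicativity of $\Sigma$-transforms as a relation among the inverse $\eta$-transforms, substitute $z\mapsto\eta_{\mu}(z)$, cancel using $\eta_{\mu}^{-1}(\eta_{\mu}(z))=z$, and then apply $\eta_{\mu\boxtimes\nu}$; the domain bookkeeping you flag is exactly what the paper handles by choosing $\mathit{D}_1\subset\mathit{D}_0$ with $\eta_{\mu}(\mathit{D}_1)\subset\mathit{D}_0$.
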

\begin{proof}
From (\ref{sigmaM}), we find that
 \begin{equation}\label{sigmaInv}
   \frac{\eta_{\mu\boxtimes\nu}^{-1}(z)}{z}=\frac{\eta_{\mu}^{-1}(z)}{z}\cdot
       \frac{\eta_{\nu}^{-1}(z)}{z}
 \end{equation}
holds for $z$ in a neighborhood of zero, which we denote by $\mathit{D}_0$. We choose
 a subdomain $\mathit{D}_1\subset \mathit{D}_0$ such that
 $\eta_{\mu}(\mathit{D}_1)\subset \mathit{D}_0$. Replacing $z$
 by $\eta_{\mu}(z)$ in (\ref{sigmaInv}), we obtain that
    \begin{equation}\label{eq:3.3a}
      \frac{\eta_{\mu\boxtimes\nu}^{-1}(\eta_{\mu}(z))}{\eta_{\mu}(z)}=
       \frac{\eta_{\mu}^{-1}(\eta_{\mu}(z))}{\eta_{\mu}(z)}\cdot
       \frac{\eta_{\nu}^{-1}(\eta_{\mu}(z))}{\eta_{\mu}(z)}
       =\frac{z}{\eta_{\mu}(z)}\cdot\frac{\eta_{\nu}^{-1}(\eta_{\mu}(z))}{\eta_{\mu}(z)}
    \end{equation}
holds for $z\in \mathit{D}_1$. Note that $\eta_{\nu}^{-1}(z)=z\Sigma_{\nu}(z)$ holds for $z\in\mathit{D}_0$,
and we then rewrite (\ref{eq:3.3a}) as
 \begin{equation}\label{eq:3.3b}
   \eta_{\mu\boxtimes\nu}^{-1}(\eta_{\mu}(z))=z\Sigma_{\nu}(\eta_{\mu}(z)).
 \end{equation}
Applying $\eta_{\mu\boxtimes\nu}$ on both sides of (\ref{eq:3.3b}) yields that
 \begin{equation}\nonumber
   \eta_{\mu}(z)=\eta_{\mu\boxtimes\nu}(z\Sigma_{\nu}(\eta_{\mu}(z)))
 \end{equation}
holds for $z$ in a neighborhood of zero $\mathit{D}_1$.
\end{proof}

For any $t>0$, we denote by $\eta_t:\mathbb{D}\rightarrow\mathbb{D}$ the
subordination function of $\mu\boxtimes\lambda_t$ with respect to $\mu$.
Since $\eta_t :\mathbb{D}\rightarrow \mathbb{D}$ is analytic and $\eta_t(0)=0$,
Proposition \ref{etaD} implies the existence of a probability measure $\rho_t$ such that
$\eta_{\rho_t}(z)=\eta_t(z)$.
\begin{lemma}\label{rhoInf}
The measure $\rho_t$ is $\boxtimes$-infinitely divisible and its $\Sigma$-transform is
$\Sigma_{\rho_t}(z)=\Sigma_{\lambda_t}(\eta_{\mu}(z))$.
\end{lemma}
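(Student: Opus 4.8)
The plan is to first pin down the functional inverse of $\eta_t$ near the origin, and then recognize its logarithm as a Herglotz function so that the L\'{e}vy--Hin\v{c}in characterization (\ref{levyMultiT}) applies. For the first step, note that $\Sigma_{\lambda_t}(0)=\exp(t/2)\neq 0$, so $\lambda_t\in\MT\cap\Mstar$ and Lemma \ref{subLemma} applies with $\nu=\lambda_t$, giving $\et{\mu}(z)=\et{\mu\boxtimes\lambda_t}\bigl(z\,\Sigma_{\lambda_t}(\et{\mu}(z))\bigr)$ for $z$ near $0$. Combining this with the defining relation $\et{\mu\boxtimes\lambda_t}(w)=\et{\mu}(\eta_t(w))$ of the subordination function yields $\et{\mu}(z)=\et{\mu}\bigl(\eta_t(z\,\Sigma_{\lambda_t}(\et{\mu}(z)))\bigr)$. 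Since $\mu\in\Mstar$ forces $\et{\mu}'(0)\neq 0$, the map $\et{\mu}$ is injective on a small disk around $0$, and because both $\eta_t$ and $z\mapsto z\,\Sigma_{\lambda_t}(\et{\mu}(z))$ vanish at $0$, both arguments of $\et{\mu}$ lie in that disk for $z$ small; cancelling $\et{\mu}$ gives
\begin{equation}\nonumber
 \eta_t\bigl(z\,\Sigma_{\lambda_t}(\et{\mu}(z))\bigr)=z
\end{equation}
for $z$ in a neighborhood of $0$.

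Next, set $g(z)=z\,\Sigma_{\lambda_t}(\et{\mu}(z))$, so that the identity above reads $\eta_t\circ g=\mathrm{id}$ near $0$. Since $g'(0)=\Sigma_{\lambda_t}(0)=\exp(t/2)\neq 0$, the function $g$ is biholomorphic near $0$ and $\eta_t=g^{-1}$ there; in particular $\et{\rho_t}'(0)=\eta_t'(0)=1/g'(0)=\exp(-t/2)\neq 0$, so that $\rho_t\in\MT\cap\Mstar$ and $\et{\rho_t}^{-1}=g$ near $0$. Hence $\Sigma_{\rho_t}(z)=\et{\rho_t}^{-1}(z)/z=\Sigma_{\lambda_t}(\et{\mu}(z))$, which is the claimed formula.

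For infinite divisibility, observe that the formula just obtained shows $\Sigma_{\rho_t}$ extends analytically to all of $\mathbb{D}$, because $\et{\mu}$ maps $\mathbb{D}$ into $\mathbb{D}$ and $\Sigma_{\lambda_t}(w)=\exp\bigl(\tfrac t2\tfrac{1+w}{1-w}\bigr)$ is analytic on $\mathbb{D}$; moreover
\begin{equation}\nonumber
 \log\Sigma_{\rho_t}(z)=\frac t2\,\frac{1+\et{\mu}(z)}{1-\et{\mu}(z)},\qquad z\in\mathbb{D},
\end{equation}
has strictly positive real part on $\mathbb{D}$, since $|\et{\mu}(z)|<1$ there by Proposition \ref{etaD} and $w\mapsto\frac{1+w}{1-w}$ carries $\mathbb{D}$ into the right half-plane. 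Applying the Herglotz--Riesz representation of analytic functions with nonnegative real part on the disk, together with the identity $\frac{s+z}{s-z}=\frac{1+\bar s z}{1-\bar s z}$ for $|s|=1$ and the change of variable $s\mapsto\bar s$, produces $\alpha\in\mathbb{R}$ and a finite positive measure $\sigma$ on $\mathrm{T}$ with $\log\Sigma_{\rho_t}(z)=\alpha i+\int_{\mathrm{T}}\frac{1+sz}{1-sz}\,d\sigma(s)$ (in fact $\alpha=0$ and $\sigma(\mathrm{T})=t/2$ by evaluating at $z=0$). This is precisely the form (\ref{levyMultiT}), so the characterization of $\boxtimes$-infinite divisibility on $\mathrm{T}$ from \cite{BV1992} gives $\rho_t\in\mathcal{ID}(\boxtimes,\mathrm{T})$.

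I expect the main obstacle to be the bookkeeping in the first step: one must carefully track the successively shrinking neighborhoods on which the various compositions are defined, and in particular verify that for $z$ small enough the value $\eta_t\bigl(z\,\Sigma_{\lambda_t}(\et{\mu}(z))\bigr)$ remains inside the disk on which $\et{\mu}$ is injective, so that the cancellation of $\et{\mu}$ is legitimate. Once this local identity is established the remaining steps are routine, the only external ingredient being the standard Herglotz representation.
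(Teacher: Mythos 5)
Your proposal is correct and follows essentially the same route as the paper: both identify $\Phi_t(z)=z\,\Sigma_{\lambda_t}(\eta_\mu(z))$ as the local inverse of $\eta_t$ via Lemma \ref{subLemma} (the paper cancels $\eta_{\mu\boxtimes\lambda_t}$ to get $\Phi_t\circ\eta_t=\mathrm{id}$, you cancel $\eta_\mu$ to get $\eta_t\circ\Phi_t=\mathrm{id}$, which is an immaterial difference since both maps have nonvanishing derivative at $0$), and both conclude infinite divisibility from the positivity of $\Re\bigl(\tfrac{1+\eta_\mu(z)}{1-\eta_\mu(z)}\bigr)$ together with the characterization in \cite{BV1992}. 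The paper just makes the Herglotz representation explicit as $\tfrac t2\int_{\mathrm{T}}\tfrac{1+\xi z}{1-\xi z}\,d\mu(\xi)$, i.e.\ with $\sigma=\tfrac t2\mu$, where you invoke the abstract Herglotz--Riesz theorem.
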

\begin{proof}
 Define analytic function $\Phi_t:\mathbb{D}\rightarrow\mathbb{C}$ by
  $\Phi_t(z):=z\Sigma_{\lambda_t}(\eta_{\mu}(z)) $ for all $t>0$.
  By Lemma \ref{subLemma}, we have that
    \begin{equation}\nonumber
       \et{\mu}(z)=\et{\mu\boxtimes\lambda_t}(z\Sigma_{\lambda_t}(\eta_{\mu}(z)))=\et{\mu\boxtimes\lambda_t}(\Phi_t(z))
    \end{equation}
which implies that
 \begin{equation}\nonumber
   \eta_{\mu\boxtimes\lambda_t}(z)=\eta_{\mu}(\eta_t(z))
       =\eta_{\mu\boxtimes\lambda_t}(\Phi_t(\eta_t(z))).
 \end{equation}
Since $\eta_{\mu\boxtimes\lambda_t}$ is invertible in a neighborhood
of zero, we have that $\Phi_t(\eta_t(z))=z$ in a neighborhood
of zero.

We thus obtain that
  $\eta_{\rho_t}^{-1}(z)=\eta_t^{-1}(z)=\Phi_t(z)$ holds for $z$ in a neighborhood of zero, which yields that
   \begin{equation}\label{eq:3.5}
     \Sigma_{\rho_t}(z)=\frac{\eta_{\rho_t}^{-1}(z)}{z}=\Sigma_{\lambda_t}\left(\eta_{\mu}(z) \right).
   \end{equation}
By the definition of the $\psi$- and $ \eta$-transforms, we have that
 \begin{equation}\label{formula}
   \Sigma_{\lambda_t}(\eta_{\mu}(z))=
   \exp\left( \frac{t}{2} \int_{\mathrm{T}}\frac{1+\xi z}{1-\xi z} d\mu(\xi)\right).
 \end{equation}
The real part of the integrand in (\ref{formula}) is positive for all $z\in\mathbb{D}$, then
the asseration follows from (\ref{eq:3.5}) and Theorem 6.7 in \cite{BV1992}.
\end{proof}

By (\ref{eq:3.5}), the right hand side of (\ref{formula}) is the
L\'{e}vy-Hin\v{c}in representation of $\rho_t$. We can also write $\eta_t$ in terms of $\lambda_t$ and $\mu\boxtimes\lambda_t$.
Replacing $z$ by $\eta_{\mu\boxtimes\lambda_t}(z)$ in the equation
\begin{equation}\nonumber
    \frac{\eta_{\mu\boxtimes\lambda_t}^{-1}(z)}{z}=\frac{\eta_{\mu}^{-1}(z)}{z}\cdot
       \frac{\eta_{\lambda_t}^{-1}(z)}{z},
 \end{equation}
we obtain that
 \begin{equation}\nonumber
   \frac{z}{\eta_{\mu\boxtimes\lambda_t}(z)}=\frac{\eta_t(z)}
   {\eta_{\mu\boxtimes\lambda_t}(z)}\cdot \Sigma_{\lambda_t}
   \left(\eta_{\mu\boxtimes\lambda_t}(z)\right),
 \end{equation}
which shows that
 \begin{equation}
   \eta_t(z)=\frac{z}{\Sigma_{\lambda_t}
   \left(\eta_{\mu\boxtimes\lambda_t}(z)\right)}.
 \end{equation}
\subsection{Modified $\mathcal{S}$-transform and subordination functions}
 Given $\mu\in\MT\backslash\Mstar$ and $\nu\in\MT\cap\Mstar$, it is known from \cite{Biane1998} that $\et{\mu\boxtimes\nu}$ is subordinated to $\et{\mu}$ and $\et{\nu}$. The subordination function for this case is generally not unique (see example \ref{example:3.5} below). However, we show that there is a nice subordination function, which we call the principal subordination function, uniquely determined by certain conditions.
Using the principal subordination function, results related to subordination function in the case $\mu, \nu\in\MT\cap\Mstar$ can be extended to the case where $\mu\in\MT\backslash\Mstar$ and $\nu\in\MT\cap\Mstar$.

Let us first give an example which illustrates the non-uniqueness of subordination functions.
 \begin{example}\label{example:3.5}
  For $k\in\mathbb{N}$, and let $\lambda^{(k)}=1/k \sum_{n=0}^{k-1}\delta_{z_n}$, where $z_n=e^{2\pi i n/k}$.
  We have $\psi_{\lambda^{(k)}}(z)=z^k/(1-z^k)$ and $\et{\lambda^{(k)}}(z)=z^k$. Given $\nu\in\MT\cap\Mstar$,
  if $\omega:\mathbb{D}\rightarrow\mathbb{D}$ is a subordination function of $\et{\lambda^{(k)}\boxtimes \nu}$ with respect to $\et{\lambda^{(k)}}$, then $\omega^{(n)}(z):=e^{2\pi i n/k}\omega(z)$ is also a subordination function of $\et{\lambda^{(k)}\boxtimes \nu}$ with respect to $\et{\lambda^{(k)}}$ for all integer $0<n<k$.
 \end{example}

 We now introduce the modified $\mathcal{S}$-transform. Given two free random variables $x$ and $y$ in a W*-probability space $(\mathcal{A}, \phi)$, such that $\phi(x)=0$ and $\phi(y)\neq 0$, we can not directly apply Voiculescu's $\mathcal{S}$-transform ($\Sigma$-transform) to calculate the distribution of $xy$.
  N. Raj Rao and R. Speicher \cite{RS2007} introduce a new transform, which we call the modified $\mathcal{S}$-transform, to deal with this case. They apply the modified $\mathcal{S}$-transform to study the distribution of $xy$ where $x, y$ are free self-adjoint random variables such that $\phi(x)=0$, $\phi(y)\neq 0$. For nonzero self-adjoint operator $x$, we have that $\phi(x^2)\neq 0$.
Assume that $\phi(x)=\cdots=\phi(x^{k-1})=0$ and $\phi(x^k)\neq 0$, Arizmendi \cite{Ariz2012} observe that we can calculate the distribution of $xy$ using the idea in \cite{RS2007}.
  We present the details of their work for reader's convenience.

  We first recall some definitions.
  For $\mu\in\MT\cap\Mstar$, we have $\psi_{\mu}(0)=0$ and $\psi_{\mu}'(0)\neq 0$. It follows that there
  exists a function $\chi_{\mu}(z)$, which is analytic in a neighborhood of zero, such that
     \begin{equation}\nonumber
        \psi_{\mu}(\chi_{\mu}(z))=\chi_{\mu}(\psi_{\mu}(z))=z
     \end{equation}
holds for sufficiently small $z$. The usual $\mathcal{S}$-transform is defined by
    \begin{equation}\nonumber
     \mathcal{S}_{\mu}(z)=\frac{z+1}{z}\chi_{\mu}(z).
    \end{equation}
   We then have
     \begin{equation} \nonumber
     \Sigma_{\mu}(z)=\mathcal{S}_{\mu}\left(\frac{z}{1-z} \right), \,\eta_{\mu}^{-1}(z)=\chi\left(\frac{z}{1-z}\right).
      \end{equation}
    We set
     \begin{equation}\nonumber
    \MT^k=\left\{\mu\in\MT: \int_{\mathrm{T}}t^n d\mu(t)=0\, \text{for} \,1\leq n < k,\,
\text{and} \int_{\mathrm{T}}t^k d\mu(t)\neq 0  \right\}.
     \end{equation}
Then for $\mu\in\MT^k$, we have that
  \begin{equation}
    \begin{cases}
       \psi_{\mu}'(0)=\cdots\psi_{\mu}^{(k-1)}(0)=0=\eta_{\mu}'(0)=\cdots\eta_{\mu}^{(k-1)}(0),\\
       \psi_{\mu}^{(k)}(0)\neq 0 \,\,\text{and}\,\,\eta_{\mu}^{(k)}\neq 0.
    \end{cases}
  \end{equation}
For $\mu\in\MT^k, \nu\in\MT\cap\Mstar$, from the definition of free independence, we deduce that $\mu\boxtimes\nu\in\MT^k$.

We recall the following classical result in complex analysis
(see, for example, \cite{Hille}).
\begin{thm}\label{inverseF}
 If $f(z)$ is holomorphic in $|z|<R$, and suppose that
 \begin{equation}\nonumber
   f(0)=f'(0)=\cdots=f^{(k-1)}(0)=0, \,\,\,\, f^{(k)}\neq 0,
 \end{equation}
 then for small values of $w\neq 0$ the equation
 \begin{equation}\nonumber
   f(z)=w
 \end{equation}
 has $k$ roots $z_1(w)$, $\cdots$, $z_k(w)$, which tend to zero when $w$ tends to zero.
 Moreover, there exists a function $g(w)$, holomorphic for $w$ sufficiently
 small with $g(0)=0$ and $g'(0)\neq 0$, such that for any fixed small values $w\neq 0$,
  \begin{equation}\nonumber
    z_j(w)=g(\omega^j w^{1/k}), \,\,\, \omega=e^{2\pi i/k},\,\,\,0\leq \arg{\omega^{1/k}}<\frac{2\pi}{k},
   \end{equation}
 if we put those roots in a certain order.
\end{thm}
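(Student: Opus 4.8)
The statement is the classical Weierstrass-type description of the local behaviour of a holomorphic map with a zero of order $k$, so the plan is to reduce it to the ordinary inverse function theorem by extracting a $k$-th root. First I would use the hypothesis $f(0)=\cdots=f^{(k-1)}(0)=0$, $f^{(k)}(0)\neq 0$ to factor $f(z)=z^{k}h(z)$ on $|z|<R$, where $h$ is holomorphic and $h(0)=f^{(k)}(0)/k!\neq 0$. Shrinking to a disk $|z|<r$ on which $h$ is nonvanishing (possible by continuity), and using that such a disk is simply connected, $h$ admits a holomorphic $k$-th root there: there is a holomorphic $\phi$ with $\phi(0)^{k}=h(0)$ and $h(z)=\phi(z)^{k}$. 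Hence $f(z)=\bigl(z\phi(z)\bigr)^{k}$ on $|z|<r$.

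Next I would set $\zeta(z):=z\phi(z)$. This is holomorphic near $0$ with $\zeta(0)=0$ and $\zeta'(0)=\phi(0)\neq 0$, so by the inverse function theorem for holomorphic maps there is a holomorphic function $g$, defined on a neighbourhood of $0$, with $g(0)=0$, $g'(0)=1/\phi(0)\neq 0$, and $\zeta(g(w'))=w'$ for all small $w'$. Now for small $w\neq 0$ the equation $f(z)=w$ is equivalent to $\zeta(z)^{k}=w$, i.e.\ $\zeta(z)$ is one of the $k$ values $\omega^{j}w^{1/k}$, $j=1,\dots,k$, where $\omega=e^{2\pi i/k}$ and $w^{1/k}$ is a fixed branch with $0\le\arg w^{1/k}<2\pi/k$. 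Applying $g$ gives the $k$ roots $z_{j}(w)=g(\omega^{j}w^{1/k})$, and since $g$ is continuous with $g(0)=0$ each $z_{j}(w)\to 0$ as $w\to 0$. That these are exactly the solutions (with the right multiplicity, and that they are distinct for $w\neq 0$ small) follows because $\zeta$ is a local biholomorphism and the $\omega^{j}w^{1/k}$ are distinct for $w\neq 0$.

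I do not expect a serious obstacle here; the result is standard and the argument above is essentially complete. The only points demanding a little care are (i) justifying the holomorphic $k$-th root of $h$, which is exactly where the simple connectivity of a small disk around $0$ is used, and (ii) bookkeeping for the branch of $w^{1/k}$ and the ordering of the roots, so that the indexing $z_{j}(w)=g(\omega^{j}w^{1/k})$ is unambiguous once a branch is fixed. Both are routine, so the "hard part" is merely being precise about these normalisations rather than any real analytic difficulty. If one prefers to avoid invoking the holomorphic inverse function theorem as a black box, an alternative is to prove the $k$-valued local inversion directly via the argument principle (counting zeros of $f(z)-w$ in a small disk), but the root-extraction route above is the cleanest and is the one I would write up.
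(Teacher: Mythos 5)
Your proof is correct; the factorization $f(z)=\bigl(z\phi(z)\bigr)^k$ via a holomorphic $k$-th root of $h$ on a small disk, followed by the holomorphic inverse function theorem applied to $\zeta(z)=z\phi(z)$, is the standard argument for this classical fact. The paper does not prove the theorem at all --- it only cites it (to Hille) as a known result --- and your write-up supplies exactly the canonical proof that the cited reference uses, with the relevant care points (existence of the $k$-th root, branch bookkeeping, and accounting for all $k$ roots via the local biholomorphism) correctly identified.
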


\begin{rmk}\label{rmk:4}
\emph{The converse of Theorem \ref{inverseF} is also true. More precisely, if we
are given a function $g(w)$ which is holomorphic for $w$ sufficiently small with $g(0)=0$ and $g'(0)\neq 0$, and for $j=1, \cdots, k$, let
  \begin{equation}\nonumber
    z_j(w)=g(\omega^j w^{1/k}), \,\,\, \omega=e^{2\pi i/k},\,\,\,0\leq \arg{\omega^{1/k}}<\frac{2\pi}{k},
   \end{equation}
then $z_1(w), \cdots, z_k(w)$ are the roots of the equation
 \begin{equation}\nonumber
   F^k(z)=w,
 \end{equation}
where $F$ is a holomorphic function defined in a neighborhood of the zero such that $F(g(w))=w$. }
\end{rmk}

For $j=1,\cdots, k$, denote $D_{j,r}=\{\omega^j z: 0\leq \arg(z)<2\pi/k, |z|<r \}$.
We record the following result for convenience.
\begin{prop}\label{identityINV}
  Under the assumption of Theorem \ref{inverseF}, we have that $z_j\left(f(z)\right)=z$ for
  $z\in g(D_{j, r})$ for $r$ sufficiently small.
\end{prop}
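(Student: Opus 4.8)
The plan is to return to the factorization of $f$ that underlies the proof of Theorem~\ref{inverseF} and then to verify the identity by a direct computation. Recall that the function $g$ in Theorem~\ref{inverseF} is obtained as follows: since $f(0)=\cdots=f^{(k-1)}(0)=0$ and $f^{(k)}(0)\neq 0$, one may write $f(z)=z^k\phi(z)$ with $\phi$ holomorphic and $\phi(0)\neq 0$ on a small disk about the origin; choosing a holomorphic branch $\psi$ of $\phi^{1/k}$ there and setting $F(z)=z\psi(z)$, one gets $F(0)=0$, $F'(0)=\psi(0)\neq 0$, and
\begin{equation}\nonumber
  F(z)^k=z^k\psi(z)^k=z^k\phi(z)=f(z)
\end{equation}
in a neighborhood of $0$; then $g$ is the local inverse of $F$, so that $g(F(z))=z$ and $F(g(w))=w$ for $z,w$ small. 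First I would fix $r>0$ small enough that $g$ is defined and one-to-one on $\{|\zeta|<r\}$, that $g(\{|\zeta|<r\})\subseteq\{|z|<R\}$ (the disk where $f$ is holomorphic), and that the root formula $z_j(w)=g(\omega^j w^{1/k})$ of Theorem~\ref{inverseF} holds for $|w|<r^k$.

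Next I would take an arbitrary $z\in g(D_{j,r})$ and write $z=g(\zeta)$ with $\zeta\in D_{j,r}$, i.e. $\zeta=\omega^j\xi$ for some $\xi$ with $|\xi|<r$ and $0\leq\arg\xi<2\pi/k$. Setting $w:=f(z)$, the factorization gives
\begin{equation}\nonumber
  w=f\bigl(g(\zeta)\bigr)=F\bigl(g(\zeta)\bigr)^k=\zeta^k=\omega^{jk}\xi^k=\xi^k,
\end{equation}
since $\omega^k=1$. Because $0\leq\arg\xi<2\pi/k$, the number $\xi$ is exactly the branch of $w^{1/k}$ picked out in Theorem~\ref{inverseF} by the normalization $0\leq\arg w^{1/k}<2\pi/k$, that is, $w^{1/k}=\xi$. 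Hence
\begin{equation}\nonumber
  z_j\bigl(f(z)\bigr)=z_j(w)=g\bigl(\omega^j w^{1/k}\bigr)=g(\omega^j\xi)=g(\zeta)=z,
\end{equation}
which is the desired identity. (The boundary case $\xi=0$, i.e. $z=0$, is handled separately using $z_j(w)\to 0$ as $w\to 0$, or simply by shrinking $D_{j,r}$ to exclude the origin and invoking continuity.)

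The computation is short; the only real care needed is bookkeeping of the $k$-th-root branch. The main obstacle I expect is matching conventions: one must check that the ordering of the roots $z_1,\dots,z_k$ and the choice $\arg w^{1/k}\in[0,2\pi/k)$ in Theorem~\ref{inverseF} are precisely the ones for which $\xi$ coincides with $w^{1/k}$ when $\zeta=\omega^j\xi\in D_{j,r}$ — equivalently, that $z_j$ is the root associated with the ``$j$-th sheet'' $D_{j,r}$ on which we are working. If one prefers not to reopen the proof of Theorem~\ref{inverseF}, the same conclusion follows from Remark~\ref{rmk:4}: the $g$ produced there has a local inverse $F$ with $F^k=f$, and the argument above then goes through verbatim.
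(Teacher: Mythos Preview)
Your argument is correct. The paper does not supply a proof of this proposition; it is simply ``recorded for convenience'' between Remark~\ref{rmk:4} and Definition~\ref{modified}, with no argument given. So there is no paper proof to compare against, and you have filled in exactly the details the paper omits.

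Your approach is the natural one: recover the factorization $f=F^k$ underlying Theorem~\ref{inverseF} (equivalently, invoke Remark~\ref{rmk:4}), write $z=g(\zeta)$ with $\zeta=\omega^j\xi\in D_{j,r}$, compute $f(z)=F(g(\zeta))^k=\zeta^k=\xi^k$, and then observe that the branch normalization $0\leq\arg w^{1/k}<2\pi/k$ forces $w^{1/k}=\xi$, so that $z_j(f(z))=g(\omega^j\xi)=z$. The caveat you flag about matching the labeling of the $z_j$ to the sectors $D_{j,r}$ is exactly the right point to be careful about, and with the paper's conventions (where $D_{j,r}=\{\omega^j\xi:0\leq\arg\xi<2\pi/k,\ |\xi|<r\}$ and $z_j(w)=g(\omega^j w^{1/k})$) it checks out as you describe.
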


Given $\mu\in\MT^k$, and by Theorem \ref{inverseF}, we know that
there exist $k$ functions represented by the power series in $z^{1/k}$ such that
\begin{equation}\label{eq:chiINV}
  \psi_{\mu}(\chi_{\mu}^{(j)}(z))=z,
\end{equation}
for $z$ sufficiently small. Moreover, there exists a function $g_{\mu}(w)$ holomorphic in
a neighborhood of the zero, such that for $j=1,\cdots, k$,
\begin{equation}\nonumber
  \chi_{\mu}^{(j)}(z)=g_{\mu}(\omega^{j}z^{1/k}),
\end{equation}
where $\omega=e^{2\pi i/k},\,\,\,0\leq \arg{z^{1/k}}<2\pi/k$.

\begin{defn}\label{modified}
Given $\mu\in \MT^k$. Let $\chi_{\mu}^{(j)}$ be the inverse function of $\psi_{\mu}$
in (\ref{eq:chiINV}), the modified $\mathcal{S}$-transform of $\mu$ is $k$ functions
$\mathcal{S}_{\mu}^{(1)}(z), \cdots, \mathcal{S}_{\mu}^{(k)}(z)$, such that
for $j=1,\cdots,k$,
\begin{equation}\nonumber
  \mathcal{S}_{\mu}^{(j)}(z)=\chi_{\mu}^{(j)}(z) \cdot \frac{1+z}{z}.
\end{equation}
\end{defn}

 Given $\mu\in\MT^k$ and $\nu\in\MT\cap\Mstar$, we set
\begin{equation}\nonumber
  \mathcal{S}^{(j)}(z)=\mathcal{S}_{\mu}^{(j)}(z)\cdot \mathcal{S}_{\nu}(z)
\end{equation}
and compute
\begin{equation}\label{eq:3.10}
   \begin{split}
  \chi^{(j)}(z)&=\mathcal{S}^{(j)}(z)\cdot\frac{z}{1+z}\\
            &=\mathcal{S}_{\mu}^{(j)}(z)\cdot \mathcal{S}_{\nu}(z)\cdot\frac{z}{1+z}\\
            &=\chi_{\mu}^{(j)}(z)\cdot \mathcal{S}_{\nu}(z)\\
            &=g_{\mu}(\omega^{j}z^{1/k})\cdot \mathcal{S}_{\nu}(z)\\
            &=g(\omega^{j}z^{1/k}),
\end{split}
\end{equation}
where $g(z)=g_{\mu}(z)\cdot \mathcal{S}_{\nu}(z^k)$ is a function such that $g(0)=0, \,g'(0)\neq 0$.
From Remark \ref{rmk:4}, we deduce that for different $j$, there
exists the same left inverse $\psi$ such that $\psi(\chi^{(j)}(z))=z$.
Therefore, we have the following proposition.
\begin{prop}\label{prop:3.10}
Given $\mu \in \MT^k$ and $\nu \in \MT \cap \Mstar$,
for $1\leq j\leq k$, let
\begin{align}
  \mathcal{S}^{(j)}(z)&=\mathcal{S}_{\mu}^{(j)}(z)\cdot \mathcal{S}_{\nu}(z)\nonumber\\
  \chi^{(j)}(z)&=\mathcal{S}^{(j)}(z)\cdot\frac{z}{1+z}.\nonumber
\end{align}
Then there exists a unique holomorphic function $\psi$ defined in a neighborhood
of the zero such that
\begin{equation}\nonumber
  \psi\bigl((\chi^{(j)})(z)\bigr)=z.
\end{equation}
\end{prop}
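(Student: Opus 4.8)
The plan is to prove Proposition \ref{prop:3.10} by leveraging the computation already carried out in \eqref{eq:3.10} together with the converse of Lagrange's inversion formula recorded in Remark \ref{rmk:4}. The content of the proposition is just that the $k$ branch functions $\chi^{(j)}$, each of which is a power series in $z^{1/k}$, arise as the $k$ roots of a single holomorphic equation $\psi(w)^{k} = z$, so that they all share a common left inverse $\psi$; the uniqueness of $\psi$ is then immediate since a holomorphic function vanishing at $0$ with nonzero derivative has a locally unique inverse.

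First I would set $g_{\mu}$ to be the holomorphic function guaranteed by Theorem \ref{inverseF} applied to $\psi_{\mu}$ (legitimate since $\mu\in\MT^k$ gives $\psi_{\mu}(0)=\cdots=\psi_{\mu}^{(k-1)}(0)=0$ and $\psi_{\mu}^{(k)}(0)\neq 0$), so that $\chi_{\mu}^{(j)}(z)=g_{\mu}(\omega^{j}z^{1/k})$ with $\omega=e^{2\pi i/k}$. Next I would define $g(z):=g_{\mu}(z)\cdot\mathcal{S}_{\nu}(z^{k})$ and verify the two normalization properties: $g(0)=g_{\mu}(0)\mathcal{S}_{\nu}(0)=0$, and $g'(0)=g_{\mu}'(0)\mathcal{S}_{\nu}(0)\neq 0$ because $g_{\mu}'(0)\neq 0$ (from Theorem \ref{inverseF}) and $\mathcal{S}_{\nu}(0)=\chi_{\nu}'(0)=1/\psi_{\nu}'(0)\neq 0$ (here one uses $\nu\in\MT\cap\Mstar$). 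Then the chain of equalities in \eqref{eq:3.10} shows $\chi^{(j)}(z)=g(\omega^{j}z^{1/k})$ for $j=1,\dots,k$, where one should note that $\mathcal{S}_{\nu}$ is single-valued near $0$ so that $\mathcal{S}_{\nu}\bigl((\omega^{j}z^{1/k})^{k}\bigr)=\mathcal{S}_{\nu}(z)$ independently of the branch of $z^{1/k}$ chosen.

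Having exhibited $\chi^{(j)}(z)=g(\omega^{j}z^{1/k})$ with $g$ satisfying the hypotheses of Remark \ref{rmk:4}, I would invoke that remark directly: there exists a holomorphic function $F$ near $0$ with $F(g(w))=w$, and the $\chi^{(j)}(z)$ are precisely the $k$ roots of $F(w)^{k}=z$. Setting $\psi:=F^{k}$, we get $\psi(\chi^{(j)}(z))=F\bigl(g(\omega^{j}z^{1/k})\bigr)^{k}=(\omega^{j}z^{1/k})^{k}=z$ for each $j$, which is the claimed identity. For uniqueness, suppose $\tilde\psi$ is another holomorphic function near $0$ with $\tilde\psi(\chi^{(1)}(z))=z$; since $\chi^{(1)}$ is a continuous map sending a punctured neighborhood of $0$ onto a punctured neighborhood of $0$ (because $g'(0)\neq 0$ makes $w\mapsto g(w)$ a local homeomorphism and $z\mapsto \omega z^{1/k}$ is surjective onto a sector), the values $\chi^{(1)}(z)$ fill an open set accumulating at $0$, forcing $\tilde\psi=\psi$ by the identity theorem.

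The main obstacle, such as it is, is purely bookkeeping about the multivalued symbol $z^{1/k}$: one must be careful that $g_{\mu}$ and $F$ (hence $\psi=F^{k}$) are genuinely single-valued holomorphic functions on a full neighborhood of $0$, while $\chi^{(j)}$ is single-valued only after fixing the branch $0\leq\arg z^{1/k}<2\pi/k$, and that the product defining $g$ respects this. There is no hard analysis here beyond Theorem \ref{inverseF}, Remark \ref{rmk:4}, and the observation that $\mathcal{S}_{\nu}(0)\neq 0$; the proposition is essentially a clean repackaging of the Lagrange-inversion discussion, and the proof should be only a few lines.
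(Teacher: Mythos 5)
Your proposal is correct and follows essentially the same route as the paper: the paper's proof consists precisely of the computation in (\ref{eq:3.10}) showing $\chi^{(j)}(z)=g(\omega^{j}z^{1/k})$ with $g(z)=g_{\mu}(z)\mathcal{S}_{\nu}(z^{k})$ satisfying $g(0)=0$, $g'(0)\neq 0$, followed by an appeal to Remark \ref{rmk:4} to produce the common left inverse $\psi$. Your additional checks (that $\mathcal{S}_{\nu}(0)=1/\psi_{\nu}'(0)\neq 0$, and the identity-theorem argument for uniqueness) are correct details the paper leaves implicit.
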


The following result is due to Raj Rao and Speicher \cite{RS2007} and Arizmendi \cite{Ariz2012}.
\begin{thm}\label{thm:3.11}
Given $\mu\in \MT^k, \nu\in\MT\cap\Mstar$,
we have that
\begin{equation}\label{equation}
 \mathcal{S}_{\mu\boxtimes\nu}^{(j)}(z)=\mathcal{S}_{\mu}^{(j)}(z)\cdot\mathcal{S}_{\nu}(z), \,\,
 j=1,\cdots,k,
\end{equation}
where the modified $\mathcal{S}$-transforms are listed in a certain order.
\end{thm}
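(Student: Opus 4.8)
The plan is to reduce the multiplicative identity \eqref{equation} to the factorization of the $\chi$-functions already established in Proposition \ref{prop:3.10}, and then to identify the root that produces each branch of the modified $\mathcal{S}$-transform of $\mu\boxtimes\nu$ via the multi-valued inverse function theory of Theorem \ref{inverseF} and Proposition \ref{identityINV}. Since $\mu\in\MT^k$ and $\nu\in\MT\cap\Mstar$, we recorded above that $\mu\boxtimes\nu\in\MT^k$; thus $\psi_{\mu\boxtimes\nu}$ has a zero of order exactly $k$ at the origin, and by Theorem \ref{inverseF} the equation $\psi_{\mu\boxtimes\nu}(w)=z$ has $k$ small roots $\chi_{\mu\boxtimes\nu}^{(1)}(z),\dots,\chi_{\mu\boxtimes\nu}^{(k)}(z)$, each of the form $g_{\mu\boxtimes\nu}(\omega^{j}z^{1/k})$ for a single holomorphic germ $g_{\mu\boxtimes\nu}$ with $g_{\mu\boxtimes\nu}(0)=0$, $g_{\mu\boxtimes\nu}'(0)\neq 0$. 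The modified $\mathcal{S}$-transform branches are then $\mathcal{S}_{\mu\boxtimes\nu}^{(j)}(z)=\chi_{\mu\boxtimes\nu}^{(j)}(z)\cdot\frac{1+z}{z}$.

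First I would set up the computation exactly as in \eqref{eq:3.10}, but now applied to $\mu\boxtimes\nu$ in place of the formal product. Define $\mathcal{S}^{(j)}(z)=\mathcal{S}_{\mu}^{(j)}(z)\cdot\mathcal{S}_{\nu}(z)$ and $\chi^{(j)}(z)=\mathcal{S}^{(j)}(z)\cdot\frac{z}{1+z}=\chi_{\mu}^{(j)}(z)\cdot\mathcal{S}_{\nu}(z)=g(\omega^{j}z^{1/k})$ with $g(z)=g_{\mu}(z)\cdot\mathcal{S}_{\nu}(z^{k})$, so that $g(0)=0$ and $g'(0)\neq 0$. By Proposition \ref{prop:3.10}, these $k$ functions share a common holomorphic left inverse $\psi$ near $0$, characterized by $\psi(\chi^{(j)}(z))=z$. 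The crux is to show that this common left inverse is exactly $\psi_{\mu\boxtimes\nu}$, for then uniqueness of the $k$ small roots in Theorem \ref{inverseF} forces $\{\chi^{(j)}\}=\{\chi_{\mu\boxtimes\nu}^{(j)}\}$ as sets (after possibly relabeling), and hence $\{\mathcal{S}^{(j)}\}=\{\mathcal{S}_{\mu\boxtimes\nu}^{(j)}\}$, which is \eqref{equation} up to the stated reordering.

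To identify $\psi$ with $\psi_{\mu\boxtimes\nu}$, I would argue at the level of the (single-valued) $\Sigma$-transforms by passing through the principal branch $j=k$ (the branch on which $z^{1/k}$ is taken with argument in $[0,2\pi/k)$ so that $z\mapsto z^{1/k}$ restricted to a small positive real interval is real-analytic). On that branch, combining $\chi_{\mu}^{(k)}$ and $\mathcal{S}_{\nu}$ reproduces the classical relation $\mathcal{S}_{\mu\boxtimes\nu}(z)=\mathcal{S}_{\mu}(z)\mathcal{S}_{\nu}(z)$ in the usual form once one restricts to a sector where everything is single-valued; concretely, substituting $\eta_{\mu\boxtimes\nu}^{-1}$, using $\eta_{\mu}(z)=\eta_{\mu\boxtimes\nu}(z\Sigma_{\nu}(\eta_{\mu}(z)))$ from Lemma \ref{subLemma} (valid for $\mu$ with nonzero mean — here one applies it with the roles adjusted, or re-derives the analogue directly from $\Sigma_{\mu\boxtimes\nu}=\Sigma_{\mu}\Sigma_{\nu}$ read through $\chi$'s), yields $\psi(w)=\psi_{\mu\boxtimes\nu}(w)$ on a sector, and then analytic continuation extends the equality to a full neighborhood of $0$. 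Once the principal branch is pinned down, the remaining branches are obtained by the substitution $z^{1/k}\mapsto\omega^{j}z^{1/k}$, which is precisely the permutation action appearing in Theorem \ref{inverseF}, so they match automatically.

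The main obstacle I anticipate is the bookkeeping of branches: Lemma \ref{subLemma} and the derivation of $\Sigma_{\mu\boxtimes\nu}=\Sigma_{\mu}\Sigma_{\nu}$ were established for measures with \emph{nonzero} mean, where all inverse functions are single-valued near $0$, whereas here $\psi_{\mu}$ and $\psi_{\mu\boxtimes\nu}$ vanish to order $k$. One must therefore either (i) work on a sliced domain $g_{\mu\boxtimes\nu}(D_{j,r})$ where, by Proposition \ref{identityINV}, the $j$-th root function is a genuine local inverse, and transport the algebraic identity $\Sigma_{\mu\boxtimes\nu}=\Sigma_{\mu}\Sigma_{\nu}$ there; or (ii) perform a change of variable $z=w^{k}$ at the outset, replacing $\psi_{\mu}(z)$ by a function with a simple zero and making all transforms single-valued, prove the product formula in the classical way, and then change back. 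Option (ii) is cleaner: after the substitution, the content of \eqref{equation} becomes the statement that $g(w)=g_{\mu}(w)\mathcal{S}_{\nu}(w^{k})$ is the germ whose $k$-fold symmetrization inverts $\psi_{\mu\boxtimes\nu}$, and this is verified by the classical $\mathcal{S}$-transform identity applied after desingularization. Handling the precise sectors on which "modified $\mathcal{S}$-transforms are listed in a certain order" is the only genuinely delicate point, and it is exactly the place where Remark \ref{rmk:4} (the converse of Theorem \ref{inverseF}) does the work: knowing the germ $g$ already determines the unique left inverse $\psi$ and hence the unordered $k$-tuple of branches, so no finer matching is needed beyond fixing one branch.
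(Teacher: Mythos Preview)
The paper does not give its own proof of Theorem \ref{thm:3.11}; it attributes the result to Raj Rao--Speicher and Arizmendi and simply quotes it. So there is no in-paper argument to compare against, and your proposal must stand on its own.

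Your overall architecture is sound: once one shows that the common left inverse $\psi$ produced by Proposition \ref{prop:3.10} coincides with $\psi_{\mu\boxtimes\nu}$, the uniqueness of the $k$ small roots in Theorem \ref{inverseF} forces $\{\chi^{(j)}\}=\{\chi_{\mu\boxtimes\nu}^{(j)}\}$ as unordered $k$-tuples, which is exactly the statement. The genuine gap is in the identification $\psi=\psi_{\mu\boxtimes\nu}$. Both of your suggested routes for this step are problematic. Invoking $\Sigma_{\mu\boxtimes\nu}=\Sigma_{\mu}\Sigma_{\nu}$ ``read through $\chi$'s'' is circular: that identity \emph{is} Theorem \ref{thm:3.11}. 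And Lemma \ref{subLemma} as stated requires $\mu\in\Mstar$, which fails here; ``adjusting the roles'' does not help, since swapping $\mu$ and $\nu$ would require $\Sigma_{\mu}$, which does not exist single-valued. Your option (ii) is too vague to constitute an argument.

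There is, however, a clean fix that stays within the paper's toolbox. Extend the relation $\eta_{\mu}(z)=\eta_{\mu\boxtimes\nu}\bigl(z\,\Sigma_{\nu}(\eta_{\mu}(z))\bigr)$ to $\mu\in\MT^k$ by approximating $\mu$ weakly by $\mu_n\in\MT\cap\Mstar$ (exactly as the paper does later, in the proof of Corollary \ref{cor:3.13}); this step uses only Lemma \ref{subLemma} and weak continuity, not Theorem \ref{thm:3.11}. Then for $w=\chi_{\mu}^{(j)}(z)$ one has $\psi_{\mu}(w)=z$ and $\chi^{(j)}(z)=w\,\mathcal{S}_{\nu}(\psi_{\mu}(w))$, so the extended relation gives $\psi_{\mu\boxtimes\nu}(\chi^{(j)}(z))=\psi_{\mu}(w)=z$, identifying $\psi=\psi_{\mu\boxtimes\nu}$ directly. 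This replaces your hand-waving at step 3 with a two-line computation. For comparison, the proofs in the cited references proceed instead via formal power series and moment--cumulant relations, bypassing analytic subordination entirely.
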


Because of Proposition \ref{prop:3.10} and Theorem \ref{thm:3.11}, for fixed $\mu\in\MT^k$ and $\nu\in\MT\cap\Mstar$, we denote
\begin{equation}\label{eq:3.12}
   \psi(z)=\pp{\mu\boxtimes\nu}(z), \text{and}\, \chi^{(j)}(z)=\chi^{(j)}_{\mu\boxtimes\nu}(z),
\end{equation}
and we also denote $g(z)=g_{\mu}(z)\cdot \mathcal{S}_{\nu}(z^k)$ as in (\ref{eq:3.10}).

Given $\mu\in\MT^k, \nu\in\MT\cap\Mstar$, we set $\iota_{\mu}^{(j)}(z)=\chi_{\mu}^{(j)}(z/(1-z)) $ and
  $\iota_{\nu}(z)=\chi_{\nu}(z/(1-z))$. Theorem \ref{thm:3.11} implies that
  \begin{equation}\label{plug}
     \chi_{\mu\boxtimes\nu}^{(j)}(z)=\chi_{\mu}^{(j)}(z)\cdot\chi_{\nu}(z)\cdot\frac{1+z}{z}.
  \end{equation}
   We also have that $\chi_{\mu}^{(j)}(z)=g_{\mu}(\omega^j z^{1/k})$ and $\chi_{\mu\boxtimes\nu}^{(j)}(z)=g_{\mu}(\omega^j z^{1/k})\cdot \mathcal{S}_{\nu}(z)=g(\omega^j z^{1/k})$.
  Substituting $z$ by $\pp{\mu\boxtimes\nu}(z)$ in (\ref{plug}), and applying Proposition \ref{identityINV},
  we find that
    \begin{equation}\nonumber
      z=\chi_{\mu\boxtimes\nu}^{(j)}(\pp{\mu\boxtimes\nu}(z))=\chi_{\mu}^{(j)}(\pp{\mu\boxtimes\nu}(z))\cdot\chi_{\nu}(\pp{\mu\boxtimes\nu}(z))\cdot\frac{1+\pp{\mu\boxtimes\nu}(z)}{\pp{\mu\boxtimes\nu}(z)},
    \end{equation}
  where $z\in g(D_{j, r})$ for $r$ sufficiently small. We thus have that
    \begin{equation}\label{equFORsub}
      z \et{\mu\boxtimes\nu} =  \iota_{\mu}^{(j)}(\et{\mu\boxtimes\nu})\cdot \iota_{\nu}(\et{\mu\boxtimes\nu})
    \end{equation}
  holds in the same domain.

  We can now utilize the argument in \cite{BB2007new} to prove the existence of subordination function of $\et{\mu\boxtimes\nu}$ with respect to $\et{\mu}$ for $\mu\in\MT^k, \nu\in\MT\cap\Mstar$.
 Note that part of the following result is known in \cite{Biane1998}.
 \begin{thm}\label{BBnew}%%%$\mu\in\MT^k, \nu\in\MT\cap\Mstar$
  Given $\mu\in\MT^k, \nu\in\MT\cap\Mstar$, there exists two unique analytic functions $\omega_1, \omega_2: \mathbb{D}\rightarrow \mathbb{D}$ such that
   \begin{enumerate}[$(1)$]
     \item $\omega_1(0)=\omega_2(0)=0$;
     \item $\et{\mu\boxtimes\nu}(z)=\et{\mu}(\omega_1(z))=\et{\nu}(\omega_2(z))$,
     \item $\omega_1(z)\omega_2(z)=z\et{\mu\boxtimes\nu}(z)$ for all $z\in\mathbb{D}$.
   \end{enumerate}
 \end{thm}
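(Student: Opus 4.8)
The plan is to adapt the fixed-point argument of Belinschi--Bercovici (Proposition 3.5 in \cite{BB2007new}, used for measures on $\mathrm{T}$ with nonzero mean) to the present situation, with equation (\ref{equFORsub}) playing the role of the multiplicative functional equation. First I would work in a small disk $D_r = \{|z| < r\}$ around the origin, where (\ref{equFORsub}) has been established: there,
\begin{equation}\nonumber
  z\,\et{\mu\boxtimes\nu}(z) = \iota_{\mu}^{(j)}(\et{\mu\boxtimes\nu}(z))\cdot \iota_{\nu}(\et{\mu\boxtimes\nu}(z)).
\end{equation}
Reading off the natural candidates $\omega_1(z) := \iota_{\nu}(\et{\mu\boxtimes\nu}(z))$ and $\omega_2(z) := \iota_{\mu}^{(j)}(\et{\mu\boxtimes\nu}(z))$ near $0$, one checks directly from the definitions of $\iota_{\nu}$ and $\iota_{\mu}^{(j)}$ that $\et{\mu}(\omega_1(z)) = \et{\nu}(\omega_2(z)) = \et{\mu\boxtimes\nu}(z)$ and that $\omega_1(0)=\omega_2(0)=0$, $\omega_1(z)\omega_2(z) = z\,\et{\mu\boxtimes\nu}(z)$, giving (1), (2), (3) locally. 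The whole problem is then to extend $\omega_1,\omega_2$ analytically from $D_r$ to all of $\mathbb{D}$ while preserving these three properties, and to prove uniqueness.

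For the extension I would set up a reproducing/fixed-point scheme in the spirit of Biane and Belinschi--Bercovici. Define $F_j(z) := z\,\et{\mu\boxtimes\nu}(z)/\iota_{\nu}(z)$ (an analytic self-map-type expression near $0$); the relation $\omega_2(z)\,\et{\mu\boxtimes\nu}(\omega_2^{-1}\cdots)$-type manipulations show that $\omega_2$ must satisfy a functional equation of the form $\omega_2 = h_z(\omega_2)$ where, for each fixed $z\in\mathbb{D}$, $h_z : \mathbb{D}\to\mathbb{D}$ is analytic and (by the Denjoy--Wolff / Schwarz--Pick machinery, exactly as in \cite{Biane1998, BB2007new}) has an attracting fixed point. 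Concretely, using $\et{\mu\boxtimes\nu}(z) = \et{\mu}(\omega_2(z))$ together with $\omega_1\omega_2 = z\,\et{\mu\boxtimes\nu}$ and $\et{\mu\boxtimes\nu}(z)=\et{\nu}(\omega_1(z))$, eliminate $\omega_1$ to get a single self-consistent equation for $\omega_2$ as a function of $z$; iterating $h_z$ from $0$ produces $\omega_2(z)\in\mathbb{D}$ for every $z\in\mathbb{D}$, analytic in $z$ by Vitali/normal-families, and agreeing with the local solution on $D_r$ by the identity theorem. Then define $\omega_1(z) := z\,\et{\mu\boxtimes\nu}(z)/\omega_2(z)$; since near $0$ both $\et{\mu\boxtimes\nu}(z)$ and $\omega_2(z)$ vanish to order $k$ in $z$ (because $\mu\boxtimes\nu\in\MT^k$ and $\omega_2 = \iota_\mu^{(j)}\circ\et{\mu\boxtimes\nu}$ behaves like $z$ composed with an order-$k$ zero), the quotient extends analytically across $0$, and one reads off $\omega_1(0)=0$, $|\omega_1|<1$ from Proposition \ref{etaD} applied to $\et{\mu\boxtimes\nu}$ and the Schwarz lemma. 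Properties (1)--(3) then hold on $D_r$ and propagate to $\mathbb{D}$ by analytic continuation.

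For uniqueness I would argue as follows: if $\omega_1,\omega_2$ and $\tilde\omega_1,\tilde\omega_2$ are two pairs satisfying (1)--(3), then from (2) and (3), $\omega_2$ solves the fixed-point equation $\omega_2 = h_z(\omega_2)$ derived above, and likewise $\tilde\omega_2$; since for each $z$ the map $h_z$ is a strict contraction in the hyperbolic metric of $\mathbb{D}$ (or has a unique fixed point in $\mathbb{D}$, the case $h_z$ an elliptic/parabolic automorphism being excluded because $h_z(0)\neq 0$ in general and $h_z'$-estimates, exactly as in \cite{BB2007new}), the fixed point is unique, so $\omega_2=\tilde\omega_2$ and then $\omega_1 = z\et{\mu\boxtimes\nu}/\omega_2 = \tilde\omega_1$. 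The symmetric roles of $\mu$ and $\nu$ give the analogous statement for $\omega_1$.

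The main obstacle I anticipate is the analytic continuation step, specifically verifying that the iteration map $h_z$ genuinely sends $\mathbb{D}$ into $\mathbb{D}$ (not merely near $0$) and is a hyperbolic contraction \emph{uniformly enough} to invoke normal families — here one must use that $\mu\in\MT^k$ forces the correct order-$k$ vanishing behavior so that the various quotients like $z\et{\mu\boxtimes\nu}(z)/\omega_2(z)$ and the expressions defining $h_z$ are bounded by $1$ in modulus. This is exactly the place where the modified $\mathcal{S}$-transform (Definition \ref{modified}, Theorem \ref{thm:3.11}) and the branch bookkeeping of Theorem \ref{inverseF}/Remark \ref{rmk:4} do real work: the factorization $g(z) = g_\mu(z)\cdot\mathcal{S}_\nu(z^k)$ with $g(0)=0$, $g'(0)\neq 0$ is what guarantees a single well-defined left inverse $\psi = \pp{\mu\boxtimes\nu}$ (Proposition \ref{prop:3.10}), and hence a single candidate $\et{\mu\boxtimes\nu}$, so that the $k$-fold branch ambiguity of Example \ref{example:3.5} is confined to the choice of index $j$ and does not obstruct the construction of \emph{one} analytic pair $(\omega_1,\omega_2)$ on all of $\mathbb{D}$. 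Once the self-map property is in hand, the rest follows the template of \cite{Biane1998, BB2007new} essentially verbatim.
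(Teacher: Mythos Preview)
Your plan is essentially the paper's own proof: define $\omega_1,\omega_2$ locally via the branch inverses $\iota_\mu^{(j)},\iota_\nu$ composed with $\et{\mu\boxtimes\nu}$, obtain the product relation from (\ref{equFORsub}), then feed this into the Denjoy--Wolff iteration of \cite{BB2007new} to extend to $\mathbb{D}$. Two points deserve cleaning up.

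First, your indices are swapped and then used inconsistently. With $\omega_1:=\iota_\nu(\et{\mu\boxtimes\nu})$ you get $\et{\nu}(\omega_1)=\et{\mu\boxtimes\nu}$, not $\et{\mu}(\omega_1)=\et{\mu\boxtimes\nu}$; two lines later you correctly invoke $\et{\mu}(\omega_2)=\et{\mu\boxtimes\nu}$, so your own text disagrees with itself. The paper sets $\omega_1=\iota_\mu^{(j)}(\et{\mu\boxtimes\nu})$, $\omega_2=\iota_\nu(\et{\mu\boxtimes\nu})$. Relatedly, your vanishing-order claim is off: $\iota_\mu^{(j)}$ behaves like $z^{1/k}$ near $0$, so $\iota_\mu^{(j)}\circ\et{\mu\boxtimes\nu}$ vanishes to order $1$, not $k$; the quotient $z\et{\mu\boxtimes\nu}/\omega$ is still analytic, but for the right reason.

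Second, the paper makes the iteration map completely explicit where you leave $h_z$ vague. Writing $\et{\mu}(z)=zf_1(z)$, $\et{\nu}(z)=zf_2(z)$ with $f_1,f_2:\mathbb{D}\to\mathbb{D}$, the product relation plus (2) give the coupled system $\omega_1(z)=zf_2(\omega_2(z))$ and $\omega_2(z)=zf_1(\omega_1(z))$, and this is exactly the input the \cite{BB2007new} argument needs; once written this way the self-map property and the normal-families step are immediate, with no separate analysis of orders or boundedness required. For uniqueness the paper does something simpler than your fixed-point argument: since $\et{\nu}'(0)\neq 0$, condition (2) alone determines $\omega_2$ near $0$ and hence globally by the identity theorem; then (3) pins down $\omega_1$.
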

 \begin{proof}
  Since $\et{\mu}(0)=0, \et{\nu}(0)=0$, we can write $\et{\mu}(z)=z f_1(z), \et{\nu}(z)=z f_2(z)$
  for two analytic functions $f_1, f_2: \mathbb{D}\rightarrow\mathbb{D}$. Fix $1\leq j\leq k$, set $\omega_1(z)=\iota_{\mu}^{(j)}(\et{\mu\boxtimes\nu}(z))$, $\omega_2(z)=\iota_{\nu}(\et{\mu\boxtimes\nu}(z))$ defined in $g(D_{j,r})$ for $r$ sufficiently small.

  By (\ref{equFORsub}), we have that
     \begin{equation}\nonumber
        z\et{\mu\boxtimes\nu}(z)=\omega_1(z)\omega_2(z),
     \end{equation}
 holds for $z\in g(D_{j,r})$. We thus obtain that
          \begin{equation}\nonumber
             \omega_1(z)=\frac{z\et{\mu\boxtimes\nu}}{\omega_2(z)}=\frac{z\et{\nu}(\omega_2(z))}{\omega_2(z)}=zf_2(\omega_2(z)).
          \end{equation}
   Similarly, we have that $\omega_2(z)=zf_1(\omega_1(z))$ for $z\in g(D_{j,r})$.
   Regarding $\omega_1(z), \omega_2(z)$ as Denjoy-Wolff points, the same argument in \cite{BB2007new} implies $\omega_1,\omega_2$ can be extended analytically to $\mathbb{D}$. By the uniqueness of Denjoy-Wolff points, $\omega_1, \omega_2$ does not depend on the choice of $j$.

   By the definitions of $\iota_{\mu}^{(j)}, \iota_{\nu}$, we have that $\et{\mu}(\omega_1(z))=\et{\nu}(\omega_2(z))=\et{\mu\boxtimes\nu}$ for $z$ in $g(D_{j,r})$. Thus (2) and (3) hold by analytic continuation. Since $\et{\nu}'(0)\neq 0$, $\et{\nu}$ is locally invertible near the origin and therefore $\omega_2$ is unique. Finally (3) implies the uniqueness of $\omega_1$.
 \end{proof}

Since $\mu \in\MT^k$ and  $\mu\boxtimes\nu \in \MT^k$, we have that $\omega_1'(0)\neq 0$, where $\omega_1$ is given in Theorem \ref{BBnew}.
 \begin{defn}
   For $\mu\in\MT^k, \nu\in\MT\cap\Mstar$, the subordination $\omega_1$ satisfying the relations (1), (2) and (3)
   in Theorem \ref{BBnew} is called the \emph{principal} subordination function of $\et{\mu\boxtimes\nu}$ with respect to $\et{\mu}$. The measure $\rho\in\MT\cap\Mstar$ satisfying $\et{\rho}(z)=\omega_1(z)$ is called the \emph{principal} subordination distribution of $\et{\mu\boxtimes\nu}$ with respect to $\et{\mu}$.
 \end{defn}
 Note that for $\mu,\nu\in\MT\cap\Mstar$, the principal subordination function of $\et{\mu\boxtimes\nu}$ with respect to $\et{\mu}$ is the usual subordination function.

 The following result might be obtained by approximation. We provide a direct proof.
 \begin{cor}\label{cor:3.13}
   Given $\mu\in\MT^k,\nu\in\MT\cap\Mstar$, let $\rho$ be the principal subordination distribution of $\et{\mu\boxtimes\nu}$ with respect to $\et{\mu}$, we have that
     \begin{equation}\nonumber
        \Sigma_{\rho}(z)=\Sigma_{\nu}(\et{\mu}(z)).
     \end{equation}
   In particular, if $\nu\in\mathcal{ID}(\boxtimes, \mathrm{T})$, we have $\rho\in\mathcal{ID}(\boxtimes, \mathrm{T})$.
 \end{cor}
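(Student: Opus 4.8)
The plan is to prove the identity $\Sigma_{\rho}(z)=\Sigma_{\nu}(\et{\mu}(z))$ by a direct computation from the three defining properties of the principal subordination function in Theorem \ref{BBnew}, without appealing to any approximation of $\mu$ by measures in $\MT\cap\Mstar$. Write $\omega_1,\omega_2:\mathbb{D}\to\mathbb{D}$ for the functions produced by Theorem \ref{BBnew}, so that $\et{\rho}=\omega_1$. I would use the following facts recorded earlier: $\et{\nu}^{-1}(z)=z\Sigma_{\nu}(z)$ in a neighborhood of $0$; $\et{\nu}$ is locally invertible at $0$ because $\nu\in\MT\cap\Mstar$; $\omega_1'(0)\neq 0$, so $\omega_1^{-1}$ exists in a neighborhood of $0$; and $\mu\in\MT^k$ forces $\et{\mu}(w)=c_k w^k+O(w^{k+1})$ with $c_k\neq 0$, so $\et{\mu}(w)\neq 0$ for $0<|w|$ small.

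The computation goes as follows. Substitute $z=\omega_1^{-1}(w)$, for $w$ near $0$, into properties (2) and (3) of Theorem \ref{BBnew}. Property (2) in the form $\et{\mu\boxtimes\nu}(z)=\et{\mu}(\omega_1(z))$ gives $\et{\mu\boxtimes\nu}(\omega_1^{-1}(w))=\et{\mu}(w)$, while property (2) in the form $\et{\mu\boxtimes\nu}(z)=\et{\nu}(\omega_2(z))$ then gives $\et{\mu}(w)=\et{\nu}(\omega_2(\omega_1^{-1}(w)))$, hence $\omega_2(\omega_1^{-1}(w))=\et{\nu}^{-1}(\et{\mu}(w))=\et{\mu}(w)\,\Sigma_{\nu}(\et{\mu}(w))$. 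Property (3), $\omega_1(z)\omega_2(z)=z\,\et{\mu\boxtimes\nu}(z)$, evaluated at $z=\omega_1^{-1}(w)$ reads $w\,\et{\mu}(w)\,\Sigma_{\nu}(\et{\mu}(w))=\omega_1^{-1}(w)\,\et{\mu}(w)$. Cancelling the nonzero factor $\et{\mu}(w)$ yields $\omega_1^{-1}(w)=w\,\Sigma_{\nu}(\et{\mu}(w))$, that is, $\Sigma_{\rho}(w)=\et{\rho}^{-1}(w)/w=\Sigma_{\nu}(\et{\mu}(w))$ near $0$; the identity then extends to the common domain of definition by analytic continuation.

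For the last assertion, suppose $\nu\in\mathcal{ID}(\boxtimes,\mathrm{T})$. Since $\nu$ has nonzero mean, the L\'{e}vy--Hin\v{c}in formula (\ref{levyMultiT}) gives $\Sigma_{\nu}=\exp(u)$ with $u$ analytic on $\mathbb{D}$ and $\Re u\geq 0$. Composing with $\et{\mu}:\mathbb{D}\to\mathbb{D}$ (which fixes $0$), the function $u\circ\et{\mu}$ is analytic on $\mathbb{D}$ with $\Re(u\circ\et{\mu})\geq 0$, so by the Riesz--Herglotz representation it again has the form (\ref{levyMultiT}). As $\Sigma_{\rho}=\exp(u\circ\et{\mu})$ near $0$ and $\rho\in\MT\cap\Mstar$ (because $\omega_1'(0)\neq 0$), Theorem 6.7 in \cite{BV1992}---invoked exactly as in the proof of Lemma \ref{rhoInf}---shows $\rho\in\mathcal{ID}(\boxtimes,\mathrm{T})$. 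No step here is a genuine obstacle; the only care needed is bookkeeping of the neighborhoods of $0$ on which the various local inverses ($\omega_1^{-1}$, $\et{\nu}^{-1}$) are defined, together with the harmless cancellation of the factor $\et{\mu}(w)$ which uses $\mu\in\MT^k$.
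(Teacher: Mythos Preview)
Your argument is correct and is a genuinely more direct route than the paper's. The paper first establishes the functional equation $\et{\mu}(z)=\et{\mu\boxtimes\nu}(z\Sigma_{\nu}(\et{\mu}(z)))$ by approximating $\mu$ through measures in $\MT\cap\Mstar$, and then verifies $\Phi(\omega_1(z))=z$ on each sector $g(D_{j,r})$ by tracing the branch structure of the modified $\mathcal{S}$-transform and invoking Proposition~\ref{identityINV}. You bypass all of this by exploiting property~(3) of Theorem~\ref{BBnew}, the product identity $\omega_1(z)\omega_2(z)=z\,\et{\mu\boxtimes\nu}(z)$, which the paper proves but does not use in its own argument for Corollary~\ref{cor:3.13}. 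Combined with the local invertibility of $\et{\nu}$ and $\omega_1$ at the origin, this single algebraic relation pins down $\omega_1^{-1}$ directly, with no need for the approximation step or any bookkeeping of the $k$ branches. Your proof of the infinite-divisibility clause is essentially identical to the paper's. The paper's longer approach does, implicitly, re-derive the subordination equation of Lemma~\ref{subLemma} in the $\MT^k$ setting, which has some independent interest; your approach trades that for brevity and shows that the corollary really follows formally from the three defining properties of the principal subordination function.
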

 \begin{proof}
   By choosing a sequence $\mu_n\in\MT\cap\Mstar$ such that $\mu_n$ converges to $\mu$ weakly, Lemma \ref{subLemma} implies
     \begin{equation}\nonumber
        \et{\mu}(z)=\et{\mu\boxtimes\nu}(z\Sigma_{\nu}(\et{\mu}(z)))
     \end{equation}
 for $z$ in a neighborhood of zero.

  Set $\Phi(z)=z\Sigma_{\nu}(\et{\mu}(z))=z\cdot\mathcal{S}_{\nu}(\pp{\mu}(z))$, and we thus have
    \begin{equation}\label{for314}
       \et{\mu\boxtimes\nu}(z)=\et{\mu}(\omega_1(z))=\et{\mu\boxtimes\nu}(\Phi(\omega_1(z)))=\et{\mu\boxtimes\nu}(\Phi(\et{\rho}(z))).
    \end{equation}
Fix $1\leq j \leq k$, we claim that if $z\in g(D_{j,r})$, then $\Phi(\omega_1(z))=z$. Indeed, for $ 0\leq\arg(w^{1/k})< 2\pi/k $
  and $z=g(\omega^j w^{1/k})=\chi^{(j)}(w)$, using the construction of $\omega_1$ in Theorem \ref{BBnew}, we have
    \begin{equation}\label{eq:cor3.14a}
      \omega_1(z)=\iota_{\mu}^{(j)}(\et{\mu\boxtimes\nu}(z))=\chi_{\mu}^{(j)}(\pp{\mu\boxtimes\nu}(z)).
    \end{equation}
From (\ref{eq:3.10}) and (\ref{eq:3.12}), we have
      \begin{equation}\label{eq:cor3.14b}
        \chi^{(j)}(w)=g(\omega^j w^{1/k})), \text{and}\, \pp{\mu\boxtimes\nu}(\chi^{(j)}(w))=w.
      \end{equation}
Equations (\ref{eq:cor3.14a}) and (\ref{eq:cor3.14b}) imply that $\omega_1(g(\omega^{j} w^{1/k})))=\chi_{\mu}^{(j)}(w)$.

  Note that $\pp{\mu}(\omega_1(g(\omega^j w^{1/k})))=\pp{\mu\boxtimes\nu}(g(\omega^j w^{1/k}))=w$. Thus we obtain that
      \begin{equation}\nonumber
         \Phi(\omega_1(z))=\Phi(\omega_1(g(\omega^j w^{1/k})))=\chi_{\mu}^{(j)}(w)\mathcal{S}_{\nu}(w)=\chi^{(j)}(w)=z.
      \end{equation}
   The above claim, (\ref{for314}) and Proposition \ref{identityINV} imply
      \begin{equation}\nonumber
        z=\Phi(\omega_1(z))=\Phi(\et{\rho}(z))
      \end{equation}
   for $z\in g(D_{j,r})$. We conclude that $\Sigma_{\rho}(z)=\Phi(z)/z=\Sigma_{\nu}(\et{\mu}(z))$ for $z$ in a small neighborhood of zero by applying the above argument for all $1\leq j \leq k$.

  If $\nu\in\mathcal{ID}(\boxtimes,\mathrm{T})$, then by Theorem 6.7 in \cite{BV1993}, there exists an analytic function $u(z)$ defined in $\mathbb{D}$ such that $\Sigma_{\nu}(z)=\exp(u(z))$ and $\Re{u(z)}\geq 0$ for all $z\in\mathbb{D}$. Thus $\Sigma_{\nu}(\et{\mu}(z))=\exp(u(\et{\mu}(z)))$ and $\Re(u(\et{\mu}(z)))\geq 0$ for all $z\in \mathbb{D}$, and then the second assertion follows from Theorem 6.7 in \cite{BV1993}.
 \end{proof}
 \begin{rmk}
   \emph{If $k=1$, noticing that $\MT^k=\MT\cap\Mstar$, the modified $\mathcal{S}$-transform is the usual $\mathcal{S}$-transform. We see that Corollary 3.14 holds when $\mu,\nu\in\MT\cap\Mstar$. }
 \end{rmk}

The following result is the multiplicative analogue of Lemma 2.6.
\begin{prop}\label{prop:3.14}
 Given $\rho,\tau\in\MT\cap\Mstar$,
 let $\sigma$ be a measure in $\MT\cap\Mstar$ such that $\et{\rho\boxtimes\tau}(z)=\et{\rho}(\et{\sigma}(z))$. If $\sigma\in\mathcal{ID}(\boxtimes, \mathrm{T})$,
 then $\rho\boxtimes\tau^{\boxtimes t}$ can be defined for all $t\geq 0$ in the sense that $\Sigma_{\rho\boxtimes(\tau^{\boxtimes t})}(z)=\Sigma_{\rho}(z)(\Sigma_{\tau}(z))^t$.
\end{prop}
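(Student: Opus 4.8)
The plan is to transplant the proof of Lemma~\ref{lemma:2.6} to the multiplicative setting, under the dictionary in which the reciprocal Cauchy transform $F$ is replaced by $\eta$, the Voiculescu transform $\varphi$ by $\log\Sigma$, and the relation $F^{-1}(z)=z+\varphi(z)$ by $\eta^{-1}(z)=z\Sigma(z)$. In this dictionary Proposition~\ref{subordINF}(1) corresponds to the identity $\Sigma_{\sigma}(z)=\Sigma_{\tau}(\et{\rho}(z))$, and the Voiculescu-transform description of $\mathcal{ID}(\boxplus,\mathbb{R})$ corresponds to the L\'{e}vy--Hin\v{c}in formula (\ref{levyMultiT}). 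As in the additive case, the phrase ``$\rho\boxtimes\tau^{\boxtimes t}$ can be defined'' is read as: $\Sigma_{\rho}(z)\Sigma_{\tau}(z)^{t}$ is the $\Sigma$-transform of some measure in $\MT\cap\Mstar$, even though $\tau^{\boxtimes t}$ alone need not make sense when $0<t<1$.

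First I would record the subordination identity for the given $\sigma$. Since $\rho\in\MT\cap\Mstar$, the function $\et{\rho}$ is invertible near $0$; substituting the hypothesis $\et{\rho\boxtimes\tau}(z)=\et{\rho}(\et{\sigma}(z))$ into Lemma~\ref{subLemma} (applied to the pair $\rho,\tau$) and cancelling $\et{\rho}$ gives $\et{\sigma}(z\,\Sigma_{\tau}(\et{\rho}(z)))=z$, hence
\[ \Sigma_{\sigma}(z)=\frac{\eta_{\sigma}^{-1}(z)}{z}=\Sigma_{\tau}(\et{\rho}(z)) \]
near $0$; equivalently this is Corollary~\ref{cor:3.13} with $k=1$. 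Next I would use that $\sigma\in\mathcal{ID}(\boxtimes,\mathrm{T})$ has nonzero mean: its $\Sigma$-transform is $\Sigma_{\sigma}(z)=\exp(u(z))$ with $u$ of the type (\ref{levyMultiT}), so $\Re u\geq 0$ on $\mathbb{D}$; then $\exp(t\,u(z))$ also has nonnegative real part for every $t\geq 0$, and by Theorem~6.7 of \cite{BV1992} there is a $\boxtimes$-infinitely divisible measure $\sigma_{t}\in\MT\cap\Mstar$ with $\Sigma_{\sigma_{t}}(z)=\Sigma_{\sigma}(z)^{t}$. I would then set $\et{\mu_{t}}(z):=\et{\rho}(\et{\sigma_{t}}(z))$, which is an $\eta$-transform of a measure $\mu_{t}\in\MT$ by Proposition~\ref{etaD}, and $\mu_{t}\in\Mstar$ because $\et{\mu_{t}}'(0)=\et{\rho}'(0)\,\et{\sigma_{t}}'(0)\neq 0$. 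Inverting germs at $0$, $\eta_{\mu_{t}}^{-1}=\eta_{\sigma_{t}}^{-1}\circ\eta_{\rho}^{-1}$, and using $\eta_{\sigma_{t}}^{-1}(w)=w\,\Sigma_{\sigma_{t}}(w)$, $\eta_{\rho}^{-1}(z)=z\,\Sigma_{\rho}(z)$, $\et{\rho}\circ\eta_{\rho}^{-1}=\mathrm{id}$ and the identity above, I compute
\[ \eta_{\mu_{t}}^{-1}(z)=\eta_{\rho}^{-1}(z)\,\Sigma_{\sigma}\bigl(\eta_{\rho}^{-1}(z)\bigr)^{t}=z\,\Sigma_{\rho}(z)\,\Sigma_{\tau}\bigl(\et{\rho}(\eta_{\rho}^{-1}(z))\bigr)^{t}=z\,\Sigma_{\rho}(z)\,\Sigma_{\tau}(z)^{t}, \]
so $\Sigma_{\mu_{t}}(z)=\Sigma_{\rho}(z)\,\Sigma_{\tau}(z)^{t}$ near $0$, and $\mu_{t}$ witnesses the claim; as a byproduct, the subordination function of $\et{\rho\boxtimes\tau^{\boxtimes t}}$ with respect to $\et{\rho}$ is $\et{\sigma_{t}}=\et{\sigma^{\boxtimes t}}$.

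The step doing the real work — and the closest thing to an obstacle — will be the passage from $\sigma\in\mathcal{ID}(\boxtimes,\mathrm{T})$ to the whole family $\{\sigma_{t}\}_{t\geq 0}$, i.e.\ the stability of the L\'{e}vy--Hin\v{c}in class under multiplying the exponent $u$ by $t$, which is precisely what makes $\rho\boxtimes\tau^{\boxtimes t}$ available for non-integer and for small $t$. Beyond that, the remaining points are bookkeeping: that the composite $\et{\rho}\circ\et{\sigma_{t}}$ genuinely falls under Proposition~\ref{etaD} — it fixes $0$ and maps $\mathbb{D}$ into $\mathbb{D}$ — and that it has nonvanishing derivative at $0$, which is where the nonzero-mean hypotheses on $\rho$ and $\sigma$ (hence on $\sigma_{t}$) are used so that $\Sigma_{\mu_{t}}$ is defined; and that the branch of $\Sigma_{\tau}(z)^{t}$ in the last display matches that of $\Sigma_{\sigma}(z)^{t}=\exp(t\,u(z))$, which is automatic near $0$ since there $\Sigma_{\tau}(0)=\Sigma_{\sigma}(0)\neq 0$ and both are pinned down by the same value of $\log$ at the origin. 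The case $t=0$ is trivial, with $\sigma_{0}=\delta_{1}$ and $\mu_{0}=\rho$.
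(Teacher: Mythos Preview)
Your proof is correct and follows exactly the approach the paper sketches: define $\mu_t$ by $\et{\mu_t}=\et{\rho}\circ\et{\sigma^{\boxtimes t}}$ and compute its $\Sigma$-transform via the identity $\Sigma_{\sigma}(z)=\Sigma_{\tau}(\et{\rho}(z))$ from Corollary~\ref{cor:3.13}, mirroring Lemma~\ref{lemma:2.6}. One cosmetic slip: where you write ``$\exp(t\,u(z))$ also has nonnegative real part'' you mean that $t\,u$ has nonnegative real part (equivalently, $t\,u$ is again of the form~(\ref{levyMultiT})), which is what Theorem~6.7 of \cite{BV1992} needs.
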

\begin{proof}
 For $t>0$, there exists $\mu_t\in\MT\cap\Mstar$ such that
     \begin{equation}\nonumber
        \et{\mu_t}(z)=\et{\rho}(\et{\sigma^{\boxtimes t}}).
     \end{equation}
Using a similar argument as in the proof of Lemma 2.6 and applying Corollary \ref{cor:3.13}, we can find that
 $\Sigma_{\mu_t}(z)=\Sigma_{\rho}(z)(\Sigma_{\tau}(z))^t$.
\end{proof}
\subsection{Semigroups related to multiplicative free convolution}
Recall that $\mathcal{M}_{\mathrm{T}}^*=\{\mu\in\MT\cap \Mstar:
  \eta_{\mu}(z)\neq 0, \forall z\in\mathbb{D}\backslash \{0\} \}$.
Given $\mu\in\mathcal{M}_{\mathrm{T}}^* $ and $t>1$, and
let $u$ be an analytic function satisfying that $z/(\et{\mu}(z))=e^{u(z)}$ holds for $z$ in a neighborhood of zero.
Set $H_t(z)=z e^{(t-1)u(z)}=z [z/(\et{\mu}(z))]^{t-1}$.
It is shown in
\cite{BB2005} that $H_t$ has a right inverse $\omega_t: \mathbb{D}\rightarrow\mathbb{D}$
such that $H_t(\omega(z))=z$,
and there exists a probability measure
$\multt{\mu}\in \MstarS$ such that
\begin{enumerate}[$(1)$]
  \item $\et{\mu^{\boxtimes t}}(z)=\et{\mu}(\omega_t(z))$
   and $\Sigma_{\multt{\mu}}(z)=\left( \Sigma_{\mu}(z) \right)^t$,
  \item $\omega_t(z)=\eta_{\multt{\mu}}(z) \left[z/\eta_{\multt{\mu}}(z) \right]^{1/t}$ for
  $z \in \mathbb{D}$, where the power is chosen such that the equation holds.
\end{enumerate}

Observe that for each $t>0$, by Proposition \ref{etaD}, there exists
a probability measure $\sigma_t \in \MT$ such that $\eta_{\sigma_t}(z)=\omega_{t+1}(z)$.
It turns out that $\sigma_t$ is $\boxtimes$-infinitely divisible and its
$\Sigma$-transform is $\Sigma_{\sigma_t}(z)=
\left[z/\eta_{\mu}(z) \right]^{t}$, which can be obtained by applying the same argument as in the proof of Lemma
\ref{rhoInf}.

The following result is a partial converse of Theorem 3.5 in \cite{BB2005}.
\begin{thm}
 Given $\mu\in\MT\cap\Mstar$, assume that for any $t>1$, there exists a
 probability measure $\mu_t\in\MT$ such that
     \begin{equation}\label{Massup}
       \Sigma_{\mu_t}(z)=\left(\Sigma_{\mu}(z) \right)^t.
     \end{equation}
 Assume in addition that $\mu_t$ is subordinated with respect to $\mu$ for all $t>1$.
 Then $\eta_{\mu}(z)\neq 0$  for all $z\in\mathbb{D}\backslash \{0\}$, that is $\mu\in\MstarS$.
\end{thm}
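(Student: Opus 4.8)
The plan is to argue by contradiction. Suppose $\mu\in\MT\cap\Mstar$ satisfies the stated hypotheses but $\et{\mu}(z_0)=0$ for some $z_0\in\mathbb{D}\setminus\{0\}$. Since $\et{\mu}$ is analytic and not identically zero (as $\et{\mu}'(0)\neq 0$), the zeros of $\et{\mu}$ in $\mathbb{D}$ are isolated; pick $z_0$ to be a zero of smallest modulus, and let $m\geq 1$ be its order. The key object is the function $H_t(z)=z\bigl[z/\et{\mu}(z)\bigr]^{t-1}$ appearing in Theorem 3.5 of \cite{BB2005}: near $z_0$, the factor $z/\et{\mu}(z)$ has a pole of order $m$, so $H_t$ has a pole of order $m(t-1)$ there, which for generic $t$ is not even an integer. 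The subordination hypothesis says $\et{\mu_t}(z)=\et{\mu}(\omega_t(z))$ for an analytic $\omega_t:\mathbb{D}\to\mathbb{D}$ with $\omega_t(0)=0$, and by Lemma \ref{subLemma} together with \refto{Massup} one shows (exactly as in the proof of Lemma \ref{rhoInf}, using Proposition \ref{etaD}) that $\omega_t=\et{\sigma_t}$ for a $\boxtimes$-infinitely divisible $\sigma_t$ with $\Sigma_{\sigma_t}(z)=\bigl[z/\et{\mu}(z)\bigr]^{t-1}$, and that $H_t$ is the left inverse of $\omega_t$: $H_t(\omega_t(z))=z$ on $\mathbb{D}$.

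From here the contradiction comes from comparing local behavior near $z_0$. First I would observe that $z/\et{\mu}(z)=\exp(u(z))$ with $u$ analytic only works in a neighborhood of $0$ where $\et{\mu}$ is nonvanishing; the obstruction is precisely that $\Sigma_{\sigma_t}(z)=\exp((t-1)u(z))$ cannot extend analytically past $z_0$ for non-integer $t-1$, because crossing a zero of $\et{\mu}$ of order $m$ forces $u$ to pick up a term $-m\log(z-z_0)$, and then $(t-1)u$ has a genuine branch point unless $m(t-1)\in\mathbb{Z}$. On the other hand, by Lemma \ref{infMstar}, $\sigma_t\in\mathcal{ID}(\boxtimes,\mathrm{T})$ forces $\Sigma_{\sigma_t}$ — hence $z\Sigma_{\sigma_t}(z)=H_t(z)$ — to be single-valued and analytic on all of $\mathbb{D}$, and in fact $H_t$ is the left inverse of the \emph{globally defined} $\et{\sigma_t}$. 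So I would trace through: $\Sigma_{\sigma_t}$ is globally analytic on $\mathbb{D}$ (from $\boxtimes$-infinite divisibility and Lemma \ref{infMstar}(1)), yet it agrees near $0$ with $\bigl[z/\et{\mu}(z)\bigr]^{t-1}$, which has a branch point at the minimal-modulus zero $z_0$ of $\et{\mu}$ whenever $t-1\notin\frac{1}{m}\mathbb{Z}$. Choosing $t>1$ with $m(t-1)\notin\mathbb{Z}$ (always possible) yields the contradiction, so $\et{\mu}$ has no zeros in $\mathbb{D}\setminus\{0\}$, i.e. $\mu\in\MstarS$.

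The step I expect to be the main obstacle is rigorously pinning down that $\Sigma_{\sigma_t}$, initially defined only near $0$ via $\bigl[z/\et{\mu}(z)\bigr]^{t-1}$, must actually extend to a single-valued analytic function on $\mathbb{D}$ as a consequence of the hypotheses — i.e. connecting "$\mu_t$ subordinated to $\mu$" to "$H_t$ is a global left inverse of a global subordination function $\omega_t$." This requires the argument of Lemma \ref{rhoInf}: $\et{\mu}(z)=\et{\mu_t}(\Phi_t(z))$ with $\Phi_t(z)=z\Sigma_{\mu}(z)^{t-1}$ near $0$ (from Lemma \ref{subLemma} applied with $\nu=\multt{\mu}$, valid in the regime where $\mu_t$ is computed by $\boxtimes$), then substituting the subordination relation and using local invertibility of $\et{\mu_t}$ to get $\Phi_t(\omega_t(z))=z$, hence $\omega_t=\et{\sigma_t}$ with $\Sigma_{\sigma_t}=\Phi_t(z)/z$. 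Here one must be careful that $\Phi_t$ itself makes sense — and that is exactly the point: if $\et{\mu}$ vanishes somewhere, $\Phi_t$ has a branch point, but $\omega_t$ is globally analytic with $\omega_t(0)=0$, so its left inverse $H_t=\Phi_t$ would have to be globally single-valued on the image, contradicting the branch point. Making this "left inverse of a global map is global" argument airtight — including ruling out that the branch point of $\Phi_t$ lies outside $\et{\mu_t}(\mathbb{D})$ — by using Lemma \ref{infMstar}(3) (the image $\et{\sigma_t}(\mathbb{D})$ is exactly $\{z:|H_t(z)|<1\}$, a simply connected domain containing a neighborhood of the zero, on which $H_t$ is well-defined) is the delicate part; everything else is bookkeeping with $\Sigma$- and $\eta$-transforms.
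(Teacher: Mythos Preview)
Your approach and the paper's share the same core computation: from the subordination hypothesis you extract measures $\sigma_s\in\MT$ with $\eta_{\sigma_s}=\omega_{s+1}$ and $\Sigma_{\sigma_s}(z)=[z/\eta_\mu(z)]^s$ near zero, for all $s>0$. Where you diverge is the endgame. The paper simply observes that $\{\sigma_s\}_{s>0}$ is a $\boxtimes$-semigroup (since $\Sigma_{\sigma_{s+s'}}=\Sigma_{\sigma_s}\Sigma_{\sigma_{s'}}$), so each $\sigma_s$ is $\boxtimes$-infinitely divisible; then the L\'evy--Hin\v{c}in formula (Theorem~6.7 in \cite{BV1992}) gives $\Sigma_{\sigma_1}(z)=\exp(u(z))$ for some analytic $u$ on all of $\mathbb{D}$ with $\Re u\ge 0$. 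Since this equals $z/\eta_\mu(z)$ near zero, analytic continuation yields $\eta_\mu(z)=z\exp(-u(z))$ on $\mathbb{D}$, and the nonvanishing is immediate.

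Your route via branch points of $[z/\eta_\mu(z)]^{t-1}$ at a minimal-modulus zero $z_0$ is correct but overcomplicated: non-integer exponents are unnecessary, since already for $s=1$ the function $z/\eta_\mu(z)$ has a \emph{pole} at $z_0$, while the globally analytic $\Sigma_{\sigma_1}$ stays finite there. Your ``main obstacle'' --- extending $\Sigma_{\sigma_t}$ globally --- dissolves once you invoke L\'evy--Hin\v{c}in directly rather than trying to go through Lemma~\ref{infMstar} and image domains. One small inaccuracy: the infinite divisibility of $\sigma_t$ does not follow ``exactly as in the proof of Lemma~\ref{rhoInf}'' (that proof relies on the explicit exponential form of $\Sigma_{\lambda_t}$, which is unavailable here); it follows instead from the semigroup property, as the paper uses.
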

\begin{proof}
For each $t>1$,  we denote by $\omega_t$ the subordination function of $\mu_t$ to $\mu$. Observing that $\mu_t\in \Mstar$ and $\omega_t'(0)\neq 0$,
for each $t>1$, there exists a probability measure $\sigma_{t-1}\in\MT\cap\Mstar$ such that
     $\eta_{\sigma_{t-1}}(z)=\omega_t(z)$.
We rewrite (\ref{Massup}) as
   \begin{equation}\label{rewrite}
     \frac{\eta_{\mu_t}^{-1}(z)}{z}=\left[\frac{\eta_{\mu}^{-1}(z)}{z} \right]^t
   \end{equation}
for $z$ in a neighborhood of zero.

Note that $\omega_t^{-1}(z)=\eta_{\mu_t}^{-1}(\eta_{\mu}(z))$ for $z$ in a neighborhood
of zero.
Replacing $z$ by $\eta_{\mu}(z)$ in (\ref{rewrite}),
we obtain that
    \begin{align*}
      \frac{\omega_t^{-1}(z)}{\eta_{\mu}(z)}&=
          \frac{\eta_{\mu_t}^{-1}(\eta_{\mu}(z))}{\eta_{\mu}(z)} \\
         & =\left[\frac{\eta_{\mu}^{-1}(\eta_{\mu}(z))}{\eta_{\mu}(z)} \right]^t \\
         &=\left[\frac{z}{\eta_{\mu}(z)}\right]^t,
    \end{align*}
which implies that,
      \begin{equation}\nonumber
        \frac{\omega_t^{-1}(z)}{z}=\left[\frac{z}{\eta_{\mu}(z)}\right]^{t-1}.
      \end{equation}
Given $t>0$, we thus have $\Sigma_{\sigma_{t}}(z)=[z/\eta_{\mu}(z)]^t$
for $z$ in a neighborhood of zero.
Therefore $\sigma_t$ is $\boxtimes$-infinitely divisible.

By Theorem 6.7 in \cite{BV1992}, there exists
an analytic function $u(z)$ in $\mathbb{D}$ such that
$\Re{u(z)}\geq 0$ if $z\in\mathbb{D}$ and $\Sigma_{\sigma_1}(z)=\exp(u(z))$.
We thus obtain that
$z/\eta_{\mu}(z)=\exp(u(z))$, which implies that $\eta_{\mu}(z)\neq 0 $ for all $z\in\mathbb{D}$.
\end{proof}

It was pointed in \cite{BB2005} that $\multt{\mu}$ is only
determined up to a rotation by a multiple of $2\pi t$.
Note that $\omega_t$ and $\sigma_t$ are determined by the choice of $\multt{\mu}$.
\subsection{Multiplicative Boolean convolution and the
Bercovici-Pata bijection}
Multiplicative Boolean convolution on $\mathrm{T}$ was studied by Franz
\cite{Franz}.
Let $\mu\in\MT$, and we set $k_{\mu}(z)=z/\eta_{\mu}(z)$. Given
two probability measures $\mu,\nu\in\MT$, their
multiplicative Boolean convolution $\mu \bmultg \nu $
is a probability measure on $\mathrm{T}$ such that
\begin{equation}\nonumber
  k_{\mu\bmultk\nu}(z)=k_{\mu}(z)k_{\nu}(z)
\end{equation}
for all $z\in\mathbb{D}$.

A probability $\mu\in\MT$ is said to be $\bmultg$-infinitely divisible,
if for any positive integer $n$, there exists $\mu_n \in\MT$ such
that $\mu=(\mu_n)^{\bmultk n}$. Let $P_0$ be the Haar measure. It
is shown in \cite{Franz} that $\mu\in\MT\backslash \{P_0\}$
is $\bmultg$-infinitely divisible if and only if $\eta_{\mu}'(0)\neq 0$ and
$\eta_{\mu} \neq 0 $ for all $z\in\mathbb{D}\backslash\{0\}$, that is $\mu\in\MstarS$,
which is equivalent to
  \begin{equation}\label{levyMBool}
    k_{\mu}(z)=\exp\left(bi+\int_{\mathrm{T}}\frac{1+\xi z}{1-\xi z}d\,\tau_{\mu}(\xi) \right),
  \end{equation}
where $b\in\mathbb{R}$ and $\tau_{\mu}$ is a finite measure on $\mathrm{T}$.
Equation (\ref{levyMBool}) is the analogue of the
L\'{e}vy-Hin\v{c}in formula in this context.

The multiplicative Bercovici-Pata bijection from $\bmultg$ to $\boxtimes$
was studied in \cite{Wang}.
Denote the
set of all $\bmultg$-infinitely divisible measures on $\mathrm{T}$
by $\mathcal{ID}(\bmultg, \mathrm{T})$,
and the multiplicative Bercovici-Pata bijection from from $\bmultg$ to $\boxtimes$
by $\mathbb{M}$. Then we have $k_{\mu}(z)=\Sigma_{\mathbb{M}(\mu)}(z)$.

Given $\mu\in \mathcal{ID}(\bmultg, \mathrm{T})\backslash P_0=\MstarS$,
let $\omega_2$ be the subordination function of $\multtT{2}$ with
respect to $\mu$, and let $\sigma$ be the probability measure on $\mathrm{T}$
such that $\eta_{\sigma}(z)=\omega_2(z)$. Then
$\sigma$ is $\boxtimes$-infinitely divisible and its $\Sigma$-transform
is $\Sigma_{\sigma}(z)=z/\eta_{\mu}(z)=k_{\mu}(z)$. Therefore,
$\sigma$ is the same as $\mathbb{M}(\mu)$.
Since $P_0\boxtimes P_0=P_0$ and $\et{\tiny{P_0}}=z$, the subordination funciton of $P_0\boxtimes P_0$ with respect to
$P_0$ is the identity map $z$, and the measure associated with the identity map $z$ is $P_0$.
To summarize, we have
the following corollary.
\begin{cor}
 Given $\mu\in \mathcal{ID}(\bmultg, \mathrm{T})$,
 let $\omega_2$ be the subordination function of $\multtT{2}$ with
respect to $\mu$, and let $\sigma$ be the probability measure on $\mathrm{T}$
such that $\eta_{\sigma}(z)=\omega_2(z)$.
 Then $\sigma=\mathbb{M}(\mu)$, where $\mathbb{M}$ is the
 multiplicative Bercovici-Pata bijection from from $\bmultg$ to $\boxtimes$.
\end{cor}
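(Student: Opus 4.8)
The plan is to observe that, by Franz's classification recalled above, $\mathcal{ID}(\bmultg,\mathrm{T})$ is the disjoint union $\MstarS\cup\{P_0\}$, so it suffices to treat the two cases $\mu\in\MstarS$ and $\mu=P_0$ separately; the first is the substantive one and is essentially the $t=1$ specialization of the material already assembled in Subsection 3.3.

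For $\mu\in\MstarS$ I would proceed as follows. Take $t=2$ in the construction of Subsection 3.3: writing $z/\et{\mu}(z)=e^{u(z)}$ near the origin and $H_2(z)=z\bigl[z/\et{\mu}(z)\bigr]$, the subordination function $\omega_2$ of $\multtT{2}$ with respect to $\mu$ is the right inverse of $H_2$ and $\et{\multtT{2}}(z)=\et{\mu}(\omega_2(z))$; by Proposition \ref{etaD} there is a unique $\sigma\in\MT$ with $\et{\sigma}=\omega_2$. Since $H_2'(0)=1/\et{\mu}'(0)\neq 0$, the function $H_2$ is locally invertible at $0$ and there coincides with $\omega_2^{-1}$, so $\et{\sigma}^{-1}(z)=\omega_2^{-1}(z)=z\bigl[z/\et{\mu}(z)\bigr]$ and hence
\begin{equation}\nonumber
  \Sigma_{\sigma}(z)=\frac{\et{\sigma}^{-1}(z)}{z}=\frac{z}{\et{\mu}(z)}=k_{\mu}(z);
\end{equation}
this is exactly the identity $\Sigma_{\sigma_t}(z)=[z/\et{\mu}(z)]^{t}$ of Subsection 3.3 with $t=1$. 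Because $k_{\mu}$ has the L\'{e}vy--Hin\v{c}in form (\ref{levyMBool}), $\log k_{\mu}$ has nonnegative real part on $\mathbb{D}$, so Theorem 6.7 in \cite{BV1992} gives $\sigma\in\mathcal{ID}(\boxtimes,\mathrm{T})$. Finally, the multiplicative Bercovici--Pata bijection is characterized by $\Sigma_{\mathbb{M}(\mu)}=k_{\mu}$, and since the $\Sigma$-transform determines a measure in $\MT\cap\Mstar$, I would conclude $\sigma=\mathbb{M}(\mu)$.

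For $\mu=P_0$ the statement follows directly from the observations preceding it: $P_0\boxtimes P_0=P_0$, so the subordination function of $P_0^{\boxtimes 2}$ with respect to $P_0$ may be taken to be the identity, the corresponding measure is $P_0$, and $\mathbb{M}$ fixes $P_0$. I do not expect a real obstacle here: the only thing not already packaged in Subsection 3.3 is this Haar-measure bookkeeping, where the $\Sigma$-transform is degenerate and one must argue at the level of $\eta$-transforms, and that step is elementary.
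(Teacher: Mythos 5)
Your argument is correct and follows essentially the same route as the paper: for $\mu\in\MstarS$ it is the $t=1$ case of the identity $\Sigma_{\sigma_t}(z)=[z/\eta_{\mu}(z)]^{t}$ from Subsection 3.3, combined with $\Sigma_{\mathbb{M}(\mu)}=k_{\mu}$, and the Haar case is handled separately. One small caveat (a slip the paper itself also makes): the measure whose $\eta$-transform is the identity map is $\delta_1$, not $P_0$ (since $\eta_{P_0}\equiv 0$), so in the degenerate case $\mu=P_0$, where the subordination function is non-unique, one must take $\omega_2\equiv 0$ rather than $\omega_2(z)=z$ to get $\sigma=P_0=\mathbb{M}(P_0)$.
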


\begin{prop}
 Let $\mu\in\MT$, then the following are equivalent.
   \begin{enumerate}[$(1)$]
     \item $\mu\in\mathcal{ID}(\mathrm{T}, \boxtimes)$;
     \item $\delta_{\beta}\bmultk \mu \in\mathcal{ID}(\mathrm{T}, \boxtimes)$ for any $\beta\in\mathrm{T}$.
   \end{enumerate}
\end{prop}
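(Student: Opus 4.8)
The plan is to show that multiplicative Boolean convolution with a Dirac mass $\delta_\beta$ changes the $\Sigma$-transform only by a unimodular constant and a rotation of the variable, and then to read off $\boxtimes$-infinite divisibility from the L\'{e}vy--Hin\v{c}in type description of $\mathcal{ID}(\mathrm{T},\boxtimes)$ recorded around~(\ref{levyMultiT}) (see also \cite{BV1992}). I would first dispose of $(2)\Rightarrow(1)$: from $\psi_{\delta_\beta}(z)=\beta z/(1-\beta z)$ one gets $\eta_{\delta_\beta}(z)=\beta z$, hence $\eta_{\delta_\beta\bmultk\mu}(z)=\eta_{\delta_\beta}(z)\eta_\mu(z)/z=\beta\,\eta_\mu(z)$ for every $\beta\in\mathrm{T}$; at $\beta=1$ this reads $\delta_1\bmultg\mu=\mu$, so $(2)$ at $\beta=1$ gives $(1)$.

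For $(1)\Rightarrow(2)$ the first step is a transform computation. If $\mu\notin\Mstar$ then, being $\boxtimes$-infinitely divisible, $\mu$ is the Haar measure $P_0$; since $\eta_{P_0}\equiv 0$ the formula above gives $\delta_\beta\bmultg P_0=P_0\in\mathcal{ID}(\mathrm{T},\boxtimes)$, so $(2)$ is trivial here and I may assume $\mu\in\MT\cap\Mstar$. Then $\eta_{\delta_\beta\bmultk\mu}'(0)=\beta\,\eta_\mu'(0)\neq 0$, so $\delta_\beta\bmultg\mu\in\MT\cap\Mstar$ too; inverting $\eta_{\delta_\beta\bmultk\mu}(z)=\beta\,\eta_\mu(z)$ near $0$ yields $\eta_{\delta_\beta\bmultk\mu}^{-1}(z)=\eta_\mu^{-1}(\bar\beta z)$, and since $\eta_\mu^{-1}(w)=w\,\Sigma_\mu(w)$ we get
\[
   \Sigma_{\delta_\beta\bmultk\mu}(z)=\bar\beta\,\Sigma_\mu(\bar\beta z)
\]
in a neighbourhood of $0$.

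The second step applies~(\ref{levyMultiT}). Since $\mu\in\MT\cap\Mstar$ is $\boxtimes$-infinitely divisible, $\Sigma_\mu(z)=\exp\big(\alpha i+\int_{\mathrm{T}}\frac{1+tz}{1-tz}\,d\sigma(t)\big)$ on $\mathbb{D}$ for some $\alpha\in\mathbb{R}$ and some finite positive measure $\sigma$ on $\mathrm{T}$. Writing $\beta=e^{i\theta}$ and letting $\tilde\sigma$ be the pushforward of $\sigma$ under $t\mapsto\bar\beta t$ (still a finite positive measure on $\mathrm{T}$), substituting into the last display and changing variables in the integral gives, near $0$,
\[
   \Sigma_{\delta_\beta\bmultk\mu}(z)=\exp\Big((\alpha-\theta)i+\int_{\mathrm{T}}\frac{1+sz}{1-sz}\,d\tilde\sigma(s)\Big).
\]
The right-hand side is analytic on all of $\mathbb{D}$ and again of L\'{e}vy--Hin\v{c}in form; as it agrees near $0$ with the germ of $\Sigma_{\delta_\beta\bmultk\mu}$ it is its analytic continuation, so by~(\ref{levyMultiT}) the measure $\delta_\beta\bmultg\mu$ is $\boxtimes$-infinitely divisible. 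Since $\beta\in\mathrm{T}$ was arbitrary, $(2)$ holds; and because $\nu\mapsto\delta_{\bar\beta}\bmultg\nu$ inverts $\nu\mapsto\delta_\beta\bmultg\nu$, the same argument gives the sharper equivalence that, for each fixed $\beta$, $\mu\in\mathcal{ID}(\mathrm{T},\boxtimes)$ iff $\delta_\beta\bmultg\mu\in\mathcal{ID}(\mathrm{T},\boxtimes)$.

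I do not anticipate a substantive obstacle. The only points needing a little care are the degenerate case $\mu=P_0$, where $k_{P_0}$ is undefined so one argues through $\eta_{\delta_\beta\bmultk\mu}=\eta_{\delta_\beta}\eta_\mu/z$ directly, and the fact that $\Sigma_{\delta_\beta\bmultk\mu}(z)=\bar\beta\,\Sigma_\mu(\bar\beta z)$ holds only near $0$ --- which is nevertheless enough, since the germ at $0$ determines a measure in $\MT\cap\Mstar$ uniquely and~(\ref{levyMultiT}) is a statement about the analytic continuation of $\Sigma_\mu$ to $\mathbb{D}$.
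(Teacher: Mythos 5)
Your proof is correct and follows essentially the same route as the paper: reduce to $\mu\in\MT\cap\Mstar$ (disposing of the Haar measure and the $\beta=1$ direction), compute $\eta_{\delta_\beta\bmultk\mu}(z)=\beta\,\eta_\mu(z)$ and hence $\Sigma_{\delta_\beta\bmultk\mu}(z)=\overline{\beta}\,\Sigma_\mu(\overline{\beta}z)$, and conclude from the L\'{e}vy--Hin\v{c}in formula \refto{levyMultiT}. You merely make explicit the change of variables in the Herglotz integral that the paper leaves implicit.
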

\begin{proof}
 It is enough to prove that (1) implies (2) for $\mu\in\MT\cap\Mstar$. Observing that $\et{\delta_{\beta} \bmultk \mu}(z)=\beta\cdot \et{\mu}(z)$, we thus have
    \begin{equation}\nonumber
      \Sigma_{\delta_{\beta}\bmultk\mu}(z)=\frac{\eta_{\delta_{\beta}\bmultk\mu}^{-1}(z)}{z}=\frac{\eta_{\mu}^{-1}(\overline{\beta}z)}{z}=\overline{\beta}\cdot\Sigma_{\mu}(\overline{\beta}z).
    \end{equation}
  The result follows from the L\'{e}vy-Hin\v{c}in
formula for the multiplicative free convolution on $\mathrm{T}$.
\end{proof}
\subsection{An analogue of equations studied by Belinschi and Nica}
In this subsection, we prove our Theorem \ref{thm:1.1}.
Recall that $\lambda_t$ is the free multiplicative analogue of the normal distribution on $\mathrm{T}$, the unit circle of $\mathbb{C}$,
with $\Sigma_{\lambda_t}(z)=\exp\left(\frac{t}{2}\frac{1+z}{1-z}\right)$ and we set $\lambda=\lambda_1$.
For $\mu\in\MT$, we denote $m_1(\mu)= \int_{\mathrm{T}}\xi d\mu(\xi)$.
\begin{prop}\label{prop:3.19}
 Given $\mu\in\MstarS$, and an analytic map $u=u(z)$ defined by
    \begin{equation}\label{Herglotz}
      u(z)=bi+ \int_{\mathrm{T}}\frac{1+\xi z}{1-\xi z}d\tau(\xi), \, z\in\mathbb{D},
    \end{equation}
where $b\in [0,2\pi)$ and $\tau$ is a finite measure on $\mathrm{T}$.
If $k_{\mu}(z)=z/\et{\mu}(z)=\exp(u(z))$, then
$b=\arg(1/m_1(\mu))\in [0,2\pi)$, and
 $\tau(\mathrm{T})=\ln|1/m_1(\mu)|$.
 In particular, there exists a probability measure $\nu\in \MT$ such that
 $k_{\mu}(z)=\Sigma_{\lambda}(\eta_{\nu}(z))$
 if and only if $m_1(\mu)=e^{-1/2}$.
\end{prop}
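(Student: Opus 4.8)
The plan is to read off the parameters $b$ and $\tau(\mathrm{T})$ by evaluating everything at the origin, and then to identify $\Sigma_{\lambda}\circ\et{\nu}$ with the exponential of a Herglotz integral in order to settle the final equivalence. For the first assertion I would begin by recording the behaviour of $k_{\mu}$ at $0$: since $\mu\in\MstarS\subseteq\MT\cap\Mstar$ we have $\et{\mu}(0)=0$ and $\et{\mu}'(0)=\pp{\mu}'(0)=\int_{\mathrm{T}}\xi\,d\mu(\xi)=m_1(\mu)\ne 0$, so $k_{\mu}(z)=z/\et{\mu}(z)$ extends holomorphically across $0$ with $k_{\mu}(0)=1/m_1(\mu)$. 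Evaluating (\ref{Herglotz}) at $z=0$ gives $u(0)=bi+\tau(\mathrm{T})$, so $\Re u(0)=\tau(\mathrm{T})\ge 0$ and $\Im u(0)=b$. Since $k_{\mu}=\exp(u)$ and $\exp$ is continuous, $\exp(u(0))=k_{\mu}(0)=1/m_1(\mu)$; comparing absolute values gives $\tau(\mathrm{T})=\ln|1/m_1(\mu)|$, and comparing arguments gives $b\equiv\arg(1/m_1(\mu))\pmod{2\pi}$, hence $b=\arg(1/m_1(\mu))$ once the argument is normalised to lie in $[0,2\pi)$. This proves the first assertion.

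For the ``in particular'' clause the computational core is the identity, valid for every $\nu\in\MT$: writing $\et{\nu}=\pp{\nu}/(1+\pp{\nu})$ one has $\frac{1+\et{\nu}(z)}{1-\et{\nu}(z)}=1+2\pp{\nu}(z)=\int_{\mathrm{T}}\frac{1+\xi z}{1-\xi z}\,d\nu(\xi)$, so that, as in (\ref{formula}) with $t=1$,
\[
  \Sigma_{\lambda}(\et{\nu}(z))=\exp\Bigl(\tfrac12\int_{\mathrm{T}}\frac{1+\xi z}{1-\xi z}\,d\nu(\xi)\Bigr),\qquad z\in\mathbb{D},
\]
the right-hand side being meaningful since $|\et{\nu}(z)|<1$ on $\mathbb{D}$ by Proposition \ref{etaD}. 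If such a $\nu$ exists, then by this identity $k_{\mu}=\exp(u_{\nu})$ where $u_{\nu}(z):=\tfrac12\int_{\mathrm{T}}\frac{1+\xi z}{1-\xi z}\,d\nu(\xi)$, and $u_{\nu}$ is of the form (\ref{Herglotz}) with $b=0\in[0,2\pi)$ and finite measure $\tfrac12\nu$ of total mass $\tfrac12$; applying the first assertion to this representation gives $\arg(1/m_1(\mu))=0$ and $\ln|1/m_1(\mu)|=\tfrac12$, i.e. $m_1(\mu)=e^{-1/2}$. Conversely, if $m_1(\mu)=e^{-1/2}$, then since $\mu\in\MstarS$ its function $k_{\mu}$ admits a representation of the form (\ref{Herglotz}) by the L\'evy--Hin\v{c}in formula (\ref{levyMBool}), and the first assertion forces $b=\arg(e^{1/2})=0$ and $\tau(\mathrm{T})=\ln(e^{1/2})=\tfrac12$; setting $\nu:=2\tau\in\MT$, the displayed identity yields $\Sigma_{\lambda}(\et{\nu}(z))=\exp\bigl(\int_{\mathrm{T}}\frac{1+\xi z}{1-\xi z}\,d\tau(\xi)\bigr)=\exp(u(z))=k_{\mu}(z)$ for $z\in\mathbb{D}$, as required.

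I do not expect a serious obstacle: the argument is careful bookkeeping with the complex logarithm. The one point that needs attention is that $u$, equivalently $\log k_{\mu}$, is determined only up to an additive integer multiple of $2\pi i$, so the normalisation $b\in[0,2\pi)$ must be respected throughout (and is what makes ``$b=0$'' the right conclusion rather than merely ``$b\in 2\pi\mathbb{Z}$''); beyond that, the only genuinely computational ingredient is the elementary identity relating $\et{\nu}$, $\pp{\nu}$ and the Herglotz kernel $\frac{1+\xi z}{1-\xi z}$, which is immediate from the definitions.
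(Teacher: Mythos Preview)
Your proposal is correct and follows essentially the same route as the paper: evaluate $k_{\mu}$ and $u$ at the origin to read off $b$ and $\tau(\mathrm{T})$, then use the identity $\Sigma_{\lambda}(\et{\nu}(z))=\exp\bigl(\tfrac12\int_{\mathrm{T}}\frac{1+\xi z}{1-\xi z}\,d\nu(\xi)\bigr)$ (the paper's (\ref{formula}) with $t=1$) together with the Herglotz representation (\ref{levyMBool}) of $k_{\mu}$ to settle the equivalence. Your version is in fact slightly more explicit than the paper's in deriving $\tfrac{1+\et{\nu}}{1-\et{\nu}}=1+2\pp{\nu}$ and in handling the $2\pi$-periodicity of the argument, but the underlying argument is the same.
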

\begin{proof}
By definition, we have that
   \begin{equation}
      k_{\mu}(0)=\lim_{z\rightarrow 0}\frac{z}{\et{\mu}(z)}=\frac{1}{\et{\mu}'(0)}=\frac{1}{m_1(\mu)}.
   \end{equation}
Since $u(0)=b i + \tau(\mathrm{T})$, we obtain that
    \begin{equation}
    b=\arg\left(\frac{1}{m_1(\mu)} \right),\,\text{and}\, \tau(\mathrm{T})=\ln \left|\frac{1}{m_1(\mu)}\right|.
    \end{equation}
The first assertion follows.

By (\ref{formula}), we have that
   \begin{equation}\nonumber
         \Sigma_{\lambda}(\eta_{\nu}(z))=
   \exp\left( \frac{1}{2} \int_{\mathrm{T}}\frac{1+\xi z}{1-\xi z} d\nu(\xi)\right).
    \end{equation}
Noticing that $k_{\mu}$ has the Herglotz representation as (\ref{levyMBool}),
we conclude that $k_{\mu}(z)$ can be written in the form of $\Sigma_{\lambda}(\eta_{\nu}(z))$
for a probability measure $\nu$ on $\mathrm{T}$
if and only if $\ln (1/m_1(\mu))=1/2$.
This implies the second half of the assertion.
\end{proof}

For $\mu\in\mathcal{ID}(\bmultg, \mathrm{T})\backslash P_0= \MstarS$ with $m_1(\mu)>0$,
let $u(z)$ be the analytic function satisfying $k_{\mu}(z)=\exp(u(z))$ and $u(0)>0$.
Given $t>1$, let $H_t(z)=z \exp((t-1)u(z))$, and denote its right inverse by $\omega_t:\mathbb{D}\rightarrow \mathbb{D}$ with $\omega_t(0)=0$. We define (see \cite{BB2005}) $\mu^{\boxtimes t}$ by the relation
    \begin{equation}
      \et{\mu^{\boxtimes t}}(z)=\et{\mu}(\omega_t(z)).
    \end{equation}
Then we see that $H_t'(0)>0$, $\omega_t'(0)>0$
and that
 $$m_1(\mu^{\boxtimes t})=\et{\mu^{\boxtimes t}}'(0)>0.$$
For $t>0$, we also define $\mu^{\bmultk t}$ by the relation
       \begin{equation}
          k_{\mu^{\bmultk t}}(z)=\exp(tu(z)).
       \end{equation}
For this choice of the Boolean convolution power, we have that
    \begin{equation}\nonumber
       m_1(\mu^{\bmultk t})>0.
    \end{equation}
\begin{defn}
 Given $\mu\in\MstarS$ such that
 $m_1(\mu)>0$,
 we define a family of maps $\{ \mathbb{M}_t\}_{t\neq 0}$ by
    \begin{equation}\nonumber
      \mathbb{M}_t(\mu)=\left(\mu^{\boxtimes (t+1)} \right)^{\bmultk \frac{1}{t+1}},
    \end{equation}
  where we choose $\mu^{\boxtimes (t+1)}$ and $\mathbb{M}_t(\mu)$ in a way such that they have positive means.
\end{defn}
The next result is a special case of Theorem 4.4 in \cite{AHasebe2012}.
\begin{lemma}
Given $\mu\in\MstarS$ with
 $m_1(\mu)>0$, 
 then the following assertions are true.
\begin{enumerate}[$(1)$]
  \item $\mathbb{M}_{t+s}(\mu)=\mathbb{M}_t\left(\mathbb{M}_s(\mu) \right)$ for all $t,s\geq 0$.
  \item $\mathbb{M}_1(\mu)=\mathbb{M}(\mu)$.
\end{enumerate}
\end{lemma}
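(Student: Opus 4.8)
The plan is to reduce everything to the corresponding identities for the convolution powers $\boxtimes$ and $\bmultg$, together with the single defining relation $k_\mu = \Sigma_{\mathbb{M}(\mu)}$ of the multiplicative Bercovici-Pata bijection. The key preliminary observation is that, for $\mu \in \MstarS$ with $m_1(\mu)>0$, all the objects in play are governed by the analytic function $u$ with $k_\mu(z) = z/\et{\mu}(z) = \exp(u(z))$ and $u(0)>0$: by the Lévy-Hin\v{c}in formula \refto{levyMBool} and the discussion preceding the definition of $\mathbb{M}_t$, one has $k_{\mu^{\bmultk s}}(z)=\exp(s\,u(z))$ for all $s>0$, and $\et{\mu^{\boxtimes r}}(z)=\et{\mu}(\omega_r(z))$ with $\Sigma_{\mu^{\boxtimes r}} = (\Sigma_\mu)^r$ for $r>1$. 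So first I would record, as a computational lemma, the formula for $k_{\mathbb{M}_t(\mu)}$: since $\mathbb{M}_t(\mu) = (\mu^{\boxtimes(t+1)})^{\bmultk 1/(t+1)}$, we get
\begin{equation}\nonumber
  k_{\mathbb{M}_t(\mu)}(z) = \exp\!\left(\tfrac{1}{t+1}\, u_{t+1}(z)\right),
\end{equation}
where $u_{t+1}$ is the analytic function with $k_{\mu^{\boxtimes(t+1)}} = \exp(u_{t+1})$ and $u_{t+1}(0)>0$. The point is that $u_{t+1}$ is not simply $(t+1)u$, because the Boolean and free convolution powers interact through the subordination function $\omega_{t+1}$; instead $u_{t+1}(z) = (t+1)\log\big(z/\et{\mu^{\boxtimes(t+1)}}(z)\big)$ with the branch pinned down by positivity of the mean.

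Next, for part (2) I would simply set $t=1$: then $\mathbb{M}_1(\mu) = (\mu^{\boxtimes 2})^{\bmultk 1/2}$. By the paragraph before the Corollary identifying $\sigma$ with $\mathbb{M}(\mu)$, if $\omega_2$ is the subordination function of $\mu^{\boxtimes 2}$ with respect to $\mu$ and $\sigma$ is the measure with $\et{\sigma}=\omega_2$, then $\Sigma_\sigma(z) = z/\et{\mu}(z) = k_\mu(z)$, i.e. $\sigma = \mathbb{M}(\mu)$. It then remains to check that $(\mu^{\boxtimes 2})^{\bmultk 1/2}$ has the same $k$-transform as $\sigma$. Using $\Sigma_{\mu^{\boxtimes 2}} = (\Sigma_\mu)^2$ together with the relation $\omega_2(z) = \et{\mu^{\boxtimes 2}}(z)[z/\et{\mu^{\boxtimes 2}}(z)]^{1/2}$ from property (2) of the $\boxtimes$-power construction, a short manipulation gives $z/\et{\mu^{\boxtimes 2}}(z) = k_\mu(z)^2 = k_\sigma(z)^2$, hence $k_{(\mu^{\boxtimes 2})^{\bmultk 1/2}}(z) = \big(z/\et{\mu^{\boxtimes 2}}(z)\big)^{1/2} = k_\sigma(z)$; since both measures have positive mean and $k$ determines the measure, $\mathbb{M}_1(\mu) = \sigma = \mathbb{M}(\mu)$.

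For the semigroup property (1), the cleanest route is to express $\mathbb{M}_t$ via subordination distributions, exactly as \refto{subordB} does in the additive case. Writing $\mathbb{M}_t(\mu) = (\mu^{\boxtimes(t+1)})^{\bmultk 1/(t+1)}$ and using the computation above, $k_{\mathbb{M}_t(\mu)}(z) = \big(z/\et{\mu^{\boxtimes(t+1)}}(z)\big)$, so $\et{\mathbb{M}_t(\mu)}$ is recovered from $\et{\mu^{\boxtimes(t+1)}}$, and conversely. I would then verify that the $\Sigma$-transform of the principal subordination distribution $\sigma_t$ of $\mu^{\boxtimes(t+1)}$ with respect to $\mu$ equals $[z/\et{\mu}(z)]^t = \Sigma_{\mathbb{M}(\mu)^{\boxtimes t}}$ — this is the content of the remark following the statement on $\sigma_t$ being $\boxtimes$-infinitely divisible — and then translate the composition $\mathbb{M}_{t+s} = \mathbb{M}_t \circ \mathbb{M}_s$ into the statement that composing the subordination function of $\mu^{\boxtimes(t+1)}/\mu$ with the appropriate one for $\mathbb{M}_s(\mu)$ reproduces that of $\mu^{\boxtimes(t+s+1)}/\mu$, which follows from multiplicativity of $\Sigma$ under $\boxtimes$ and from $\omega_{rs}$-type composition identities for the subordination functions. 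An alternative, and perhaps more transparent, proof of (1) is purely at the level of $\Sigma$-transforms on the positive-mean branch: one checks that $k_{\mathbb{M}_t(\mu)}$, as a function of $u = u(z)$ only, is given by a fixed-point equation $k_{\mathbb{M}_t(\mu)} = k_\mu \cdot (\text{something depending only on } t \text{ and } \et{\mathbb{M}_t(\mu)})$, and that iterating the $t$-parameter composes additively. The main obstacle throughout is bookkeeping of branches: the $\boxtimes$-power and the map $\mathbb{M}_t$ are only defined up to rotation, so one must consistently use the "positive mean" normalization at every stage to make the identities literal equalities rather than equalities up to a root of unity; checking that the positive-mean choice is preserved under all the operations (taking $\boxtimes(t+1)$-power, then $\bmultg\,1/(t+1)$-power, then composing) is where the real care is needed, and it relies on the elementary facts $H_t'(0)>0 \Rightarrow \omega_t'(0)>0 \Rightarrow m_1(\mu^{\boxtimes t})>0$ recorded just before the definition of $\mathbb{M}_t$.
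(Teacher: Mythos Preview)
The paper does not prove this lemma: it simply records it as a special case of Theorem~4.4 in \cite{AHasebe2012}. So you are attempting more than the paper does, and the relevant question is whether your argument stands on its own.

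For part~(2) your strategy is correct but the displayed chain contains a genuine error. You assert
\[
\frac{z}{\et{\mu^{\boxtimes 2}}(z)} \;=\; k_\mu(z)^2 \;=\; k_\sigma(z)^2,
\]
and both equalities are false in general: $k$ linearizes Boolean convolution, not free convolution, so $k_{\mu^{\boxtimes 2}}\neq k_\mu^2$; and $k_\mu=k_\sigma$ would force $\mu=\mathbb{M}(\mu)$. What is true is only the outer identity $z/\et{\mu^{\boxtimes 2}}(z)=k_\sigma(z)^2$, obtained from $\omega_2(z)^2=z\,\et{\mu^{\boxtimes 2}}(z)$ (square the relation $\omega_2=\et{\mu^{\boxtimes 2}}\cdot(z/\et{\mu^{\boxtimes 2}})^{1/2}$) together with $\et{\sigma}=\omega_2$. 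With that correction your conclusion $k_{(\mu^{\boxtimes 2})^{\bmultk 1/2}}=k_\sigma$ and hence $\mathbb{M}_1(\mu)=\sigma=\mathbb{M}(\mu)$ is fine.

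For part~(1), what you have written is not a proof but a list of plausible mechanisms. Neither the ``subordination composition'' route nor the ``$\Sigma$-transform fixed-point'' route is carried out: you do not exhibit the identity linking the subordination function of $(\mathbb{M}_s(\mu))^{\boxtimes(t+1)}$ with respect to $\mathbb{M}_s(\mu)$ to that of $\mu^{\boxtimes(t+s+1)}$ with respect to $\mu$, nor do you verify any concrete functional equation for $k_{\mathbb{M}_t(\mu)}$. The heart of the matter---that taking a Boolean $1/(s+1)$-power, then a free $(t+1)$-power, then a Boolean $1/(t+1)$-power collapses to a single free $(t+s+1)$-power followed by a Boolean $1/(t+s+1)$-power---requires an honest computation (for instance, the identity $\omega_{t+1}(z)=\et{\mu^{\boxtimes(t+1)}}(z)\bigl[z/\et{\mu^{\boxtimes(t+1)}}(z)\bigr]^{1/(t+1)}$ combined with the same for $\mathbb{M}_s(\mu)$, tracking the positive-mean branch throughout), and you have only asserted that it works. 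As it stands, part~(1) is a gap.
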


We are now able to prove Theorem \ref{thm:1.1}.
\begin{proof}[\textbf{Proof of Theorem \ref{thm:1.1}}]
We set
\begin{equation}
    u(z)=\frac{1}{2}\int_{\mathrm{T}}\frac{1+\xi z}{1-\xi z} d\nu(\xi),
\end{equation}
then by (\ref{formula}) and the assumption (\ref{equalM}), we have that
    \begin{equation}
       \frac{z}{\et{\mu}(z)}=\Sigma_{\lambda}(\et{\nu}(z))=\exp(u(z)).
    \end{equation}
By Proposition \ref{prop:3.19}, we see that $m_1(\mu)>0$. 
We therefore can
choose the multiplicative convolution power $\mu^{\boxtimes (t+1)}$ such that $m_1(\mu^{\boxtimes (t+1)})>0$.

Let $\eta_t$ be the the principal subordination function
of $\nu\boxtimes \lambda_t$ with respect to $\nu$ and
$\omega_{t+1}$ be the subordination function of $\multtT{(t+1)}$
with respect to $\mu$. Let $\rho_t, \sigma_t\in\MT$ such that $\et{\rho_t}=\eta_t$ and $\et{\sigma_t}=\omega_{t+1}$.

By Corollary \ref{cor:3.13}, (\ref{equalM})
implies that $\Sigma_{\rho_t}(z)=\Sigma_{\lambda_t}(\et{\mu}(z))=\exp(t u(z))$. From the choice
of $\mu^{\boxtimes (t+1)}$, the function $H_{t+1}(z):=z \exp(tu(z))$ is the left inverse of $\omega_{t+1}$
such that $H_{t+1}(\omega_{t+1}(z))=z$ for all $z\in\mathbb{D}$, which implies that
    \begin{equation}
       \Sigma_{\sigma_t}(z)=\exp(tu(z)).
    \end{equation}
We thus obtain that $\rho_t=\sigma_t$ and $\eta_t=\omega_{t+1}$.

Replacing $z$ by $\eta_t$ in (\ref{equalM}),
we obtain that
 \begin{equation}
   \begin{split}
     \Sigma_{\lambda}(\eta_{\nu\boxtimes\lambda_t}(z))&=
     \Sigma_{\lambda}(\eta_{\nu}((\eta_t(z))))
      =\frac{\eta_t(z)}{\eta_{\mu}(\eta_t(z))}\\
       &=\frac{\omega_{t+1}(z)}{\eta_{\mu}(\omega_{t+1}(z))}
       =\frac{\omega_{t+1}(z)}{\eta_{\multtT{(t+1)}}(z))} \\
       &=\left( \frac{z}{\eta_{\multtT{(t+1)}}(z)}\right)^{1/t+1}.\\
 \end{split}
 \end{equation}
On the other hand, by the definition of $\mathbb{M}_t$, we have that
 \begin{align*}
   \frac{z}{\eta_{\mathbb{M}_t(\mu)}(z)}&=
    \frac{z}{\eta_{\left(\mu^{\boxtimes (t+1)} \right)^{\bmultk \frac{1}{t+1}}}(z)}\\
     &=\left( \frac{z}{\eta_{\multtT{(t+1)}}(z)} \right)^{1/t+1}.
 \end{align*}
We thus complete the proof of Theorem \ref{thm:1.1}.
\end{proof}
\subsection{Some examples and applications}
We start with some examples 
which are the multiplicative analogue of examples studied
in \cite{Anshelevich,Anshelevich2012,AHasebe2012,BN2008Ind}. 
We define the set
\begin{equation}\nonumber
  (\mathrm{A})=\{\mu\in\MstarS: m_1(\mu)=e^{-1/2}\}.
\end{equation}
By Proposition \ref{Herglotz},
the set $\MT$ is in one-to-one correspondence with the set $(\mathrm{A})$ via
the bijection $\nu\leftrightarrow \mu$, such that
$\Sigma_{\lambda}\left(\eta_{\nu}(z) \right)=z/\eta_{\mu}(z).$
\begin{defn}
 The bijective map $\Lambda : \MT\rightarrow (\mathrm{A})$
  is defined by
  \begin{equation}\nonumber
    \Sigma_{\lambda}\left(\eta_{\nu}(z) \right)=\frac{z}{\eta_{\Lambda[\nu]}(z)}, \,\forall \nu\in\MT.
  \end{equation}
\end{defn}
Using the notation $\Lambda$, Theorem \ref{thm:1.1} implies that
\begin{equation}\nonumber
  \Lambda[\nu\boxtimes\lambda_t]=\mathbb{M}_t[\Lambda(\nu)],\,\,\forall \nu\in\MT.
\end{equation}

\begin{example}
\emph{
Let $\delta_1$ be the Dirac measure at 1, and let $\mu=\Lambda[\delta_1]$,
we have $z/\eta_{\mu}(z)=\Sigma_{\lambda}\left(\eta_{\delta_1}(z) \right)
  =\exp(\frac{1}{2}\frac{1+z}{1-z})$. For $t\geq 0$, Theorem \ref{thm:1.1} implies that
    \begin{equation}\nonumber
      \frac{z}{\eta_{\mathbb{M}_t(\mu)}(z)}
       =\Sigma_{\lambda}\left(\eta_{\delta_1\boxtimes\lambda_t}(z)\right)
        =\Sigma_{\lambda}\left(\eta_{\lambda_t}(z)\right).
    \end{equation}
In particular, when $t=1$,
   \begin{equation}\nonumber
      \eta_{\mathbb{M}_1(\mu)}(z)=
        \frac{z}{\Sigma_{\lambda}\left(\eta_{\lambda_1}(z) \right)}
         =\eta_{\lambda_1}(z),
   \end{equation}
   where we used the equality $\left( (z\Sigma_{\lambda})\circ \eta_{\lambda}\right)(z)=z$
   and $\lambda=\lambda_1$. Therefore, $\mathbb{M}_1(\mu)$ is the free multiplicative analogue of the normal distribution on $\mathrm{T}$. }
\end{example}

\begin{example}
  \emph{
   More generally, we consider $\lambda_{b,t}=\delta_{b}\boxtimes\lambda_t$
    and $\mu_{b,t}=\Lambda[\lambda_{b,t}]$.
   Then we have that
     \begin{equation}\nonumber
       \Sigma_{\lambda}\left(\eta_{\lambda_{b,t}}(z)\right)
        =\frac{z}{\eta_{\mu_{b,t}}(z)},\,\text{for}\, t \neq 0.
     \end{equation}
   On the other hand, Theorem \ref{thm:1.1} implies that
     \begin{equation}\nonumber
       \Sigma_{\lambda}\left(\eta_{\lambda_{b,t_1+t_2}}(z) \right)
          =\Sigma_{\lambda}\left(\eta_{\lambda_{b,t_1}\boxtimes\lambda_2}(z) \right)
          =\frac{z}{\eta_{\mathbb{M}_{t_2}(\mu_{b,t_1})}(z)}, \,\text{for}\, t_1,t_2\geq 0,
     \end{equation}
     which yields that $\mu_{b,t_1+t_2}=\mathbb{M}_{t_2}(\mu_{b,t_1})$ \,\text{for}\, $t_1, t_2 \geq 0$.  }
\end{example}
We would like to provide another example which covers part of Example 4.10 in \cite{AHasebe2012}.
\begin{example}
  \emph{
  Let $P_0$ be the Haar measure on $\mathrm{T}$. Then by the free independence,
    $P_0\boxtimes\lambda_t=P_0$. We set $\mu=\Lambda[P_0]$, and we have
    \begin{equation}\nonumber
      \Sigma_{\lambda}\left(\eta_{P_0\boxtimes\lambda_t}(z) \right)
      =\Sigma_{\lambda}\left(\eta_{P_0}(z) \right)=\frac{z}{\eta_{\mu}(z)},
    \end{equation}
   which implies that $\mathbb{M}_t(\mu)=\mu$ for all $t\geq 0$.
  To calculate the distribution of $\mu$, we note that $\eta_{P_0}\equiv 0$,
  which shows that $\eta_{\mu}=e^{-1}z$, and we thus have that $\psi_{\mu}(z)=z/e-z $.
  Using the identity
    \begin{equation}\nonumber
      \frac{1}{\pi}\left(\psi_{\mu}(z)+\frac{1}{2} \right)=
        \frac{1}{2\pi}\int_0^{2\pi}\frac{e^{it}+z}{e^{it}-z}d\,\mu(e^{-it}),
    \end{equation}
 and Stieltjes's inversion formula, we obtain that
    \begin{equation}\nonumber
      \mu(dt)=\frac{1}{2\pi}\frac{1-e^{-2}}{1+e^{-2}-2e^{-1} \cos(t)}dt,\,\, 0\leq t \leq 2\pi.
    \end{equation}  }
\end{example}

We then give some applications of results concerning infinity divisibility of the measures associated with
subordination functions. For $\mu\in\MT$, we say $\mu$ is nontrivial if it is not a Dirac measure at a point
on $\mathrm{T}$.
\begin{lemma}\label{lemma3.25}
 Given $\sigma\in\mathcal{ID}(\boxtimes,\mathrm{T})$ which is non-trivial, and $0<\epsilon<1$, there exists a positive
 number $n(\epsilon)$ such that
    \begin{equation}\nonumber
            \et{\sigma_t}(\overline{\mathbb{D}})\subset \mathbb{D}_{\epsilon}=\{z=re^{i\theta}:0\leq r\leq\epsilon, 0\leq \theta<2\pi \}
    \end{equation}
holds for any $t>n(\epsilon)$, where $\sigma_t=\sigma^{\boxtimes t}$.
\end{lemma}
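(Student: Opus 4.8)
The plan is to classify $\sigma$ and then reduce the assertion to a one-line comparison of $\ln(1/|z|)$ with the real part of the L\'{e}vy exponent of $\sigma$. If $\sigma$ is the Haar measure $P_0$, then $\sigma_t=P_0$, $\et{\sigma_t}\equiv 0$, and there is nothing to prove; so I may assume $\sigma\in\MstarS$, since a non-Haar member of $\mathcal{ID}(\boxtimes,\mathrm{T})$ lies in $\MstarS$ (by the L\'{e}vy--Hin\v{c}in classification of (\ref{levyMultiT}) together with Lemma \ref{infMstar}(1)). By (\ref{levyMultiT}) write $\Sigma_{\sigma}(z)=e^{u(z)}$ with
\[
  u(z)=\alpha i+\int_{\mathrm{T}}\frac{1+\xi z}{1-\xi z}\,d\rho(\xi),\qquad z\in\mathbb{D},
\]
where $\alpha\in\mathbb{R}$ and $\rho$ is a finite positive measure on $\mathrm{T}$. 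Because $\sigma$ is not a Dirac mass, $\rho\neq 0$, so $c_0:=\rho(\mathrm{T})>0$. For every $t>0$ one has $\Sigma_{\sigma_t}(z)=e^{tu(z)}$, and $tu$ is again of the form (\ref{levyMultiT}) (with $t\alpha$, $t\rho$ in place of $\alpha$, $\rho$); thus $\sigma_t\in\MstarS$ is $\boxtimes$-infinitely divisible and Lemma \ref{infMstar} applies to each $\sigma_t$.

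By Lemma \ref{infMstar}(2)--(3), $\et{\sigma_t}$ extends continuously to $\overline{\mathbb{D}}$, the image $\et{\sigma_t}(\mathbb{D})$ is precisely $\{z\in\mathbb{D}:|z\,\Sigma_{\sigma_t}(z)|<1\}$, and $\et{\sigma_t}(\mathrm{T})$ is exactly the boundary of this domain; together with the continuity of the extension this gives $\et{\sigma_t}(\overline{\mathbb{D}})=\overline{\et{\sigma_t}(\mathbb{D})}$. Hence it suffices to produce $n(\epsilon)$ such that, for every $t>n(\epsilon)$,
\[
  z\in\mathbb{D},\ |z|\geq\epsilon\ \Longrightarrow\ |z\,\Sigma_{\sigma_t}(z)|\geq 1,
\]
for then $\et{\sigma_t}(\mathbb{D})\subset\{|z|<\epsilon\}$ and its closure lies in $\mathbb{D}_{\epsilon}$.

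For the implication, note that $|z\,\Sigma_{\sigma_t}(z)|=|z|\,e^{t\,\Re u(z)}$, so the conclusion is equivalent to $t\,\Re u(z)\geq\ln(1/|z|)$. The point requiring care is that one cannot simply say ``$\Re u(z)$ is bounded below and $t$ is large'', since $\Re u(z)\to 0$ as $|z|\to 1$; instead both sides must be compared with $1-|z|^{2}$. On one side, $\Re u(z)=\int_{\mathrm{T}}\frac{1-|z|^{2}}{|1-\xi z|^{2}}\,d\rho(\xi)$ and the bound $|1-\xi z|\leq 1+|z|<2$ give $\Re u(z)\geq\frac{c_0}{4}(1-|z|^{2})$ for all $z\in\mathbb{D}$. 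On the other side, for $\epsilon\leq|z|<1$ the elementary inequalities $\ln(1/r)\leq(1-r)/r$ and $1-r\leq 1-r^{2}$ give $\ln(1/|z|)\leq(1-|z|^{2})/\epsilon$. Therefore $t\,\Re u(z)\geq\ln(1/|z|)$ as soon as $t\geq 4/(\epsilon c_0)$, and I would take $n(\epsilon)=4/(\epsilon c_0)$.

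The argument is short once the set-up is in place. The main thing to get right is the transition from the open disk to the closed disk: the identity $\et{\sigma_t}(\overline{\mathbb{D}})=\overline{\et{\sigma_t}(\mathbb{D})}$ has to be read off from the continuous extension and the boundary description in Lemma \ref{infMstar}(2)--(3) rather than proved directly, together with the (easy but essential) fact that the threshold $4/(\epsilon c_0)$ does not depend on $z$, which is what lets the inclusion hold on the whole disk simultaneously.
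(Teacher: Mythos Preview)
Your proof is correct and follows essentially the same route as the paper's: reduce to $\sigma\in\MT\cap\Mstar$ via the L\'evy--Hin\v{c}in representation $\Sigma_{\sigma}=e^{u}$, bound $\Re u(z)$ below by a multiple of $1-|z|^{2}$ using $|1-\xi z|\leq 1+|z|$ (you use the cruder $\leq 2$), and invoke Lemma~\ref{infMstar} to translate $|z\Sigma_{\sigma_t}(z)|\geq 1$ into the inclusion for $\et{\sigma_t}$. The one noteworthy difference is that the paper only checks $|\Phi_{\sigma_t}(z)|>1$ on the circle $|z|=\epsilon$ and then appeals to the simple connectedness of $\et{\sigma_t}(\mathbb{D})$ (from Lemma~\ref{infMstar}(3)) to conclude, whereas you verify the inequality directly on the full annulus $\epsilon\leq|z|<1$ and thereby obtain an explicit $n(\epsilon)=4/(\epsilon c_0)$; your version is marginally more self-contained at the cost of one extra elementary estimate.
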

\begin{proof}
 If $\sigma=P_0$, the Haar measure on $\mathrm{T}$, then the result is trivial. If $\sigma \neq P_0$ is nontrivial, then
 by Theorem 6.7 in \cite{BV1992}, there exists a finite positive measure $\nu$ on $\mathrm{T}$ with $\nu(\mathrm{T})>0$,
 $\alpha\in\mathbb{R}$, and an analytic function $u$ defined by
     \begin{equation}\nonumber
       u(z)=i\alpha +\int_{\mathrm{T}}\frac{1+\xi z}{1-\xi z}d\nu(\xi), \,\text{for}\,z\in\mathbb{D},
     \end{equation}
 such that $\Sigma_{\sigma}(z)=\exp(u(z))$. We choose $\sigma_t\in\MT$ satisfying $\Sigma_{\sigma_t}(z)=\exp(tu(z))$. Noticing that other choices of the multiplicative free convolution power of $\sigma$ can be obtained from $\sigma_t$ by a rotation of
 a multiple of $2\pi t$, it is enough to prove the assertion for $\sigma_t$.

 We set $\Phi_{\sigma_t}=z\Sigma_{\sigma_t}(z)$, then by Lemma \ref{infMstar}, we have that
       \begin{equation}\nonumber
          \Phi_{\sigma_t}^{-1}(\mathbb{D})=\et{\sigma_t}(\mathbb{D}).
       \end{equation}
 For $z=re^{i\theta}\in\mathbb{D}$, we calculate
      \begin{equation}\label{eq:3.33}
        \begin{split}
          |\Phi_{\sigma_t}(z)|&=r\exp\left(t\int_{\mathrm{T}}\frac{1-r^2}{|1-\xi z|^2}d\nu(\xi) \right)\\
                      &\geq r\exp\left(t \int_{\mathrm{T}}\frac{1-r^2}{|1+r|^2}d\nu(\xi) \right)\\
                      &=r \exp\left(t\cdot \nu(\mathrm{T})\frac{1-r}{1+r} \right).
        \end{split}
      \end{equation}
 Since  $\lim_{t\rightarrow\infty} r\exp\left( t\cdot\nu(\mathrm{T})\frac{1-r}{1+r} \right)=\infty$, we deduce that for any $0<\epsilon<1$, there exists a positive number $n(\epsilon)$ such that, for all $t>n(\epsilon)$, we have that
        \begin{equation}\nonumber
           |\Phi_{\sigma_t}(z)|>1, \,\text{for}\, |z|=\epsilon.
        \end{equation}
By Lemma \ref{infMstar}, $\Phi_{\sigma_t}(\mathbb{D})$ is a simply connected domain which contains zero, which implies that
       \begin{equation}\nonumber
         \et{\sigma_t}(\mathbb{D})=\Phi_{\sigma_t}^{-1}(\mathbb{D})\subset\mathbb{D}_{\epsilon}, \,\text{for}\, t>n(\epsilon).
       \end{equation}
The assertion follows by the fact that $\et{\sigma_t}$ extends to be a continuous function on $\overline{\mathbb{D}}$.
\end{proof}
For $\mu\in\MT$, we have that
   \begin{equation}\nonumber
      \frac{1}{2\pi}\left(\frac{1+\et{\mu}(z)}{1-\et{\mu}(z)}\right)=\frac{1}{2\pi}
       \int^{2\pi}_{0}\frac{e^{i\theta}+z}{e^{i\theta}-z}d\mu(e^{-i\theta}), \,z\in\mathbb{D}.
   \end{equation}
The real part of this function is the Poisson integral of the measure $d\mu(e^{-i\theta})$,
we can recover $\mu$ by Stieltjes's inversion formula. The functions
   \begin{equation}\label{realpartM}
     \frac{1}{2\pi}\Re{\left(\frac{1+\et{\mu}(re^{i\theta})}{1-\et{\mu}(re^{i\theta})} \right)}
       =\frac{1}{2\pi}\frac{1-|\et{\mu}(re^{i\theta})|^2}{|1-\et{\mu}(re^{i\theta})|^2}
   \end{equation}
converge to the density of $\mu(e^{-i\theta})$ a.e. relative to Lebesgue measure, and they converge to infinity a.e. relative to the singular part of this measure.
\begin{prop}\label{prop:3.26}
 Given $\mu\in\MT$ and $\sigma\in\mathcal{ID}(\boxtimes, \mathrm{T})$ which is nontrivial, let $\mu_t$ be the unique probability measure on $\mathrm{T}$ such that
       \begin{equation}\nonumber
          \et{\mu_t}(z)=\et{\mu}(\et{\sigma_t}(z)).
       \end{equation}
Then we have
     \begin{equation}\nonumber
        \lim_{t\rightarrow \infty} \sup_{\theta\in [0,2\pi]}\left| \frac{d\mu_t(e^{i\theta})}{d\theta}- \frac{1}{2\pi}\right|=0,
     \end{equation}
 where $d\mu_t(e^{i\theta})/d\theta$ is the density function of $\mu_t$ at $e^{i\theta}$ with respect to
 Lebesgue measure.
 \begin{proof}
   Given $0<\epsilon<1$, by Lemma \ref{lemma3.25}, there exists $n(\epsilon)>0$ such that $\et{\sigma_t}(e^{i\theta})<\epsilon$ for $t\geq n(\epsilon)$, which yields that $\et{\mu_t}(z)$ extends continuously to $\overline{\mathbb{D}}$. We thus have that
     \begin{equation}\nonumber
        |\et{\mu_t}(e^{i\theta})|=|\et{\mu}(\et{\sigma_t}(e^{i\theta}))|\leq |\et{\sigma_t}(e^{i\theta})|<\epsilon,
     \end{equation}
 which implies that
    \begin{equation}\label{eq:3.34}
      \frac{1-\epsilon}{1+\epsilon}=\frac{1-\epsilon^2}{|1+\epsilon|^2}\leq
         \frac{1-|\et{\mu_t}(e^{i\theta})|^2}{|1-\et{\mu_t}(e^{i\theta})|^2}\leq \frac{1}{|1-\epsilon|^2}.
    \end{equation}
Since $\epsilon$ is arbitrary, combining (\ref{realpartM}) with (\ref{eq:3.34}), we prove our assertion.
 \end{proof}
\end{prop}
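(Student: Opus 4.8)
The plan is to read off the density of $\mu_t$ from the boundary behaviour of $\et{\mu_t}$ and then control it uniformly via the contraction estimate of Lemma \ref{lemma3.25}. First I would note that since $\et{\mu}\colon\mathbb{D}\to\mathbb{D}$ is analytic with $\et{\mu}(0)=0$, the Schwarz lemma yields $|\et{\mu}(z)|\le|z|$ for $z\in\mathbb{D}$. Composing with $\et{\sigma_t}$ and applying Lemma \ref{lemma3.25}, for each $\epsilon\in(0,1)$ there is $n(\epsilon)>0$ such that $|\et{\mu_t}(z)|\le|\et{\sigma_t}(z)|<\epsilon$ for all $z\in\overline{\mathbb{D}}$ and all $t>n(\epsilon)$; in particular $\et{\mu_t}$ extends continuously to $\overline{\mathbb{D}}$ with boundary values lying in $\mathbb{D}_\epsilon$.

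Next I would invoke the Herglotz representation recorded in \eqref{realpartM}: the real part of $\tfrac{1}{2\pi}\tfrac{1+\et{\mu_t}(z)}{1-\et{\mu_t}(z)}$ is the Poisson integral of $\mu_t$ (up to the reflection $\theta\mapsto-\theta$). Because $\et{\mu_t}$ is continuous on $\overline{\mathbb{D}}$ with modulus bounded away from $1$ on $\mathrm{T}$, this Poisson integral extends continuously to $\overline{\mathbb{D}}$; hence $\mu_t$ carries no singular part and, by Stieltjes inversion, its density is the continuous function
\[
\frac{d\mu_t(e^{i\theta})}{d\theta}=\frac{1}{2\pi}\,\frac{1-|\et{\mu_t}(e^{i\theta})|^2}{|1-\et{\mu_t}(e^{i\theta})|^2}.
\]

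To conclude I would estimate this expression directly: for $t>n(\epsilon)$ one has $|\et{\mu_t}(e^{i\theta})|<\epsilon$, so the numerator lies in $[1-\epsilon^2,1]$ and the denominator in $[(1-\epsilon)^2,(1+\epsilon)^2]$, whence the density is trapped between $\tfrac{1}{2\pi}\tfrac{1-\epsilon}{1+\epsilon}$ and $\tfrac{1}{2\pi}\tfrac{1}{(1-\epsilon)^2}$, uniformly in $\theta\in[0,2\pi]$. Since both bounds tend to $\tfrac{1}{2\pi}$ as $\epsilon\to0$, letting $\epsilon\to0$ gives $\sup_{\theta}\bigl|\,d\mu_t(e^{i\theta})/d\theta-\tfrac{1}{2\pi}\bigr|\to0$. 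The only substantive input is Lemma \ref{lemma3.25}, which is already available; the remaining ingredients---the Schwarz estimate and the continuous-boundary-value argument for absolute continuity---are routine, so I do not anticipate a real obstacle.
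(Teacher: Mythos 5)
Your proposal is correct and follows essentially the same route as the paper's proof: apply Lemma \ref{lemma3.25} together with the Schwarz-lemma contraction $|\et{\mu}(z)|\le|z|$ to trap $|\et{\mu_t}(e^{i\theta})|<\epsilon$ uniformly for $t>n(\epsilon)$, then read the density off \eqref{realpartM} and sandwich it between $\tfrac{1}{2\pi}\tfrac{1-\epsilon}{1+\epsilon}$ and $\tfrac{1}{2\pi}\tfrac{1}{(1-\epsilon)^2}$. You merely make explicit the Schwarz step and the Poisson-integral/Stieltjes-inversion justification that the paper leaves implicit.
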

\begin{cor}\label{cor:3.27}
 Given $\mu\in\MT$ and a nontrivial measure $\nu\in\mathcal{ID}(\boxtimes, \mathrm{T})$, then the density functions of the measures $\mu\boxtimes\nu_t$
 converge to $1/2\pi$ uniformly as $t\rightarrow \infty$; if $\mu\in\MT$ is nontrivial, then the density functions of the measures $\mu^{\boxtimes t}$ converge to $1/2\pi$ uniformly as $t\rightarrow\infty$.
\end{cor}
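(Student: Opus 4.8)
The plan is to reduce both statements to Proposition \ref{prop:3.26}: for each of the two families I will exhibit a fixed \emph{nontrivial} $\boxtimes$-infinitely divisible measure $\sigma$ so that the measure under consideration has $\eta$-transform $\et{\mu}\bigl(\et{\sigma^{\boxtimes t}}(z)\bigr)$, and then invoke Proposition \ref{prop:3.26} directly. The degenerate sub-cases are disposed of first: if $\mu=P_0$, the Haar measure, then $\mu\boxtimes\nu_t=P_0=\mu^{\boxtimes t}$, whose density is already $1/2\pi$; and if $\nu=P_0$ then $\mu\boxtimes\nu_t=P_0$. Thus I may assume $\mu\neq P_0$, so $\mu\in\MT^k$ for some $k\geq 1$, and, for the first assertion, $\nu\neq P_0$, so $\nu\in\MstarS$ with $0<|m_1(\nu)|<1$ because $\nu$ is nontrivial.

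For the measures $\mu\boxtimes\nu_t$ I would apply Corollary \ref{cor:3.13} to the pair $(\mu,\nu_t)$: the principal subordination distribution $\rho_t$ of $\et{\mu\boxtimes\nu_t}$ with respect to $\et{\mu}$ is $\boxtimes$-infinitely divisible and satisfies $\Sigma_{\rho_t}(z)=\Sigma_{\nu_t}(\et{\mu}(z))=\bigl(\Sigma_{\nu}(\et{\mu}(z))\bigr)^t$. Setting $\sigma:=\rho_1$, so that $\Sigma_{\sigma}(z)=\Sigma_{\nu}(\et{\mu}(z))$, gives $\rho_t=\sigma^{\boxtimes t}$ and hence $\et{\mu\boxtimes\nu_t}(z)=\et{\mu}\bigl(\et{\sigma^{\boxtimes t}}(z)\bigr)$. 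The measure $\sigma$ is nontrivial: a $\boxtimes$-infinitely divisible measure on $\mathrm{T}$ is a point mass precisely when its $\Sigma$-transform is a unimodular constant, whereas $|\Sigma_{\sigma}(0)|=|\Sigma_{\nu}(0)|=1/|m_1(\nu)|>1$. Now Proposition \ref{prop:3.26}, applied with this $\mu$ and $\sigma$, shows that the densities of $\mu\boxtimes\nu_t$ converge uniformly to $1/2\pi$.

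For the measures $\mu^{\boxtimes t}$ I would use the subordination structure recorded in Subsection 3.3. Assuming $\mu\in\MstarS$ (the range of $\mu$ for which $\mu^{\boxtimes t}$ is defined for all real $t>1$), the subordination function $\omega_t$ of $\et{\mu^{\boxtimes t}}$ with respect to $\et{\mu}$ equals $\et{\sigma^{\boxtimes(t-1)}}$, where $\sigma$ is the $\boxtimes$-infinitely divisible measure with $\Sigma_{\sigma}(z)=z/\et{\mu}(z)=k_{\mu}(z)$ (that $\sigma$ is $\boxtimes$-infinitely divisible is the L\'{e}vy-Hin\v{c}in formula for $k_{\mu}$). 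Hence $\et{\mu^{\boxtimes t}}(z)=\et{\mu}\bigl(\et{\sigma^{\boxtimes(t-1)}}(z)\bigr)$, and $\sigma$ is again nontrivial because $|\Sigma_{\sigma}(0)|=|k_{\mu}(0)|=1/|m_1(\mu)|>1$ as $\mu$ is nontrivial. Applying Proposition \ref{prop:3.26} with the parameter $t-1\to\infty$ yields the uniform convergence of the densities of $\mu^{\boxtimes t}$ to $1/2\pi$. (If $\mu\in\MT\cap\Mstar\setminus\MstarS$ only the integer powers $\mu^{\boxtimes n}$ are available, and the same scheme applies with $\sigma^{\boxtimes(n-1)}$ replaced by the subordination distribution of $\et{\mu^{\boxtimes n}}$ with respect to $\et{\mu}$, still nontrivial by the same estimate at the origin; the remaining case $\mu\in\MT\setminus\Mstar$, $\mu\neq P_0$, is handled through the modified $\mathcal{S}$-transform of Subsection 3.2 as in Theorem \ref{BBnew}.)

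The step I expect to be the main obstacle is confirming that the subordination distribution $\sigma$ is simultaneously $\boxtimes$-infinitely divisible and nontrivial, so that Lemma \ref{lemma3.25}, and therefore Proposition \ref{prop:3.26}, applies: Corollary \ref{cor:3.13} and the infinite-divisibility computations of Subsection 3.3 supply the first property, while inspecting the value at the origin of the composed $\Sigma$-transform rules out the degenerate unimodular-constant case.
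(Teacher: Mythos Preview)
Your main argument is correct and coincides with the paper's: for $\mu\boxtimes\nu_t$ you invoke Corollary~\ref{cor:3.13} to identify the subordination distribution as $\sigma^{\boxtimes t}$ with $\Sigma_\sigma(z)=\Sigma_\nu(\et{\mu}(z))$, and for $\mu^{\boxtimes t}$ with $\mu\in\MstarS$ you use the semigroup $\sigma^{\boxtimes(t-1)}$ from Subsection~3.3; in both cases Proposition~\ref{prop:3.26} finishes the job. Your nontriviality check via $|\Sigma_\sigma(0)|=1/|m_1(\cdot)|>1$ is more explicit than the paper's one-line remark and is fine.

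The parenthetical treatment of the remaining cases, however, does not go through as written. For $\mu\in\MT\cap\Mstar\setminus\MstarS$ you propose replacing $\sigma^{\boxtimes(n-1)}$ by ``the subordination distribution of $\et{\mu^{\boxtimes n}}$ with respect to $\et{\mu}$''. But Proposition~\ref{prop:3.26} requires the subordination distributions to be the powers $\sigma^{\boxtimes t}$ of a \emph{fixed} $\boxtimes$-infinitely divisible $\sigma$; the computation $\Sigma_{\sigma_t}(z)=(z/\et{\mu}(z))^t$ in Subsection~3.3 relies on $\et{\mu}$ being zero-free on $\mathbb{D}\setminus\{0\}$, which fails precisely when $\mu\notin\MstarS$. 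The paper sidesteps this by quoting \cite{BB2005}: for $\mu\in\MT\cap\Mstar$ one always has $\mu\boxtimes\mu\in\MstarS$, so the already-proved case applies to $\mu\boxtimes\mu$ and hence to $\mu^{\boxtimes n}$.

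For $\mu\in\MT\setminus\Mstar$, $\mu\neq P_0$, the modified $\mathcal{S}$-transform of Subsection~3.2 does not help: that machinery computes $\mu\boxtimes\nu$ only when $\nu\in\MT\cap\Mstar$, whereas here both factors have mean zero. The paper's route is elementary: if $\phi(u)=\phi(v)=0$ and $u,v$ are free, then every moment $\phi((uv)^n)$ is an alternating word in centered free elements and hence vanishes, so $\mu\boxtimes\mu=P_0$ and $\mu^{\boxtimes n}=P_0$ for all $n\geq 2$.
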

\begin{proof}
 Noticing Corollary \ref{cor:3.13}, Propositions \ref{prop:3.14}, \ref{prop:3.26} and Subsection 3.3, we only need to prove the case of $\mu^{\boxtimes t}$ for $\mu\notin \MstarS$. We point out that the measures are nontrivial imply that the subordination distributions involved are nontrivial.

For $\mu\in\MT\backslash\Mstar$, we have $\mu^{\boxtimes n}=P_0$, where $P_0$ is the Haar measure on $\mathrm{T}$.
Thus the assertion is true for this case. For $\mu\in\MT\cap\Mstar$, but $\mu\notin\MstarS$, it is shown in
\cite{BB2005} that $\mu\boxtimes\mu\in\MstarS$, thus this case reduces to the case when $\mu\in\MstarS$. This finishes the proof.
\end{proof}
\section{multiplicative convolution on $\Mrealplus$}
\subsection{Multiplicative free convolution on $\Mrealplus$}
We are interested in the probability measures
on the positive real line $\mathbb{R}^+$, which are different from the Dirac measure at zero, we thus
set
\begin{equation}\nonumber
  \Mrealpplus=\mathcal{M}_{\mathbb{R}^+}\backslash\{\delta_0\}.
\end{equation}
Given $\mu\in\Mrealpplus$, we define
\begin{equation}\nonumber
  \psi_{\mu}(z)=\int_0^{+\infty}\frac{tz}{1-tz}d\,\mu(t),
\end{equation}
and $\et{\mu}(z)=\pp{\mu}(z)/(1+\pp{\mu}(z))$.
The transform $\et{\mu}$ is characterized by the following proposition (see \cite{BB2005}).
\begin{prop}
Let $\eta:\mathbb{C}\backslash\mathbb{R}^+\rightarrow\mathbb{C}$ be an analytic function such that
$\eta(\overline{z})=\overline{\eta(z)}$ for all $z\in \mathbb{C}\backslash\mathbb{R}^+$.
Then the following two conditions are equivalent.
  \begin{enumerate}[$(1)$]
    \item $\eta=\et{\mu}$ for some $\mu\in\Mrealpplus$.
    \item $\eta(0-)=0$ and $\arg(\eta(z))\in [\arg z, \pi)$ for all $z\in\mathbb{C}^+$.
  \end{enumerate}
\end{prop}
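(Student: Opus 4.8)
\emph{Proof proposal.} The plan is to establish the two implications separately, each time working from the defining formula $\et{\mu}(z)=\psi_{\mu}(z)/(1+\psi_{\mu}(z))$ with $\psi_{\mu}(z)=\int_0^{\infty}\frac{tz}{1-tz}\,d\mu(t)$, together with the elementary identity $1+\psi_{\mu}(z)=\int_0^{\infty}\frac{d\mu(t)}{1-tz}$, which holds precisely because $\mu(\mathbb{R}^+)=1$; in particular $\et{\mu}(z)/z=a_{\mu}(z)/b_{\mu}(z)$ with $a_{\mu}(z)=\int_0^{\infty}\frac{t}{1-tz}\,d\mu(t)$ and $b_{\mu}(z)=\int_0^{\infty}\frac{d\mu(t)}{1-tz}$.

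For $(1)\Rightarrow(2)$, the relation $\et{\mu}(0-)=0$ is immediate from dominated convergence, and since $\Im\frac{tz}{1-tz}=\frac{t\,\Im z}{|1-tz|^{2}}\ge 0$ we get $\psi_{\mu}(z)\in\mathbb{C}^+$ (strictly, as $\mu\ne\delta_0$) for $z\in\mathbb{C}^+$, whence $\et{\mu}(z)=1-(1+\psi_{\mu}(z))^{-1}\in 1+\mathbb{C}^+=\mathbb{C}^+$; in particular $\arg\et{\mu}(z)<\pi$. The only substantial point is the lower bound $\arg\et{\mu}(z)\ge\arg z$, equivalently $\Im\bigl(a_{\mu}(z)\overline{b_{\mu}(z)}\bigr)\ge 0$. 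I would expand $a_{\mu}(z)\overline{b_{\mu}(z)}-\overline{a_{\mu}(z)}b_{\mu}(z)$ as a double integral over $\operatorname{supp}\mu\times\operatorname{supp}\mu$, symmetrize in the two integration variables, and use the algebraic identity $(1-sz)(1-t\bar z)-(1-tz)(1-s\bar z)=(t-s)(z-\bar z)$ to reach
\[
\Im\bigl(a_{\mu}(z)\overline{b_{\mu}(z)}\bigr)=\frac{\Im z}{2}\iint\frac{(t-s)^{2}}{|1-tz|^{2}\,|1-sz|^{2}}\,d\mu(t)\,d\mu(s)\ge 0,\qquad z\in\mathbb{C}^+;
\]
together with $\et{\mu}(z),z\in\mathbb{C}^+$ this yields $\arg\et{\mu}(z)\in[\arg z,\pi)$.

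For $(2)\Rightarrow(1)$, set $\psi(z)=\eta(z)/(1-\eta(z))$ and $h(z)=\psi(z)/z$. By (2) and the symmetry $\eta(\bar z)=\overline{\eta(z)}$ the function $\eta$ maps $\mathbb{C}^{\pm}$ into $\mathbb{C}^{\pm}$, and a boundary argument on $(-\infty,0)$ (if $\eta(x_0)>0$ for some $x_0<0$, continuity of $\eta$ at $x_0$ contradicts $\arg\eta(z)\to\pi$ as $z\to x_0$ within $\mathbb{C}^+$) shows $\eta\le 0$ there; hence $\eta$ omits the value $1$ on $\mathbb{C}\setminus\mathbb{R}^+$, so $\psi$ and $h$ are analytic and conjugation-symmetric there, with $\psi(0-)=0$. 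From $\eta(z)/z=h(z)/(1+zh(z))$ one obtains $\Im(\eta(z)/z)=\bigl(\Im h(z)-|h(z)|^{2}\Im z\bigr)\,|1+zh(z)|^{-2}$, so the argument condition in (2) forces $\Im h(z)\ge|h(z)|^{2}\Im z\ge 0$ for $z\in\mathbb{C}^+$; thus $h$ maps $\mathbb{C}^+$ into $\mathbb{C}^+$ (a constant $h$ would have to vanish identically by this same inequality, contradicting that (2) makes $\eta$ nonvanishing on $\mathbb{C}^+$). Evaluating the inequality at $z=iy$ gives $y^{2}|h(iy)|^{2}\le y\,\Im h(iy)\le y|h(iy)|$, hence $\sup_{y>0}y|h(iy)|\le 1$; by the classical Nevanlinna--Stieltjes representation, a conjugation-symmetric function mapping $\mathbb{C}^+$ into $\mathbb{C}^+$, analytic on $\mathbb{C}\setminus[0,\infty)$ and bounded in this way, is a Cauchy transform $h(z)=\int_{[0,\infty)}\frac{d\rho(s)}{s-z}$ of a finite positive measure $\rho$ with $\rho([0,\infty))=c\le 1$.

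Then $\psi(z)=zh(z)=\int_{[0,\infty)}\frac{z}{s-z}\,d\rho(s)$; a mass of $\rho$ at $0$ would contribute the constant $-\rho(\{0\})$, and $\psi(0-)=\eta(0-)/(1-\eta(0-))=0$ forces $\rho(\{0\})=0$, so the substitution $t=1/s$ gives $\psi(z)=\int_{(0,\infty)}\frac{tz}{1-tz}\,d\mu_0(t)$ for a finite positive $\mu_0$ on $(0,\infty)$ of total mass $c$. Setting $\mu=\mu_0+(1-c)\delta_0$, a probability measure on $\mathbb{R}^+$ with $\mu\ne\delta_0$ (since $c>0$, as $\eta$ is nonvanishing on $\mathbb{C}^+$), we get $\psi_{\mu}=\psi$ and therefore $\et{\mu}=\psi_{\mu}/(1+\psi_{\mu})=\eta$. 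I expect the main obstacle to be this third step: squeezing out of the inequalities in (2) exactly the information that $h$ is a Cauchy transform of a \emph{finite} positive measure on $[0,\infty)$ of total mass at most $1$ --- in particular ruling out a linear term and a negative part in its Nevanlinna representation --- and then handling the atom at $0$ and the normalization so that the reconstructed $\mu$ is genuinely a probability measure; by contrast the symmetrization identity used in $(1)\Rightarrow(2)$ is a short and essentially routine computation once it is set up.
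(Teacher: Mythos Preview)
Your proof is correct and self-contained. Note, however, that the paper does not actually prove this proposition: it simply states it and refers the reader to \cite{BB2005} (Belinschi--Bercovici). So there is no ``paper's own proof'' to compare to in any substantive sense; you have supplied what the paper omits.

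A few brief remarks on the argument itself. In $(1)\Rightarrow(2)$, the symmetrization identity and the resulting formula
\[
\Im\bigl(a_{\mu}(z)\overline{b_{\mu}(z)}\bigr)=\frac{\Im z}{2}\iint\frac{(t-s)^{2}}{|1-tz|^{2}\,|1-sz|^{2}}\,d\mu(t)\,d\mu(s)
\]
are exactly right, and your closing remark ``together with $\eta_\mu(z),z\in\mathbb{C}^+$'' correctly handles the edge case that $\Im(\eta_\mu(z)/z)\ge 0$ alone would in principle allow $\eta_\mu(z)/z$ to be a negative real; knowing $\eta_\mu(z)\in\mathbb{C}^+$ rules that out. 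In $(2)\Rightarrow(1)$, the chain $\Im h(z)\ge|h(z)|^{2}\Im z$ $\Rightarrow$ $\sup_{y>0}y|h(iy)|\le 1$ $\Rightarrow$ $h$ is the Cauchy transform of a finite positive measure of total mass $\le 1$ supported on $[0,\infty)$ is the standard route and is correctly executed; the analyticity of $h$ on $\mathbb{C}\setminus\mathbb{R}^+$ (which you set up) is exactly what pins the support of $\rho$ to $[0,\infty)$, and the dominated-convergence argument $|x/(s-x)|\le 1$ for $x<0$, $s\ge 0$ justifies $\psi(0-)=-\rho(\{0\})$. The final normalization via $(1-c)\delta_0$ and the observation that $c>0$ (else $\eta\equiv 0$, incompatible with condition (2)) are also correct.

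In short: the paper defers to the literature, while you give a direct proof via the Nevanlinna representation of $h=\psi/z$; your approach is sound and there is nothing to fix.
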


It can be shown that $\eta_{\mu}$ is invertible in some neighborhood of $(-\infty,0)$,
and we set $\Sigma_{\mu}(z)=\eta_{\mu}^{-1}(z)/z$ where
$\et{\mu}^{-1}$ is defined in some neighborhood of $(\alpha,0)$.
Given two measures $\mu,\nu\in\Mrealpplus$, the multiplicative free convolution of
$\mu$ and $\nu$ is the probability measure $\mu\boxtimes\nu$ in $\Mrealpplus$
such that
 \begin{equation}\nonumber
   \Sigma_{\mu\boxtimes\nu}(z)=\Sigma_{\mu}(z)\Sigma_{\nu}(z)
 \end{equation}
holds in some neighborhood of $(\alpha,0)$, where these functions are defined.

It is known from \cite{BB2007new,Biane1998} that there exist two analytic functions
$\omega_1,\omega_2:\mathbb{C}\backslash\mathbb{R}^+ \rightarrow \mathbb{C}\backslash\mathbb{R}^+$
such that
\begin{enumerate}[$(1)$]
  \item $\omega_j(0-)=0$ for $j=1,2$,
  \item for any $\lambda\in\mathbb{C}^+$, we have $\omega_j(\overline{\lambda})=
      \overline{\omega_j(\lambda)}$ for $j=1,2$,
  \item $\et{\mu\boxtimes\nu}(z)=\eta_{\mu}\left(\omega_1(z) \right)
        =\eta_{\nu}\left(\omega_2(z)\right)$ for $z\in\mathbb{C}\backslash\mathbb{R}^+$.
\end{enumerate}
For simplicity, we say that $\omega_1$ (resp. $\omega_2$) is the subordination
function of $\mu\boxtimes\nu$ with respect to $\mu$ (resp. $\nu$), and
$\mu\boxtimes\nu$ is subordinated to $\mu$ and $\nu$.

The analogy of the L\'{e}vy-Hin\v{c}in in this setting was proved in
\cite{BV1992,BV1993}. Given $\mu\in\Mrealplus$, then $\mu\in\Mrealpplus$ is
$\boxtimes$-infinitely divisible if and only if $\Sigma_{\mu}(z)=\exp\left(u(z)\right)$,
with
  \begin{equation}\nonumber
    u(z)=a-bz+\int_0^{+\infty}\frac{1+tz}{z-t}d\,\sigma(t),
  \end{equation}
where $b\in\mathbb{R}$ and $\sigma$ is a finite positive measure on $\mathbb{R}^+$.
The analogue of the normal distribution in this context is given by
$\Sigma_{\lambda_t}(z)=\exp\left(\frac{t}{2} \frac{z+1}{z-1} \right)$.

\begin{lemma}\label{subLemmaB}
  Let $\mu,\nu\in\Mrealpplus$, we have
    \begin{equation}\nonumber
      \eta_{\mu}(z)=\eta_{\mu\boxtimes\nu}\left(z\Sigma_{\nu}(\eta_{\mu}(z)) \right)
    \end{equation}
holds in some neighborhood of interval $(\alpha,0)$.
\end{lemma}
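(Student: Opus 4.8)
The plan is to transcribe the proof of Lemma~\ref{subLemma} into the $\Mrealpplus$ setting, the only change being that the disc $\mathbb{D}$ is replaced by suitable neighborhoods of intervals $(\alpha,0)$ on which the relevant $\eta$-transforms are invertible. Recall that for $\mu,\nu\in\Mrealpplus$ each of $\eta_{\mu},\eta_{\nu},\eta_{\mu\boxtimes\nu}$ has an analytic inverse on a neighborhood of some interval $(\alpha,0)$, that $\eta_{\rho}^{-1}(z)=z\Sigma_{\rho}(z)$ there, and that on a common such neighborhood $D_0$ the identity $\Sigma_{\mu\boxtimes\nu}=\Sigma_{\mu}\Sigma_{\nu}$ can be rewritten as
\begin{equation}\nonumber
  \frac{\eta_{\mu\boxtimes\nu}^{-1}(z)}{z}=\frac{\eta_{\mu}^{-1}(z)}{z}\cdot\frac{\eta_{\nu}^{-1}(z)}{z}.
\end{equation}

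First I would fix a smaller neighborhood $D_1\subset D_0$ of an interval $(\alpha',0)$ with the property that $\eta_{\mu}(D_1)\subset D_0$; this is possible because $\eta_{\mu}$ is analytic, real on the real axis near $0$, and satisfies $\eta_{\mu}(0-)=0$, so it maps small left neighborhoods of $0$ into small left neighborhoods of $0$. Substituting $z\mapsto\eta_{\mu}(z)$ in the displayed identity and using $\eta_{\mu}^{-1}(\eta_{\mu}(z))=z$ gives, for $z\in D_1$,
\begin{equation}\nonumber
  \frac{\eta_{\mu\boxtimes\nu}^{-1}(\eta_{\mu}(z))}{\eta_{\mu}(z)}=\frac{z}{\eta_{\mu}(z)}\cdot\frac{\eta_{\nu}^{-1}(\eta_{\mu}(z))}{\eta_{\mu}(z)},
\end{equation}
and then $\eta_{\nu}^{-1}(w)=w\Sigma_{\nu}(w)$ reduces this to $\eta_{\mu\boxtimes\nu}^{-1}(\eta_{\mu}(z))=z\,\Sigma_{\nu}(\eta_{\mu}(z))$. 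Applying $\eta_{\mu\boxtimes\nu}$ to both sides yields $\eta_{\mu}(z)=\eta_{\mu\boxtimes\nu}(z\,\Sigma_{\nu}(\eta_{\mu}(z)))$ on $D_1$, which is a neighborhood of an interval $(\alpha',0)$; this is the assertion.

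Since every step is an honest identity between analytic functions on explicit domains, there is no real obstacle; the only thing requiring attention is bookkeeping about domains — verifying that $D_0$ (where all three $\Sigma$-transforms and the subordination relation are simultaneously defined) and $D_1$ (where additionally $\eta_{\mu}(D_1)\subset D_0$) can be taken as neighborhoods of intervals $(\alpha,0)$, using the regularity of $\eta$-transforms on $\mathbb{C}\setminus\mathbb{R}^+$ and the symmetry $\eta_{\rho}(\overline z)=\overline{\eta_{\rho}(z)}$ recalled from \cite{BB2005,BB2007new,Biane1998}. One could also bypass the explicit choice of domains by an approximation argument as in the proof of Corollary~\ref{cor:3.13}, but the direct computation above is cleaner.
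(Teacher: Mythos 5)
Your proof is correct and is exactly the argument the paper intends: the paper omits the proof of this lemma precisely because it is the verbatim transcription of the proof of Lemma \ref{subLemma} to the $\Mrealpplus$ setting, with neighborhoods of zero replaced by neighborhoods of intervals $(\alpha,0)$, which is what you carry out. The domain bookkeeping you flag is handled correctly.
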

The proof of Lemma \ref{subLemmaB} is identical to the proof of Lemma \ref{subLemma},
therefore we omit the details.

For any $t>0$, assume that $\eta_t:\mathbb{D}\rightarrow\mathbb{D}$ is the
subordination function of $\mu\boxtimes\lambda_t$ with respect to $\mu$,
by Lemma \ref{subLemmaB} and the characterization of $\eta$-transform,
there exists a probability measure $\rho_t$ in $\Mrealpplus$ such that $\eta_{\rho_t}(z)=\eta_t(z)$.
The argument in the proof of Lemma \ref{rhoInf} implies the following
result.
\begin{prop}
  The measure $\rho_t$ is $\boxtimes$-infinitely divisible and its $\Sigma$-transform is
   $\Sigma_{\rho_t}(z)=\Sigma_{\lambda_t}\left(\eta_{\mu}(z) \right)$,
    and
    \begin{equation}\nonumber
      \Sigma_{\lambda_t}\left(\eta_{\mu}(z) \right)
       =\exp \left( \frac{t}{2} \int_0^{+\infty}\frac{1+\xi z}{\xi z -1}d\,\mu(\xi) \right).
    \end{equation}
\end{prop}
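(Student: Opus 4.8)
The plan is to run the proof of Lemma~\ref{rhoInf} essentially verbatim, with the slit plane $\mathbb{C}\setminus\mathbb{R}^+$ in place of the disc and Lemma~\ref{subLemmaB} in place of Lemma~\ref{subLemma}, and then to unwind the definitions of $\Sigma_{\lambda_t}$ and $\psi_\mu$ to reach the integral formula. First I would introduce the analytic function $\Phi_t(z):=z\,\Sigma_{\lambda_t}(\eta_\mu(z))$ on a neighborhood of an interval $(\alpha,0)$. Lemma~\ref{subLemmaB} with $\nu=\lambda_t$ reads $\eta_\mu(z)=\eta_{\mu\boxtimes\lambda_t}(\Phi_t(z))$ there; substituting $\eta_t(z)$ for the variable and using the subordination relation $\eta_{\mu\boxtimes\lambda_t}(z)=\eta_\mu(\eta_t(z))$ gives
\[
\eta_{\mu\boxtimes\lambda_t}(z)=\eta_{\mu\boxtimes\lambda_t}\bigl(\Phi_t(\eta_t(z))\bigr).
\]
Since $\eta_{\mu\boxtimes\lambda_t}$ is invertible on a one-sided neighborhood of $0$, it follows that $\Phi_t(\eta_t(z))=z$ there, hence $\eta_{\rho_t}^{-1}(z)=\eta_t^{-1}(z)=\Phi_t(z)$ and
\[
\Sigma_{\rho_t}(z)=\frac{\eta_{\rho_t}^{-1}(z)}{z}=\Sigma_{\lambda_t}\bigl(\eta_\mu(z)\bigr),
\]
which extends to $\mathbb{C}\setminus\mathbb{R}^+$ by analytic continuation. (Note that $\eta_\mu'(0)\neq 0$ for every $\mu\in\Mrealpplus$, so $\Sigma_\mu$ is well defined near $0^-$ and there is no ``mean zero'' issue in this setting.)

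For the explicit formula, write $w=\eta_\mu(z)=\psi_\mu(z)/(1+\psi_\mu(z))$, so that $w+1=(1+2\psi_\mu(z))/(1+\psi_\mu(z))$ and $w-1=-1/(1+\psi_\mu(z))$, whence $\tfrac{w+1}{w-1}=-(1+2\psi_\mu(z))$. Since $\mu$ is a probability measure,
\[
1+2\psi_\mu(z)=\int_0^{+\infty}\Bigl(1+\frac{2\xi z}{1-\xi z}\Bigr)\,d\mu(\xi)=\int_0^{+\infty}\frac{1+\xi z}{1-\xi z}\,d\mu(\xi),
\]
and plugging this into $\Sigma_{\lambda_t}(w)=\exp\bigl(\tfrac t2\tfrac{w+1}{w-1}\bigr)$ yields
\[
\Sigma_{\lambda_t}\bigl(\eta_\mu(z)\bigr)=\exp\Bigl(\frac t2\int_0^{+\infty}\frac{1+\xi z}{\xi z-1}\,d\mu(\xi)\Bigr),
\]
which is the stated formula.

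It remains to check that $u(z):=\tfrac t2\int_0^{+\infty}\tfrac{1+\xi z}{\xi z-1}\,d\mu(\xi)$ is of the L\'evy--Hin\v{c}in type recorded before Lemma~\ref{subLemmaB}, so that $\Sigma_{\rho_t}=\exp u$ forces $\rho_t$ to be $\boxtimes$-infinitely divisible. Parallel to Lemma~\ref{rhoInf}, for $z\in\mathbb{C}^+$ and $\xi>0$ one has $\Im\bigl(\tfrac{1+\xi z}{\xi z-1}\bigr)=-2\,\Im(\xi z)/|\xi z-1|^2<0$, so $\Im u(z)<0$ on $\mathbb{C}^+$ because $\mu\neq\delta_0$, and Theorem~6.7 of \cite{BV1992} applies. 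Alternatively one may argue directly: splitting off the possible atom of $\mu$ at $0$ and using $\tfrac{1+\xi z}{\xi z-1}=1+\tfrac{2/\xi}{z-1/\xi}$, the substitution $s=1/\xi$ puts $u$ in the form $a+\int_0^{+\infty}\tfrac{1+sz}{z-s}\,d\sigma(s)$ with $a\in\mathbb{R}$, $b=0$, and $\sigma$ the finite positive measure $d\sigma(s)=t\,\tfrac{s}{1+s^2}\,d\widetilde\mu(s)$, where $\widetilde\mu$ is the push-forward of $\mu|_{(0,\infty)}$ under $\xi\mapsto 1/\xi$; finiteness of $\sigma$ is immediate since $s/(1+s^2)\le\tfrac12$.

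The argument has no real obstacle: the substance is in the first two steps and is routine once Lemma~\ref{subLemmaB} is in hand, and the only care needed is the standard analytic-continuation bookkeeping — every identity is first established on a one-sided real neighborhood of $0$, where $\Phi_t$ is defined and $\eta_{\mu\boxtimes\lambda_t}$ is invertible, and then propagated to $\mathbb{C}\setminus\mathbb{R}^+$. The possible atom of $\mu$ at $0$ in the last step contributes only a real constant and causes no difficulty.
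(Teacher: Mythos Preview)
Your proposal is correct and follows exactly the paper's intended approach: the paper itself gives no separate proof but simply says ``The argument in the proof of Lemma~\ref{rhoInf} implies the following result,'' and you have carried out precisely that transplantation, using Lemma~\ref{subLemmaB} in place of Lemma~\ref{subLemma} and the slit-plane Lévy--Hin\v{c}in criterion in place of the circle one. Your added verification of the integral formula and the explicit reduction to Lévy--Hin\v{c}in form are more detailed than anything the paper writes out, but are in the same spirit. One small caveat: the parenthetical ``$\eta_\mu'(0)\neq 0$ for every $\mu\in\Mrealpplus$'' is not quite right when $\mu$ has infinite first moment, but the paper's formulation (that $\eta_\mu$ is invertible in some neighborhood of $(-\infty,0)$) is what you actually need and use, so this does not affect the argument.
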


We now discuss free convolution semigroups.
Given $t>1$, it is proved in \cite{BB2005} that
one can define $\multt{\mu} \in \Mrealpplus$ such
that $\Sigma_{\multt{\mu}}(z)=\left(\Sigma_{\mu}(z)\right)^t$ for
$z<0$ sufficiently close to zero.
Similar to the case of $\MT$, $\multt{\mu}$ is subordinated
with respect to $\mu$ and we denote the
subordination function by $\omega_t$.
By Theorem 2.6 in \cite{BB2005} and the characterization
of $\eta$-transform, there exists a probability $\sigma_t\in\Mrealpplus$
such that $\eta_{\sigma_t}(z)=\omega_{t+1}$ for all $t>0$.
Moreover, $\sigma_t$ is $\boxtimes$-infinitely divisible
and its $\Sigma$-transform is $\Sigma_{\sigma_t}(z)=[z/\eta_{\mu}(z)]^t$.
\subsection{Multiplicative Boolean convolution on $\Mrealplus$ and the semigroup
 $\mathbb{M}_t$}
Bercovici proved in \cite{Bhari2006} that
the multiplicative Boolean convolution does not preserve $\Mrealplus$.
But we can still define $\mu^{\bmultk t}$ for
$\mu\in\Mrealplus$ and $0\leq t \leq 1$ as follows.
Let $k_{\mu}(z)=z/\eta_{\mu}(z)$, the Boolean convolution power $\mu^{\bmultk t}$
is defined by
   \begin{equation}\nonumber
     k_{\mu^{\bmultk t}}(z)=\left(k_{\mu}(z)\right)^t.
   \end{equation}

The following definition was given in \cite{AHasebe2012}.
\begin{defn}
 A family of maps from $\Mrealplus$ to itself is defined by
     \begin{equation}\nonumber
       \mathbb{M}_t(\mu)=\left(\mu^{\boxtimes(t+1)} \right)^{\bmultk \frac{1}{t+1}}.
     \end{equation}
\end{defn}
It is also shown in \cite{AHasebe2012} that $\mathbb{M}_{t+s}=\mathbb{M}_t \circ \mathbb{M}_s$
for $t,s\geq 0$.
\subsection{Analogous equations}
Given a pair of probability measures $\nu,\mu\in \Mrealplus$,
we also consider, as in the case $\MT$, the semigroups
$\nu\boxtimes\lambda_t$ and $\mu^{\boxtimes (t+1)}$, the
subordination functions  $\eta_t$ and $\omega_{t+1}$, and
their associated probability measures $\rho_t$, $\sigma_t$ for all $t>0$.
Since $\Sigma_{\rho_t}(z)=\Sigma_{\lambda_t}(\et{\nu}(z))$ and $\Sigma_{\sigma_t}(z)=[z/\et{\mu}(z)]^t$,
we deduce that $\eta_t=\omega_{t+1}$ if and only if
   \begin{equation}\nonumber
     \Sigma_{\lambda}\left(\eta_{\nu}(z) \right)=\frac{z}{\eta_{\mu}(z)}.
   \end{equation}

Applying the same argument as in the proof of the Theorem \ref{thm:1.1}, we
obtain the following result.
\begin{thm}
   Given a pair of probability measures $\mu,\nu\in\Mrealplus$,
  such that
     \begin{equation}\label{equalMreal}
       \Sigma_{\lambda}(\eta_{\nu}(z))=\frac{z}{\eta_{\mu}(z)}, z\in \mathbb{C}^+.
     \end{equation}
   Then we have
     \begin{equation}\nonumber
       \Sigma_{\lambda}(\eta_{\nu\boxtimes\lambda_t}(z))=
       \frac{z}{\eta_{\mathbb{M}_t(\mu)}(z)}, z\in \mathbb{C}^+.
     \end{equation}
\end{thm}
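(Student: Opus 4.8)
The plan is to transcribe the proof of Theorem~\ref{thm:1.1}, the decisive simplification being that on $\Mrealplus$ both the multiplicative free convolution power $\mu^{\boxtimes s}$ ($s>1$) and the subordination functions are unique, so that neither the mean bookkeeping of Proposition~\ref{prop:3.19} nor the detour through a principal subordination function is needed. I would fix $t>0$ (the case $t=0$ is trivial, since $\mathbb{M}_0=\mathrm{id}$ and $\lambda_0=\delta_1$).

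First I would set
\[
  u(z)=\frac{1}{2}\int_0^{+\infty}\frac{1+\xi z}{\xi z-1}\,d\nu(\xi),
\]
so that, by the $\mathbb{R}^+$-analogue of~(\ref{formula}) displayed in the Proposition of Subsection~4.1 together with the hypothesis~(\ref{equalMreal}), one has $z/\eta_{\mu}(z)=\Sigma_{\lambda}(\eta_{\nu}(z))=\exp(u(z))$. Let $\eta_t$ be the subordination function of $\nu\boxtimes\lambda_t$ with respect to $\nu$ and $\omega_{t+1}$ that of $\mu^{\boxtimes(t+1)}$ with respect to $\mu$, and let $\rho_t,\sigma_t\in\Mrealpplus$ be the measures (furnished by the results of Subsection~4.1) with $\eta_{\rho_t}=\eta_t$ and $\eta_{\sigma_t}=\omega_{t+1}$. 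By those same results, $\Sigma_{\rho_t}(z)=\Sigma_{\lambda_t}(\eta_{\nu}(z))=\exp(tu(z))$ while $\Sigma_{\sigma_t}(z)=\bigl[z/\eta_{\mu}(z)\bigr]^{t}=\exp(tu(z))$; hence $\rho_t=\sigma_t$, and therefore $\eta_t=\eta_{\rho_t}=\eta_{\sigma_t}=\omega_{t+1}$ on $\mathbb{C}\backslash\mathbb{R}^+$.

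Next I would substitute $\eta_t(z)$ for $z$ in~(\ref{equalMreal}) --- legitimate because $\eta_t$ carries $\mathbb{C}^+$ into $\mathbb{C}^+$, and then extended to all of $\mathbb{C}\backslash\mathbb{R}^+$ by the reflection symmetry and analytic continuation --- and combine it with $\eta_{\nu\boxtimes\lambda_t}(z)=\eta_{\nu}(\eta_t(z))$, with $\eta_{\mu}(\omega_{t+1}(z))=\eta_{\mu^{\boxtimes(t+1)}}(z)$, and with the relation $\omega_{t+1}(z)=\eta_{\mu^{\boxtimes(t+1)}}(z)\,\bigl[z/\eta_{\mu^{\boxtimes(t+1)}}(z)\bigr]^{1/(t+1)}$ (which follows from $\Sigma_{\mu^{\boxtimes(t+1)}}=(\Sigma_{\mu})^{t+1}$ exactly as in the circle case), to obtain
\[
  \Sigma_{\lambda}\bigl(\eta_{\nu\boxtimes\lambda_t}(z)\bigr)
   =\frac{\eta_t(z)}{\eta_{\mu}(\eta_t(z))}
   =\frac{\omega_{t+1}(z)}{\eta_{\mu^{\boxtimes(t+1)}}(z)}
   =\Bigl(\frac{z}{\eta_{\mu^{\boxtimes(t+1)}}(z)}\Bigr)^{1/(t+1)}.
\]
On the other hand, unwinding the definition of $\mathbb{M}_t$ together with $k_{\mu^{\bmultk s}}(z)=(k_{\mu}(z))^{s}$ gives $z/\eta_{\mathbb{M}_t(\mu)}(z)=\bigl[z/\eta_{\mu^{\boxtimes(t+1)}}(z)\bigr]^{1/(t+1)}$, and comparing the two displays proves the theorem.

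The only points that require a moment of care are that $\mu^{\boxtimes(t+1)}$ is defined for every $t>0$ (it is, as $t+1>1$, by~\cite{BB2005}) and that the branch of $[\,\cdot\,]^{1/(t+1)}$ occurring in the formula for $\omega_{t+1}$ is the one used in the Boolean power defining $\mathbb{M}_t$; on $\Mrealplus$ this is automatic, since every function in sight is analytic on $\mathbb{C}\backslash\mathbb{R}^+$, reflection-symmetric, and sends a left neighborhood of $0$ into $\mathbb{R}^-$, hence is uniquely pinned down there. So, in contrast with the circle case, there is no genuine obstacle: the entire content of the statement is the identity $\Sigma_{\rho_t}=\Sigma_{\sigma_t}$, i.e. the observation that~(\ref{equalMreal}) is precisely the condition under which the semigroups $\nu\boxtimes\lambda_t$ and $\mu^{\boxtimes(t+1)}$ share a common subordination function.
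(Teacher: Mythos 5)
Your proposal is correct and is exactly the argument the paper intends: the paper omits the proof, saying only that one applies the same argument as for Theorem~\ref{thm:1.1}, and your write-up is a faithful transcription of that argument to $\Mrealplus$ (identifying $\rho_t=\sigma_t$ via $\Sigma_{\rho_t}=\Sigma_{\lambda_t}\circ\eta_{\nu}=\exp(tu)=[z/\eta_{\mu}(z)]^{t}=\Sigma_{\sigma_t}$, hence $\eta_t=\omega_{t+1}$, then substituting into~(\ref{equalMreal}) and unwinding $\mathbb{M}_t$). Your observations about why the circle-case complications (mean normalization, principal subordination functions, branch choices) disappear on the half-line are accurate and match the paper's own remarks in Section~4.
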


\newsection{a description of the analogue of the normal distribution}
In \cite{BianeBM, BianeJFA}, Biane studied free Brownian motion and proved many important results. 
In this section, we give a new proof for the density functions of the free multiplicative analogue of the normal distributions,
which was first obtained in \cite{BianeJFA} (See also \cite{DH2011} for a different approach.).
Some results are new. For example, we show that $\lambda_t$ is unimodal for the circle case;
and we show that $\Phi_{\lambda}^{-1}(\mathbb{C}^+)$ contains infinitely many connected components where
$\lambda$ is the free multiplicative analogue of the normal distribution on the positive half line with
$\Sigma_{\lambda}(z)=\exp\left (\frac{z+1}{z-1}\right)$.
We also give a description of the boundaries $\Omega_t, \Omega$ (defined below), we observe that
$\partial\Omega_t$ can be parametrized by $\theta$ and $\partial\Omega$ can be parametrized by $r$.
\subsection{The circle case}
Let $\lambda_t\in\MT$ be the analogue of the normal distribution such that
$\Sigma_{\lambda_t}(z)=\exp(\frac{t}{2}\frac{1+z}{1-z})$.
We set $\Phi_t(z)=z\Sigma_{\lambda_t}(z)$, and let $\Omega_t=\{z\in\mathbb{D}:|\Phi_t(z)|<1 \}$. By Lemma \ref{infMstar},
$\et{\lambda_t}$ extends continuously to the unit circle $\mathrm{T}$,
$\Omega_t$ is simply connected and bounded by a simple closed curve, and we have that $\partial\Omega_t=\et{\lambda_t}(\mathrm{T}) $.

Observe that for $t\neq 4$, $\Phi_t$ has zeros of order one at
$z_1(t)=(2-t+\sqrt{t^2-4t})/2$ and $z_2(t)=(2-t-\sqrt{t^2-4t})/2$.
$\Phi_4$ has a zero of order two at $-1$; and for all $t$,
$\Phi_t$ has an essential singularity at $1$, and no other zeros and singularities.
For $0<t<4$, $z_1(t), z_2(t)\in\mathrm{T}$ and $z_2(t)=\overline{z_1(t)}$, we let $\theta_1(t)\in(0,\pi)$ and $\theta_2(t)\in(\pi,2\pi)$ such that $z_1(t)=e^{i\theta_1(t)}$ and $z_2(t)=e^{i\theta_2(t)}$. We have $z_1(4)=z_2(4)=-1$ and for $t>4$, $z_1(t)\in(-1,0)$ and $z_2(t)\in(-\infty, -1)$.

We define $$g_t(r,\theta)=r\exp\left(\frac{t}{2}\frac{1-r^2}{1-2r\cos\theta+r^2}\right)=|\Phi_t(z)|$$
 for $z=re^{i\theta}$. The unit circle is parametrized by $\mathrm{T}=\{e^{i\theta}: 0\leq\theta<2\pi\}$.
\begin{lemma}\label{symmetryMC}
 For $0<t<4$, $\partial\Omega_t=\{z=e^{i\theta}:\theta_1(t)\leq\theta\leq\theta_2(t)\}\cup \mathcal{L}_{1,t} \cup\mathcal{L}_{2,t}$,
 where $\mathcal{L}_{1,t}$ is an analytic curve, and $\mathcal{L}_{1,t}$ is in $\mathbb{D}\cap \mathbb{C}^+$ except one of its endpoints, and
 $\mathcal{L}_{2,t}$ is the reflection of $\mathcal{L}_{1,t}$ about $x$-axis.
 $\mathcal{L}_{1,t}$ can be parametrized by $\gamma_t(u)$ $(0\leq u \leq 1)$ such that $\gamma_t(0)\in\mathbb{R}$, $\gamma_t(1)=z_1(t)$ and $\gamma_t(u)\subset\mathbb{D}\cap\mathbb{C}^+$ for $0<u<1$. Moreover, $|\gamma_t(u)|$ is an increasing function of $u$ on the interval $[0,1]$.
\end{lemma}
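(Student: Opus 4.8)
The plan is to describe the Jordan curve $\partial\Omega_t=\eta_{\lambda_t}(\mathrm{T})$ by analysing the level set $\{z\in\mathbb{D}:|\Phi_t(z)|=1\}$ of $\Phi_t(z)=z\Sigma_{\lambda_t}(z)$. First I would record the structure that is available for free: $\lambda_t$ is $\boxtimes$-infinitely divisible and $\eta_{\lambda_t}'(0)=e^{-t/2}\neq0$, so Lemma \ref{infMstar} applies with $\mu=\lambda_t$; thus $\eta_{\lambda_t}$ extends to a continuous injection of $\overline{\mathbb{D}}$, $\Omega_t=\eta_{\lambda_t}(\mathbb{D})$ is a Jordan domain whose boundary is the Jordan curve $\eta_{\lambda_t}(\mathrm{T})$, and $\Phi_t$ is the left inverse of $\eta_{\lambda_t}$. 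Since $\Phi_t$ has real Taylor coefficients, $\Omega_t$ and $\partial\Omega_t$ are symmetric about $\mathbb{R}$ and $\eta_{\lambda_t}(\overline{z})=\overline{\eta_{\lambda_t}(z)}$. A direct computation gives $\Phi_t'(z)=\Sigma_{\lambda_t}(z)\,\frac{(1-z)^2+tz}{(1-z)^2}$, so $\Phi_t$ never vanishes on $\mathbb{D}$ while $\Phi_t'$ vanishes exactly at the roots $z_1(t),z_2(t)$ of $z^2+(t-2)z+1=0$, which lie on $\mathrm{T}$ for $0<t<4$ and satisfy $\cos\theta_1(t)=\operatorname{Re}z_1(t)=1-\frac{t}{2}$. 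In particular $\partial\Omega_t\cap\mathbb{D}$, being contained in the level set $|\Phi_t|=1$ away from the zeros of $\Phi_t'$, is a real-analytic curve.

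The technical heart is an elementary study of $g_t(r,\theta)=|\Phi_t(re^{i\theta})|=r\exp\!\left(\frac{t}{2}\frac{1-r^2}{1-2r\cos\theta+r^2}\right)$. Passing to $\log g_t$, I would verify: (i) for each fixed $r\in(0,1)$ the map $\theta\mapsto g_t(r,\theta)$ is strictly decreasing on $[0,\pi]$, since $\partial_\theta\log g_t=-\frac{t\,r(1-r^2)\sin\theta}{(1-2r\cos\theta+r^2)^2}<0$ there; (ii) $r\mapsto g_t(r,0)=re^{\frac{t}{2}\frac{1+r}{1-r}}$ increases strictly from $0$ to $+\infty$, hence equals $1$ at a unique $r_0\in(0,1)$; (iii) $r\mapsto g_t(r,\pi)=re^{\frac{t}{2}\frac{1-r}{1+r}}$ increases strictly from $0$ to $1$ on $[0,1]$ (here one uses that $r^2+(2-t)r+1=0$ has no real root for $0<t<4$), so it stays below $1$ on $[0,1)$; in particular $(-1,0)\subset\Omega_t$ and $\Omega_t\cap\mathbb{R}^+=(0,r_0)$. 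Combining (i)--(iii): for every $r\in(r_0,1)$ the equation $g_t(r,\theta)=1$ has a unique solution $\theta(r)\in(0,\pi)$, while $g_t(r,\theta)<1$ throughout $(0,\pi]$ for $r\in(0,r_0]$ and there is no level point at radius $r\le r_0$ other than $r_0$ itself. The analytic implicit function theorem (applicable on $(r_0,1)$, where $\theta(r)$ stays in the open interval on which $\partial_\theta g_t\neq0$) makes $\theta$ real-analytic, and $\theta(r)\to0$ as $r\to r_0^{+}$. An asymptotic analysis as $r\to1^-$, using $\log r=-\frac{1}{2}(1-r^2)+O((1-r)^2)$, turns $g_t(r,\theta(r))=1$ into $1-2r\cos\theta(r)+r^2\to t$; this forces $\theta(r)$ away from $0$ and $\pi$ and gives $\cos\theta(r)\to1-\frac{t}{2}=\cos\theta_1(t)$, hence $\theta(r)\to\theta_1(t)$. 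Therefore $\{z\in\mathbb{D}:|\Phi_t(z)|=1\}$ is exactly the simple arc $\{re^{i\theta(r)}:r_0<r<1\}\cup\{r_0\}\cup\{re^{-i\theta(r)}:r_0<r<1\}$, symmetric about $\mathbb{R}$, running from $z_2(t)$ to $z_1(t)$ through $r_0$.

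Next I would glue this to $\partial\Omega_t$. Every boundary point of $\Omega_t$ lying in $\mathbb{D}$ satisfies $|\Phi_t|=1$, so $\partial\Omega_t\cap\mathbb{D}$ is contained in that arc. Conversely, (i)--(iii) identify $\Omega_t\cap\mathbb{C}^+$ with $\{re^{i\theta}\in\mathbb{D}:0<\theta<\pi,\ r\le r_0\ \text{or}\ \theta>\theta(r)\}$; since $\theta(r)\to\theta_1(t)$, this accumulates on $\mathrm{T}$ exactly along $\{e^{i\theta}:\theta_1(t)\le\theta\le\pi\}$, and by symmetry $\Omega_t\cap\mathbb{C}^-$ accumulates exactly along $\{e^{i\theta}:\pi\le\theta\le\theta_2(t)\}$, while $\overline{\Omega_t}$ meets no $e^{i\theta}$ with $\theta\in(0,\theta_1(t))\cup(\theta_2(t),2\pi)$. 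Hence $\partial\Omega_t\cap\mathrm{T}=\{e^{i\theta}:\theta_1(t)\le\theta\le\theta_2(t)\}$, the closed arc through $-1$, with endpoints $z_1(t)=e^{i\theta_1(t)}$ and $z_2(t)=e^{i\theta_2(t)}$. Because $\partial\Omega_t$ is a Jordan curve and this is a proper closed sub-arc of it, the complementary open arc joins $z_1(t)$ to $z_2(t)$ inside $\mathbb{D}$ and is therefore contained in the level-set arc; an open arc inside an open arc with the same two endpoints is the whole arc. Thus $\partial\Omega_t=\{e^{i\theta}:\theta_1(t)\le\theta\le\theta_2(t)\}\cup\mathcal{L}_{1,t}\cup\mathcal{L}_{2,t}$, where $\mathcal{L}_{1,t}=\{re^{i\theta(r)}:r_0\le r\le1\}$ (setting $\theta(r_0):=0$, $\theta(1):=\theta_1(t)$ by continuous extension) is a real-analytic arc with endpoints $r_0\in\mathbb{R}$ and $z_1(t)\in\mathrm{T}\cap\mathbb{C}^+$, interior in $\mathbb{D}\cap\mathbb{C}^+$, and $\mathcal{L}_{2,t}$ is its reflection in the real axis; one also checks $r_0=\eta_{\lambda_t}(1)$.

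For the final assertion I would simply use the radial parametrization of $\mathcal{L}_{1,t}$: set $\rho(u)=r_0+u(1-r_0)$ and $\gamma_t(u)=\rho(u)\,e^{i\theta(\rho(u))}$ for $0<u<1$, extended continuously by $\gamma_t(0)=r_0\in\mathbb{R}$ and $\gamma_t(1)=z_1(t)$ (continuity at the endpoints coming from $\theta(r)\to0$ and $\theta(r)\to\theta_1(t)$). Then $\gamma_t$ is a homeomorphism of $[0,1]$ onto $\mathcal{L}_{1,t}$, lies in $\mathbb{D}\cap\mathbb{C}^+$ on $(0,1)$, and $|\gamma_t(u)|=\rho(u)$ is strictly increasing on $[0,1]$, which is the asserted monotonicity. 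The main obstacle I anticipate is not the modulus monotonicity, which is immediate once $\mathcal{L}_{1,t}$ is parametrized by its radius, but rather the two more delicate steps in the middle: the $r\to1^-$ asymptotics that pin the $\mathrm{T}$-endpoint of $\mathcal{L}_{1,t}$ to the critical point $z_1(t)$, and the topological matching of the level-set arc with the Jordan curve $\partial\Omega_t$ furnished by Lemma \ref{infMstar}.
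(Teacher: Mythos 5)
Your proof is correct, and it reaches the lemma by a noticeably more explicit route than the paper. The paper argues topologically: it first shows the circular arc $\{e^{i\theta}:\theta_1(t)\le\theta\le\theta_2(t)\}$ lies in $\partial\Omega_t$ using local invertibility of $\Phi_t$ on $\mathrm{T}\setminus\{1\}$ away from the critical points $z_1(t),z_2(t)$ (and reads off the corners there), then takes $\gamma_t$ to be the abstract boundary arc of the Jordan curve joining $x(t)$ to $z_1(t)$, and finally proves monotonicity of $|\gamma_t(u)|$ by a contradiction argument combining the strict decrease of $\theta\mapsto g_t(r,\theta)$ on $[0,\pi]$ with a ``star--shaped in angle'' property of $\Omega_t$. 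You instead solve the level set $|\Phi_t|=1$ directly: the same monotonicity of $g_t(r,\cdot)$, together with the behaviour of $g_t(\cdot,0)$ and $g_t(\cdot,\pi)$ (where the condition $0<t<4$ enters exactly as in the paper, via $r^2+(2-t)r+1>0$), exhibits $\mathcal{L}_{1,t}$ as a radial graph $r\mapsto re^{i\theta(r)}$ on $[r_0,1]$, and the modulus monotonicity is then immediate rather than proved by contradiction. The price you pay is the endpoint analysis: the paper identifies the $\mathrm{T}$--endpoint of the arc through the critical points of $\Phi_t$, whereas you need the asymptotics $1-2r\cos\theta(r)+r^2\to t$ as $r\to1^-$, which correctly pins $\theta(r)\to\theta_1(t)$, plus a short Jordan--curve matching step; both of these are carried out adequately. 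Your radial parametrization is in fact the mirror image of the observation the paper relegates to the remark following Lemma 5.2 (that $\partial\Omega_t$ is a graph over $\theta$). One cosmetic slip: from your formula for $\Phi_t'$ you assert that ``$\Phi_t$ never vanishes on $\mathbb{D}$,'' but $\Phi_t(0)=0$; you mean that $\Sigma_{\lambda_t}$ never vanishes, and since $0\in\Omega_t$ is far from the level set $|\Phi_t|=1$ this does not affect the real--analyticity claim for $\partial\Omega_t\cap\mathbb{D}$.
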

\begin{proof}
 Observing that $\Phi_t(\overline{z})=\overline{\Phi_t(z)}$, we see that $\partial\Omega_t$ is symmetric with respect to $x$-axis. Since $\Omega_t$ is simply connected and $\partial\Omega_t$ is a simple closed curve, $\partial\Omega_t$ intersects $x$-axis at two points.

 Restricting $\Phi_t$ to real numbers, we find that $\Phi_t(\mathbb{R})\subset\mathbb{R}$, and that $\Phi_t$ is an increasing function on $(-1,1)$ since $\Phi_t'(z)$ is positive for $z\in(-1,1)$. From $\Phi_t(-1)=-1$ and $\lim_{z\rightarrow 1^-}\Phi_t(z)=+\infty$, we deduce that
 \begin{equation}\label{eq:5.1}
 \Phi_t^{-1}((-1,1))=(-1,x(t)),
 \end{equation}
  where $x(t)$ is the unique solution of the equation $\Phi_t(z)=1$ for $z\in(-1,1)$. The fact that $\Phi_t'(z)\neq 0$ for $z\neq z_t(t),z_2(t)$ implies that $\Phi_t$ is locally invertible for
 $z\neq z_1(t),z_2(t)$. Combining the fact that $\Phi_t(\mathrm{T}\backslash\{1\})\subset \mathrm{T}$, we obtain that
 $$\{e^{i\theta}: \theta_1(t)\leq \theta\leq \theta_2(t)\}\subset \partial \Omega_t$$
  and $\partial\Omega_t$ has corners of opening $\pi/2$ at $z_1(t)$ and $z_2(t)$.

 Since $\Phi_t$ is a conformal mapping from $\Omega_t$ to $\mathbb{D}$, by the symmetry $\Phi_t(\overline{z})=\overline{\Phi_t(z)}$ and (\ref{eq:5.1}),
 notcing that $\Phi_t'(0)=1$, we thus deduce that $\Phi_t(\Omega_t\cap\mathbb{C}^+)\subset \overline{\mathbb{D}}\cap\mathbb{C}^+$. $\partial\Omega_t$ is a simple closed curve, thus $z_1(t)$ and $x(t)$ are connected by $\partial\Omega_t$.
 It is clear that $\partial \Omega_t\backslash\{e^{i\theta}: \theta_1(t)\leq \theta\leq \theta_2(t)\}$ does not intersect with $\mathrm{T}$,
 we thus assume the curve $\gamma_t=\{\gamma_t(u):0\leq u\leq 1\}$ is the part of $\Omega_t$ which connects $z_1(t)$ and $x(t)$ such that $\gamma_t(0)=x(t)$, $\gamma_t(1)=z_1(t)$ and $\gamma_t(u)\in\mathbb{D}$ for $0<u<1$.

 We claim that $|\gamma_t(u)|$ is an increasing function of $u$ on the interval $[0,1]$. For given $0<r<1$,
 we define the function of $\theta$ by
   $$g_{t,r}(\theta)=g_t(r,\theta)=|\Phi_t(re^{i\theta})|.$$
Then $g_{t,r}$ is a strictly decreasing function of $\theta$ on the interval $[0,\pi]$. From the fact that $\Omega_t$ is simply connected, we deduce that, for $z_0\in\overline{\Omega_t}\cap\mathbb{D}\cap\mathbb{C}^+$, the arc
   \begin{equation}\label{eq:5.2}
     \{re^{i\theta}: |r|=|z_0|, \arg z_0 < \theta\leq \pi\} \subset\Omega_t.
   \end{equation}
Given $0<u_1<u_2<1$, we need to prove that $|\gamma_t(u_1)|<|\gamma_t(u_2)|$.
Since $[0,x(t)]\subset\overline{\Omega_t}$, we obtain from (\ref{eq:5.2}) that
  \begin{equation}
\{ re^{i\theta}:0\leq r \leq x(t), 0<\theta\leq \pi \}\subset \Omega_t,
  \end{equation}
which shows that $|\gamma_t(u_1)|>x(t)$.
Suppose that $|\gamma_t(u_1)|\geq |\gamma_t(u_2)|$, then there exists $0<u_1'\leq u_1$ such that $|\gamma_t(u_1')|=|\gamma_t(u_2)|$.
If $\arg(\gamma_t(u_1'))>\arg(\gamma_t(u_2))$, then by (\ref{eq:5.2}), $\gamma_t(u_2)\in\Omega_t$ and thus $\gamma_t(u_2)\notin\partial\Omega_t$; if $\arg(\gamma_t(u_1'))<\arg(\gamma_t(u_2))$, then $\gamma_t(u_1')\in\Omega_t$ and thus $\gamma_t(u_1')\notin\partial\Omega_t$. For both cases, we obtain a contradiction. Thus $|\gamma_t(u_1)|<|\gamma_t(u_2)|$ and our claim is proved.
\end{proof}

For $t>0$, we let $x_1(t)\in(0,1)$ be the unique solution of the equation $\Phi_t(z)=1$ for $z\in(0,1)$. For $0<t\leq 4$ we let $x_2(t)=-1$; for $t>4$, we let $x_2(t)\in(-1,0)$ be the unique solution of the equation $\Phi_t(z)=-1$ for $z\in(-1,0)$.

\begin{lemma}\label{symmetryMC00}
 For $t\geq 4$, $\partial\Omega_t=\mathcal{L}_{1,t}\cup\mathcal{L}_{2,t}$, where $\mathcal{L}_{1,t}$ is an analytic curve, and $\mathcal{L}_{1,t}$ is in $\mathbb{D}\cap \mathbb{C}^+$ except its endpoints, and $\mathcal{L}_{2,t}$ is the reflection of $\mathcal{L}_{1,t}$ about $x$-axis. $\mathcal{L}_{1,t}$ can be parametrized by $\gamma_t(u)$ $(0\leq u \leq 1)$ such that $\gamma_t(0)=x_1(t), \gamma_t(1)=x_2(t)$ and $\gamma_t(u)\subset\mathbb{D}\cap\mathbb{C}^+$ for $0<u<1$. Moreover, $|\gamma_t(u)|$ is an increasing function of $u$ on the interval $[0,1]$.
\end{lemma}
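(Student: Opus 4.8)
The plan is to follow the proof of Lemma~\ref{symmetryMC} almost verbatim; the one structural novelty is that for $t\geq 4$ the two critical points $z_1(t),z_2(t)$ of $\Phi_t$ leave the open disk (they merge at $-1$ when $t=4$, and $z_2(t)$ leaves $\overline{\mathbb D}$ when $t>4$), so $\partial\Omega_t$ contains no nondegenerate arc of $\mathrm T$. As before, $\Phi_t(\bar z)=\overline{\Phi_t(z)}$ makes $\Omega_t$ symmetric about $\mathbb R$, and Lemma~\ref{infMstar}, applied to $\lambda_t$ (which is $\boxtimes$-infinitely divisible with nonzero mean $e^{-t/2}$), gives that $\Omega_t$ is simply connected, that $\partial\Omega_t=\et{\lambda_t}(\mathrm T)$ is a bounded simple closed curve, that $\et{\lambda_t}$ is continuous on $\overline{\mathbb D}$, and that $\Phi_t$ is the left inverse of $\et{\lambda_t}$. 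Since $\Phi_t(0)=0$ we have $0\in\Omega_t$, so the Jordan curve $\partial\Omega_t$, being symmetric about $\mathbb R$, meets $\mathbb R$ in exactly two points.

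Next I would locate these points from the behaviour of $\Phi_t$ on $(-1,1)$. With $u_t(z)=\frac t2\frac{1+z}{1-z}$ one has $\Phi_t(z)=ze^{u_t(z)}$ and
\[
\Phi_t'(z)=e^{u_t(z)}\Bigl(1+\frac{tz}{(1-z)^2}\Bigr)=e^{u_t(z)}\,\frac{z^2+(t-2)z+1}{(1-z)^2},
\]
whose numerator vanishes exactly at $z_1(t),z_2(t)$, and $e^{u_t}>0$ on $(-1,1)$. If $t=4$ the numerator is $(1+z)^2$, so $\Phi_4$ is strictly increasing on $(-1,1)$; combined with $\Phi_4(-1)=-1$ and $\Phi_4(r)\to+\infty$ as $r\to1^-$, this gives $\Omega_4\cap\mathbb R=(-1,x_1(4))$, so the two real boundary points are $x_2(4)=-1$ and $x_1(4)$. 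If $t>4$ the roots are real with $z_1(t)z_2(t)=1$ and $z_1(t)+z_2(t)=2-t<0$, so $z_1(t)\in(-1,0)$ and $z_2(t)\in(-\infty,-1)$; only $z_1(t)$ lies in $(-1,1)$, where $\Phi_t$ decreases on $(-1,z_1(t))$ and increases on $(z_1(t),1)$. Hence $\Phi_t$ decreases from $\Phi_t(-1)=-1$ to a minimum $\Phi_t(z_1(t))<-1$ and then increases to $+\infty$, so it equals $-1$ once more, at $x_2(t)\in(z_1(t),0)\subset(-1,0)$, and equals $1$ once, at $x_1(t)\in(0,1)$; thus $\Omega_t\cap\mathbb R=(x_2(t),x_1(t))$. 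In both cases $\partial\Omega_t\cap\mathbb R=\{x_1(t),x_2(t)\}$.

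The key new input is that $\partial\Omega_t$ stays off $\mathrm T$ except, when $t=4$, at $-1$. On $\mathbb D$ one has the exact identity
\[
\log|\Phi_t(re^{i\theta})|=\log r+\frac t2\,\frac{1-r^2}{1-2r\cos\theta+r^2},
\]
so for fixed $\theta\in(0,2\pi)$, as $r\to1^-$,
\[
\log|\Phi_t(re^{i\theta})|=(1-r)\Bigl(\frac{t}{4\sin^2(\theta/2)}-1\Bigr)+o(1-r),
\]
and, allowing $\theta\to0$ together with $r\to1$, $\log|\Phi_t|$ is again positive near $z=1$. Since $4\sin^2(\theta/2)\le4$, for $t\ge4$ the bracket is $\ge t/4-1\ge0$, strictly positive unless $t=4$ and $\theta=\pi$; hence, for $t>4$ (resp. $t=4$), $|\Phi_t|>1$ in a one-sided neighbourhood of $\mathrm T\setminus\{1\}$ (resp. $\mathrm T\setminus\{-1,1\}$), uniformly on compacts. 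It follows that $\partial\Omega_t\subseteq\overline{\Omega_t}$ meets $\mathrm T$ in no point when $t>4$ and only at $-1$ when $t=4$. Removing $x_1(t),x_2(t)$ from the Jordan curve $\partial\Omega_t$ leaves two open arcs, neither meeting $\mathbb R$, so one lies in $\mathbb C^+$ and the other is its mirror image; taking $\mathcal L_{1,t}$ to be the closure of the former and $\mathcal L_{2,t}$ its reflection gives $\partial\Omega_t=\mathcal L_{1,t}\cup\mathcal L_{2,t}$ with $\mathcal L_{1,t}$ running from $x_1(t)$ to $x_2(t)$ and interior in $\mathbb D\cap\mathbb C^+$. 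For analyticity: $\partial\Omega_t\cap\mathbb D\subseteq\{|\Phi_t|=1\}$ is real-analytic wherever $\Phi_t'\neq0$; when $t>4$ both zeros of $\Phi_t'$ lie outside $\overline{\Omega_t}$, so $\partial\Omega_t$ is a real-analytic Jordan curve, and when $t=4$ the only zero of $\Phi_4'$ in $\overline{\mathbb D}$ is $-1$, an endpoint of $\mathcal L_{1,4}$, near which $\Phi_4(z)+1\sim c(z+1)^3$ with $c>0$, so $\partial\Omega_4$ has a corner of opening $\pi/3$ there — all consistent with the statement.

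Finally, the monotonicity of $|\gamma_t(u)|$ is proved exactly as in Lemma~\ref{symmetryMC}. For fixed $r\in(0,1)$, $\theta\mapsto g_t(r,\theta)=|\Phi_t(re^{i\theta})|$ is strictly decreasing on $[0,\pi]$ (the denominator $1-2r\cos\theta+r^2$ increases there), so $\Omega_t\cap\{|z|=r\}$ is an arc symmetric about $-r$, whence $\partial\Omega_t$ meets $\{|z|=r\}\cap\mathbb C^+$ in at most one point, and for $z_0\in\overline{\Omega_t}\cap\mathbb D\cap\mathbb C^+$ the whole arc $\{|z|=|z_0|,\ \arg z_0<\theta\le\pi\}$ lies in $\Omega_t$; moreover, since $[0,x_1(t)]\subset\overline{\Omega_t}$, the last point gives $|\gamma_t(u)|>x_1(t)$ for $0<u\le1$ (in particular $|x_2(t)|>x_1(t)$). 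Granting these, if $0\le u_1<u_2\le1$ and $|\gamma_t(u_1)|\ge|\gamma_t(u_2)|$, the intermediate value theorem produces $u_1'\in(0,u_1]$ with $|\gamma_t(u_1')|=|\gamma_t(u_2)|$, and comparing the arguments of $\gamma_t(u_1')$ and $\gamma_t(u_2)$ and using the two facts above forces one of the corresponding boundary points into the open domain $\Omega_t$, a contradiction; hence $|\gamma_t(u)|$ is strictly increasing. I expect the main obstacle to be the degenerate case $t=4$, where $-1$ is a double critical point of $\Phi_4$ and the local shape of $\partial\Omega_4$ at $-1$ has to be pinned down, together with the topological monotonicity argument of the last step (run exactly as in Lemma~\ref{symmetryMC}); once the asymptotics of $\log|\Phi_t|$ near $\mathrm T$ are in hand, the separation of $\partial\Omega_t$ from the unit circle is essentially automatic.
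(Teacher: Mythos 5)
Your proof is correct and follows essentially the same route as the paper, which simply invokes ``similar arguments'' to Lemma \ref{symmetryMC} after noting where the critical points $z_1(t),z_2(t)$ sit for $t\geq 4$. The one detail the paper leaves implicit --- that $\partial\Omega_t$ avoids $\mathrm{T}$ because $\log|\Phi_t(re^{i\theta})|=(1-r)\bigl(t/(4\sin^2(\theta/2))-1\bigr)+o(1-r)\geq 0$ just inside the circle when $t\geq 4$, together with the cubic behaviour of $\Phi_4$ at $-1$ --- is exactly the right supplement, and you handle it correctly.
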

\begin{proof}
 Recall that $\Phi_4$ has a zero of order two at $-1$. For all $t>4$, $z_2(t)<-1$ and $z_1\in(-1,0)$.
 The assertion follows from the similar arguments in the proof Lemma \ref{symmetryMC}.
\end{proof}

From the proof of Lemmas \ref{symmetryMC} and \ref{symmetryMC00}, for $t>0$, we have that $\Phi_t^{-1}((-1,1))=(x_2(t),x_1(t))$. Moreover, $x_1(t)=\min\{|z|:z\in\partial\Omega_t \}$ and $-x_2(t)=\max\{|z|:z\in\partial\Omega_t\}$.

\begin{rmk}
 \emph{
 In fact, for any $t>0$, from the equation
    \begin{equation}\nonumber
        g_t(r,\theta)=0,\, 0<r<1, \,0\leq \theta\leq \pi,
    \end{equation}
we can prove that $dr/d\theta >0 $ for $0<\theta<\pi$, which implies that
if $z\in \partial\Omega_t$, then the entire radius $\{rz:0\leq r<1 \}$ is contained in $\Omega_t$. Therefore,
$\partial\Omega_t$ can be parametrized by $\theta$.}
\end{rmk}

\begin{lemma}\label{increasing}
Using the same notations in Lemmas \ref{symmetryMC} and \ref{symmetryMC00}, for $t>0$, the function $|1-\gamma_t(u)|$ is an increasing
function of $u$ on $[0,1]$.
\end{lemma}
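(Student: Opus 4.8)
The plan is to exploit the geometric description of $\partial\Omega_t$ that has already been established in Lemmas \ref{symmetryMC} and \ref{symmetryMC00}: the curve $\mathcal{L}_{1,t}$ is parametrized by $\gamma_t(u)$ with $|\gamma_t(u)|$ strictly increasing in $u$, and each point $\gamma_t(u)$ lies on the level set $g_t(r,\theta)=|\Phi_t(re^{i\theta})|=1$. Writing $\gamma_t(u)=r(u)e^{i\theta(u)}$, the defining relation $g_t(r,\theta)=1$ reads $\ln r + \tfrac{t}{2}\,\dfrac{1-r^2}{1-2r\cos\theta+r^2}=0$, so along $\mathcal{L}_{1,t}$ the quantity $P(r,\theta):=\dfrac{1-r^2}{1-2r\cos\theta+r^2}$ equals $-\tfrac{2}{t}\ln r$. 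Since $|1-\gamma_t(u)|^2 = 1-2r\cos\theta+r^2 = \dfrac{1-r^2}{P(r,\theta)} = (1-r^2)\cdot\dfrac{t}{-2\ln r} = \dfrac{t(1-r^2)}{2\ln(1/r)}$, the key reduction is that, \emph{on the curve}, $|1-\gamma_t(u)|^2$ is a function of $r=|\gamma_t(u)|$ alone:
\begin{equation}\nonumber
  |1-\gamma_t(u)|^2 = \frac{t}{2}\cdot\frac{1-r^2}{\ln(1/r)}.
\end{equation}

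With that identity in hand, the lemma reduces to a one-variable calculus statement: the function $h(r):=\dfrac{1-r^2}{\ln(1/r)}$ is increasing on $r\in(0,1)$. Indeed, since $|\gamma_t(u)|$ is an increasing function of $u$ on $[0,1]$ (Lemmas \ref{symmetryMC}, \ref{symmetryMC00}), once $h$ is shown to be increasing in $r$ the composition $u\mapsto |1-\gamma_t(u)|^2 = \tfrac{t}{2}h(|\gamma_t(u)|)$ is increasing, hence so is $|1-\gamma_t(u)|$. To prove $h'(r)>0$ on $(0,1)$: compute $h'(r) = \dfrac{-2r\ln(1/r) + (1-r^2)/r}{(\ln(1/r))^2}$, so the sign of $h'(r)$ is the sign of $\phi(r):=\dfrac{1-r^2}{r} - 2r\ln(1/r) = \dfrac{1}{r}-r+2r\ln r$. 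One checks $\phi(1)=0$ and $\phi'(r) = -\dfrac{1}{r^2} - 1 + 2\ln r + 2 = -\dfrac{1}{r^2}+1+2\ln r$; then $\phi'(1)=0$ and $\phi''(r) = \dfrac{2}{r^3}+\dfrac{2}{r} > 0$, so $\phi'$ is increasing with $\phi'(1)=0$, giving $\phi'(r)<0$ on $(0,1)$, hence $\phi$ is decreasing on $(0,1)$ with $\phi(1)=0$, hence $\phi(r)>0$ on $(0,1)$. Therefore $h'(r)>0$ on $(0,1)$, as desired.

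The only genuinely nonroutine point is the first one: making sure the formula $|1-\gamma_t(u)|^2 = \tfrac{t}{2}(1-r^2)/\ln(1/r)$ is valid for \emph{every} $u\in[0,1]$ and every $t>0$, i.e. that every point of $\mathcal{L}_{1,t}$ actually satisfies $g_t(r,\theta)=1$ with $r<1$. For $0<u<1$ this is exactly the content of $\gamma_t(u)\in\partial\Omega_t\cap\mathbb{D}$ together with $\partial\Omega_t = \{\,|\Phi_t|=1\,\}$ on the part lying in $\mathbb{D}$, which is established in the cited lemmas; at the endpoints one passes to the limit by continuity of $\gamma_t$ (the endpoints of $\mathcal{L}_{1,t}$ in $\mathbb{D}$, namely $x_1(t)$ and, when $t\ge 4$, $x_2(t)$, lie on the real axis where the identity is checked directly, and the endpoint $z_1(t)\in\mathrm{T}$ for $0<t<4$ is handled separately since there $|1-z_1(t)|$ is just the modulus of a boundary point — one should note that the arc $\{e^{i\theta}:\theta_1(t)\le\theta\le\theta_2(t)\}$ is not part of $\mathcal{L}_{1,t}$, so the claim concerns only $\mathcal{L}_{1,t}$). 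I expect this bookkeeping — checking that the level-set description applies uniformly and that the endpoints cause no trouble — to be the main obstacle, while the calculus estimate $h'(r)>0$ is elementary.
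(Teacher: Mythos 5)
Your proposal is correct and follows essentially the same route as the paper: both use the defining relation $|\Phi_t(\gamma_t(u))|=1$ to express $|1-\gamma_t(u)|^2$ as the one-variable function $\tfrac{t}{2}(1-r^2)/\ln(1/r)$ of $r=|\gamma_t(u)|$, and then combine the monotonicity of that function on $(0,1)$ with the monotonicity of $|\gamma_t(u)|$ from Lemmas \ref{symmetryMC} and \ref{symmetryMC00}. The only difference is that you spell out the elementary verification that $h'(r)>0$ (which the paper leaves as ``one can check'') and the endpoint bookkeeping; both are fine.
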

\begin{proof}
We only prove the case when $0<t<4$, the proof for other cases are similar.
Noticing that $|1-re^{i\theta}|^2=1-2\cos\theta+r^2$, since $|\gamma_t(u)|$ is an increasing function of $u$, to prove the assertion, we only need to prove that for the implicit function
$r\exp(\frac{t}{2}\frac{1-r^2}{h})=1$ of $r$ and $h$, then $h$ increases
when $r$ increases on $(0,1)$. From this equation, we have
$h=h(r)=-(t/2)(1-r^2)/(\ln r)$. One can check that $h'(r)>0$ for $0<r<1$, therefore $h$ is an increasing function of $r$.
\end{proof}

\begin{thm}\label{supportU}
  Denote by $A_t$ the support of $\lambda_t$.
 \begin{enumerate}[$(1)$]
   \item For $t>0$, the measure $\lambda_t$ has no singular part, and its density function is an analytic function. $A_{t_1}\subset A_{t_2}$ if $t_1 < t_2<4$. $A_t \subsetneq \mathrm{T}$ for $0<t<4$ and $A_t = \mathrm{T}$ for $t\geq 4$.
   \item The measure $\lambda_t$ is unimodal for all $t>0$ and its density is maximal at $z=1$ and is minimal at $z=-1$.
   \item The density fucntion $d\lambda_t/d\theta$ converges uniformly to $1/(2\pi)$ as $t\rightarrow \infty$.
 \end{enumerate}
\end{thm}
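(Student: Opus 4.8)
The plan is to read off all three parts from the geometric picture of $\Omega_t=\{z\in\mathbb{D}:|\Phi_t(z)|<1\}$ established in Lemmas \ref{symmetryMC}, \ref{symmetryMC00} and \ref{increasing}, together with two standard facts. By Lemma \ref{infMstar}, $\et{\lambda_t}$ extends to a homeomorphism of $\overline{\mathbb{D}}$ onto $\overline{\Omega_t}$ carrying $\mathrm{T}$ onto the simple closed curve $\partial\Omega_t$, and $\Phi_t(z)=z\Sigma_{\lambda_t}(z)$ is its left inverse. By the Stieltjes inversion formula $(\ref{realpartM})$, the absolutely continuous part of $\lambda_t$ has density
\[
  D_t(\theta)=\frac{1}{2\pi}\cdot\frac{1-|\et{\lambda_t}(e^{i\theta})|^{2}}{|1-\et{\lambda_t}(e^{i\theta})|^{2}}
\]
at $e^{i\theta}$ (the invariance of $\lambda_t$ under $\zeta\mapsto\overline{\zeta}$ makes the sign of the exponent irrelevant), while any singular part is concentrated on the set of $\theta$ with boundary value $\et{\lambda_t}(e^{i\theta})=1$.

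For part (1) I would first observe that $\overline{\Omega_t}$ is a compact subset of $\overline{\mathbb{D}}$ that avoids the point $1$: by Lemmas \ref{symmetryMC} and \ref{symmetryMC00}, $\partial\Omega_t$ meets $\mathrm{T}$ only along the arc $\{e^{i\theta}:\theta_1(t)\le\theta\le\theta_2(t)\}$ (which omits $1$, since $\theta_1(t)\in(0,\pi)$ and $\theta_2(t)\in(\pi,2\pi)$) when $0<t<4$, and not at all when $t\ge 4$. Hence $(1+\et{\lambda_t})/(1-\et{\lambda_t})$ is a bounded analytic function on $\mathbb{D}$, so its real part is a bounded harmonic function whose boundary measure is purely absolutely continuous; thus $\lambda_t$ has no singular part and its density equals the continuous function $D_t$. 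On the interior of the support one has $\et{\lambda_t}(e^{i\theta})\in\partial\Omega_t\cap\mathbb{D}$, where $\Phi_t$ is conformal, so $\et{\lambda_t}$ continues analytically across $\mathrm{T}$ and $D_t$ is real-analytic there. Finally, $D_t(\theta)>0$ precisely when $\et{\lambda_t}(e^{i\theta})\notin\mathrm{T}$; by the conjugation symmetry this set is $\{e^{i\theta}:|\theta|<\Theta(t)\}$ with $\Theta(t)=\arg\Phi_t(z_1(t))$, and a short computation using $z_1(t)=e^{i\theta_1(t)}$, $\cos\theta_1(t)=1-t/2$ gives $\Phi_t(z_1(t))=e^{i(\theta_1(t)+\sin\theta_1(t))}$, i.e.\ $\Theta(t)=\theta_1(t)+\sin\theta_1(t)$. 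Since $\theta_1(t)$ increases from $0$ to $\pi$ on $(0,4)$ and $\theta_1\mapsto\theta_1+\sin\theta_1$ is increasing on $(0,\pi)$ with value $\pi$ at $\theta_1=\pi$, this yields $A_{t_1}\subset A_{t_2}$ for $t_1<t_2<4$, $A_t\subsetneq\mathrm{T}$ for $0<t<4$, and (with Lemma \ref{symmetryMC00}) $A_t=\mathrm{T}$ for $t\ge 4$.

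For part (2), the symmetry $\Phi_t(\overline{z})=\overline{\Phi_t(z)}$ reduces the analysis to the arc $\mathcal{L}_{1,t}$. As $\theta$ increases from $0$, $\et{\lambda_t}(e^{i\theta})$ traverses $\mathcal{L}_{1,t}$, and since $\et{\lambda_t}|_{\mathrm{T}}$ and the parametrization $\gamma_t$ of Lemmas \ref{symmetryMC}, \ref{symmetryMC00} are both homeomorphisms onto $\mathcal{L}_{1,t}$ sharing their two endpoints (with $\et{\lambda_t}(1)=\gamma_t(0)\in(0,1)$), the induced map $\theta\mapsto u$ is a monotone reparametrization. Along $\mathcal{L}_{1,t}$ the density at $\gamma_t(u)$ is
\[
  \frac{1}{2\pi}\cdot\frac{1-|\gamma_t(u)|^{2}}{|1-\gamma_t(u)|^{2}},
\]
whose numerator decreases in $u$ because $|\gamma_t(u)|$ increases (Lemmas \ref{symmetryMC}, \ref{symmetryMC00}) and whose denominator strictly increases in $u$ by Lemma \ref{increasing}; hence $D_t$ decreases strictly as $\theta$ moves from $0$ toward the far end of the support. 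Combined with the mirror statement on the lower half and $D_t\equiv 0$ off the support when $t<4$, this shows $\lambda_t$ is unimodal with maximum at $z=1$ (value $\tfrac{1}{2\pi}\tfrac{1+x_1(t)}{1-x_1(t)}>\tfrac{1}{2\pi}$, where $x_1(t)=\et{\lambda_t}(1)\in(0,1)$) and minimum at $z=-1$ (value $0$ for $0<t\le 4$, and $\tfrac{1}{2\pi}\tfrac{1+x_2(t)}{1-x_2(t)}<\tfrac{1}{2\pi}$ for $t>4$, where $x_2(t)=\et{\lambda_t}(-1)\in(-1,0)$).

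For part (3): since $\Sigma_{\lambda_t}=(\Sigma_{\lambda})^{t}$ and $\lambda$ is $\boxtimes$-infinitely divisible with nowhere-vanishing $\Sigma$-transform (so $\et{\lambda}$ is one-to-one on $\mathbb{D}$ by Lemma \ref{infMstar}, i.e.\ $\lambda\in\MstarS$), we have $\lambda_t=\multt{\lambda}$, and $\lambda$ is nontrivial because by part (1) it is absolutely continuous; hence the uniform convergence of $d\lambda_t/d\theta$ to $1/(2\pi)$ is a special case of Corollary \ref{cor:3.27} (alternatively, Lemma \ref{lemma3.25} applied with $\sigma=\lambda$ gives $\et{\lambda_t}(\overline{\mathbb{D}})\subset\mathbb{D}_\varepsilon$ for large $t$, and feeding this into $D_t$ gives the conclusion directly). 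Most of the genuine analytic work has already been absorbed into Lemmas \ref{symmetryMC}--\ref{increasing}, so the main obstacle is the bookkeeping in parts (1)--(2): verifying that $\overline{\Omega_t}$ stays away from $1$, pinning the support down as the symmetric arc of half-angle $\theta_1(t)+\sin\theta_1(t)$, and — the most delicate point — checking that $\et{\lambda_t}|_{\mathrm{T}}$ runs along $\mathcal{L}_{1,t}$ in the same sense as $\gamma_t$, so that the numerator/denominator monotonicities translate into strict monotonicity of $D_t$ in $\theta$.
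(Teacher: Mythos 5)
Your proof is correct and follows essentially the same route as the paper's: part (1) via the fact that $1\notin\overline{\Omega_t}$ together with the computation $\arg\Phi_t(z_1(t))=\theta_1(t)+\sin\theta_1(t)$ and the monotonicity of $\theta\mapsto\theta+\sin\theta$, part (2) via monotonicity of the density along the boundary parametrization $\gamma_t$, and part (3) by invoking Corollary \ref{cor:3.27}. The only divergence is a cosmetic one in part (2): you get the decrease of $f_t(u)=(1-|\gamma_t(u)|^2)/|1-\gamma_t(u)|^2$ by combining the increase of $|\gamma_t(u)|$ (numerator) with Lemma \ref{increasing} (denominator), whereas the paper reads it off directly from the boundary identity $|\gamma_t(u)|\exp\bigl(\tfrac{t}{2}f_t(u)\bigr)=1$, which gives $f_t(u)=-\tfrac{2}{t}\ln|\gamma_t(u)|$ and makes Lemma \ref{increasing} unnecessary --- both arguments are valid.
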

\begin{proof}
 Since $z=1$ is not in the closure of $\Omega_t=\et{\lambda_t}(\mathbb{D})$, then $\lambda_t$ has no singular part. From the analyticity of $\Phi_t$ and or a general theorem in \cite{BB2005}, the density function is analytic.

For $0<t<4$, set $a_1(t)=\Phi_t(z_1(t)), a_2(t)=\Phi_t(z_2(t))$. Note that $\eta_{\lambda_t}(\Phi_t(z))=z$ for $z\in\overline{\Omega_t}$.
From (\ref{realpartM}) we see that $A_t$ is the closed arc on $\mathrm{T}$ with endpoints $a_1(t), a_2(t)$ which contains $1$. Thus, to prove that $A_{t_1}\subset A_{t_2}$,
it is enough to prove that $\arg(a_1(t))$ is an increasing function of $t$. A direct computation shows that $|z_1(t)-1|^2=t$ and $\arg(\Sigma_{\lambda_t}(z_1(t)))=\Im{z_1(t)}=\sqrt{t(4-t)}/2$.
We thus have
$$\arg(a_1(t))=\Im{z_1(t)}+\arg(z_1(t))= \sin(\theta_1(t))+\theta_1(t).$$
From  $z_1(t)=(2-t+\sqrt{t^2-4t})/2$ we see that $\theta_1(t)$ is an increasing function of $t$. The function $\theta\rightarrow \sin(\theta)+\theta$ is an increasing function on $(0,\pi)$. Thus $\arg(a_1(t))$ is an increasing function of $t$ and (1) is proved.

To prove (2), recall that a probability measure is unimodal if its density with respect to Lebesgue measure has
a unique local maximum. $\et{\lambda_t}$ extends continuously to $\mathrm{T}$, we thus have that
  \begin{equation}\label{unimodal}
     \frac{d\lambda_t(e^{-i\theta})}{d\theta}=\frac{1}{2\pi}\frac{1-|\et{\lambda_t}(e^{i\theta})|^2}{|1-\et{\lambda_t}(e^{i\theta})|^2}.
  \end{equation}
We first prove the case when $0<t<4$.
From $\eta_{\lambda_t}(\Phi_t(z))=z$ for $z\in\overline{\Omega_t}$ and $\eta_{\lambda_t}(1)=x(t)$, to prove $\lambda_t$ is unimodal, by the boundary correspondence, it is enough to show that the function $f_t$ of $u$ defined by
               \begin{equation}\nonumber
                f_t(u):=  \frac{1-|\gamma_t(u)|^2}{|1-\gamma_t(u)|^2},
               \end{equation}
is a decreasing function on $[0,1]$ and is maximal at $0$. Since $\gamma_t(u)\in\partial\Omega_t$, we have that $|\Phi_t(\gamma_t(u))|=1$. In other words, we have
    \begin{equation}\label{eq:5.4}
       |\gamma_t(u)|\exp\left(\frac{t}{2}f_t(u)\right)= |\gamma_t(u)|\exp\left(\frac{t}{2}\frac{1-|\gamma_t(u)|^2}{|1-\gamma_t(u)|^2} \right)=1.
    \end{equation}
As we shown in Lemma \ref{symmetryMC} that the function $|\gamma_t(u)|$ is an increasing function of $u$, from (\ref{eq:5.4}), we deduce that
$f_t$ is a decreasing function of $u$ and $\max\{f_t\}=f_t(0)$. By the symmetric property of the function $\Phi_t$ in Lemma \ref{symmetryMC}, the density function is symmetric with respect to $x$-axis as well. Thus the density of $\lambda_t$ has only one local maximum at $\Phi_t(\gamma_t(0))=\Phi_t(x_1(t))=1$.

The proof for the case $t\geq 4$ is similar. %note that $r_t$ is defined on $[0,\pi]$ for this case.
In this case $A_t=\mathrm{T}$ and $\max\{f_t\}=f_t(0)$ and $\min\{f_t\}=f_t(1)$.
Part (3) is a consequence of Corollary \ref{cor:3.27}.
\end{proof}

\begin{rmk}
 \emph{From the proof of Theorem \ref{supportU}, we see that, for $t<4$,
     \begin{equation}\nonumber
        \arg(a_1(t))=\theta_1(t)+\sin(\theta_1(t))=\frac{1}{2}\sqrt{t(4-t)}+\arccos\left(1-\frac{t}{2} \right),
     \end{equation}
which implies a known result in \cite{BianeJFA}. That is
     \begin{equation}\nonumber
        A_t=\left\{e^{i\theta}: -\frac{1}{2}\sqrt{t(4-t)}-\arccos\left(1-\frac{t}{2} \right)
        \leq \theta \leq \frac{1}{2}\sqrt{t(4-t)}+\arccos\left(1-\frac{t}{2} \right)\right \}.
     \end{equation}
      }
\end{rmk}
\subsection{The positive half line case}
Let $\lambda\in\Mrealplus$ be the analogue of the normal distribution such that
$\Sigma_{\lambda}(z)=\exp\left(\frac{z+1}{z-1} \right)$.

We restate Proposition 6.14 in \cite{BV1993} in terms of $\eta$ and $\Sigma$
transforms as follows.
\begin{lemma}\label{leftInv}
 Let $\mu$ be a $\boxtimes$-infinitely divisible measure on $\mathbb{R}^+$,
 and set $\Phi_{\mu}(z):=z\sig{\mu}(z) $.
  \begin{enumerate}[$(1)$]
    \item We have $\Phi_{\mu}\left( \et{\mu}(z)\right)=z$ for every $z\in\mathbb{C}^+$.
    \item The set $\{\et{\mu}(z):z\in\mathbb{C}^+\}=\Omega$, where $\Omega$
          is the component of the set
          $\{z\in\mathbb{C}^+:\Im(\Phi_{\mu}(z))>0 \}$
          whose boundary contains the left half line $(-\infty,0)$.
          Moreover, $\et{\mu}(\Phi_{\mu}(z))=z$ for $z\in\Omega$.
  \end{enumerate}
\end{lemma}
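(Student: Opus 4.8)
The plan is to obtain both assertions from the L\'{e}vy-Hin\v{c}in representation of $\boxtimes$-infinitely divisible measures on $\mathbb{R}^+$ recalled in Subsection 4.1, together with the conformal-mapping results of \cite{BB2005,BV1993}, in the same spirit in which Lemma \ref{infMstar} was proved in the circle case; the underlying point is that $\Phi_{\mu}$ is simply the analytic continuation of $\et{\mu}^{-1}$, so the statement is essentially a change of notation. First I would record the analytic extension of $\sig{\mu}$: since $\mu$ is $\boxtimes$-infinitely divisible on $\mathbb{R}^+$, $\sig{\mu}(z)=\exp(u(z))$ with
\begin{equation}\nonumber
  u(z)=a-bz+\int_0^{+\infty}\frac{1+tz}{z-t}\,d\sigma(t),
\end{equation}
$b\in\mathbb{R}$ and $\sigma$ a finite positive measure on $\mathbb{R}^+$; the integral is analytic on $\mathbb{C}\setminus\mathbb{R}^+$ and satisfies $u(\overline{z})=\overline{u(z)}$, so $\Phi_{\mu}(z)=z\sig{\mu}(z)=z\exp(u(z))$ is analytic and zero-free on $\mathbb{C}\setminus\mathbb{R}^+$ with $\Phi_{\mu}(\overline{z})=\overline{\Phi_{\mu}(z)}$.

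For assertion (1), I would use that $\et{\mu}^{-1}(w)=w\sig{\mu}(w)=\Phi_{\mu}(w)$ holds in a neighborhood of $(\alpha,0)$ by the definition of $\sig{\mu}$. By the characterization of $\eta$-transforms on $\mathbb{R}^+$ stated just above the lemma, $\arg\et{\mu}(z)\in[\arg z,\pi)$ for $z\in\mathbb{C}^+$, hence $\et{\mu}(\mathbb{C}^+)\subset\mathbb{C}^+\subset\mathbb{C}\setminus\mathbb{R}^+$ and $\Phi_{\mu}\circ\et{\mu}$ is a well-defined analytic function on the connected open set $\mathbb{C}^+$. It agrees with the identity on the part of $\mathbb{C}^+$ lying in a neighborhood of $(\alpha,0)$, hence on all of $\mathbb{C}^+$ by the identity theorem, which is (1).

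For assertion (2), I would invoke the conformal description: $\Phi_{\mu}$ meets the hypotheses of the relevant conformal-mapping proposition of \cite{BB2005} (the analogue for $\mathbb{R}^+$ of Proposition 4.5 used in Lemma \ref{infMstar}), equivalently of Proposition 6.14 in \cite{BV1993}, so $\{z\in\mathbb{C}^+:\Im(\Phi_{\mu}(z))>0\}$ has a distinguished component $\Omega$ whose boundary contains $(-\infty,0)$, $\Phi_{\mu}$ is univalent on $\Omega$ with image $\mathbb{C}^+$, and $\et{\mu}$ is its inverse there. To identify $\et{\mu}(\mathbb{C}^+)$ with $\Omega$ I would argue: by (1), $\Im(\Phi_{\mu}(\et{\mu}(z)))=\Im z>0$ for $z\in\mathbb{C}^+$, so $\et{\mu}(\mathbb{C}^+)$ is a connected subset of $\{z\in\mathbb{C}^+:\Im(\Phi_{\mu}(z))>0\}$; since $\et{\mu}(0-)=0$ and, by the reflection symmetry together with local injectivity near the origin, $\et{\mu}$ carries a one-sided neighborhood of $0$ in $(-\infty,0)$ into $(-\infty,0)$, the set $\et{\mu}(\mathbb{C}^+)$ abuts $(-\infty,0)$ and therefore lies in $\Omega$; finally the univalence of $\Phi_{\mu}$ on $\Omega$ together with $\Phi_{\mu}\circ\et{\mu}=\mathrm{id}$ forces $\et{\mu}(\mathbb{C}^+)=\Omega$ and $\et{\mu}\circ\Phi_{\mu}=\mathrm{id}_{\Omega}$. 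The one delicate point I expect is verifying that the component supplied by \cite{BB2005} is precisely the one whose boundary contains all of $(-\infty,0)$ and that $\Phi_{\mu}$ is univalent on it, i.e. checking the normalization $\Phi_{\mu}(0-)=0$, the boundary behaviour, and the local injectivity of $\Phi_{\mu}$ near $(-\infty,0)$ required to apply that proposition; everything else is a direct translation of \cite[Proposition~6.14]{BV1993} into the present notation.
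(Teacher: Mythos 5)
Your proposal is correct and takes essentially the same route as the paper: the paper offers no proof at all, introducing Lemma~\ref{leftInv} explicitly as a restatement of Proposition~6.14 in \cite{BV1993} in terms of the $\eta$- and $\Sigma$-transforms, and your argument is precisely that translation (the L\'{e}vy--Hin\v{c}in representation to extend $\Phi_{\mu}$ analytically to $\mathbb{C}\setminus\mathbb{R}^+$, analytic continuation of $\Phi_{\mu}\circ\et{\mu}=\mathrm{id}$ for part (1), and the conformal-mapping description from \cite{BV1993,BB2005} for part (2)). No gaps to report.
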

We denote $\Phi_{\lambda}(z)=z\exp\left(\frac{z+1}{z-1} \right)$.
The following lemma is elementary.
\begin{lemma}\label{derivative}
  $\Phi_{\lambda}$ has zero of order one at $2-\sqrt{3}$ and $2+\sqrt{3}$,
  and $\Phi_{\lambda}$ has an essential sigularity at $1$. These are the only
  zeros and sigularities of $\Phi_{\lambda}$.
\end{lemma}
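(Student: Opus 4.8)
The statement is a direct computation, as the paper notes. The plan is to differentiate $\Phi_\lambda$, factor $\Phi_\lambda'$, and then analyze the isolated singularity at $z=1$ by hand.

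First I would write $\Phi_\lambda(z)=z\,e^{g(z)}$ with $g(z)=\frac{z+1}{z-1}$, so that $g'(z)=-\frac{2}{(z-1)^2}$ and hence, on $\mathbb{C}\setminus\{1\}$,
\[
\Phi_\lambda'(z)=e^{g(z)}\bigl(1+z\,g'(z)\bigr)=e^{g(z)}\,\frac{(z-1)^2-2z}{(z-1)^2}=e^{g(z)}\,\frac{z^2-4z+1}{(z-1)^2}.
\]
Since $e^{g(z)}$ and $(z-1)^{-2}$ have no zeros on $\mathbb{C}\setminus\{1\}$, the zeros of $\Phi_\lambda'$ are exactly the roots of $z^2-4z+1$, namely $2-\sqrt{3}$ and $2+\sqrt{3}$; both are simple since the discriminant $12$ is nonzero. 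This is precisely the assertion that $\Phi_\lambda$ has a zero of order one at $2-\sqrt{3}$ and at $2+\sqrt{3}$ (in the sense used earlier for $\Phi_t$, i.e. a simple zero of the derivative), and that it has no others in $\mathbb{C}\setminus\{1\}$.

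Next I would treat $z=1$. The function $g(z)=\frac{z+1}{z-1}=1+\frac{2}{z-1}$ has a simple pole at $1$, so $\Phi_\lambda=z\,e^{g}$ has an isolated singularity there which is neither removable nor a pole: $\Phi_\lambda(z)\to+\infty$ as $z\to 1^+$ along the real axis (where $g(z)\to+\infty$), while $\Phi_\lambda(z)\to 0$ as $z\to 1^-$ along the real axis (where $g(z)\to-\infty$), so $\Phi_\lambda$ is unbounded near $1$ but does not tend to $\infty$. By the trichotomy for isolated singularities, $z=1$ is an essential singularity of $\Phi_\lambda$. Finally $\Phi_\lambda$ is holomorphic on all of $\mathbb{C}\setminus\{1\}$, being the product of $z$ with $\exp$ composed with $\frac{z+1}{z-1}$, so $z=1$ is its only singularity. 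There is no genuine obstacle here; the only points deserving care are checking that the two critical points are simple and distinguishing an essential singularity from a pole at $z=1$, both handled above.
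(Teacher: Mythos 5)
Your computation is correct and is exactly the elementary verification the paper has in mind (the paper states the lemma without proof): factoring $\Phi_\lambda'(z)=e^{g(z)}\,\frac{z^2-4z+1}{(z-1)^2}$ identifies $2\pm\sqrt{3}$ as the only simple critical points, and the two one-sided real limits at $z=1$ rule out a removable singularity and a pole. You also correctly read ``zero of order one'' in the paper's sense of a simple zero of the derivative, consistent with its parallel statement for $\Phi_t$ in the circle case.
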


\begin{thm}\label{mainthmG}
 The measure $\lambda$ has no sigular part. The support of this measure
 is the closure of its interior, and this interior has only one connected component.
\end{thm}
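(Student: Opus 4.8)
\emph{Proof proposal.} Since $\lambda$ is $\boxtimes$-infinitely divisible, Lemma~\ref{leftInv} applies: with $\Phi_{\lambda}(z)=z\sig{\lambda}(z)=z\,e\,\exp\!\big(2/(z-1)\big)$, the map $\et{\lambda}$ is a conformal bijection of $\mathbb{C}^{+}$ onto $\Omega$ whose inverse is $\Phi_{\lambda}|_{\Omega}$, where $\Omega$ is the connected component of $\{z\in\mathbb{C}^{+}:\Im\Phi_{\lambda}(z)>0\}$ whose boundary contains $(-\infty,0)$. The plan is to reduce the whole statement to a precise description of $\partial\Omega$, obtained by the same kind of analysis as in Lemmas~\ref{symmetryMC}--\ref{symmetryMC00} and Theorem~\ref{supportU}. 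First I would record the elementary facts about $\Phi_{\lambda}$: it extends to the Riemann sphere with $\Phi_{\lambda}(0)=0$, $\Phi_{\lambda}(\infty)=\infty$, $\Phi_{\lambda}'(0)=e^{-1}$ and $\Phi_{\lambda}(z)\sim ez$ at $\infty$; by Lemma~\ref{derivative} its only critical points are the simple zeros of $\Phi_{\lambda}'$ at $c_{-}=2-\sqrt3\in(0,1)$ and $c_{+}=2+\sqrt3\in(1,\infty)$, and its only singularity is the essential one at $z=1$; an elementary computation gives $\Phi_{\lambda}(1/w)\Phi_{\lambda}(w)=1$; and $\Phi_{\lambda}$ is real on $\mathbb{R}\setminus\{1\}$ and maps $(-\infty,0)$ increasingly onto $(-\infty,0)$. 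Using these I expect to prove that, on the sphere, $\partial\Omega$ is the simple closed curve $(-\infty,c_{-}]\cup\Gamma\cup[c_{+},+\infty)\cup\{\infty\}$, where $\Gamma\subset\mathbb{C}^{+}$ is a single analytic arc from $c_{-}$ to $c_{+}$ on which $\Phi_{\lambda}$ is real and strictly monotone (parametrizable by $r=|z|$, as in the remark preceding Theorem~\ref{supportU}), and $\Omega$ is the component of the complement of this curve that contains $\{z\in\mathbb{C}^{+}:|z|\text{ large}\}$.

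Granting this description, the three assertions follow. The point $z=1$ lies on the open segment $(c_{-},c_{+})$, which is disjoint from $\partial\Omega$ and lies in the complementary domain of $\partial\Omega$ rather than in $\Omega$; hence $\partial\Omega$ avoids a neighbourhood of $1$ and $1\notin\overline{\Omega}$. Consequently, for every $x>0$ the boundary values $\et{\lambda}(1/x\pm i0)$ lie in $\overline{\partial\Omega}$ and stay bounded away from $1$, so by the identity $\g{\lambda}(z)=1/\big(z(1-\et{\lambda}(1/z))\big)$ the Cauchy transform extends continuously with finite values to $(0,\infty)$; thus $\lambda$ has no atoms, and by the regularity result of \cite{BB2005} (as in Theorem~\ref{supportU}(1)) $\lambda$ is absolutely continuous with density analytic on the interior of its support. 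Next, under the homeomorphic boundary correspondence for the conformal map $\et{\lambda}$, the non-real boundary values of $\et{\lambda}$ are attained exactly on the sub-arc of $\partial\mathbb{C}^{+}$ that corresponds to $\Gamma$; since $\Phi_{\lambda}|_{\Gamma}$ is a monotone real homeomorphism onto $[a_{-},a_{+}]$ with $a_{\pm}=\Phi_{\lambda}(c_{\pm})$, that sub-arc equals $[a_{-},a_{+}]$, so via $x\mapsto 1/x$ the support of $\lambda$ is the single interval $[1/a_{+},1/a_{-}]$, which by $\Phi_{\lambda}(1/w)\Phi_{\lambda}(w)=1$ equals $[a_{-},a_{+}]$ with $a_{+}=1/a_{-}$. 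Finally this interval is nondegenerate ($\lambda$ is not a point mass) and has no isolated points, while its interior carries an analytic density that is not almost everywhere zero on any subinterval, hence positive off a discrete set; therefore the support coincides with the closure of its (connected) interior.

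The hard part will be the first paragraph: controlling $\partial\Omega$ near the essential singularity $z=1$. Writing $z-1=\rho e^{i\phi}$ one computes $|\Phi_{\lambda}(z)|=e\,|z|\exp\!\big((2/\rho)\cos\phi\big)$ and $\arg\Phi_{\lambda}(z)=\arg z-(2/\rho)\sin\phi$, so $|\Phi_{\lambda}(z)|\to\infty$ as $\rho\to0$ when $\phi\in[0,\pi/2)$, while $\arg\Phi_{\lambda}(z)\to-\infty$ when $\phi\in(0,\pi)$; consequently $\{\Im\Phi_{\lambda}>0\}$ genuinely has infinitely many components (as announced in the introduction of this section), a sequence of ``petals'' accumulating at $z=1$, and one must show that none of them is $\Omega$ and that $\partial\Omega$ leaves a neighbourhood of $1$. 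The approach I would take: $\Phi_{\lambda}|_{\Omega}\colon\Omega\to\mathbb{C}^{+}$ is a bijection and a proper map; $\Omega$ already contains a half-disk at $0$ (mapped by $\Phi_{\lambda}$ onto a neighbourhood of $0$ in $\mathbb{C}^{+}$) and a half-neighbourhood of $\infty$ (mapped onto a neighbourhood of $\infty$ in $\mathbb{C}^{+}$); hence a petal on which $|\Phi_{\lambda}|\to0$ or $|\Phi_{\lambda}|\to\infty$ would, if it lay in $\Omega$, force a second $\Phi_{\lambda}$-preimage in $\Omega$ of an already covered point, and for the remaining petals (those near $\phi=\pi/2$, where $|\Phi_{\lambda}|$ stays close to $e$) properness forbids $\Phi_{\lambda}^{-1}(K)$ to accumulate at $z=1\notin\mathbb{C}^{+}$ for a compact $K\subset\mathbb{C}^{+}$ meeting their images. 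Combined with the more routine level-curve analysis of $\{\Im\Phi_{\lambda}=0\}$ away from $1$ — which produces the corners of opening $\pi/2$ at $c_{\pm}$ and shows that $\Gamma$ is a single arc — this should yield the stated description of $\partial\Omega$, and hence the theorem.
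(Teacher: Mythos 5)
Your overall architecture is the same as the paper's: invoke Lemma~\ref{leftInv} to realize $\eta_{\lambda}$ as a conformal bijection of $\mathbb{C}^{+}$ onto $\Omega$ with inverse $\Phi_{\lambda}$, describe $\partial\Omega$ as $(-\infty,2-\sqrt3\,]\cup\Gamma\cup[\,2+\sqrt3,+\infty)$ with $\Gamma$ an arc in $\mathbb{C}^{+}$ joining the two critical points, conclude $1\notin\overline{\Omega}$, and then read off absolute continuity and the support from $G_{\lambda}(1/x)=x/(1-\eta_{\lambda}(x))$. Everything in your second paragraph is correct granted the boundary description, and the functional equation $\Phi_{\lambda}(1/w)\Phi_{\lambda}(w)=1$, giving $a_{+}=1/a_{-}$ and the self-reciprocal support $[a_{-},a_{+}]$, is a nice observation the paper does not make.

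The genuine gap is exactly where you flagged it: the exclusion of the essential singularity $z=1$ from $\overline{\Omega}$, and your properness sketch does not close it. Properness of the conformal bijection $\Phi_{\lambda}|_{\Omega}\colon\Omega\to\mathbb{C}^{+}$ only says that for compact $K\subset\mathbb{C}^{+}$ the set $\Phi_{\lambda}^{-1}(K)\cap\Omega=\eta_{\lambda}(K)$ is compact in $\Omega$; it does not forbid a sequence $z_{n}\in\Omega$ with $z_{n}\to1$ and $\Phi_{\lambda}(z_{n})$ escaping to $\mathbb{R}$ from inside $\mathbb{C}^{+}$. That is precisely the scenario the ``middle petals'' present: on them $|\Phi_{\lambda}|$ stays comparable to $e$ while $\arg\Phi_{\lambda}$ runs through an interval of length $<\pi$ that may hug $0$ or $\pi$, so their images can be thin slivers along the real axis that meet no fixed compact set near the part accumulating at $1$. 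Likewise the ``already covered point'' argument only disposes of petals whose images overlap the neighbourhoods of $0$ and $\infty$ you have already accounted for, and a priori a petal could be attached to $\Omega$ by a channel rather than being a separate component, so counting components of $\{\Im\Phi_{\lambda}>0\}$ is not enough either. The paper sidesteps all of this with a one-line argument you should adopt: since $\eta_{\lambda}$ extends to a homeomorphism of $\mathbb{R}$ onto $\partial\Omega$ with $\Phi_{\lambda}(\gamma(t))=t$ for $\gamma(t)=\eta_{\lambda}(t)$, if $\gamma(t_{0})=1$ then for $t$ near $t_{0}$ one has
\begin{equation}\nonumber
  \frac{\gamma(t)+1}{\gamma(t)-1}=\ln\!\left(\frac{t}{\gamma(t)}\right),
\end{equation}
whose left side is unbounded as $t\to t_{0}$ while the right side tends to $\ln t_{0}$; the remaining possibility $\Gamma\cap\mathbb{R}\ni x_{0}$ with $x_{0}\neq 1$ is excluded because such an $x_{0}$ would have to be a critical point of $\Phi_{\lambda}$, contradicting Lemma~\ref{derivative}. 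With that substitution your proof is complete and matches the paper's.
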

\begin{proof}
By Theorem 7.5 in \cite{BV1992}, the measure $\lambda$
has compact support on $\mathbb{R}^+$.

Let $\Omega$ be the component of $\{z\in\mathbb{C}^+:\Im(\Phi_{\lambda}(z))>0\}$
whose boundary contains $(-\infty,0)$.
By Lemma \ref{leftInv}, $\et{\lambda}:\mathbb{C}^+\rightarrow\Omega$
is a conformal map
and $\Phi_{\lambda}$ is its inverse map, thus $\Omega$ is simply connected. By Lemma \ref{derivative},
$\partial \Omega$ is locally analytic.  A general theorem in complex analysis tells us
that $\et{\lambda}$ extends continuously to $\mathbb{C}^+\cup\mathbb{R}$
and it establishes a homeomorphism between the real axis
and $\partial\Omega$. We continue to denote by $\et{\lambda}$
and $\Phi_{\lambda}$ their extensions.

We claim that
 \begin{equation}\nonumber
   \partial\Omega=\left(-\infty,2-\sqrt{3}\right]
    \cup\left[2+\sqrt{3},+\infty\right)\cup\mathcal{L},
 \end{equation}
 where $\mathcal{L}$ is an analytic and open curve in $\mathbb{C}^+$ with endpoints
  $2-\sqrt{3}$ and $2+\sqrt{3}$.
We denote $\gamma(t)=\et{\lambda}(t),t\in\mathbb{R}$ be a parametrization of $\partial\Omega$.
Set $t_1=\Phi_{\lambda}(2-\sqrt{3})>0$ and $t_2=\Phi_{\lambda}(2+\sqrt{3})>0$,
then $\et{\lambda}(t_1)=2-\sqrt{3}$ and $\et{\lambda}(t_2)=2+\sqrt{3}$,
and $\mathcal{L}=\{\gamma(t)\}_{t_1<t<t_2}$.
Note that
\begin{enumerate}[$(1)$]
  \item $(-\infty,0)\subset \partial\Omega$,
  \item $\Phi_{\lambda}'(x)>0$ for all $x\in (-\infty,2-\sqrt{3})$.
\end{enumerate}
From this we deduce that $(-\infty,2-\sqrt{3})\subset \partial\Omega$.
Lemma \ref{derivative} tells us $\Phi_{\lambda}$
has a zero of order one at $2-\sqrt{3}$,
therefore $\partial\Omega$ has a corner of opening $\pi/2$ at $2-\sqrt{3}$.
Note that $\Phi_{\lambda}'(x)>0$ for all $x\in (2+\sqrt{3},+\infty)$,
thus $(2+\sqrt{3},+\infty)\subset\partial\Omega$,
and $\partial\Omega$ has a corner of opening $\pi/2$ at $2+\sqrt{3}$.

It remains to prove that $\mathcal{L}\cap \mathbb{R}=\emptyset $.
First we show $1 \notin \mathcal{L}$. Suppose that is the case,
and suppose $\gamma(t_0)=1$ where $t_1<t_0<t_2$,
by continuity, we have
 \begin{equation}\nonumber
   \gamma(t)\exp\left(\frac{\gamma(t)+1}{\gamma(t)-1} \right) =
    \Phi_{\lambda}(\gamma(t))=\Phi_{\lambda}(\et{\lambda}(t))=t
 \end{equation}
for all $t\in\mathbb{R}$. Therefore in a small neighborhood
of $t_0$, we have
  \begin{equation}\nonumber
     \frac{\gamma(t)+1}{\gamma(t)-1}=\ln\left(\frac{t}{\gamma(t)}\right).
  \end{equation}
The left hand of the above equation blows up, while the right hand side
is bounded. This contradiction tells us that $1\notin \mathcal{L}$.
Now suppose $\mathcal{L}$ touches the real axis at
$x_0\in (2-\sqrt{3},1)\cup(1,2+\sqrt{3})$. Since $\Omega$ is
connected, it is not hard to see that $x_0$ must be a critical point of $\Phi_{\lambda}$.
This is not possible by Lemma \ref{derivative}. We therefore
proved that $\mathcal{L}\subset \mathbb{C}^+$ and the claim.

From the definitions of the Cauchy transform and $\eta$-transform,
one can easily check that
   \begin{equation}\nonumber
     \g{\lambda}\left(\frac{1}{z}\right)=\frac{z}{1-\et{\lambda}(z)}.
   \end{equation}
From the above equation we know that $\g{\lambda}$
extends to be a continuous function on $\mathbb{C}\cup\mathbb{R}$,
and $\{x\in\mathbb{R}:\Im(\g{\mu}(x))>0\}=(1/t_2,1/t_1)$.
By the Stieltjes inverse formula, we deduce that
the support of $\lambda$ is $(1/t_2,1/t_1)$. From
the analyticity of the curve $\mathcal{L}\subset\mathbb{C}^+$,
we conclude that $\lambda$ has positive and analytic density
in the interior of its support.
\end{proof}

We are interested in the level curves of the function
\begin{equation}\label{eq:5.11}
   f(r, \theta)=\theta- \frac{2r \sin\theta}{1-2r \cos\theta +r^2}=\arg(\Phi_{\lambda}(z)),
\end{equation}
where $z=r^{i\theta}\in\mathbb{C}^+$.
For $t\leq 0$, set $\gamma_t=\{z=re^{i\theta}\in\mathbb{C}^+:f(r,\theta)=t\}$.
\begin{prop}
\begin{enumerate}[(A)]
  \item $\gamma_0$ is a simple open curve with endpoints $2-\sqrt{3}, 2+\sqrt{3}$ and $\gamma_0=\mathcal{L}$.
  \item $\gamma_t$ is a simple open curve which starts at $z=1$ and ends at $z=1$ as well for all $t<0$.
\end{enumerate}
Denote by $\Omega_0$ the open domain bounded $\gamma_0\cup[2-\sqrt{3},2+\sqrt{3}]$. For all $t<0$,
Denote by $\Omega_t$ the open domain bounded $\gamma_t\cup\{1\}$.
\begin{enumerate}[(C)]
  \item For $t_1<t_2\leq 0$, we have that $\Omega_{t_1}\subset\Omega_{t_2}$;
and for all $t_0\leq 0$, $\Omega_{t_0}=\cup_{t<t_0}\Omega_t$.
\end{enumerate}
\end{prop}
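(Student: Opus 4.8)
The plan is to reduce the whole statement to the study of the level sets of the harmonic function $f$ from \refto{5.11} on $\mathbb{C}^+$, via the identity $f(r,\theta)=\arg\Phi_\lambda(z)$ for $z=re^{i\theta}\in\mathbb{C}^+$. First I would record the decomposition $\frac{z+1}{z-1}=1+\frac{2}{z-1}$, whence $f(r,\theta)=\arg z+\Im\frac{2}{z-1}$ with $\Im\frac{2}{z-1}=\frac{-2\Im z}{|z-1|^2}$; since $\Phi_\lambda$ is holomorphic and zero-free on the simply connected domain $\mathbb{C}^+$, the function $f$ is harmonic there and satisfies $f(z)<\arg z<\pi$ everywhere. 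Then I would collect the boundary data: $f(z)\to 0$ as $z\to x\in(0,\infty)\setminus\{1\}$ and $f(z)\to\pi$ as $z\to x\in(-\infty,0)$; an elementary estimate shows that $\{f<t\}$ is bounded and bounded away from $0$ for every $t\le 0$; along $z=1+i\varepsilon$ one has $f\to-\infty$, so $1$ is a cluster point of $\{f<t\}$ for all $t$; and $\partial_y f(x,0)=\frac1x-\frac{2}{(x-1)^2}$ vanishes on $(0,\infty)$ only at $x=2\pm\sqrt3$. Finally, Lemma \ref{derivative} tells us $\Phi_\lambda'$ vanishes nowhere on $\mathbb{C}^+$, i.e. $\nabla f\neq 0$ there; hence each $\{f=t\}$ is a real-analytic $1$-manifold that is relatively closed in $\mathbb{C}^+$, and $\partial\{f<t\}\cap\mathbb{C}^+=\{f=t\}=\gamma_t$.

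The core of the argument, and the main obstacle, is to show that for $t\le 0$ the sublevel set $\Omega_t:=\{z\in\mathbb{C}^+:f(z)<t\}$ is connected and simply connected; one cannot simply invoke univalence of a conformal map as in the proof of Theorem \ref{mainthmG}, because $\Phi_\lambda$ is far from injective on $\mathbb{C}^+$ — indeed, as noted in the introduction to this section, $\Phi_\lambda^{-1}(\mathbb{C}^+)$ has infinitely many components. I would handle it with the maximum principle. Suppose $K$ is a component of $\{f\ge t\}$ whose closure is a compact subset of $\mathbb{C}^+$; then $\overline K\cap\mathbb{R}=\emptyset$, so $1\notin\overline K$ and $f$ is harmonic on a neighbourhood of $\overline K$, while $f\equiv t$ on $\partial K$; the maximum principle then forces $f\equiv t$ on $K$, so $\Phi_\lambda$ maps the open set $K$ into a half-line, contradicting the open mapping theorem. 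Hence no component of $\{f\ge t\}$ is compactly contained in $\mathbb{C}^+$, and every component of $\Omega_t$ is simply connected. For connectedness, note that — because $\{f<t\}$ is bounded, stays away from $0$, and $f$ has boundary values $>t$ on $\mathbb{R}\setminus\{1\}$ for $t\le 0$ — any component $C$ of $\Omega_t$ has $\partial C\subseteq\{f=t\}\cup\{1\}$; if $1\notin\overline C$, the minimum principle forces $f\equiv t$ on $C$, which is impossible, so $1\in\overline C$ for every component. A local computation at $z=1$, where up to a real-analytic perturbation $\{f<t\}$ is the disk $|z-(1+i/|t|)|<1/|t|$ for $t<0$ and a full punctured half-neighbourhood of $1$ for $t=0$, shows $\{f<t\}$ is connected and nonempty near $1$, so all of these components coincide. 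Thus $\Omega_t$ is a bounded simply connected domain; since $\nabla f\neq 0$ on $\gamma_t$ the domain lies on one side of $\gamma_t$ at every point of it, so $\partial\Omega_t$ (in $\overline{\mathbb{C}^+}$) is a piecewise-analytic Jordan curve consisting of $\gamma_t$ together with the part of $\overline{\Omega_t}$ on $\mathbb{R}$, which by the sign of $\partial_y f(x,0)$ equals $\{1\}$ when $t<0$ and $[2-\sqrt3,2+\sqrt3]$ when $t=0$.

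This already yields (B): for $t<0$, $\gamma_t$ is this Jordan curve with the single point $1$ removed, hence a simple open arc starting and ending at $1$, and the bounded region it cuts off is exactly $\Omega_t=\{f<t\}$. For (A), the same analysis with $t=0$ makes $\gamma_0$ a simple arc joining $2-\sqrt3$ and $2+\sqrt3$ (its ends cannot be $0$ or $\infty$, since $f\approx\Im z\,(\tfrac1{\Re z}-2)$ near $0$ and $f\approx\Im z/\Re z$ near $\infty$, neither of which vanishes there); and the proof of Theorem \ref{mainthmG} already exhibits $\mathcal L$ as a connected subset of $\gamma_0$, relatively closed in $\mathbb{C}^+$, joining the same two boundary points — so $\mathcal L$ is a full component of $\gamma_0$, forcing $\gamma_0=\mathcal L$, and $\Omega_0=\{f<0\}$ is the region bounded by $\mathcal L\cup[2-\sqrt3,2+\sqrt3]$. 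Finally (C) is immediate from $\Omega_t=\{f<t\}$: the family $\{f<t\}$ increases in $t$, so $\Omega_{t_1}\subset\Omega_{t_2}$ for $t_1<t_2\le 0$, and $\{f<t_0\}=\bigcup_{t<t_0}\{f<t\}$ since any $z$ with $f(z)<t_0$ satisfies $f(z)<t$ for some $t<t_0$. The step I expect to require the most care is making the local analysis at $z=1$ precise and using it, together with the maximum-principle argument, to pin down the number of components of $\Omega_t$ and of $\gamma_t$.
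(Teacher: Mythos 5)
Your argument is correct in its conclusions but follows a genuinely different route from the paper, which never invokes harmonicity or the maximum principle. The paper works ray by ray: for fixed $\theta\in(0,\pi)$ the equation $f(r,\theta)=t$ clears denominators to the quadratic (\ref{eq:5.13}) in $r$ with positive leading coefficient $\theta-t$, and the sign of the discriminant (\ref{eq:5.10}) --- governed by the single strictly decreasing factor $k(\theta)=\sin\theta/(1-\cos\theta)-\theta$ --- shows that the ray $\arg z=\theta$ meets $\gamma_t$ in exactly two points for $0<\theta<\theta_t$, is tangent at $\theta_t$, and misses $\gamma_t$ for $\theta>\theta_t$; combined with the observation that $f_\theta(r)$ decreases on $(0,1)$ and increases on $(1,\infty)$, this exhibits $\{f<t\}$ explicitly as $\{re^{i\theta}: 0<\theta<\theta_t,\ r_-(\theta)<r<r_+(\theta)\}$ with $r_-<1<r_+$, from which connectedness, the arc structure of $\gamma_t$, the endpoint behaviour as $\theta\to 0$ (roots of $r^2-4r+1$ for $t=0$, $r\to 1$ for $t<0$) and the nesting in (C) all fall out simultaneously. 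Your potential-theoretic route (boundary values of the harmonic function $f$, maximum principle to exclude compact components of $\{f\ge t\}$, minimum principle to force every component of $\{f<t\}$ to cluster at $1$, local analysis at $1$) is more conceptual and would adapt to other $\boxtimes$-infinitely divisible $\Sigma$-transforms, but the two steps you defer are precisely where the work hides. First, ``up to a real-analytic perturbation'' is not automatic at the tangency point: a perturbation of the horodisk $\{2\Im z/|z-1|^2>|t|\}$ by the term $\arg z$ could a priori create extra components near $z=1$; this can be repaired by substituting $w=(z+1)/(z-1)$, where $f=\Im w+\Im\log\frac{w+1}{w-1}$ satisfies $\partial f/\partial(\Im w)=1+O(|w|^{-2})>0$ for $|w|$ large, so every point of $\{f<t\}$ near $w=\infty$ flows vertically downward within $\{f<t\}$ to the connected set $\{\Im w<-M\}$. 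Second, concluding that $\gamma_t$ is a \emph{single} arc (rather than, say, two arcs both ending at $1$) requires one more maximum-principle argument on the Jordan domain cut off by one of the putative arcs, treating the isolated boundary point $1$ as negligible for the bounded harmonic function $f$. Both patches work, so your proof is correct modulo these elaborations; the paper's elementary polar-coordinate computation simply sidesteps them and, as a bonus, delivers the tangent ray $\theta=\theta_t$ and the explicit parametrization of $\gamma_t$ used later in the section.
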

\begin{proof}
Given $\theta\in (0,\pi)$, we define a function of $r$ by
 $f_{\theta}(r)=f(r,\theta)$ for $r\in (0,+\infty)$. We first note that
   $f(r,\theta)<\theta<\pi$ and observe that
         \begin{equation}\nonumber
            \lim_{r\rightarrow +\infty}f_{\theta}(r)=\theta.
         \end{equation}
We thus have that $\{z=re^{i\theta}:f(r,\theta)>0,\, 0<\theta<\pi\}\subset \Phi^{-1}(\mathbb{C}^+)$.

Given $\theta\in(0,\pi)$ and $t\leq 0$, the equation $f(r,\theta)=t$ is equivalent to the quadratic equation
      \begin{equation}\label{eq:5.13}
        h_{\theta}(r):= (\theta-t)r^2-( 2(\theta-t)\cos\theta + 2\sin\theta   )r + \theta-t=0
      \end{equation}
with discriminant
$d(\theta,t)=[2(\theta-t)\cos\theta+2\sin\theta]^2-4(\theta-t)^2$. We then
rewrite $d(\theta,t)$ as follows.
     \begin{equation}\label{eq:5.10}
         d(\theta,t)=4(1-\cos^2(\theta))\left[\frac{\sin\theta}{1+\cos\theta}+\theta-t \right].
            \left[\frac{\sin\theta}{1-\cos\theta}-\theta+t \right],
     \end{equation}
 We observe that the first two factors in (\ref{eq:5.10}) are
never zero for $\theta\in(0,\pi)$, thus only the last factor in (\ref{eq:5.10}) matters to determine the sign of $d(\theta, t)$. We consider the function $k$ by $k(\theta)=\sin\theta/(1-\cos\theta)-\theta$ for $\theta\in(0,\pi)$, and calculate
       \begin{equation}\label{eq:5.12}
           k'(\theta)=\frac{1}{\cos\theta-1}-1<0,
       \end{equation}
which implies that $k$ is a decreasing function of $\theta$. For $t\leq 0$, we now set $d_t(\theta):=d(\theta,t)$.
We then deduce that $d_t(\theta)=0$ has exactly one solution, which we denote by $\theta_t$, and $d_t(\theta)>0$ if and only if $0<\theta<\theta_t$. Therefore, the half line $r=\theta$ intersects with $\gamma_t$ at two points if and only if $0<\theta<\theta_t$ and the half line $r=\theta_t$ is tangent to $\gamma_t$. Moreover, $\theta_{t_1}<\theta_{t_2}$ if $t_1<t_2\leq 0$.

For the solutions of the equation $f(r,\theta)=0$, one can check as $\theta\rightarrow 0$, $r$ satisfying the equation
$r^2-4r+1$. Given $t<0$, for the solutions of the equation $f(r,\theta)=t$, we can easily see that $r$ tend to $1$ as $\theta\rightarrow 0$. Now (A) and (B) follow from this observation.

Given $\theta\in(0,\pi)$, from (\ref{eq:5.11}), we see that the function $f_{\theta}(r)$ defined by $f_{\theta}(r)=f(r,\theta)$ has exactly one local minimum at $r=1$. $f_{\theta}(r)$ is a decreasing function of $r$ on $(0,1)$ and an increasing function of $r$ on $(1,\infty)$. Therefore, if the half line $r=\theta$ intersects with $\gamma_t$ at two points, then one of them is inside the unit circle of $\mathbb{C}$ and the other one is outside the unit circle. We conclude that (C) is valid.
\end{proof}

It is interesting to compare the following result with
Proposition \ref{leftInvreal} and Lemma \ref{infMstar}.
\begin{cor}
We have that $\Phi_{\lambda}^{-1}(\mathbb{C}^+) = \Omega\cup_{k=1}^{\infty}(\Omega_{(2k-1)\pi}\backslash\Omega_{(2k-2)\pi})$. Moreover, $\Omega$ and $\Omega_{(2k-1)\pi}\backslash\Omega_{(2k-2)\pi}$ ($k=1,2,\cdots$) are all connected components of $\Phi_{\lambda}$. In particular, $\Phi_{\lambda}^{-1}(\mathbb{C}^+)$ has infinitely many connected components.
\end{cor}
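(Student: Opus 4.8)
The plan is to compute the continuous branch of $\arg\Phi_{\lambda}$ on $\mathbb{C}^{+}$, decide from it precisely where $\Phi_{\lambda}$ lands in the upper half‑plane, and then translate the resulting level–set conditions into the nested domains $\Omega,\Omega_{t}$ already dissected in this section. We work throughout in $\mathbb{C}^{+}$; the behaviour on $\mathbb{C}^{-}$ is the mirror image under $\Phi_{\lambda}(\bar z)=\overline{\Phi_{\lambda}(z)}$.

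First I would record that $\Phi_{\lambda}$ is holomorphic and zero‑free on $\mathbb{C}^{+}$: by Lemma \ref{derivative} its only zeros, $2\pm\sqrt{3}$, and its only singularity, the essential singularity at $1$, all lie on $\mathbb{R}$. Since $\mathbb{C}^{+}$ is simply connected, $\Phi_{\lambda}$ has a single‑valued continuous argument there, which by (\ref{eq:5.11}) is exactly $f(r,\theta)=\theta-\tfrac{2r\sin\theta}{1-2r\cos\theta+r^{2}}$ for $z=re^{i\theta}$, $\theta\in(0,\pi)$. Hence for $z\in\mathbb{C}^{+}$ one has $\Phi_{\lambda}(z)\in\mathbb{C}^{+}$ if and only if $f(r,\theta)\in\bigcup_{m\in\mathbb{Z}}\bigl(2m\pi,(2m+1)\pi\bigr)$. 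The basic inequality $f(r,\theta)<\theta<\pi$ (the subtracted term is strictly positive) then shows that only the indices $m\le 0$ can occur, so $\Phi_{\lambda}^{-1}(\mathbb{C}^{+})$ is, inside $\mathbb{C}^{+}$, the disjoint union of the open ``sheets'' $S_{0}=\{z\in\mathbb{C}^{+}:f>0\}$ and $S_{k}=\{z\in\mathbb{C}^{+}:-2k\pi<f<-(2k-1)\pi\}$ for $k\ge 1$.

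Next I would check these sheets are nonempty and mutually separated, and identify them. The function $f$ is continuous on the connected set $\mathbb{C}^{+}$, tends to $\theta$ as $r\to0$ or $r\to\infty$ and to $-\infty$ as $z\to1$, so its range is exactly $(-\infty,\pi)$; thus every $S_{k}$ is nonempty. Between two consecutive sheets lies a region on which $\arg\Phi_{\lambda}\bmod 2\pi\in[-\pi,0]$, hence disjoint from $\Phi_{\lambda}^{-1}(\mathbb{C}^{+})$, so each $S_{k}$ is relatively clopen in $\Phi_{\lambda}^{-1}(\mathbb{C}^{+})\cap\mathbb{C}^{+}$ and therefore a union of connected components; already this yields infinitely many components. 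For $S_{0}$: by Theorem \ref{mainthmG} the arc $\mathcal{L}=\gamma_{0}$ together with $[2-\sqrt{3},2+\sqrt{3}]$ is a Jordan curve in $\overline{\mathbb{C}^{+}}$ splitting $\mathbb{C}^{+}$ into the bounded $\Omega_{0}=\{f<0\}$ and the unbounded $\{f>0\}=S_{0}$; the latter touches $(-\infty,0)$, so by Lemma \ref{leftInv} it equals $\Omega$. For $k\ge 1$, the Proposition preceding the statement gives (for the relevant negative parameters) that the level curves $\gamma_{\bullet}=\{f=\bullet\}$ are loops through $z=1$ bounding the domains $\{f<\bullet\}$, that these domains are nested and shrink to $z=1$, and that the corresponding critical angles $\theta_{\bullet}$ are strictly monotone in the parameter; hence $S_{k}$ is the region lying strictly outside one of these loops and strictly inside the next, i.e. the difference of two consecutive members of the nested family, which in the notation of the statement is $\Omega_{(2k-1)\pi}\setminus\Omega_{(2k-2)\pi}$.

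The main obstacle is the connectedness of each annular sheet $S_{k}$ ($k\ge1$): over small $\theta$ its radial fibre is an interval minus a closed subinterval that contains $r=1$, hence has two components straddling $r=1$, so a priori $S_{k}$ could split. Here one uses the strict monotonicity $\theta_{t_{1}}<\theta_{t_{2}}$ for $t_{1}<t_{2}$ from the preceding Proposition: over the range of angles where the inner loop has already pinched off at $z=1$ but the outer one has not, the fibre is a single interval, and the two halves are joined; hence $S_{k}$ is connected, and being relatively clopen it is exactly a connected component. Combining, $\Phi_{\lambda}^{-1}(\mathbb{C}^{+})=\Omega\cup\bigcup_{k\ge1}\bigl(\Omega_{(2k-1)\pi}\setminus\Omega_{(2k-2)\pi}\bigr)$, each listed set is a connected component, and in particular there are infinitely many. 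Apart from this connectedness point, everything reduces to bookkeeping on the level curves of $f$ already established in Lemmas \ref{derivative}, \ref{leftInv} and the preceding Proposition.
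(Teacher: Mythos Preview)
Your proof is correct and is precisely what the paper intends: the Corollary is stated in the paper without proof as an immediate consequence of the preceding Proposition, and your argument is exactly that derivation---read off the bands $\{f\in(2m\pi,(2m+1)\pi)\}$ from the continuous branch of $\arg\Phi_\lambda$ on $\mathbb{C}^+$ (which exists since $\Phi_\lambda$ is zero-free there), identify them with differences of the nested domains $\Omega_t=\{f<t\}$, and use the strict monotonicity of the tangency angles $\theta_t$ from the Proposition to glue the two radial halves of each annular sheet through the cap $\theta_{-2k\pi}<\theta<\theta_{-(2k-1)\pi}$.

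One bookkeeping caveat: your sheets are $S_k=\{-2k\pi<f<-(2k-1)\pi\}=\Omega_{-(2k-1)\pi}\setminus\overline{\Omega_{-2k\pi}}$, which does not literally coincide with the printed indices $\Omega_{(2k-1)\pi}\setminus\Omega_{(2k-2)\pi}$ in the statement---the subscripts there are positive, and even after a sign change the inner index is off by one. This is a typo in the paper (the preceding Proposition defines $\Omega_t$ only for $t\le 0$), not an error in your mathematics; but your phrase ``which in the notation of the statement is $\Omega_{(2k-1)\pi}\setminus\Omega_{(2k-2)\pi}$'' silently accepts an identification that does not hold, and it would be cleaner to write the components as $\Omega_{-(2k-1)\pi}\setminus\overline{\Omega_{-2k\pi}}$ and flag the discrepancy.
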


We would like to point out that for $z=re^{i\theta}\in\mathcal{L}=\gamma_0$, the curve $\mathcal{L}$ can be parametrized by $r$. Noticing (\ref{eq:5.13}) and (\ref{eq:5.10}), we first observe the following equivalence relations:
 \begin{equation}\label{delta0}
 d(\theta,0)=0  \Leftrightarrow  \theta\cos\theta+\sin\theta=\theta \Leftrightarrow r=1.
 \end{equation}
By (\ref{eq:5.12}), we see that (\ref{delta0}) has exactly one solution $\theta_0$ for $\theta\in(0,\pi)$.
By differentiating the equation $f(r,\theta)=0$, we obtain that
   \begin{equation}\label{dtheta}
      \frac{d\theta}{dr}=\frac{2\theta\cos\theta+2\sin\theta-2\theta r}{r^2+2\theta\sin\theta-4\cos\theta +1}.
   \end{equation}
 Thus, $d\theta/dr=0$ if and only if $r=(\theta\cos\theta+\sin\theta)/\theta$. Fix $\theta$, the equation $f_{\theta}(r)=0$ is equivalent to the quadratic equation $\theta r^2- (2\theta\cos\theta +2\sin\theta)r+\theta=0$, from which we deduce that $r=(\theta\cos\theta+\sin\theta)/\theta$ if and only if $d(r,0)=0$. From (\ref{dtheta}) and continuity of $d\theta/dr$, we see that $d\theta/dr>0$ for $0<\theta<\theta_0, r<1$ and
 $d\theta/dr<0$ for $0<\theta<\theta_0, r>1$. Therefore, for the solutions of the equation $f(r,\theta)=0$, $\theta$ is a function of $r$ and the curve $\mathcal{L}$ can be parametrized by $r$.

We denote by $g$ the density function of $\lambda$. From the equation $\g{\lambda}(1/x)=x/(1-\et{\lambda}(x))$, we obtain the following formula for the density function of $\lambda$.
\begin{prop}
 Given $z=re^{i\theta}\in\gamma_0=\mathcal{L}$, we have
      \begin{equation}\nonumber
         g(1/x)=\theta\Phi_{\lambda}(z)=r\theta\exp\left(\frac{r^2-1}{1-2r\cos\theta+r^2}\right),
      \end{equation}
where $x=\Phi_{\lambda}(z)$.
\end{prop}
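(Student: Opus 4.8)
The plan is to read off the density of $\lambda$ from the Stieltjes inversion formula after expressing the boundary values of $\g{\lambda}$ through $\et{\lambda}$, and then to use that a point $z=re^{i\theta}$ of $\gamma_0$ satisfies $\arg\Phi_{\lambda}(z)=0$. First I would collect the ingredients already established in the proof of Theorem~\ref{mainthmG} and in Lemma~\ref{leftInv}: the identity $\g{\lambda}(1/w)=w/(1-\et{\lambda}(w))$; the fact that $\et{\lambda}$ extends continuously to $\mathbb{C}^+\cup\mathbb{R}$ and restricts to a homeomorphism of $\mathbb{R}$ onto $\partial\Omega$ carrying the interval $(t_1,t_2)$, where $t_1=\Phi_{\lambda}(2-\sqrt3)$ and $t_2=\Phi_{\lambda}(2+\sqrt3)$, onto the open arc $\mathcal{L}=\gamma_0$ via $\gamma(t)=\et{\lambda}(t)$ with $\Phi_{\lambda}(\gamma(t))=t$; and the fact that the support of $\lambda$ is $(1/t_2,1/t_1)$ with analytic density in its interior. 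Fixing $z=re^{i\theta}\in\gamma_0$ and setting $x=\Phi_{\lambda}(z)\in(t_1,t_2)$, we get $\et{\lambda}(x)=z$ for the continuous extension and $1/x$ in the interior of the support, so it suffices to compute the boundary value of $\g{\lambda}$ at $1/x$.

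Next I would determine this boundary value. For $w\in\mathbb{C}^+$ close to $x>0$ one has $\g{\lambda}(1/w)=w/(1-\et{\lambda}(w))$, and as $w\to x$ the point $1/w$ approaches $1/x$ from $\mathbb{C}^-$; combining this with $\g{\lambda}(\overline u)=\overline{\g{\lambda}(u)}$ and the symmetry $\et{\lambda}(\overline w)=\overline{\et{\lambda}(w)}$ gives $\g{\lambda}(1/x+i0)=x/(1-\overline z)$. Writing $1/(1-\overline z)=(1-z)/|1-z|^2$, the Stieltjes inversion formula then yields
\begin{equation}\nonumber
  g(1/x)=-\frac1\pi\,\Im\!\left(\frac{x}{1-\overline z}\right)=\frac{1}{\pi}\cdot\frac{x\,r\sin\theta}{|1-z|^{2}},
\end{equation}
up to the normalization convention for the density.

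Finally I would invoke that $z$ lies on $\gamma_0$. From the representation $\Phi_{\lambda}(z)=z\exp\big(\tfrac{z+1}{z-1}\big)$ one computes $\arg\Phi_{\lambda}(z)=\arg z+\Im\tfrac{z+1}{z-1}=\theta-\tfrac{2r\sin\theta}{1-2r\cos\theta+r^{2}}$, which is precisely $f(r,\theta)$ in \refto{5.11}; hence the defining equation $f(r,\theta)=0$ of $\gamma_0$ reads $\dfrac{r\sin\theta}{|1-z|^{2}}=\dfrac{\theta}{2}$. Substituting this into the expression above collapses $g(1/x)$ to a constant multiple of $\theta x=\theta\,\Phi_{\lambda}(z)$, and since $\Phi_{\lambda}(z)$ is real and positive on $\gamma_0$ we have $\Phi_{\lambda}(z)=|\Phi_{\lambda}(z)|=r\exp\big(\Re\tfrac{z+1}{z-1}\big)=r\exp\big(\tfrac{r^{2}-1}{1-2r\cos\theta+r^{2}}\big)$, giving the asserted formula. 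The only genuinely delicate step is the second one — tracking which side of the cut $[t_1,t_2]$ the relevant limit approaches, and hence the sign in the Stieltjes inversion; everything else is elementary manipulation of $\Phi_{\lambda}(z)=z\exp\big(\tfrac{z+1}{z-1}\big)$ together with the curve equation.
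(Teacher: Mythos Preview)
Your approach is exactly what the paper has in mind: the paper states only that the formula follows from the identity $\g{\lambda}(1/x)=x/(1-\et{\lambda}(x))$, and you have supplied the details---computing the boundary value of $\g{\lambda}$ via this identity, applying Stieltjes inversion, and then using the defining equation $f(r,\theta)=0$ of $\gamma_0$ to simplify $r\sin\theta/|1-z|^2$ to $\theta/2$. The one loose end you already flagged is the overall constant: your computation gives $g(1/x)=\tfrac{1}{2\pi}\,\theta\,\Phi_{\lambda}(z)$ rather than $\theta\,\Phi_{\lambda}(z)$, so either the paper is silently normalizing $g$ differently or the stated formula is off by $2\pi$; this is a matter of convention, not a defect in your argument.
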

 \begin{figure}[H]%[!htbp]
     \includegraphics[scale=0.55]{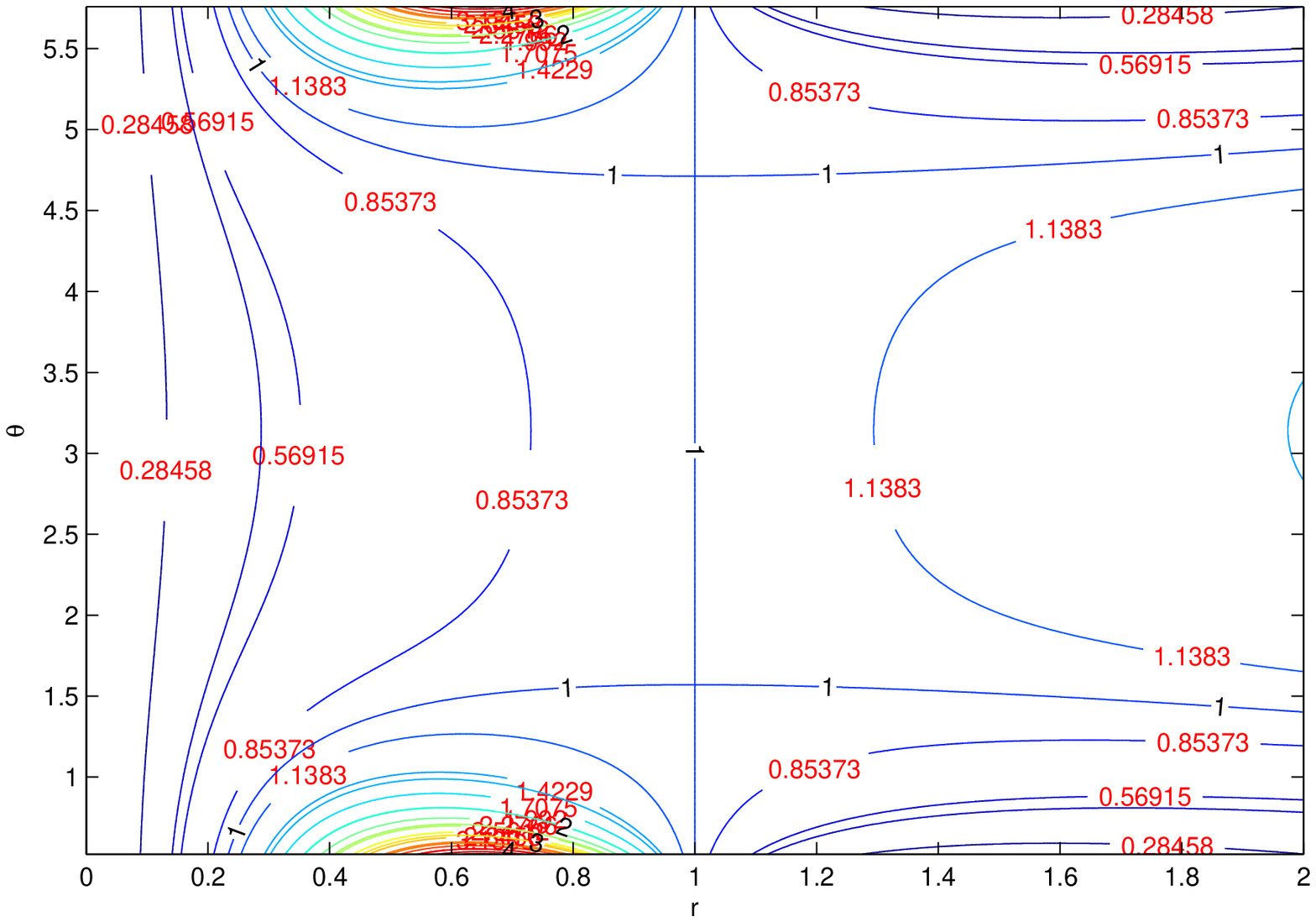}
    \vspace{0pt}
    %\caption{Level curves of $g_2(r,\theta)=|\Phi_2(re^{i\theta})|$.}
    \caption{Level curves of $g_2(r,\theta)=|\Phi_2(re^{i\theta})|$. The vertical axis indicates
     $\theta$, and the horizontal axis indicates $r$.}
    \label{fig1}
   \end{figure}
    \begin{figure}[H]%[!htbp]
 %\vspace{-10pt}
    \includegraphics[scale=0.55]{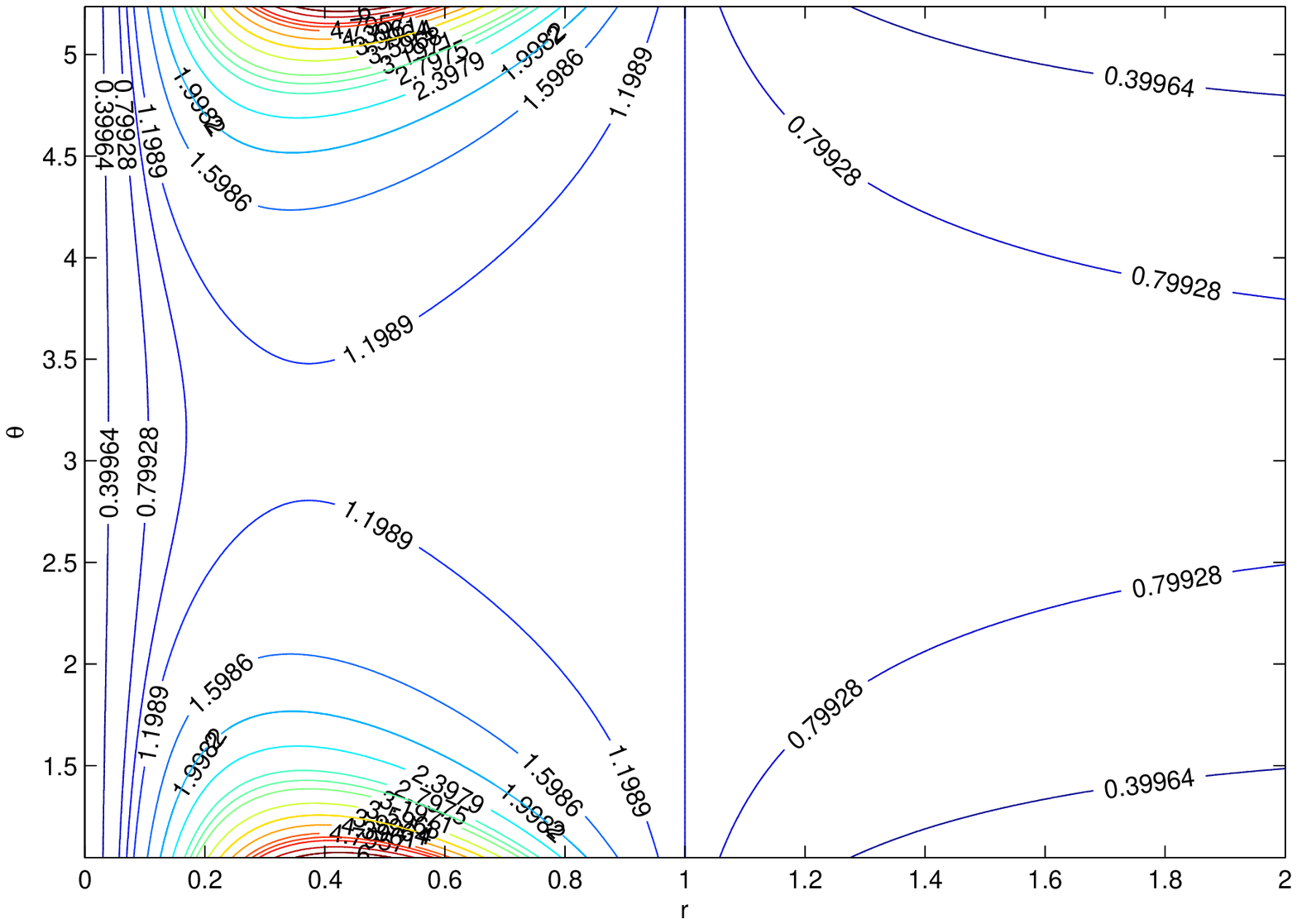}
    \vspace{0pt}
    \caption{Level curves of $g_5(r,\theta)=|\Phi_5(re^{i\theta})|$. The vertical axis indicates
     $\theta$, and the horizontal axis indicates $r$.}
    \label{fig2}
   \end{figure}
\begin{figure}[H]%[!htbp]%[H]
 \vspace{-10pt}
   \includegraphics[scale=0.6]{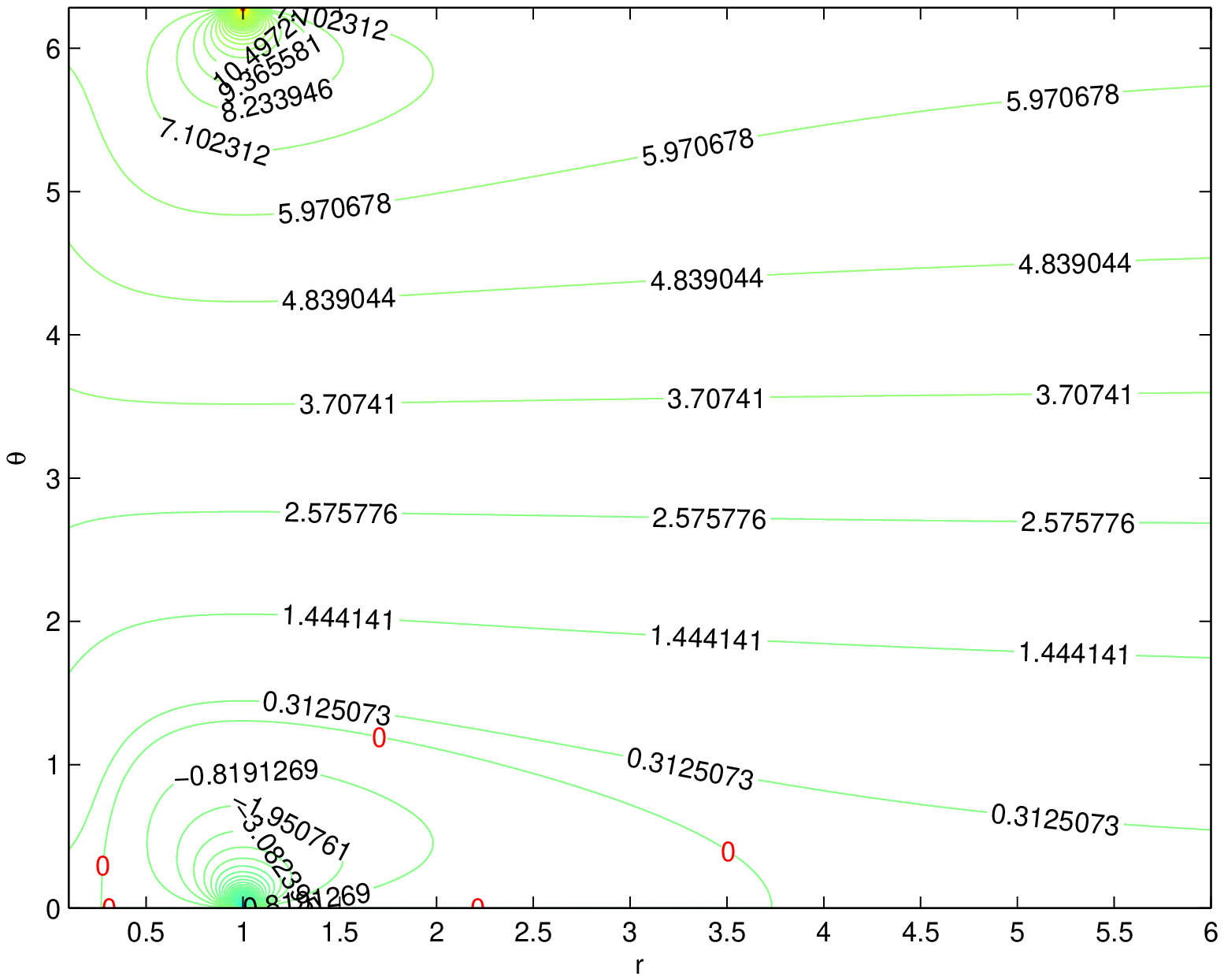}
    \vspace{0pt}
    \caption{Level curves of $f(r,\theta)=\arg(\Phi_{\lambda}(re^{i\theta}))$. The vertical axis indicates
     $\theta$, and the horizontal axis indicates $r$.}
    \label{fig3}
   \end{figure}

\section*{Acknowledgements}
I am grateful to my advisor, Professor Hari Bercovici, for encouragement and many helpful discussions which make this work available. I thank Octavio Arizmendi for informing me his paper \cite{Ariz2012} and Professor Michael Anshelevich for his interest. I would also like to thank Professor Eric Bedford for several helpful conversations and Aimin Huang for his careful reading of a preliminary version of this paper.

\bibliography{clipboard}

\providecommand{\bysame}{\leavevmode\hbox to3em{\hrulefill}\thinspace}
\providecommand{\MR}{\relax\ifhmode\unskip\space\fi MR }
% \MRhref is called by the amsart/book/proc definition of \MR.
\providecommand{\MRhref}[2]{%
  \href{http://www.ams.org/mathscinet-getitem?mr=#1}{#2}
}
\providecommand{\href}[2]{#2}
\begin{thebibliography}{10}

\bibitem{Anshelevich}
Michael Anshelevich, \emph{Free evolution on algebras with two states}, J.
  Reine Angew. Math. \textbf{638} (2010), 75--101. \MR{2595336 (2012d:46155)}

\bibitem{AnshG2011}
\bysame, \emph{Generators of some non-commutative stochastic process},
  arXiv:1104.1381v2 math.OA (2011).

\bibitem{AnshetwostateBM}
\bysame, \emph{Two-state free {B}rownian motions}, J. Funct. Anal. \textbf{260}
  (2011), no.~2, 541--565. \MR{2737412 (2012c:46147)}

\bibitem{Anshelevich2012}
\bysame, \emph{Free evolution on algebras with two states {II}},
  arXiv:1204.0289v1 math.OA (2012).

\bibitem{Ariz2012}
Octavio Arizmendi, \emph{$k$-divisible random variables in free probability},
  arXiv:1203.4780v1 math.PR (2012).

\bibitem{AHasebe2012}
Octavio Arizmendi and Takahiro Hasebe, \emph{Semigroups related to additive and
  multiplicative, free and boolean convolutions}, arXiv:1105.3344v3 math.PR
  (2012).

\bibitem{BB2004}
Serban~T. Belinschi and Hari Bercovici, \emph{Atoms and regularity for measures
  in a partially defined free convolution semigroup}, Math. Z. \textbf{248}
  (2004), no.~4, 665--674. \MR{2103535 (2006i:46095)}

\bibitem{BB2005}
\bysame, \emph{Partially defined semigroups relative to multiplicative free
  convolution}, Int. Math. Res. Not. (2005), no.~2, 65--101. \MR{2128863
  (2006f:46061)}

\bibitem{BB2007new}
\bysame, \emph{A new approach to subordination results in free probability}, J.
  Anal. Math. \textbf{101} (2007), 357--365. \MR{2346550 (2008i:46059)}

\bibitem{BN2008}
Serban~T. Belinschi and Alexandru Nica, \emph{{$\eta$}-series and a {B}oolean
  {B}ercovici-{P}ata bijection for bounded {$k$}-tuples}, Adv. Math.
  \textbf{217} (2008), no.~1, 1--41. \MR{2357321 (2009c:46088)}

\bibitem{BN2008Ind}
\bysame, \emph{On a remarkable semigroup of homomorphisms with respect to free
  multiplicative convolution}, Indiana Univ. Math. J. \textbf{57} (2008),
  no.~4, 1679--1713. \MR{2440877 (2009f:46087)}

\bibitem{BN2009}
\bysame, \emph{Free {B}rownian motion and evolution towards
  {$\boxplus$}-infinite divisibility for {$k$}-tuples}, Internat. J. Math.
  \textbf{20} (2009), no.~3, 309--338. \MR{2500073 (2010g:46108)}

\bibitem{Bhari2006}
Hari Bercovici, \emph{On {B}oolean convolutions}, Operator theory 20, Theta
  Ser. Adv. Math., vol.~6, Theta, Bucharest, 2006, pp.~7--13. \MR{2276927
  (2007m:46105)}

\bibitem{BPata1999}
Hari Bercovici and Vittorino Pata, \emph{Stable laws and domains of attraction
  in free probability theory}, Ann. of Math. (2) \textbf{149} (1999), no.~3,
  1023--1060, With an appendix by Philippe Biane. \MR{1709310 (2000i:46061)}

\bibitem{BV1992}
Hari Bercovici and Dan Voiculescu, \emph{L\'evy-{H}in\v cin type theorems for
  multiplicative and additive free convolution}, Pacific J. Math. \textbf{153}
  (1992), no.~2, 217--248. \MR{1151559 (93k:46052)}

\bibitem{BV1993}
\bysame, \emph{Free convolution of measures with unbounded support}, Indiana
  Univ. Math. J. \textbf{42} (1993), no.~3, 733--773. \MR{1254116 (95c:46109)}

\bibitem{BianeBM}
Philippe Biane, \emph{Free {B}rownian motion, free stochastic calculus and
  random matrices},  \textbf{12} (1997), 1--19. \MR{1426833 (97m:46104)}

\bibitem{Biane1997}
\bysame, \emph{On the free convolution with a semi-circular distribution},
  Indiana Univ. Math. J. \textbf{46} (1997), no.~3, 705--718. \MR{1488333
  (99e:46084)}

\bibitem{BianeJFA}
\bysame, \emph{Segal-{B}argmann transform, functional calculus on matrix spaces
  and the theory of semi-circular and circular systems}, J. Funct. Anal.
  \textbf{144} (1997), no.~1, 232--286. \MR{1430721 (97k:22011)}

\bibitem{Biane1998}
\bysame, \emph{Processes with free increments}, Math. Z. \textbf{227} (1998),
  no.~1, 143--174. \MR{1605393 (99e:46085)}

\bibitem{arithmetic}
Gennadii~P. Chistyakov and Friedrich Götze, \emph{The arithmetic of
  distributions in free probability theory}, Central European Journal of
  Mathematics \textbf{9} (2011), 997--1050, 10.2478/s11533-011-0049-4.

\bibitem{CG2011}
\bysame, \emph{Asymptotic expansion in the \uppercase{CLT} in free
  probability}, arXiv:1109.4844v2 [math.PR] (2011).

\bibitem{DH2011}
Nizar Demni and Taoufik Hmidi, \emph{Spectral distribution of the free unitary
  brownian motion: another approach}, arXiv: 1103.4693 math.OA (2011).

\bibitem{Franz}
Uwe Franz, \emph{Boolean convolution of probability measures on the unit
  circle}, Analyse et probabilit\'es, S\'emin. Congr., vol.~16, Soc. Math.
  France, Paris, 2008, pp.~83--94. \MR{2599263 (2011c:46136)}

\bibitem{Hille}
Einar Hille, \emph{Analytic function theory. {V}ol. 1}, Introduction to Higher
  Mathematics, Ginn and Company, Boston, 1959. \MR{0107692 (21 \#6415)}

\bibitem{LR2007}
Romuald Lenczewski, \emph{Decompositions of the free additive convolution}, J.
  Funct. Anal. \textbf{246} (2007), no.~2, 330--365. \MR{2321046 (2008d:28009)}

\bibitem{Maassen}
Hans Maassen, \emph{Addition of freely independent random variables}, J. Funct.
  Anal. \textbf{106} (1992), no.~2, 409--438. \MR{1165862 (94g:46069)}

\bibitem{Nica2009}
Alexandru Nica, \emph{Multi-variable subordination distributions for free
  additive convolution}, J. Funct. Anal. \textbf{257} (2009), no.~2, 428--463.
  \MR{2527024 (2010j:46121)}

\bibitem{NicaS1996}
Alexandru Nica and Roland Speicher, \emph{On the multiplication of free
  {$N$}-tuples of noncommutative random variables}, Amer. J. Math. \textbf{118}
  (1996), no.~4, 799--837. \MR{1400060 (98i:46069)}

\bibitem{RS2007}
N.~Raj Rao and Roland Speicher, \emph{Multiplication of free random variables
  and the {$S$}-transform: the case of vanishing mean}, Electron. Comm. Probab.
  \textbf{12} (2007), 248--258. \MR{2335895 (2008f:46082)}

\bibitem{SpeicherW1997}
Roland Speicher and Reza Woroudi, \emph{Boolean convolution}, Free probability
  theory ({W}aterloo, {ON}, 1995), Fields Inst. Commun., vol.~12, Amer. Math.
  Soc., Providence, RI, 1997, pp.~267--279. \MR{1426845 (98b:46084)}

\bibitem{DVV1986}
Dan Voiculescu, \emph{Addition of certain noncommuting random variables}, J.
  Funct. Anal. \textbf{66} (1986), no.~3, 323--346. \MR{839105 (87j:46122)}

\bibitem{Wang}
Jiun-Chau Wang, \emph{Limit laws for {B}oolean convolutions}, Pacific J. Math.
  \textbf{237} (2008), no.~2, 349--371. \MR{2421126 (2009h:46128)}

\bibitem{Wang2010}
\bysame, \emph{Local limit theorems in free probability theory}, Ann. Probab.
  \textbf{38} (2010), no.~4, 1492--1506. \MR{2663634 (2011i:46081)}

\end{thebibliography}
\bibliographystyle{amsplain}

\end{document}